\def\R{\mathbb{R}}
\def\cR{\mathcal{R}}
\def\Z{\mathbb{Z}}
\def\bZ{\mathbf{Z}}
\def\P{\mathbb{P}}
\def\E{\mathbb{E}}
\def\cL{\mathcal{L}}
\def\N{\mathcal{N}}
\def\A{\mathbf{A}}
\def\D{\mathbf{D}}
\def\e{\mathbf{e}}
\def\x{\mathbf{x}}
\def\y{\mathbf{y}}
\def\a{\mathbf{a}}
\def\bUpsilon{\boldsymbol{\Upsilon}}
\def\b{\mathbf{b}}
\def\D{\mathbf{D}}
\def\M{\mathbf{M}}
\def\I{\mathbf{I}}
\def\T{\mathbf{T}}
\def\n{\mathbf{n}}
\def\X{\mathbf{X}}
\def\u{\mathbf{u}}
\def\v{\mathbf{v}}
\def\z{\mathbf{z}}
\def\s{\mathbf{s}}
\def\bR{\mathbf{R}}
\def\sr{\mathsf{r}}
\def\btheta{\boldsymbol{\theta}}
\def\bbeta{\boldsymbol{\eta}}
\def\bvarphi{\boldsymbol{\varphi}}
\def\boldeta{\boldsymbol{\eta}}
\def\bOmega{\boldsymbol{\Omega}}
\def\eps{\varepsilon}
\def\beps{\boldsymbol{\varepsilon}}
\def\d{\mathrm{d}}
\def\cE{\mathcal{E}}
\def\cT{\mathcal{T}}
\def\op{\mathrm{op}}
\def\tu{\Tilde{\u}}
\def\tx{\Tilde{\x}}
\def\sP{\mathsf{P}}
\def\deps{\partial_\eps|_{\eps = 0}}
\DeclareMathOperator{\Var}{Var}
\DeclareMathOperator{\diag}{diag}
\DeclareMathOperator{\Tr}{Tr}
\DeclareMathOperator{\indep}{indep}
\DeclareMathOperator{\RWF}{RWF}
\DeclareMathOperator{\WF}{WF}
\newcommand{\pnorm}[2]{\lVert #1\rVert_{#2}}
\newcommand{\bigpnorm}[2]{\big\lVert#1\big\rVert_{#2}}
\newcommand{\Bigpnorm}[2]{\Big\lVert#1\Big\rVert_{#2}}
\newcommand{\biggpnorm}[2]{\bigg\lVert#1\bigg\rVert_{#2}}
\newtheorem{theorem}{Theorem}[section]
\newtheorem{assumption}[theorem]{Assumption}
\newtheorem{lemma}[theorem]{Lemma}
\newtheorem{remark}[theorem]{Remark}
\newtheorem{proposition}[theorem]{Proposition}
\newtheorem{definition}[theorem]{Definition}
\newtheorem{example}[theorem]{Example}
\title{Learning single index model with gradient descent: spectral initialization and precise asymptotics}
\author{Yuchen Chen\thanks{Department of Statistics and Data Science, Carnegie Mellon University, \texttt{yuchenc3@andrew.cmu.edu}.}~ and Yandi Shen\thanks{Department of Statistics and Data Science, Carnegie Mellon University, \texttt{yandis@andrew.cmu.edu}.}}
\date{}
\begin{document}
\maketitle
\begin{abstract}
    Non-convex optimization plays a central role in many statistics and machine learning problems. Despite the landscape irregularities for general non-convex functions, some recent work showed that for many learning problems with random data and large enough sample size, there exists a region around the true signal with benign landscape. Motivated by this observation, a widely used strategy is a two-stage algorithm, where we first apply a spectral initialization to plunge into the region, and then run gradient descent for further refinement. While this two-stage algorithm has been extensively analyzed for many non-convex problems, the precise distributional property of both its transient and long-time behavior remains to be understood. In this work, we study this two-stage algorithm in the context of single index models under the proportional asymptotics regime. We derive a set of dynamical mean field equations, which describe the precise behavior of the trajectory of spectral initialized gradient descent in the large system limit. We further show that when the spectral initialization successfully lands in a region of benign landscape, the above equation system is asymptotically time translation invariant and exponential converging, and thus admits a set of long-time fixed points that represents the mean field characterization of the limiting point of the gradient descent dynamic. As a proof of concept, we demonstrate our general theory in the example of regularized Wirtinger flow for phase retrieval.
\end{abstract}
\setcounter{tocdepth}{2}
\tableofcontents

\section{Introduction}
Learning single index models (also known as generalized linear models) is of fundamental importance in many areas of modern statistics, machine learning, and signal processing. To set up the stage, suppose we have $n$ responses $\y = (y_1,\ldots,y_n)$ generated from the model
\begin{align}\label{eq:intro_model}
y_i = \varphi(\x_i^\top\btheta^\ast, z_i), \quad i=1,\ldots,n, 
\end{align}
where $\x_i\in\R^d$ is the sensing vector of the $i$th data point, $\btheta^\ast$ is the true signal of interest, and $z_i$ is some external  noise independent of $(\x_i,\btheta^\ast)$. Upon observing $\y$ and $\X = [\x_1^\top;\ldots;\x_n^\top] \in \R^{n\times d}$, our goal is to estimate the unknown $\btheta^\ast$. While earlier approaches have focused on convex formulations of the learning problem \cite{balan2006signal,conca2015algebraic,bandeira2014saving}, which is naturally amenable to the design of provable algorithms and theoretical analysis, it is widely observed in practice that popular algorithms like semidefinite
programming are prohibitively expensive when the data scale is large. To this end, a much more computationally efficient and widely used approach is to optimize over the empirical risk, given by
\begin{align}\label{eq:intro_objective}
\cL_n(\btheta) = \sum_{i=1}^n \bar L(\x_i^\top \btheta, y_i),
\end{align}
where $\bar L:\R^2 \rightarrow \R_+$ is some loss function. While (\ref{eq:intro_objective}) usually amounts to a non-convex objective with complicated landscape, it is widely observed in many data regimes that gradient descent (and its variants) can converge to the global minimum in relatively few steps, which piques great interest among researchers in understanding and explaining the success of such descent algorithms.  

Consider gradient descent, the simplest algorithm to minimize (\ref{eq:intro_objective}), which initializes from some $\btheta^0$ and then iterates as
\begin{align*}
\btheta^{t+1} = \btheta^t - \gamma \nabla_{\btheta} \cL_n(\btheta),
\end{align*}
where $\gamma > 0$ is the step size. Under fairly general assumptions on the sensing matrix $\X$ and appropriate sample size conditions, a recent line of work \cite{chen2017solving,ma2020implicit,chen2019gradient} rigorously justified the success of vanilla gradient descent by showing that in many noiseless problems (i.e., $\z = 0$), the gradient descent iterates will typically converge to the truth $\btheta^\ast$ after $O(\log n)$ steps. The key insight there is that for large enough sample size, $\cL_n(\btheta)$ admits a local region around $\btheta^\ast$ with benign landscape (i.e., well-conditioned Hessian matrix), and from both warm (e.g., spectral initialization) and cold starts (i.e., random initialization), the gradient descent iterates will plunge into and stay in this region after at most $O(\log n)$ steps, after which the convergence to $\btheta^\ast$ is exponential. These results, in the form of non-asymptotic high probability bounds, are however less useful in understanding the (precise) transient behavior of gradient descent and characterizing the properties of the convergence points (e.g., the global minimum), especially when there is noise. 

To these ends, a complementary line of work aims to derive more precise characterizations of both transient and long-time behavior of gradient descent, at the cost of stronger distributional assumptions on the data model (\ref{eq:intro_model}). One such example is the recent breakthrough paper \cite{celentano_high-dimensional_2021}, which studies the so-called mean field regime consisting of proportional asymptotics $n \asymp d$, sensing matrix $\X$ with independent entries, and some further distributional assumptions on the truth $\btheta^\ast$ and noise $\z = (z_1,\ldots,z_n)$. Within this regime, \cite{celentano_high-dimensional_2021} developed a set of dynamical mean field theory (DMFT), originating from the study of spin glass in statistical physics \cite{sompolinsky1981dynamic,sompolinsky1982relaxational}, to characterize the precise distribution of any fixed number of gradient descent iterates in the large system limit (i.e., $n,d\rightarrow\infty$).  The limiting fixed dimensional process comes in the form of a complicated integro-differential equation system, which contains rich information about the large system behavior of gradient descent, and in principle could be read off to guide practical considerations such as early stopping and choice of loss functions. Under an abstract convergence condition, \cite{celentano_high-dimensional_2021} also studied the long-time behavior of this limiting process and established some connection to the static mean field characterization of the objective $\cL_n(\btheta)$. 

Despite the remarkable progress of \cite{celentano_high-dimensional_2021}, a few important questions remain to be answered. First, it only considers the case where the gradient descent initialization $\btheta^0$ is independent of the sensing matrix $\X$. Without further knowledge of the truth $\btheta^\ast$, the most natural initialization is to draw $\btheta^0$ uniformly in random (i.e., cold start), and while gradient descent is still known to converge in some examples \cite{ma2020implicit}, both theory and simulation suggests that it typically takes $O(\log n)$ iterates to align with $\btheta^\ast$. This $\log n$ dependence, while mild in practice, still poses theoretical challenge as it cannot be captured by the finite step characterization of \cite{celentano_high-dimensional_2021} and its non-asymptotic extensions to larger but still $o(\log n)$ steps \cite{han2024entrywise}. Second, to analyze the complicated DMFT equation, \cite{celentano_high-dimensional_2021} proposed a general condition which dictates that the components of the DMFT equation are asymptotically time translation invariant (TTI) and exponentially converging, under which its long-time fixed points can be derived. To the best of our knowledge, however, there is currently very little guidance on how to verify such a condition (even for strongly convex loss functions), which limits the applicability of DMFT in understanding the long-time and large system behavior of gradient descent. 

 Motivated by its practical success, we consider in this work gradient descent with a spectral initialization given by
 \begin{align}\label{eq:intro_spec}
 \btheta^0 = \sqrt{d}\hat\btheta^s, 
 \end{align}
 where $\hat\btheta^s$ is the unit-norm leading eigenvector of certain matrix that we construct from the data $(\X,\y)$. As mentioned above, the advantage of such initialization  is that for large enough sample size, $\btheta^0$ is already inside a local region around $\btheta^\ast$ with benign landscape, after which the gradient descent iterates $\{\btheta^t\}_{t\geq 0}$ typically converge linearly and within $O(1)$ number of steps (as opposed to $O(\log n)$ steps for random initialization). Based on this algorithm, the second consideration above raises the following natural questions:
 \begin{enumerate}
 \item What is the precise pathwise behavior of such algorithm, in the sense of DMFT characterization?
 \item Under what (verifiable) conditions can we use the resulting DMFT equation to analyze the convergence point of such algorithm?
 \end{enumerate}
Answering these questions requires significant theoretical innovation beyond \cite{celentano_high-dimensional_2021}, and, as discussed above, also offers valuable insights to many practical questions of interest. 

Let us now summarize our main contributions towards answering these questions. 
\begin{itemize}
\item We derive the dynamical mean field equations for spectral initialized gradient descent, consisting of univariate stochastic processes $\{\theta^t\}_{t\geq 0},\{\eta^t\}_{t\geq 0}$, Gaussian processes $(\{u^t\}_{t\geq 0},u^\diamond)$, $(\{w^t\}_{t\geq 0},w^\ast)$, and covariance and response processes $C_\theta,C_\eta,R_\theta,R_\eta$ satisfying a self-consistent system of equations. The laws of these processes $\sP(\{\theta^t\}_{t\geq 0}),\sP(\{\eta^t\}_{t\geq 0})$ characterize the precise large system dynamical behavior of the above algorithm, via the following almost sure limits as $n,d\rightarrow\infty$ with $n/d\rightarrow \delta \in (0,\infty)$:
    \begin{equation}\label{eq:intro_result_1}
    \begin{aligned}
        \frac{1}{d}\sum_{j=1}^d \delta_{\left(\theta^{0}_j,...,\theta^{m}_j,\theta^\ast_j\right)} &\overset{W_2}{\to} \mathsf{P}\left(\theta^{0},...,\theta^{m},\theta^*\right),\\
        \frac{1}{n}\sum_{i=1}^n\delta_{\left(\eta^0_i,...,\eta^{m}_i,\eta^\ast_i,z_i\right)} &\overset{W_2}{\to} \sP\left(\eta^0,...,\eta^m,w^*,z\right),
    \end{aligned}
    \end{equation}
    where $\bbeta^t = \X\btheta^t, \bbeta^\ast = \X\btheta^\ast$, and $\overset{W_2}{\to}$ denotes Wasserstein-2 convergence. We refer to Theorem \ref{thm: discrete dmft asymp} for precise assumptions and statements.
    
    The high level proof strategy of (\ref{eq:intro_result_1}) follows that of \cite{celentano_high-dimensional_2021} by reducing to the state evolution analysis of an approximate message passing (AMP) algorithm, with the apparent difference being that the initialization $\btheta^0$ is no longer independent of $\X$. To this end, we consider a spectral initialized AMP algorithm with dependence on multiple past iterates and derive its state evoluton, which generalizes an early result in \cite{mondelli_approximate_2022} and might be of independent interest.

    \item 
    
    To verify the asymptotically TTI and exponentially converging condition proposed in \cite{celentano_high-dimensional_2021} (with some further modifications for spectral initialization), we formulate a high probability assumption directly on the high dimensional gradient descent dynamic $\{\btheta^t\}_{t\geq 0}$ (as opposed to on the limiting DMFT equations), which states that, after at most a constant number of iterates, the dynamic will enter and stay in a region with benign landscape (see Assumption \ref{ass:exp_tti_cond}). This condition is inspired by earlier non-asymptotic analysis \cite{chen2019gradient,ma2020implicit}, and has two interesting consequences:
    \begin{enumerate}
    \item We can verify that in such case, the DMFT system is indeed asymptotically TTI and exponentially converging, and similar to \cite{celentano_high-dimensional_2021}, its limiting variables (in the $L^2$ sense) can be described by a set of fixed point equations (see Proposition \ref{prop: dmft fixed point});
    \item On the event where the above assumption holds, the gradient descent dynamic $\{\btheta^t\}_{t\geq 0}$ necessarily admits a limiting point $\btheta^\infty$, and its precise distributional characterization is given by
    \begin{align}\label{intro:theta_infty}
    \frac{1}{d}\sum_{j=1}^d \delta_{\theta^\infty_j} \overset{W_2}{\to}\sP(\theta^\infty),
    \end{align}
    where $\theta^\infty$ is (part of) a solution to the prescribed long-time fixed point equations, and $\sP(\theta^\infty)$ denotes its law (see Theorem \ref{thm:dmft_exp_tti_condition}).
    \end{enumerate}
    
    For strongly convex problems, this assumption holds unconditionally over the entire $\R^d$, verifying a claim in \cite{celentano_high-dimensional_2021} and completing the results therein. For nonconvex problems, at least when the sample size is large enough and the global landscape is benign, the prescribed region can typically be chosen as a small $\ell_2$ ball (or some further refinement thereof) around the (unique) global minimum, and the Hessian upper and lower bounds can be made rigorous by leveraging existing analysis in \cite{chen2019gradient,ma2020implicit} and many other extensions. In such case, $\btheta^\infty$ above can be identified as the global minimum, and (\ref{intro:theta_infty}) allows us to characterize its precise distribution. As a proof of concept, we work out an example of noiseless phase retrieval under a regularized squared loss (see Proposition \ref{lm:pr_exp_tti_ass}).
\end{itemize}
\paragraph{Related Literature}

Dynamical mean field theory (DMFT) goes back to the study of disordered systems in statistical physics \cite{sompolinsky1981dynamic,sompolinsky1982relaxational}, where rigorous derivations were first obtained using pathwise large deviation techniques  \cite{arous1995large,arous1997symmetric,guionnet1997averaged}. Long-time behavior of DMFT has long been a question of interest in the physics literature, but mathematically rigorous results of convergence remain limited. Notable exceptions include \cite{dembo2007limiting,dembo2020dynamics}, which study the mean field behavior of Langevin dynamics for spherical spin glasses. For such dynamics, the DMFT system can be simplified and closed solely on deterministic correlation and response processes (known as Cugliandolo-Kurchan equations \cite{cugliandolo1993analytical}), and the authors were able to study equilibrium properties by directly analyzing such deterministic system. While such techniques can be extended to spiked matrix models \cite{liang2022high,bodin2021rank}, which can be viewed as ``planted" version of the pure spin glass model, it is unclear whether such analysis can be extended to the current DMFT system which involves random components. 

Following heuristic arguments from statistical physics, DMFT has been formalized in many statistical problems \cite{sarao2020complex,sarao2020optimization,sarao2020marvels,mignacco2021stochasticity,mignacco2022effective}. The recent breakthrough paper \cite{celentano_high-dimensional_2021}, which was further built on previous work on the analysis of general first order methods \cite{celentano2020estimation}, introduced a novel way to rigorously derive DMFT via a reduction to AMP algorithms and their state evolution analysis \cite{bayati_message_passing_2011,javanmard2013state}. Since then, their work has been applied and extended to obtain DMFT characterization in a variety of contexts, including stochastic gradient dynamics \cite{gerbelot2024rigorous}, nonasymptotic DMFT \cite{han2024entrywise}, empirical Bayes estimation and sampling \cite{fan2025dmft,fan2025dynamicalmeanfieldanalysisadaptive}, neural networks \cite{montanari2025dynamical,han2025precise}, among many others. Detailed analysis of the DMFT equation, on the other hand, is much more scarce. Aside from the conditional result of \cite{celentano_high-dimensional_2021} (based on the asymptotically TTI and exponential converging condition mentioned above), one exception is \cite{fan2025dynamicalmeanfieldanalysisadaptive}, which studies an empirical Bayes problem in the context of Bayesian linear model and derives the long-time behavior of an associate adaptive Langevin algorithm (with independent initialization). The current work, on the other hand, considers the more general context of single index models, and addresses significant additional challenges brought by the spectral initialization. 

Our leading non-convex example, phase retrieval, has a vast previous literature. 
Shifting from the worst-case study of injectivity thresholds \cite{balan2006signal,conca2015algebraic,bandeira2014saving}, we list below a few lines of study from an average-case perspective that are most related to the current work.
\begin{itemize}
\item As mentioned above, earlier approaches used convex relaxation which lifted the problem to a convex matrix optimization problem \cite{candes2015phase,candes2013phaselift,goldstein2018phasemax,walkspurger2015phase}. Despite the issue of computational cost, the convex formulation allows one to apply techniques like convex Gaussian min-max theorem (CGMT) \cite{thrampoulidis2014gaussian} to derive mean field asymptotics, e.g., precise sample complexity for perfect recovery \cite{abbasi2017performance,dhifallah2017phase}.
\item Spectral methods, which output the leading eigenvector of certain matrix constructed from the data, have been extensively analyzed both in terms of high probability bounds for general sensing matrix $\X$ \cite{ma2020implicit} and more precisely in the mean field regime \cite{netrapalli2013phase,lu2020phase,luo2019optimal,maillard2022construction}, and is known in the latter case to achieve the information-theoretic threshold of weak recovery \cite{mondelli2019fundamental,barbier2019optimal} (see also \cite{maillard2020phase} for threshold of strong recovery).
\item Gradient descent on the squared loss (known as Wirtinger flow) and many of its variants have also been extensively studied; see \cite{cai2016optimal,chen2017solving,zhang2017nonconvex,wang2017solving,luo2019solving,chen2019gradient,chi2019nonconvex,Soltanolkotabi2019structured,gao2020perturbed,li2020toward,hand2020optimal,ma2020implicit,cai2022solving} for an imcomplete list. A complementary line of study analyzes the geometric properties of the optimization landscape (e.g., the existence of spurious local minima), for both the vanilla Wirtinger flow objective  \cite{sun2018geometric} and its regularized version \cite{li2020toward}. We refer to \cite{sarao2019afraid,sarao2020complex,sarao2020marvels,antenucci2019glassy,cai2022solving,maillard2020landscape} for more detailed discussions on the connections between landscape and convergence of gradient methods. 
\item Another popular iterative algorithm is AMP, which was studied in the context of single index models in \cite{rangan2012iterative,javanmard2013state} and was first applied to phase retrieval in \cite{schniter2015compressive}. Several versions of AMP have been analyzed, including the Bayes-optimal AMP \cite{barbier2019optimal}, optimization-based AMP \cite{ma2019optimization}, and spectral initialized AMP \cite{mondelli_optimal_2022,mondelli_approximate_2022}.
\end{itemize}

Lastly, we mention in passing that both gradient descent (and its variants) \cite{bietti2025learning,troiani2024fundamental,collins2024hitting,bruna2025survey} and spectral methods \cite{kovavcevic2025spectral,defilippis2025optimal} have recently gained interest in the study of multi-index models, which generalizes (\ref{eq:intro_model}) to multiple signal directions. It is an interesting future direction to extend our theory to this more generalized setting.  

\paragraph{Paper organization} The rest of the paper is organized as follows. Section \ref{sec:main_results} introduces the detailed model setup and our main results. Main proofs are given in Sections \ref{sec:dmft_asymp_proof}-\ref{sec:conv_hd_res}, and the remaining proofs are given in Appendix \ref{sec: spec init amp}-\ref{sec:phase_conv}. 

\paragraph{Notation}
Bold quantities such as $\btheta, \M$ will denote vectors/matrices. When a scalar function $f:\R^d\rightarrow \R$ is applied to a matrix $\M \in \R^{n\times d}$, it is applied row-wise. A function is $2$-pseudo Lipschitz if $|f(x)-f(y)|\leq C(1+\pnorm{x}{}+\pnorm{y}{})\pnorm{x-y}{}$. On a Polish space $\mathcal{M}$, we use $\mathcal{P}_p(\mathcal{M})$ to denote probability distributions on $\mathcal{M}$ with finite $p$-th moment, and $W_p$ to denote the Wasserstein-$p$ metric on $\mathcal{P}_p(\mathcal{M})$. For a random variable $X$, we use $\sP(X)$ to denote its law. For a vector $\v$, we use $\pnorm{\v}{}$ to denote the $2$-norm, and $\diag(\v)$ to denote the diagonal matrix with $\v$ as its diagonal. For a matrix $\M$, we use $\pnorm{\M}{F},\pnorm{\M}{\op},\pnorm{\M}{\ast},\Tr(\M)$ to denote the usual Frobenius, operator, nuclear norm, and trace. For two matrices $\M,\M'$, we use $\M\circ\M'$ to denote the Hadamard (entrywise) product. Unless otherwise indicated, constants $C,C',c,c' >0$ are constants independent of $n,d$ which may change line to line.

\section{Main results}\label{sec:main_results}
\subsection{Model setup and preliminaries}
Let the deterministic vector $\btheta^* = (\theta^\ast_1,\ldots,\theta^\ast_d)\in\R^d$ be the signal of interest. As shown in (\ref{eq:intro_model}), we observe $n$ i.i.d.\@ sensing vectors $\x_i \in \R^d$ along with noisy observations given by
\begin{align}\label{eq:model}
    \y = \varphi(\X\btheta^*,\z),
\end{align}
where $\y = (y_1,\ldots,y_n) \in \R^n$, $\X = [\x_1^\top;\cdots;\x_n^\top]\in \R^{n\times d}$, $\z = (z_1,\ldots,z_n)$ are i.i.d. noise variables independent of $\X$, and $\varphi:\R\times\R \rightarrow\R$ is a link function applied entry-wise. 
Throughout this work, we make the following model assumptions regarding (\ref{eq:model}).
\begin{assumption}[Model Assumptions]\
\label{ass: model assumptions}
\begin{enumerate}
    \item $\lim_{n,d \rightarrow \infty} n/d = \delta \in (0,\infty)$.
    \item The signal $\btheta^\ast$ satisfies $\pnorm{\btheta^\ast}{} = \sqrt{d}$, and for some scalar variable $\theta^\ast$ with a finite moment generating function in a neighborhood around $0$, $\frac{1}{d}\sum_{j=1}^d \delta_{\theta^\ast_j} \overset{W_p}{\to} \sP(\theta^\ast)$ for any $p\geq 1$.
    \item The design $\X = (X_{ij})_{i\in[n],j\in[d]} \in \R^{n\times p}$ has independent entries satisfying $\E X_{ij} = 0$, $\Var X_{ij} = 1/d$, and $\pnorm{\sqrt{d} X_{ij}}{\psi_2} \leq C$, where $\pnorm{\cdot}{\psi}$ is the sub-Gaussian norm.
    \item $z_1,\ldots,z_n$ are i.i.d. distributed as $\sP(z)$ for some scalar variable $z$ with a finite moment generating function in a neighborhood around $0$.
    \item $u\mapsto \varphi(u,v)$ is continuous, weakly differentiable in $u$ for any $v\in\R$, and $\sup_{u,v} |\partial_u \varphi(u,v)| \leq C$. 
\end{enumerate}
\end{assumption}

Given a pre-processing function $\mathcal{T}_s:\R \rightarrow \R$, define
\begin{align}\label{def:M_n}
    \M_n := \sum_{i=1}^n \mathcal{T}_s(y_i)\x_i\x_i^\top.
\end{align}
The spectral estimator $\hat{\btheta}^s$ is defined as the principal eigenvector of $\M_n$, with normalization $\pnorm{\hat{\btheta}^s}{}=1$. We make the following assumption on the pre-processing function $\cT_s(\cdot)$.
\begin{assumption}[Spectral Initalization]\label{ass: spectral initialization}
The pre-processing function $\cT_s(\cdot)$ is nonnegative, bounded, and Lipschitz, satisfies $\P(Z_s = 0) < 1$, and with $\tau := \inf\{z:\P(Z_s\leq z)=1\}$ assumed to be finite, it holds that 
\begin{align*}
\lim_{\lambda \rightarrow\tau^+}\E\Big[\frac{Z_s}{(\lambda-Z_s)^2}\Big]= \lim_{\lambda \rightarrow\tau^+} \E\Big[\frac{Z_sG^2}{\lambda-Z_s}\Big] = \infty,
\end{align*}
where the expectation is over the following randomness: $G\sim N(0,1)$ is independent of $z \sim \sP(z)$, $Y = \varphi(G,z)$, and $Z_s=\mathcal{T}_s(Y)$.
\end{assumption}
These are standard assumptions in the analysis of spectral estimators for single index models, and we refer to \cite{mondelli_optimal_2022} and references therein for further discussions and relaxations. We now summarize some previous results on the performance of the spectral estimator \cite{lu2020phase,mondelli2019fundamental,mondelli_optimal_2022} under Assumption \ref{ass: spectral initialization}. For any $\delta\in(0,\infty)$, define on the domain $\lambda \in (\tau,\infty)$ the functions
\begin{align*}
    \phi(\lambda) := \lambda \E\Big[\frac{Z_sG^2}{\lambda-Z_s}\Big], \quad \psi_\delta(\lambda) := \lambda\Big(\frac{1}{\delta} +\E\Big[\frac{Z_s}{\lambda-Z_s}\Big]\Big).
\end{align*}
It can be readily checked that the function $\psi_\delta$ is convex and has a minimizer which we denote by $\overline{\lambda}_\delta$. Define
\begin{align*}
    \zeta_\delta(\lambda) = \psi_\delta\big(\max(\lambda,\overline{\lambda}_\delta)\big).
\end{align*}
Under Assumption \ref{ass: spectral initialization}, there exists a unique solution $\lambda_\delta^*$ \cite[Theorem 1]{lu2020phase} to
\begin{align}\label{def:lambda_delta}
    \zeta_\delta(\lambda) = \phi(\lambda).
\end{align}
For future reference, define
\begin{align}\label{def:T_func}
\cT(u):=\frac{\cT_s(u)}{\lambda_\delta^* - \cT_s(u)}.
\end{align}

The following result characterizes the high-dimensional limit of the overlap between the spectral estimator $\hat\btheta^s$ and the signal $\btheta^\ast$, along with the limiting leading eigenvalues of $\M_n$ defined in (\ref{def:M_n}); see \cite[Theorem 1]{lu2020phase} and \cite[Lemma 2]{mondelli2019fundamental}.
\begin{proposition}\label{prop: spec est overlap}
Under Assumptions \ref{ass: model assumptions} and \ref{ass: spectral initialization}, the limiting overlap satisfies
\begin{align*}
    \frac{\Big|\langle\hat{\btheta}^s,\btheta^*\rangle\Big|^2}{\pnorm{\hat{\btheta}^s}{}^2\pnorm{\btheta^\ast}{}^2} \xrightarrow{a.s.} a^2 := \begin{cases}
        0 & \psi'_\delta(\lambda_\delta^*) \leq 0\\
        \frac{\psi'_\delta(\lambda_\delta^*)}{\psi'_\delta(\lambda_\delta^*)-\phi'(\lambda_\delta^*)} & \psi_\delta'(\lambda_\delta^*) > 0.
    \end{cases}
\end{align*}

Moreover, with $\lambda_1(\M_n)$ and $\lambda_2(\M_n)$ denoting the first and second leading eigenvalues of $\M_n$, it holds almost surely as $n,d\rightarrow\infty$ that
\begin{align*}
\lim_{n,d\rightarrow\infty} \lambda_1(\M_n) = \delta \psi_\delta(\lambda^\ast_\delta), \quad \lim_{n,d\rightarrow\infty} \lambda_2(\M_n) = \delta \psi_\delta(\bar{\lambda}_\delta).
\end{align*}
\end{proposition}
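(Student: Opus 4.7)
The plan is to view $\M_n$ as a deformed weighted Wishart model: a ``bulk'' weighted sample covariance living on the subspace orthogonal to $\btheta^\ast$, plus a rank-two ``spike'' created by the coupling between $\x_i^\top\btheta^\ast$ and $\cT_s(y_i)$. The two functions $\psi_\delta$ and $\phi$ then encode respectively the bulk edge and the effective strength of this deformation; the equation $\zeta_\delta=\phi$ is precisely the BBP-type threshold that determines whether the top eigenvalue escapes the bulk.

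First I would condition on $\{(G_i,z_i)\}_{i=1}^n$ with $G_i := \x_i^\top\btheta^\ast\sim N(0,1)$ asymptotically, so that $y_i=\varphi(G_i,z_i)$ and hence $\cT_s(y_i)$ become measurable under this conditioning. Orthogonally decomposing $\x_i = (G_i/d)\btheta^\ast + \tilde\x_i$ with $\tilde\x_i\perp\btheta^\ast$, one can write
\begin{align*}
\M_n \;=\; \underbrace{\sum_{i=1}^n \cT_s(y_i)\,\tilde\x_i\tilde\x_i^\top}_{=:\bar\M_n} \;+\; \M_n^{\mathrm{low}},
\end{align*}
where $\bar\M_n$ is (conditionally) a weighted Wishart matrix whose summands $\tilde\x_i$ are independent sub-Gaussian vectors in the $(d-1)$-dimensional orthogonal complement of $\btheta^\ast$, and $\M_n^{\mathrm{low}}$ is a symmetric rank-two matrix supported in $\operatorname{span}\{\btheta^\ast,\ \sum_i \cT_s(y_i)\,G_i\,\tilde\x_i\}$.

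For the bulk I would invoke Silverstein's equation for weighted Wishart matrices, together with a sub-Gaussian universality argument, to show that the limiting Stieltjes transform of $\bar\M_n$ satisfies a fixed-point relation whose inversion identifies $\delta\psi_\delta$ as the map from Stieltjes variable back to spectral parameter; in particular the right edge of the bulk equals $\delta\psi_\delta(\wb\lambda_\delta)$, which yields the stated limit for $\lambda_2(\M_n)$. For the spike I would apply the Schur-complement identity to the rank-two perturbation $\M_n^{\mathrm{low}}$: using anisotropic resolvent concentration of $(\bar\M_n-\lambda I)^{-1}$ against the deterministic direction $\btheta^\ast$, together with trace concentration for the quadratic form involving $\sum_i\cT_s(y_i)G_i\tilde\x_i$, the secular equation collapses in the limit to $\psi_\delta(\lambda)=\phi(\lambda)$, whose unique solution on the relevant domain is $\lambda^\ast_\delta$. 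An outlier at $\delta\psi_\delta(\lambda^\ast_\delta)$ emerges precisely when $\psi'_\delta(\lambda^\ast_\delta)>0$, i.e.\ when $\lambda^\ast_\delta>\wb\lambda_\delta$; otherwise the top eigenvalue sticks to the edge $\delta\psi_\delta(\wb\lambda_\delta)$, which is exactly why the mapping $\zeta_\delta$ is defined with a floor at $\wb\lambda_\delta$. The squared overlap is then read off from the residue of the perturbed eigenvector along $\btheta^\ast$ via differentiating the secular equation at $\lambda=\lambda^\ast_\delta$; after algebraic simplification this produces the ratio $\psi'_\delta(\lambda^\ast_\delta)/(\psi'_\delta(\lambda^\ast_\delta)-\phi'(\lambda^\ast_\delta))$ in the supercritical regime and $0$ otherwise.

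The main obstacle is the rigorous large-$n$ analysis of the resolvent $(\bar\M_n-\lambda I)^{-1}$ tested against the deterministic direction $\btheta^\ast$: the weights $\cT_s(y_i)$ and the conditional sensing directions $\tilde\x_i$ remain coupled through $G_i$, and the entries of $\X$ are only sub-Gaussian rather than Gaussian, so one needs an anisotropic local law for weighted Wisharts together with a universality extension beyond the Gaussian case. These ingredients are precisely what is established in \cite{lu2020phase} (for the overlap and leading eigenvalue) and further refined in \cite{mondelli2019fundamental,mondelli_optimal_2022} (for universality and the second eigenvalue), so in practice the proof reduces to assembling and invoking those results.
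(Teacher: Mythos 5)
Your proposal is consistent with the paper: the paper does not prove this proposition at all, but simply quotes it as a known result, citing \cite[Theorem 1]{lu2020phase} for the overlap and leading eigenvalue and \cite[Lemma 2]{mondelli2019fundamental} for the second eigenvalue --- exactly the references you invoke in your final paragraph. Your BBP-type sketch (bulk weighted Wishart plus low-rank spike, secular equation, edge at $\delta\psi_\delta(\wb{\lambda}_\delta)$, outlier when $\psi_\delta'(\lambda_\delta^\ast)>0$) is a reasonable outline of how those cited works establish the result, but it is extra detail beyond anything the paper attempts, and the paper's own treatment reduces to the same citations you end with.
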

In other words, when $\psi_\delta'(\lambda^\ast_\delta) > 0$, the spectral estimator has a non-zero limiting overlap with the truth $\btheta^\ast$, and the matrix $\M_n$ admits a strictly positive eigengap in the large system limit. Our main results, stated in Theorems \ref{thm: discrete dmft asymp} and \ref{thm:dmft_exp_tti_condition} below, will operate in this regime.

To estimate $\btheta^\ast$ in (\ref{eq:model}), we consider a ridge regularized empirical risk of the form
\begin{equation}
    \mathcal{L}_n(\btheta) := \sum_{i=1}^n \bar{L}(\x_i^\top\btheta,y_i) + \frac{\lambda}{2}\pnorm{\btheta}{}^2,
\end{equation}
where $\bar{L}: \R\times \R \rightarrow\R_+$ is the loss function. Using the representation (\ref{eq:model}), this amounts to
\begin{align}\label{eq:GD_objective}
\cL_n(\btheta) = \sum_{i=1}^n L(\x_i^\top \btheta, \x_i^\top\btheta^\ast,z_i) + \frac{\lambda}{2}\pnorm{\btheta}{}^2, 
\end{align}
where $L(a,b,c) = \bar{L}(a, \varphi(b,c))$. We study the optimization of $\cL_n$ through gradient descent
\begin{align}\label{eq: gradient descent}
    \btheta^{t+1} = \btheta^t - \gamma\nabla_{\btheta}\cL_n(\btheta^t) = \btheta^t - \gamma\X^\top\ell(\X\btheta^t,\X\btheta^\ast,\z) - \gamma\lambda\btheta^t,
\end{align}
where $\ell(a,b,c) = \partial_a L(a,b,c)$ is the (weak) derivative of $L$ with respect to the first argument and is applied entry-wise, $\gamma>0$ is a fixed step size, and the initialization is the (normalized) spectral estimator in (\ref{def:M_n}):
\begin{align}\label{eq:gradient_des_init}
\btheta^0 = \sqrt{d}\hat{\btheta}^s.
\end{align}
In cases with global sign symmetry (e.g., phase retrieval), we pick without loss of generality the sign of $\btheta^\ast$ such that $\langle \hat{\btheta}^s,\btheta^\ast\rangle \geq 0$ so that $a \geq 0$ in Proposition \ref{prop: spec est overlap}.

We make the following assumptions for the loss function of gradient descent.
\begin{assumption}[Loss function]
    \label{ass: ell lipschitz} 
    The function $(x_1,x_2,x_3)\mapsto\ell(x_1,x_2,x_3)$ is continuously twice differentiable and there exists some $C > 0$ such that
    \begin{align*}
    \sup_{(x_1,x_2,x_3)\in\R^3} \max_{I \in \{(1),(2),(3),(11),(12),(22)\}}\Big|\partial_{I}\ell(x_1,x_2,x_3) \Big| \leq C,
    \end{align*}
    where $\partial_I$ denotes the derivative with respect to the arguments in the index set $I$.
\end{assumption}

Before introducing the main results, we give two examples of models satisfying our model assumptions.

\begin{example}[Linear model with robust loss]\label{ex:model_examples}
For the linear model, the link function is $\varphi(b,c) = b +c$, with observation
\begin{equation*}
    y_i = \x_i^\top\btheta^\ast + z_i, \quad i=1,\ldots,n.
\end{equation*}
Under the robust loss $L(a,b,c) = \rho(a-b-c)$ in (\ref{eq:GD_objective}) for some (weakly) differentiable $\rho:\R\rightarrow\R_+$, the gradient descent dynamic in (\ref{eq: gradient descent}) takes the form
\begin{equation*}
    \btheta^{t+1} = \btheta^t - \gamma\X^\top \rho'(\X\btheta^t - \X\btheta^\ast - \z) - \gamma\lambda\btheta^t,
\end{equation*}
where $\rho'$ is applied entrywise. In this case, $\ell(a,b,c) = \rho'(a-b-c)$, and it can be readily checked that such $\ell$ satisfies Assumption \ref{ass: ell lipschitz} as long as $\pnorm{\rho''}{\infty}\vee \pnorm{\rho'''}{\infty} \leq C$.
\end{example}

\begin{example}[Phase retrieval with regularized squared loss]\label{ex:phase_retrieval}
A prototypical example of single index models that often leads to non-convex objective functions is phase retrieval, where we only observe (a noisy version of) the magnitude of linear measurements. More precisely, we observe
\begin{equation*}
    y_i = |\x_i^\top\btheta^\ast| + z_i, \quad i=1,\ldots,n,
\end{equation*}
with link function $\varphi(b,c) = |b| + c$ in (\ref{eq:model}). We now consider a regularized version of the squared loss defined as follows. Let $h:[0,\infty) \rightarrow [0,1]$ be a smooth function satisfying
\begin{align}\label{eq:truncation}
        h(u) = 1 \text{ for } u \in [0,L], \quad
        h(u) \in [0,1] \text{ for } u \in (L,U), \quad
        h(u) = 0 \text{ for } u \in [U,\infty),
\end{align}
and moreover, $h'(u),h''(u)$ exist with $\sup_{u\in\R_+} \{|h'(u)|,|h''(u)|\} < C$, where $C,L,U >0$ are some constants. Examples of this type of function are given in \cite{li2020toward}; see also Proposition \ref{lm:pr_exp_tti_ass} and related discussions. We consider the regularized squared loss, also known as regularized Wirtinger flow, defined as 
\begin{align}\label{eq:smoothed_WF_loss}
    L^{\RWF}(a,b,c) = \frac{1}{2}\big(a^2-\varphi(b,c)^2\big)^2h\big(a^2\big)h\big(\varphi(b,c)^2\big).
\end{align}
The first partial derivative takes the form 
\begin{align*}
    \ell^{\RWF}(a,b,c) &= \partial_a  L^{\RWF}(a,b,c)\\
    &=2\Big(a^2-\varphi(b,c)^2\Big)ah(a^2)h\big(\varphi(b,c)^2\big) + \Big(a^2-\varphi(b,c)^2\Big)^2h'(a^2)ah\big(\varphi(b,c)^2\big),
\end{align*}
and using the bounded support of $h,h',h''$, one can readily check that it satisfies Assumption \ref{ass: ell lipschitz}. We refer to Proposition \ref{lm:pr_exp_tti_ass} for a detailed discussion of our general theory applied to this example.
\end{example}

\subsection{Dynamical mean field approximation under spectral initialization}
Consider the following dynamical mean field system. Initialize from
\begin{align}\label{def:dmft_theta0}
\theta^0 := a\theta^\ast + u^\diamond,
\end{align}
where $\theta^\ast \sim \sP(\theta^\ast)$ is independent of $u^\diamond \sim \N(0,1-a^2)$ with $\sP(\theta^\ast)$ given in Assumption \ref{ass: model assumptions}, and $a$ is the overlap given in Proposition \ref{prop: spec est overlap}. Further define the correlation functions
\begin{align}\label{def:theta_cov} 
C_\theta(0,0) := \E[\theta^0\theta^0] = 1, \quad C_\theta(0,\ast) := \E[\theta^0\theta^\ast] = a, \quad C_\theta(\ast,\ast) = 1, 
\end{align}
and response functions
\begin{align*}
R_\theta(0,\diamond) := \E[r_\theta(0,\diamond)], \quad r_\theta(0,\diamond) := 1.
\end{align*}

At time $t\in\Z_+$, given $\{R_\theta(s,r)\}_{0\leq r < s\leq t}$, $\{R_\theta(s,\diamond)\}_{0\leq s\leq t}$, $\{C_\theta(s,r)\}_{0\leq r \leq s \leq t}$, $\{C_\theta(s,\ast)\}_{0\leq s\leq t}$, define (in the probability space of $z$) 
\begin{align}\label{def:dmft_eta}
\eta^t &= -\sum_{s=0}^{t-1}\ell(\eta^{s},w^*,z)R_\theta(t,s) + \cT(y)w^0R_\theta(t,\diamond) + w^t.
\end{align}
Moreover, given the process $\{\eta^s\}_{0\leq s\leq t}$, define 
\begin{equation}\label{eq:r_eta_response}
\begin{aligned}
r_\eta(t,s) &= 
\partial_1\ell(\eta^t,w^*,z)\Big(-\sum_{r=s+1}^{t-1}r_\eta(r,s)R_\theta(t,r)-\partial_1\ell(\eta^s,w^*,z)R_\theta(t,s)\Big), \qquad 0 \leq s < t,\\
 r_\eta(t,*) &= \partial_1\ell(\eta^t,w^*,z)\Big(-\sum_{r=0}^{t-1}r_\eta(r,*)R_\theta(t,r)\Big) + \partial_2\ell(\eta^t,w^*,z),\\
r_\eta(t,\diamond) &= \partial_1\ell(\eta^t,w^*,z)\Big(-\sum_{s=0}^{t-1}r_\eta(s,\diamond)R_\theta(t,s) + \cT(y)R_\theta(t,\diamond)\Big),\\
r_\eta(t,\diamond\diamond)
&=\partial_1\ell(\eta^t,w^*,z) \Big(-\sum_{s=0}^{t-1}r_\eta(s,\diamond\diamond)R_\theta(t,s) +\cT'(y)\varphi'(w^*,z)w^0 R_\theta(t,\diamond)\Big).
\end{aligned}
\end{equation}
Here $(w^\ast,\{w^s\}_{0\leq s\leq t})$ is a centered Gaussian process independent of $z$ with covariance structure
\begin{align}\label{eq:w_cov}
\E[w^r w^s] = C_\theta(r,s), \quad \E[w^s w^\ast] = C_\theta(s,\ast), \quad \E[w^\ast w^\ast] = C_\theta(\ast,\ast),
\end{align}
function $\cT(\cdot)$ is defined in (\ref{def:T_func}), and
\begin{align*}
y := \varphi(w^\ast, z).
\end{align*}
With these processes constructed, we can now define the correlation functions  $\{C_\eta(s,r)\}_{0\leq r \leq s \leq t}$, $\{C_\eta(s,\diamond)\}_{0\leq s\leq t}$, $C_\eta(\diamond, \diamond)$ as
\begin{align}\label{def:eta_cov}    
    C_\eta(s,r) = \delta\E\big[\ell(\eta^{s},w^*,z)\ell(\eta^{r},w^*,z)\big], \quad 
    C_\eta(s,\diamond) = 
        -\frac{\delta}{\lambda_\delta^*}\E\big[\ell(\eta^{s},w^*,z) \cT_s(y)\eta^0\big], \quad C_\eta(\diamond,\diamond) = 1-a^2,
\end{align}
and response functions $\{R_\eta(s,r)\}_{0\leq r < s\leq t}$, $\{R_\eta(s,\ast)\}_{0\leq s\leq t}$, $\{R_\eta(s,\diamond)\}_{0\leq s\leq t}$, $\{R_\eta(s,\diamond\diamond)\}_{0\leq s\leq t}$ as
\begin{equation}\label{def:eta_res}
\begin{gathered}
R_\eta(s,r) = \delta\E[r_\eta(s,r)], \quad
    R_\eta(s,*) = \delta\E[r_\eta(s,\ast)],\\
    R_\eta(s,\diamond) = \delta\E[r_\eta(s,\diamond)], \quad R_\eta(s,\diamond\diamond) = \delta\E[r_\eta(s,\diamond\diamond)].
\end{gathered}
\end{equation}

Conversely, suppose that up to time $t\in\Z_+$, we are given $\{\eta^s\}_{0\leq s\leq t}$, response functions $\{R_\eta(r,s)\}_{0\leq r < s \leq t}$, $\{R_\eta(s,\ast),R_\eta(s,\diamond),R_\eta(s,\diamond\diamond)\}_{0\leq s\leq t}$ and covariance functions $\{C_\eta(s,r)\}_{0\leq r\leq s \leq t}$, $\{C_\eta(s,\diamond)\}_{0\leq s \leq t}$, $C_\eta(\diamond,\diamond)$. Define (in the probability space of $\theta^\ast$)
\begin{align}\label{def:dmft_theta}
    \theta^{t+1} = \theta^t +\gamma\Big(- \lambda\theta^t -\delta \E\big[\partial_1\ell(\eta^t,w^*,z)\big]\theta^{t}-\sum_{s=0}^{t-1}R_\eta(t,s)\theta^{s} - R_\eta(t,\diamond)\theta^0 - \big(R_\eta(t,\ast) + R_\eta(t,\diamond\diamond)\big)\theta^* + u^{t}\Big).
\end{align}
Here $(u^\diamond,\{u^s\}_{0\leq s \leq t})$ is a centered Gaussian process independent of $\theta^\ast$ with covariance structure
\begin{equation}\label{eq:u_cov}
    \E[u^r u^s] = C_\eta(r,s), \quad \E[u^ru^\diamond] = C_\eta(r,\diamond), \quad\E[u^\diamond u^\diamond] = C_\eta(\diamond,\diamond).
\end{equation}
Further define 
\begin{equation}\label{eq:r_theta_response}
\begin{aligned}
    r_\theta(t+1,s) &= 
    \begin{cases}
    \big(1-\gamma\lambda-\gamma\delta\E[\partial_1\ell(\eta^t,w^*,z)]\big) r_\theta(t,s) - \gamma\sum_{r=s+1}^{t-1}R_\eta(t,r)r_\theta(r,s) & s \leq t-1,\\
    \gamma & s = t,
    \end{cases}\\
    r_\theta(t+1,\diamond) &= \begin{cases}
    \left(1-\gamma\lambda-\gamma\delta\E[\partial_1\ell(\eta^{t},w^*,z)]\right) r_\theta(t,\diamond) - \gamma\sum_{r=0}^{t-1}R_\eta(t,r)r_\theta(r,\diamond) - \gamma R_\eta(t,\diamond) &t \geq 0\\
    1 & t = -1.
    \end{cases}
\end{aligned}
\end{equation}
Then we can construct the correlation functions $\{C_\theta(s,r)\}_{0\leq r\leq s\leq t+1}$, $\{C_\theta(s,\ast)\}_{0\leq s\leq t+1}$, and $C_\theta(\ast,\ast)$ as
\begin{align}\label{def:theta_cov}
C_\theta(s,r) = \E[\theta^{s}\theta^{r}], \quad C_\theta(s,*) = \E[\theta^{s}\theta^{*}], \quad C_\theta(\ast,\ast) = \E[(\theta^\ast)^2] = 1,   
\end{align}
and response functions $\{R_\theta(s,r)\}_{0\leq r<s\leq t+1}$, $\{R_\theta(s,\diamond)\}_{0\leq s\leq t+1}$ as
\begin{align} \label{def:theta_res}
R_\theta(s,r) = \E[r_\theta(s,r)], \quad R_\theta(s,\diamond) = \E[r_\theta(s,\diamond)].
\end{align}
In summary, the above DMFT system is recursively defined in the following order: with $\theta^0$ in (\ref{def:dmft_theta0}),
\begin{align*}
&\theta^0 \longrightarrow \{C_\theta(s,r)\}_{0\leq r\leq s\leq 0}, \{C_\theta(s,\ast)\}_{0\leq s\leq 0}, \{R_\theta(s,r)\}_{0\leq r < s\leq 0}, \{R_\theta(s,\diamond)\}_{0\leq s\leq 0}\longrightarrow \eta^0\\
 &\longrightarrow \{C_\eta(s,r)\}_{0\leq r\leq s\leq 0}, \{C_\eta(s,\diamond)\}_{0\leq s\leq 0}, C_\eta(\diamond,\diamond), \{R_\eta(s,r)\}_{0\leq r < s\leq 0}, \{R_\eta(s,\ast),R_\eta(s,\diamond),R_\eta(s,\diamond\diamond)\}_{0\leq s\leq 0}\\
&\longrightarrow \theta^1 \longrightarrow \{C_\theta(s,r)\}_{0\leq r\leq s\leq 1}, \{C_\theta(s,\ast)\}_{0\leq s\leq 1}, \{R_\theta(s,r)\}_{0\leq r < s\leq 1}, \{R_\theta(s,\diamond)\}_{0\leq s\leq 1}\longrightarrow \eta^1\\
&\longrightarrow \{C_\eta(s,r)\}_{0\leq r\leq s\leq 1}, \{C_\eta(s,\diamond)\}_{0\leq s\leq 1}, C_\eta(\diamond,\diamond), \{R_\eta(s,r)\}_{0\leq r < s\leq 1}, \{R_\eta(s,\ast),R_\eta(s,\diamond),R_\eta(s,\diamond\diamond)\}_{0\leq s\leq 1}\\
&\longrightarrow\theta^2 \rightarrow \ldots.
\end{align*}

Our first main result is the following distributional characterization of the gradient descent iterates (\ref{eq: gradient descent}) with spectral initialization. Given (\ref{eq: gradient descent}), further define
\begin{align}\label{def:eta_highd}
\bbeta^\ast := \X\btheta^\ast, \quad \bbeta^t := \X\btheta^t, \quad t\geq 0.
\end{align}

\begin{theorem}\label{thm: discrete dmft asymp}
    Suppose Assumptions \ref{ass: model assumptions}, \ref{ass: spectral initialization}, \ref{ass: ell lipschitz} hold, and $\psi_\delta'(\lambda_\delta^*) > 0$ in Proposition \ref{prop: spec est overlap}. Then for any fixed integer $m\geq 0$, almost surely as $n,d \rightarrow\infty$, we have that
    \begin{align*}
        \frac{1}{d}\sum_{j=1}^d \delta_{\left(\theta^{0}_j,...,\theta^{m}_j,\theta^\ast_j\right)} &\overset{W_2}{\to} \mathsf{P}\left(\theta^{0},...,\theta^{m},\theta^*\right),\\
        \frac{1}{n}\sum_{i=1}^n\delta_{\left(\eta^0_i,...,\eta^{m}_i,\eta^\ast_i,z_i\right)} &\overset{W_2}{\to} \sP\left(\eta^0,...,\eta^m,w^*,z\right).
    \end{align*}
\end{theorem}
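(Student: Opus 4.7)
The plan is to follow the reduction-to-AMP strategy of \cite{celentano_high-dimensional_2021}, with the main novel ingredient being a spectral-initialized AMP whose nonlinearities may depend on all past iterates. The output distributional limit will then be read off from the state evolution of this AMP, after subtracting appropriately chosen Onsager correction terms to recover $\btheta^t$ and $\bbeta^t = \X\btheta^t$.

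First I would construct a two-sided AMP recursion with iterates $(\a^t, \b^t)$ of the schematic form $\b^t = \X f_t(\a^0,\ldots,\a^t,\btheta^\ast) - \sum_{s<t} g_s(\ldots) \cdot (\text{Onsager})$ and $\a^{t+1} = \X^\top g_t(\b^0,\ldots,\b^t,\z) - \sum_{s\leq t} f_s(\ldots) \cdot (\text{Onsager})$, where the nonlinearities $f_t,g_t$ are chosen so that, after undoing the Onsager corrections via a deterministic change of variables, the iterates recover the gradient descent dynamic in (\ref{eq: gradient descent}) together with $\bbeta^t$. To accommodate the spectral initialization $\btheta^0 = \sqrt{d}\hat\btheta^s$, which is \emph{not} independent of $\X$, I would prepend to this AMP a Mondelli--Venkataramanan style block whose fixed point computes the leading eigenvector of $\M_n$: one step of the form $\a \mapsto \cT(\y)\circ (\X\btheta - \text{Onsager})$ with $\cT$ as in (\ref{def:T_func}) is a power iteration on $\M_n/\lambda_\delta^\ast$ dressed with the correct Onsager term, and iterating converges to $\hat\btheta^s$. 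Concatenating with the gradient-descent block produces a single AMP sequence whose iterates encode both the spectral estimator and all of $\btheta^0,\ldots,\btheta^m$.

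Second, I would derive the state evolution for this combined AMP. For standard (iterate-independent) initialization the state evolution is classical \cite{bayati_message_passing_2011, javanmard2013state}; for spectral initialization with \emph{single}-past-iterate dependence it was obtained in \cite{mondelli_approximate_2022}. The extension I need — spectral initialization together with multi-past-iterate nonlinearities — would be established by the usual Gaussian conditioning argument on $\X$ in the spirit of \cite{bolthausen2014iterative}, keeping track of (i) the extra Gaussian coordinate $u^\diamond$ coming from the randomness of $\hat\btheta^s$ orthogonal to $\btheta^\ast$, whose variance $1-a^2$ is pinned down by Proposition \ref{prop: spec est overlap}, and (ii) the additional response channels through which the spectral information propagates at each subsequent step; these give rise respectively to $C_\eta(\diamond,\diamond), C_\eta(s,\diamond)$ in (\ref{def:eta_cov}), the response functions $R_\theta(t,\diamond)$ and $R_\eta(t,\diamond), R_\eta(t,\diamond\diamond)$ in (\ref{eq:r_eta_response}) and (\ref{def:eta_res}), and the initialization (\ref{def:dmft_theta0}). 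The $\diamond\diamond$ term in particular arises because $\hat\btheta^s$ depends on $\y = \varphi(\X\btheta^\ast,\z)$, so differentiating through the spectral block picks up a factor of $\cT'(y)\varphi'(w^\ast,z)$. A careful bookkeeping then identifies the AMP state evolution line by line with (\ref{def:theta_cov})--(\ref{def:theta_res}).

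Finally, with state evolution in hand, I would pass from pseudo-Lipschitz convergence of empirical averages (the standard AMP output) to the $W_2$-convergence of the joint empirical distributions of $(\theta^0_j,\ldots,\theta^m_j,\theta^\ast_j)$ and $(\eta^0_i,\ldots,\eta^m_i,\eta^\ast_i,z_i)$ by (a) the change of variables relating AMP iterates to $\btheta^t,\bbeta^t$ (a deterministic affine map in the past iterates with coefficients prescribed by the $R$'s), and (b) standard moment bounds upgrading almost sure convergence on pseudo-Lipschitz test functions to $W_2$; the almost sure statement itself follows from the sub-Gaussian concentration in Assumption \ref{ass: model assumptions} together with a Borel--Cantelli argument along a suitable deterministic subsequence. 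The main obstacle throughout is the spectral block: extending the Mondelli--Venkataramanan state evolution to nonlinearities depending on arbitrarily many past iterates requires redoing the conditional-distribution calculation for a genuinely multi-step memory AMP whose first block is not in the standard form, and tracking the extra $\diamond, \diamond\diamond$ response terms in (\ref{eq:r_eta_response})--(\ref{eq:r_theta_response}) correctly through every subsequent iterate; everything after this block is routine modulo careful algebra.
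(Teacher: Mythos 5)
Your overall architecture matches the paper's: reduce the gradient descent dynamic to a spectral-initialized AMP with multi-past-iterate nonlinearities (the paper's Lemma \ref{lm: AMP nonlinearity choice} constructs exactly the nonlinearities you describe, with the extra Onsager term $-\frac{1}{\lambda_\delta^*}\bZ_s\X\btheta^0\,\zeta_{i,-1}$), prove a state evolution for that AMP generalizing \cite{mondelli_approximate_2022} (the paper's Theorem \ref{thm: amp spec}), identify the state-evolution covariances and the $\diamond,\diamond\diamond$ response channels with the DMFT system (the paper's Lemma \ref{lem:response_correspondence}), and upgrade pseudo-Lipschitz averages to $W_2$ convergence. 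Your account of where $u^\diamond$, $C_\eta(\cdot,\diamond)$, and the $\cT'(y)\varphi'(w^\ast,z)$ factor in $r_\eta(t,\diamond\diamond)$ come from is correct.

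The gap is in how you propose to prove the extended state evolution. You say it "would be established by the usual Gaussian conditioning argument on $\X$ in the spirit of Bolthausen," but the usual conditioning lemma requires that the sigma-algebra generated by past iterates correspond to \emph{linear} observations of $\X$, and the initialization $\sqrt{d}\hat\btheta^s$ is the leading eigenvector of $\X^\top\bZ_s\X$ — a genuinely nonlinear functional of $\X$ — so a direct conditioning on it does not leave $\X$ conditionally Gaussian on an affine subspace and the argument does not go through as stated. What actually makes your prepended spectral block work (and what the paper does, following \cite{mondelli_approximate_2022}) is different: one runs a $T$-step power-iteration-type block inside an AMP whose initialization is \emph{independent} of $\X$ (a fictitious correlated start $\alpha\btheta^\ast+\sqrt{1-\alpha^2}\n$), so that the classical multi-memory state evolution of \cite{javanmard2013state} applies verbatim, and then performs an exchange of limits: $n,d\to\infty$ first, then $T\to\infty$. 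The substantive work is precisely in this interchange — showing the first-stage output approximates $\sqrt{d}\hat\btheta^s$ at rate $(\lambda_{2,\infty}/\lambda_{1,\infty})^T$ (this is where the eigengap condition $\psi_\delta'(\lambda_\delta^*)>0$ enters), showing the Gaussian state-evolution variables and Onsager coefficients converge as $T\to\infty$ (via the GAMP fixed-point analysis), showing the high-dimensional second-stage iterates with the approximate initialization track the spectral-initialized AMP iterates, and in particular identifying the accumulated first-stage Onsager correction with the term $-\frac{1}{\lambda_\delta^*}\bZ_s\X\btheta^0$ appearing in (\ref{eq:spec_init_AMP}) (the paper's (\ref{eq: conv first onsager}) and Lemmas \ref{lem: AMP stage 1}–\ref{lem: conv hd amp}). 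Your proposal asserts "iterating converges to $\hat\btheta^s$" but does not set up this double-limit structure or the convergence of the Onsager/SE quantities, and the alternative you offer in its place (direct conditioning on the spectral initialization) would fail; this is the main technical content beyond \cite{celentano_high-dimensional_2021} and needs to be supplied.
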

\begin{remark}[Independent initialization]
    If the gradient descent (\ref{eq: gradient descent}) is initialized from some $\btheta^{0,\indep}$ independent of $\X$ that also satisfies $\frac{1}{d}\sum_{j=1}^d \delta_{\theta^{0,\indep}_j}\overset{W_2}{\to} \mathsf{P}(\theta^{0,\indep})$ for some scalar variable $\theta^{0,\indep}$ with $\E[(\theta^{0,\indep})^2] < \infty$, then the empirical distribution of $\{\btheta^t\}_{t\geq 0}$ and $\{\bbeta^t\}_{t\geq 0}$ is still described by Theorem \ref{thm: discrete dmft asymp}, with the DMFT law on the right given by (\ref{def:dmft_theta0})-(\ref{def:theta_res}) except for (i) $\theta^0$ in (\ref{def:dmft_theta0}) is replaced by $\theta^{0,\indep}$ independent of everything else and (ii)  all terms involving $\diamond$ or $\diamond\diamond$ (i.e., $r_{\theta}(t,\diamond)$, $r_\eta(t,\diamond)$, $r_\eta(t,\diamond\diamond)$) are set to $0$. In this case, Theorem \ref{thm: discrete dmft asymp} reduces to the result of \cite{celentano_high-dimensional_2021} up to some minor rescaling.
\end{remark}

The proof of Theorem \ref{thm: discrete dmft asymp} is given in Section \ref{sec:dmft_asymp_proof}. The proof idea closely follows that of \cite{celentano_high-dimensional_2021}, with the apparent difference being that the initialization $\btheta^0$ is no longer independent of $\X$. To this end, we consider a spectral initialized AMP algorithm with dependence on multiple past iterates and derive its precise distributional behavior (see Theorem \ref{thm: amp spec} ahead), which generalizes an early result in \cite{mondelli_approximate_2022} and might be of independent interest. 

\subsection{Convergence of DMFT system}
Our second main result involves studying long-time fixed points of the DMFT system with spectral initialization, starting with the following condition under which the convergence of DMFT system can be made precise. We use $\pnorm{X}{L^2} = (\E[X^2])^{1/2}$ to denote the $L^2$ norm of a random variable, and $X_n \overset{L^2}{\to} X$ to denote $L^2$ convergence for $(\{X_n\}_{n\geq 0},X)$ defined in the same space.

\begin{definition}[Approximately TTI and exponential convergence]
\label{def:exp_conv_spec_init}
We say the spectral initialized DMFT system (\ref{def:dmft_theta0})-(\ref{def:theta_res}) is approximately time translation invariant and exponentially converging if for some deterministic constants $C,c > 0$, the following conditions are satisfied:
\begin{enumerate}
    \item There exist univariate random variables $\theta^\infty$ (in the space of $(\{u^t\}_{t\geq 0},u^\diamond,\theta^\ast)$) and $\eta^\infty$ (in the space of $(\{w^t\}_{t\geq 0},w^\ast,z)$) such that
    \begin{align*}
        \pnorm{\theta^t-\theta^\infty}{L^2} \leq Ce^{-ct}, \quad 
        \pnorm{\eta^t-\eta^\infty}{L^2} \leq Ce^{-ct}.
    \end{align*}
    \item There exist Gaussian variables $u^\infty$ (in the space of $(\{u^t\}_{t\geq 0}, u^\diamond,\theta^\ast)$) and $w^\infty$ (in the space of $(\{w^t\}_{t\geq 0}, w^\ast,z)$) such that 
    \begin{align*}
        \pnorm{u^t-u^\infty}{L^2} \leq Ce^{-ct}, \quad 
        \pnorm{w^t-w^\infty}{L^2} \leq Ce^{-ct}.
    \end{align*}
    \item (Approximate time translation invariance of response) There exist a univariate random process $\{r_\eta(t)\}_{t > 0}$ and random variable $r_\eta^\ast$ (in the space of $(\{w^t\}_{t\geq 0}, w^\ast,z)$), and deterministic process $\{R_\theta(t)\}_{t > 0}$ such that as $t\rightarrow\infty$, 
    \begin{align*}
    R_\theta(t+s,t) \to R_\theta(s), \quad
    r_\eta(t+s,t)\overset{L^2}{\to} r_\eta(s), \quad r_\eta(t,\ast)\overset{L^2}{\to} r_\eta^*,
    \end{align*}
    for any fixed integer $s > 0$.

    \item (Decay of response from spectral initialization) The response functions $R_\theta(t,\diamond),R_\eta(t,\diamond),R_\eta(t,\diamond\diamond)$  satisfy as $t\rightarrow\infty$ that
    \begin{align*}
        R_\theta(t,\diamond) \rightarrow 0, \quad R_\eta(t,\diamond) \rightarrow 0, \quad
        R_\eta(t,\diamond\diamond) \rightarrow 0.
    \end{align*}
    \item (Decay of response from random initialization) For all $t\geq 0$ and $s\geq 1$, the response functions $R_\theta(t,s), r_\eta(t,s)$ satisfy
    \begin{align}\label{eq:response_exp_decay}
        |R_\theta(t+s,t)| \leq Ce^{-cs}, \quad \pnorm{r_\eta(t+s,t)}{L^2} \leq Ce^{-cs}.
    \end{align}
\end{enumerate}
\end{definition}
\begin{remark}
Definition \ref{def:exp_conv_spec_init} is close in spirit to \cite[Definition 4.2]{celentano_high-dimensional_2021}, with two differences: (i) For the approximate TTI property in part (3), we only require it to hold pointwise in $s$; (ii) we require in part (4) that the response functions resulting from spectral initialization decay to zero. We also note that in order for the subsequent Proposition \ref{prop: dmft fixed point} to hold, the exponential decay factor $Ce^{-ct}$ in part (1)(2)(5) can be relaxed to any positive sequence $\eps(t)$ such that $t\eps(t)\rightarrow 0$ as $t\rightarrow\infty$.
\end{remark}

Under Definition \ref{def:exp_conv_spec_init}, the following result, whose proof is given in Section \ref{sec:fixed_point}, shows that the convergence point $(\theta^\infty, \eta^\infty,u^\infty,(w^\infty,w^\ast), \{R_\theta(s)\}_{s\geq 1}, \{r_\eta(s)\}_{s\geq 1}, r_\eta^\ast)$ of the DMFT system must satisfy a system of self-consistent equations. The proof strategy is similar to \cite{celentano_high-dimensional_2021}, except that that we treat the discrete-time case explicitly and deal with the additional response functions that arise from the spectral initialization.

\begin{proposition}
\label{prop: dmft fixed point}
Suppose the spectral initialized DMFT system satisfies Definition \ref{def:exp_conv_spec_init}. Then the limiting variables $(\theta^\infty,\eta^\infty,u^\infty, (w^\infty,w^\ast))$ therein satisfy the fixed point equations 
    \begin{gather}
            0 = -(\lambda+\delta\Gamma^\infty + R_\eta^\infty)\theta^\infty - R_\eta^*\theta^* + u^\infty,\label{eq:fix_1}\\
            \eta^\infty = -R_\theta^\infty \ell(\eta^\infty,w^\ast,z) + w^\infty.\label{eq:fix2}
    \end{gather}
    Here $u^\infty \sim \N(0,C^\infty_\eta)$ (resp. $(w^\infty,w^*)\sim \N(0,C^\infty_\theta)$) is independent of $\theta^\ast$ (resp. $z$), with
    \begin{align}\label{eq:fix3}
        C^\infty_\eta = \delta\E\big[\ell(\eta^\infty,w^*,z)^2\big], \quad C^\infty_\theta = \E\big[(\theta^\infty,\theta^\ast)(\theta^\infty,\theta^\ast)^\top\big],
    \end{align}
    and the response values $(R_\theta^\infty,R_\eta^\infty,R_\eta^*)$ are deterministic constants satisfying, with $\Gamma^\infty := \E[\partial_1 \ell(\eta^\infty,w^\ast,z)]$, 
            \begin{gather}(R_\theta^\infty)^{-1} = \lambda + \delta\Gamma^\infty + R_\eta^\infty,\label{eq:fix5}\\
            \delta\Gamma^\infty + R_\eta^\infty = \delta(R_\theta^\infty)^{-1}\E\Big[\Big(1-\Big(1+\partial_1\ell(\eta^{\infty},w^*,z)R_\theta^\infty\Big)^{-1}\Big)\Big],\label{eq:fix6}\\
            R_\eta^* = \delta\E\Big[\big(1+\partial_1 \ell(\eta^\infty,w^*,z)R_\theta^\infty\big)^{-1}\partial_2 \ell(\eta^\infty,w^*,z)\Big],\label{eq:fix7}
            \end{gather}
    where we assume that the inverses exist.
\end{proposition}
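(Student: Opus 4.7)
The strategy is to directly take $t \to \infty$ limits in the DMFT recursions (\ref{def:dmft_eta}), (\ref{def:dmft_theta}), (\ref{eq:r_theta_response}), and (\ref{eq:r_eta_response}), invoking the five conditions in Definition \ref{def:exp_conv_spec_init} to identify each limit term. Throughout I write $\alpha := \partial_1 \ell(\eta^\infty,w^\ast,z)$, and I define $R_\theta^\infty := \sum_{k\geq 1} R_\theta(k)$ and $R_\eta^\infty := \delta\sum_{k \geq 1}\E[r_\eta(k)]$, which are finite thanks to the exponential decay in condition (5).

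\textbf{Step 1 (equations \eqref{eq:fix_1} and \eqref{eq:fix2}).} For \eqref{eq:fix2}, I rewrite the memory sum in (\ref{def:dmft_eta}) as $\sum_{k=1}^t R_\theta(t,t-k)\,\ell(\eta^{t-k},w^\ast,z)$. Pointwise TTI from condition (3) gives $R_\theta(t,t-k) \to R_\theta(k)$, condition (5) supplies the summable majorant $Ce^{-ck}$, and condition (1) together with the Lipschitz bound on $\ell$ from Assumption \ref{ass: ell lipschitz} gives $\ell(\eta^{t-k},w^\ast,z) \to \ell(\eta^\infty,w^\ast,z)$ in $L^2$, so dominated convergence yields the limit $R_\theta^\infty \ell(\eta^\infty,w^\ast,z)$. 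The $\diamond$-term vanishes by condition (4), and $w^t \to w^\infty$ by condition (2). For \eqref{eq:fix_1} I take $t\to\infty$ in (\ref{def:dmft_theta}): the $\diamond$ and $\diamond\diamond$ responses vanish by (4); continuity of $\partial_1\ell$ yields $\delta\E[\partial_1\ell(\eta^t,w^\ast,z)]\to \delta\Gamma^\infty$; the memory sum $\sum_{s=0}^{t-1} R_\eta(t,s)\theta^s$ converges to $R_\eta^\infty \theta^\infty$ by the same TTI-plus-majorant argument; and condition (1) forces $(\theta^{t+1}-\theta^t)/\gamma \to 0$ in $L^2$.

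\textbf{Step 2 (covariance \eqref{eq:fix3}, and Gaussianity).} The claims $u^\infty \sim \N(0,C_\eta^\infty)$ and $(w^\infty,w^\ast)\sim\N(0,C_\theta^\infty)$ follow because $L^2$ limits of Gaussian vectors are Gaussian, with covariances given by $L^2$ limits of (\ref{def:eta_cov}) and (\ref{def:theta_cov}).

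\textbf{Step 3 (response equations \eqref{eq:fix5}--\eqref{eq:fix7}).} For \eqref{eq:fix5}, I set $S_t := \sum_{s=0}^{t-1} R_\theta(t,s)$ and sum the recursion (\ref{eq:r_theta_response}) over $s$; after swapping the order of the double sum on the right I obtain $S_{t+1} = \gamma + (1-\gamma\lambda - \gamma\delta\Gamma_t)S_t - \gamma\sum_{r=1}^{t-1} R_\eta(t,r) S_r$ with $\Gamma_t := \E[\partial_1\ell(\eta^t,w^\ast,z)]$. Since $S_t\to R_\theta^\infty$ by TTI and exponential decay, sending $t\to\infty$ and dividing by $\gamma$ gives $0 = 1 - (\lambda+\delta\Gamma^\infty+R_\eta^\infty)R_\theta^\infty$. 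For \eqref{eq:fix6}, taking $L^2$ limits of $r_\eta(t+k,t)$ in (\ref{eq:r_eta_response}) produces the limiting recursion $r_\eta(k) = -\alpha\bigl(\sum_{j=1}^{k-1} r_\eta(j)\,R_\theta(k-j) + \alpha R_\theta(k)\bigr)$; summing over $k\geq 1$, interchanging sums via condition (5), and letting $R := \sum_{k\geq 1} r_\eta(k)$ gives $R(1+\alpha R_\theta^\infty) = -\alpha^2 R_\theta^\infty$. Then $R_\eta^\infty = \delta\E[R] = -\delta R_\theta^\infty\E[\alpha^2/(1+\alpha R_\theta^\infty)]$, which algebraically rearranges (using \eqref{eq:fix5}) to \eqref{eq:fix6}. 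For \eqref{eq:fix7}, taking the $L^2$ limit in the recursion for $r_\eta(t,\ast)$ gives $r_\eta^\ast(1+\alpha R_\theta^\infty) = \partial_2\ell(\eta^\infty,w^\ast,z)$; taking expectation, multiplying by $\delta$, and using $R_\eta^\ast = \lim_{t\to\infty}(R_\eta(t,\ast)+R_\eta(t,\diamond\diamond)) = \delta\E[r_\eta^\ast]$ from condition (4) yields \eqref{eq:fix7}.

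\textbf{Main obstacle.} The routine content is algebraic; the technical heart is the rigorous exchange of limits with infinite sums. Condition (5) provides the deterministic/$L^2$ summable majorants needed for dominated convergence in the memory sums of Step 1 and Step 3, and in particular makes $\sum_{k\geq 1} r_\eta(k)$ absolutely $L^2$-convergent so that $R$ and $\delta\E[R]$ in the derivation of \eqref{eq:fix6} are well defined. Care is also needed when combining $L^2$-convergence of the integrands (e.g. $\ell(\eta^{t-k},w^\ast,z)$) with summability of the deterministic response weights; Assumption \ref{ass: ell lipschitz} on uniform boundedness of $\partial_I \ell$ is what permits this combination.
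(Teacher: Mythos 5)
Your proposal is correct and follows essentially the same strategy as the paper's proof: take $t\to\infty$ in the DMFT recursions, use pointwise TTI (condition (3)) together with the exponential bounds of condition (5) to pass the limit through the memory and convolution sums (your dominated-convergence packaging is the paper's explicit truncation argument), and then sum the limiting recursions and rearrange, with $R^\ast_\eta = \delta\E[r_\eta^\ast]$ exactly as in the paper. The only cosmetic deviation is in (\ref{eq:fix5}), where you sum the finite-$t$ recursion over $s$ first (via $S_t$) and then send $t\to\infty$, whereas the paper first derives the limiting translation-invariant recursion for $R_\theta(s)$ and then telescopes and sums over $s$; the ingredients are identical, and incidentally your final rearrangement for (\ref{eq:fix6}) is pure algebra and does not actually need (\ref{eq:fix5}).
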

\begin{remark}
Under Definition \ref{def:exp_conv_spec_init}, the above system has at least one solution, but in general multiple solutions could exist. We refer to \cite{celentano_high-dimensional_2021} for some discussion on the connection to static mean field characterization of $\cL_n(\btheta)$ via the convex Gaussian min-max theorem \cite{thrampoulidis2014gaussian} (see also Remark \ref{rmk:nonunique_solution} below).
\end{remark}

With the above characterization, a natural question is when will the conditions in Definition \ref{def:exp_conv_spec_init} be satisfied. Given the DMFT approximation result in Theorem \ref{thm: discrete dmft asymp}, a sensible sufficient condition is that the spectral initialized gradient descent will converge over an $(n,d)$-independent horizon. We formulate such a condition over the landscape along the gradient descent trajectory.

\begin{assumption}\label{ass:exp_tti_cond} 
There exists a convex region $\cR_n \subset\R^d$ and constants $t_0 \in \Z_+$, $c_0>0, c>1$ independent of $n,d$ such that with probability $1-n^{-c}$, the following holds.
    \begin{enumerate}
        \item For $t\geq t_0$, $\btheta^t \in \cR_n$.
        \item $\lambda_{\min}(\nabla_{\btheta}^2 \cL_n(\btheta)) \geq c_0$ for all $\btheta \in \cR_n$. 
    \end{enumerate} 
    We note that under Assumption \ref{ass: ell lipschitz}, we also have $\lambda_{\max}(\nabla_{\btheta}^2 \cL_n(\btheta)) \leq L$ for all $\btheta\in \cR_n$ where $L=\lambda + 4(1+\sqrt{\delta})^2\pnorm{\partial_1\ell}{\infty}$ is a constant. Here $\lambda_{\min}$ and $\lambda_{\max}$ denote the smallest and largest eigenvalues respectively.
\end{assumption}

The above assumption, inspired by previous convergence analysis of gradient descent in non-convex problems \cite{ma2020implicit}, says that after at most a constant number of steps, spectral initialized gradient descent will enter and stay within a region of benign landscape (i.e., with a well-conditioned Hessian matrix). By standard analysis, gradient descent will converge linearly to certain limit point $\btheta^\infty$ in such a region, hence the overall algorithm will converge over an $(n,d)$-independent horizon. 

The following result establishes the validity of Definition \ref{def:exp_conv_spec_init} under the above condition, and as a corollary, provides the precise distributional characterization of the limit point $\btheta^\infty$. Its proof is given in Section \ref{sec:proof_long_time}. 

\begin{theorem}\label{thm:dmft_exp_tti_condition}
Suppose Assumptions \ref{ass: model assumptions}, \ref{ass: spectral initialization}, \ref{ass: ell lipschitz}, \ref{ass:exp_tti_cond} hold, $\psi_\delta'(\lambda_\delta^*) > 0$ in Proposition \ref{prop: spec est overlap}, and the step size satisfies $\gamma < 2/L$. Then the DMFT is approximately TTI and exponentially convergent in the sense of Definition \ref{def:exp_conv_spec_init}, and almost surely for large enough $n,d$,  there exists a vector $\btheta^\infty \in \R^d$ such that $\pnorm{\btheta^t - \btheta^\infty}{} \rightarrow 0$ as $t\rightarrow\infty$, and
\begin{align*}
\frac{1}{d}\sum_{j=1}^d \delta_{\theta^\infty_j} \overset{W_2}{\to} \sP(\theta^\infty),
\end{align*}
where $\theta^\infty$ is (part of) a solution to the fixed-point equation in Proposition \ref{prop: dmft fixed point}.
\end{theorem}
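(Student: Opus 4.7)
The plan is to exploit Assumption \ref{ass:exp_tti_cond} at the high-dimensional level to obtain quantitative exponential contraction of the iterates $\{\btheta^t\}$, then use the DMFT approximation of Theorem \ref{thm: discrete dmft asymp} to transfer this contraction to the mean-field variables, and finally invoke Proposition \ref{prop: dmft fixed point} to read off the fixed-point equations. Throughout I would work on the high-probability event $\cE_n$ on which $\{\btheta^t\}_{t\geq t_0}\subset \cR_n$ and $\cL_n$ is $c_0$-strongly convex and $L$-smooth on $\cR_n$. On $\cE_n$, the gradient map $\btheta\mapsto\btheta-\gamma\nabla\cL_n(\btheta)$ is a contraction on $\cR_n$ with modulus $\rho:=\max(|1-\gamma c_0|,|1-\gamma L|)<1$ (valid since $\gamma<2/L$), so $\{\btheta^t\}_{t\geq t_0}$ is Cauchy and converges to the unique minimizer $\btheta^\infty$ of $\cL_n$ restricted to $\cR_n$, with the quantitative bound $d^{-1/2}\pnorm{\btheta^t-\btheta^\infty}{}\leq C\rho^{t-t_0}$ for all $t\geq t_0$ and a constant $C$ independent of $n,d$ (the initial $d^{-1/2}\pnorm{\btheta^{t_0}}{}$ is uniformly controlled via Theorem \ref{thm: discrete dmft asymp} applied at time $t_0$).

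To verify parts (1) and (2) of Definition \ref{def:exp_conv_spec_init}, I would combine this contraction with Theorem \ref{thm: discrete dmft asymp}: for any fixed $t_1,t_2$, $d^{-1}\pnorm{\btheta^{t_1}-\btheta^{t_2}}{}^2\to\E[(\theta^{t_1}-\theta^{t_2})^2]$ almost surely, and combining with the a.s.\ bound $d^{-1}\pnorm{\btheta^{t_1}-\btheta^{t_2}}{}^2\leq C\rho^{2(\min(t_1,t_2)-t_0)}$ yields $\pnorm{\theta^{t_1}-\theta^{t_2}}{L^2}\leq C\rho^{\min(t_1,t_2)-t_0}$. Hence $\{\theta^t\}$ is Cauchy in $L^2$ and converges to some $\theta^\infty$; the same argument applied to $\bbeta^t=\X\btheta^t$, using $\pnorm{\X}{\op}=O(1)$ with high probability, yields the analogue for $\eta^t$. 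Convergence of the Gaussian processes $u^t,w^t$ in part (2) then follows because their covariance kernels $C_\eta,C_\theta$ are continuous functionals of $(\eta^t,\theta^t)$ via bounded $\ell$, so they inherit the exponential $L^2$ convergence, and the corresponding jointly Gaussian components in the DMFT law converge in $L^2$.

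The main obstacle is verifying the response-related conditions (3), (4), (5). A natural strategy is to introduce a perturbed dynamic $\btheta^{t+1,\eps}=\btheta^t-\gamma\nabla\cL_n(\btheta^t)+\gamma\eps\mathbf{h}^t$, whose linearization has iteration matrix $\I-\gamma\nabla^2\cL_n(\btheta^t)$ with spectral radius at most $\rho$ on $\cR_n$; applying an analogous DMFT analysis to this joint trajectory, the sensitivities $\deps\btheta^{t,\eps}$ decay at rate $\rho^{t-s}$ at the high-dimensional level and transfer in the mean-field limit to $|R_\theta(t+s,t)|,\pnorm{r_\eta(t+s,t)}{L^2}\leq Ce^{-cs}$, giving part (5). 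Part (3) then follows because, once $\eta^t$ and the coefficients $\partial_1\ell(\eta^t,w^\ast,z)$ have converged in $L^2$, the recursions (\ref{eq:r_theta_response})-(\ref{eq:r_eta_response}) become asymptotically time-translation invariant, yielding pointwise-in-$s$ limits $R_\theta(s),r_\eta(s),r_\eta^\ast$; part (4) follows by the same damping mechanism applied to the fixed finite-energy perturbation supplied by the spectral initialization (i.e., the $\diamond$ terms in the recursions). An alternative that avoids extending the DMFT is to argue directly by induction on the DMFT response recursions themselves, using the boundedness of $\partial_1\ell,\partial_2\ell$ from Assumption \ref{ass: ell lipschitz} together with the already established $L^2$ exponential contraction of $\eta^t$ to close a contractive recursion on the responses.

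With Definition \ref{def:exp_conv_spec_init} verified, Proposition \ref{prop: dmft fixed point} immediately identifies $\theta^\infty$ as a solution to the fixed-point system (\ref{eq:fix_1})-(\ref{eq:fix7}). The $W_2$ convergence $d^{-1}\sum_{j=1}^d\delta_{\theta_j^\infty}\to\sP(\theta^\infty)$ then follows by triangle inequality: given $\eps>0$, choose $t$ large enough that both $d^{-1/2}\pnorm{\btheta^t-\btheta^\infty}{}\leq\eps/3$ on $\cE_n$ (uniformly in $n,d$, from the high-dim contraction) and $\pnorm{\theta^t-\theta^\infty}{L^2}\leq\eps/3$ (from the DMFT contraction), then send $n,d\to\infty$ so that $W_2(d^{-1}\sum_j\delta_{\theta_j^t},\sP(\theta^t))\leq\eps/3$ by Theorem \ref{thm: discrete dmft asymp}; combining the three bounds yields the claim.
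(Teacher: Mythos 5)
Your handling of parts (1)--(2) of Definition \ref{def:exp_conv_spec_init} and of the final $W_2$ statement is essentially the paper's argument: contraction on the event of Assumption \ref{ass:exp_tti_cond} (the paper's Lemma \ref{lm:long-time-properties}), transfer to $L^2$ statements about $\theta^t,\eta^t,u^t,w^t$ via Theorem \ref{thm: discrete dmft asymp}, and a three-term triangle inequality for the empirical law of $\btheta^\infty$. The genuine gap is in parts (3)--(5). Your primary route --- perturb the dynamic, note that the sensitivity propagator is $\I-\gamma\nabla^2\cL_n(\btheta^t)$, and ``transfer in the mean-field limit'' --- presupposes exactly the hard step: a rigorous identification of the DMFT response functions with high-dimensional sensitivity traces for a dynamic whose initialization $\btheta^0=\sqrt{d}\hat\btheta^s$ depends on $\X$. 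The paper cannot run this identification directly on the spectral-initialized dynamic; it introduces an artificial two-stage dynamic ($T$ power iterations from $\btheta^\ast$, then gradient descent), derives its own DMFT (Proposition \ref{prop:discreteDmftArt}), proves via leave-one-out/cavity estimates that the perturbed derivatives equal the artificial responses up to vanishing errors (Lemma \ref{lm: low dim response}, Proposition \ref{prop: high dim response}, Lemma \ref{lm: response matrix recursion}), and then couples and passes $T\to\infty$ (Proposition \ref{prop:art_spec_dmft_coup}) to express $R_\theta(t,s)$, $r_\eta(t,s)$, $r_\eta(t,\ast)$ and, crucially, the spectral responses $R_\theta(t,\diamond),R_\eta(t,\diamond),R_\eta(t,\diamond\diamond)$ as limits of traces of products of $\bOmega^t$ and $\bUpsilon$ matrices (Lemmas \ref{lm: response_mat_relation_1}--\ref{lm: response_mat_relation_3}). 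Your one-line claim that part (4) ``follows by the same damping mechanism applied to the $\diamond$ terms'' skips all of this: the decay of the $\diamond$ responses needs the representation as an infinite sum over first-stage responses together with the eigengap $\lambda_{1,\infty}>\lambda_{2,\infty}$ (Lemmas \ref{lm:first_stage_reg}, \ref{lm:first_stage_res_exp_decay}), not only the second-stage Hessian bound.

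Your proposed fallback --- closing a contractive induction directly on the DMFT recursions (\ref{eq:r_theta_response})--(\ref{eq:r_eta_response}) using boundedness of $\partial_1\ell,\partial_2\ell$ and the $L^2$ contraction of $\eta^t$ --- does not work. The contraction supplied by Assumption \ref{ass:exp_tti_cond} lives in the spectrum of the high-dimensional Hessian $\lambda\I_d+\X^\top\D^t\X$, and this information is invisible at the level of the scalar recursions: the coefficient $1-\gamma\lambda-\gamma\delta\E[\partial_1\ell(\eta^t,w^\ast,z)]$ need not have modulus below one (for nonconvex losses $\partial_1\ell$ may change sign), and the memory sums $\sum_r R_\eta(t,r)r_\theta(r,s)$ cannot be controlled without already knowing decay of $R_\eta$, which is what one is trying to prove. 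This is precisely why the paper routes every decay and TTI claim through the matrix representations, where $c_0\I\preceq\nabla^2\cL_n\preceq L\I$ gives $\pnorm{\bOmega^t}{\op}\leq e^{-c}$ for $t\geq t_0$ (Lemma \ref{lm:Omega_exp_decay}) and the Cauchy/TTI statements for $R_\theta(t+s,t)$, $r_\eta(t+s,t)$, $r_\eta(t,\ast)$ are obtained from Frobenius-norm comparisons of matrix products (Lemma \ref{lm: response_mat_relation_2}). Similarly, your part (3) argument (``once the coefficients converge the recursions become TTI'') needs part (5) plus quantitative control of the transient memory, which again is only available through the high-dimensional representation. So the overall mechanism you identify is the right one, but the proposal is missing the artificial-dynamic/cavity machinery that makes the response--sensitivity dictionary, the $T\to\infty$ limit, and the spectral ($\diamond$) response decay rigorous.
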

\begin{remark}
The convergence $\pnorm{\btheta^t-\btheta^\infty}{}\rightarrow 0$ as $t\rightarrow\infty$ is only guaranteed on the high probability event of Assumption \ref{ass:exp_tti_cond}, hence $\btheta^\infty$ is only well-defined almost surely for large enough $n,d$, rather than with probability $1$ for any finite $n,d$. In concrete applications, however, $\btheta^\infty$ can typically be identified with the (unique) global minimum of $\cL_n(\btheta)$ (which is indeed well-defined with probability $1$ for any finite $n,d$), at least when the sample size is large enough and the global landscape is benign.
\end{remark}
\begin{remark}\label{rmk:nonunique_solution}
We emphasize that in general the system (\ref{eq:fix_1})-(\ref{eq:fix7}) may have multiple solutions, which potentially correspond to the static mean field characterizations of the local minima of $\cL_n(\btheta)$. In such case, $\theta^\infty$ (and other components of the fixed point) in the above theorem refers to the specific one prescribed by Definition \ref{def:exp_conv_spec_init}, which could depend on the specific choice of the pre-processing function $\cT_s(\cdot)$ in the initialization. While we will not pursue this in the current work, we conjecture that for a general class of loss functions and large enough aspect ratio $\delta$, with high probability $\cL_n(\btheta)$ admits a unique global minimum (up to global symmetry) and no spurious local minima (see \cite{li2020toward} for some rigorous results in the context of Example \ref{ex:model_examples}), and (\ref{eq:fix_1})-(\ref{eq:fix7}) will admit a unique solution (up to global symmetry) that corresponds to the mean field characterization of this global minimum; see \cite{celentano_high-dimensional_2021,asgari2025local} for some results along this line for (strongly) convex problems. 
\end{remark}

To the best of our knowledge, Theorem \ref{thm:dmft_exp_tti_condition} is new even for (strongly) convex problems. More precisely, for single index models with strongly convex loss (or convex but with positive regularization $\lambda >0$), Assumption \ref{ass:exp_tti_cond} is automatically satisfied with $\cR_n = \R^d$ and $c_0$ the strong convexity parameter, and Theorem \ref{thm:dmft_exp_tti_condition} provides a precise distributional characterization of $\btheta^\infty$, which is the unique global minimum $\hat\btheta$ of $\cL_n(\btheta)$ in this case. Restricting to the case of independent initialization, Theorem \ref{thm:dmft_exp_tti_condition} partially addresses the question posed in Remark 4.3 of \cite{celentano_high-dimensional_2021} regarding the validity of Definition \ref{def:exp_conv_spec_init}. 

Moving on to nonconvex problems, the key step is to identity the region $\cR_n$ and establish the Hessian lower bound in Assumption \ref{ass:exp_tti_cond}. Assuming the sample size is large enough (i.e., large $\delta$) and the global landscape is benign (i.e., unique global minimum and no spurious local minima), $\cR_n$ can typically be chosen to be a small $\ell_2$ ball (or some further refinements thereof) around the unique global minimum $\hat\btheta$ of $\cL_n(\btheta)$, and the high probability Hessian lower bound in part (2) could be established by the following general strategy: (i) show that $\hat\btheta$ is close to $\btheta^\ast$; (ii) show that the ``population version" Hessian $\nabla_{\btheta}^2 \E[\cL_n(\btheta)]$ is well-conditioned at $\btheta^\ast$ and thus also in some small $\ell_2$ neighborhood around $\btheta^\ast$ (or equivalently, $\hat\btheta$) by continuity; (iii) show the same bound for $\nabla_{\btheta}^2 \cL_n(\btheta)$ via concentration arguments.

As a proof of concept, we will follow \cite{li2020toward} and work out the details of the phase retrieval Example \ref{ex:phase_retrieval} in the noiseless case (i.e., $\z=0$ in (\ref{eq:model})). Its proof is given in Section \ref{sec:phase_conv}.

\begin{proposition}\label{lm:pr_exp_tti_ass}
    Consider Example \ref{ex:phase_retrieval} in the noiseless case with $\z=0$ in (\ref{eq:model}), and assume additionally that $\X$ has i.i.d. $\N(0,1/d)$ entries. For any $M > 0$, let the spectral pre-processing function be
    \begin{align*}
    \cT_s(y) = \big(\min(y,M)\big)^2, \quad y\in\R_+,
    \end{align*}
    which satisfies Assumption \ref{ass: spectral initialization} and $\psi'_\delta(\lambda^\ast_\delta) > 0$ in Proposition \ref{prop: spec est overlap}. For sufficiently large $U = 2L$ in the definition of the truncation function $h(\cdot)$ in (\ref{eq:truncation}), there exists some $\delta_0 = \delta_0(U)$ such that for $\delta > \delta_0$, the following holds with probability at least $1-e^{-cn}$ for some universal $c > 0$:
    \begin{enumerate}
    \item The spectral estimator $\hat\btheta^s$ satisfies $\pnorm{\hat{\btheta}^s - \btheta^\ast/\sqrt{d}}{} \leq 1/5$;
    \item With $\cR_n := \{\btheta \in \R^d:\pnorm{\btheta-\btheta^\ast}{}/\sqrt{d}\leq 1/5\}$, it holds that $\lambda_{\min}(\nabla^2_{\btheta} \cL_n(\btheta)) > 1/50$ for any $\btheta\in \cR_n$.
    \end{enumerate}
    Moreover, Theorem \ref{thm:dmft_exp_tti_condition} holds with $\btheta^\infty = \btheta^\ast$ and $\theta^\infty = \theta^\ast$, and the remaining components of the solution to (\ref{eq:fix_1})-(\ref{eq:fix7}) are given (sequentially) by: $u^\infty = 0, \eta^\infty = w^\infty = w^\ast$, $R_\theta^\infty$ is the unique solution to
    \begin{align*}
    \frac{1}{\delta} = \E\Big[\frac{\partial_1\ell(w^\ast,w^\ast,0)R_\theta^\infty}{1+\partial_1\ell(w^\ast,w^\ast,0)R_\theta^\infty}\Big],
    \end{align*}
    and
    \begin{align*}
    R_\eta^\infty = \frac{1}{R_\theta^\infty} - \delta\E\Big[\frac{\partial_1\ell(w^\ast,w^\ast,0)}{1+\partial_1\ell(w^\ast,w^\ast,0)}\Big],\quad R_\eta^\ast = -\delta\E[\partial_1\ell(w^\ast,w^\ast,0)] - R_\eta^\infty.
    \end{align*}
\end{proposition}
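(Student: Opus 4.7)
The proposition breaks into two essentially independent pieces: first, verify Assumption \ref{ass:exp_tti_cond} so that Theorem \ref{thm:dmft_exp_tti_condition} applies; second, solve the fixed-point system (\ref{eq:fix_1})-(\ref{eq:fix7}) explicitly to identify $\btheta^\infty$ and the response constants. I address these in turn; the main obstacle is the Hessian lower bound required by part (2) of Assumption \ref{ass:exp_tti_cond}.

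\textbf{Spectral initialization accuracy.} For the preprocessor $\cT_s(y) = (\min(y, M))^2$, I first check Assumption \ref{ass: spectral initialization} and that $\psi'_\delta(\lambda^\ast_\delta) > 0$ (standard for truncated-quadratic preprocessors, see \cite{mondelli_optimal_2022,lu2020phase}). With the sign convention $\langle \hat\btheta^s, \btheta^\ast\rangle \geq 0$, I have $\pnorm{\hat\btheta^s - \btheta^\ast/\sqrt{d}}{}^2 = 2(1 - \langle\hat\btheta^s, \btheta^\ast\rangle/\sqrt{d})$, whose almost sure limit is $2(1-a)$ by Proposition \ref{prop: spec est overlap}. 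An asymptotic expansion of the closed-form formula for $a$ shows $a \to 1$ as $\delta \to \infty$, so this quantity is less than $1/25$ for $\delta$ large enough. Upgrading to an exponential high-probability bound uses the spectral gap in Proposition \ref{prop: spec est overlap} combined with Gaussian concentration applied to the leading eigenvector functional of $\M_n$.

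\textbf{Hessian lower bound (main obstacle).} The empirical Hessian is
\[
\nabla_\btheta^2 \cL_n(\btheta) = \sum_{i=1}^n \partial_1 \ell^{\RWF}(\x_i^\top \btheta, \x_i^\top \btheta^\ast, 0)\, \x_i \x_i^\top + \lambda \I.
\]
My strategy is the three-step program sketched in the paper: (i) evaluate the population Hessian at $\btheta = \btheta^\ast$, which by rotational invariance of the Gaussian design takes the form $\alpha_\ast \I + \beta_\ast\, \btheta^\ast (\btheta^\ast)^\top/d$; using the compact expression $\partial_1\ell^{\RWF}(w,w,0) = 4 w^2 h(w^2)^2$ (all terms in $\ell^{\RWF}(a,b,0)$ carry the factor $(a^2-b^2)$ that vanishes at $a=b$, leaving only the derivative of that factor), Gaussian integration against $W = \x_1^\top \btheta^\ast \sim \N(0,1)$ yields $\alpha_\ast, \alpha_\ast + \beta_\ast > c(U)$ provided $U = 2L$ is taken large enough; (ii) extend the bound to $\btheta \in \cR_n$ by continuity, using boundedness of $h, h', h''$ and $\pnorm{\btheta - \btheta^\ast}{}/\sqrt{d} \leq 1/5$; (iii) transfer to the empirical Hessian via matrix concentration (e.g., matrix Bernstein) combined with an $\eps$-net over $\cR_n$ and sub-Gaussian control of $\x_i^\top \btheta$. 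The quantitative execution is essentially carried out in \cite{li2020toward} for the same regularized Wirtinger flow objective; I import and tune their bounds to obtain the explicit $1/50$ constant after choosing $U$ and $\delta_0$. The upper bound $\lambda_{\max}(\nabla^2 \cL_n) \leq L$ is immediate from $\pnorm{\partial_1\ell^{\RWF}}{\infty} \leq C$ and $\pnorm{\X}{\op} \leq 2(1+\sqrt{\delta})$ with high probability.

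\textbf{Fixed-point identification.} Once Theorem \ref{thm:dmft_exp_tti_condition} is applicable, the limit $(\theta^\infty, \eta^\infty, u^\infty, w^\infty)$ solves (\ref{eq:fix_1})-(\ref{eq:fix7}). The decisive algebraic observation is that every term in $\ell^{\RWF}(a, b, 0)$ carries the factor $(a^2 - b^2)$, so $\ell^{\RWF}(w^\ast, w^\ast, 0) \equiv 0$. Taking the ansatz $\btheta^\infty = \btheta^\ast$ (hence $\theta^\infty = \theta^\ast$ and $w^\infty = w^\ast$), (\ref{eq:fix3}) immediately yields $C^\infty_\eta = 0$ and therefore $u^\infty = 0$ a.s., while (\ref{eq:fix2}) collapses to $\eta^\infty = w^\ast$, confirming the ansatz. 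With $u^\infty = 0$ and $\theta^\infty = \theta^\ast$, (\ref{eq:fix_1}) reduces to the scalar constraint $\lambda + \delta\Gamma^\infty + R^\infty_\eta + R^\ast_\eta = 0$; eliminating $\delta\Gamma^\infty + R^\infty_\eta$ between (\ref{eq:fix5}) and (\ref{eq:fix6}) at $\eta^\infty = w^\ast$ produces the scalar equation $1/\delta = \E[\partial_1\ell(w^\ast, w^\ast, 0) R_\theta^\infty / (1 + \partial_1\ell(w^\ast, w^\ast, 0) R_\theta^\infty)]$ for $R^\infty_\theta$, whose unique positive solution follows by monotonicity of the right-hand side in $R_\theta^\infty$. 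The stated formulas for $R^\infty_\eta$ and $R^\ast_\eta$ then drop out of (\ref{eq:fix5}) and (\ref{eq:fix7}) respectively, using the identity $\partial_2\ell^{\RWF}(w,w,0) = -\partial_1\ell^{\RWF}(w,w,0)$ (since $b$ enters $\ell^{\RWF}(a,b,0)$ only through $b^2$ against the $(a^2 - b^2)$ factor).
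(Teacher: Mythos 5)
Your fixed-point algebra (using $\ell^{\RWF}(w^\ast,w^\ast,0)=0$, hence $C_\eta^\infty=0$, $u^\infty=0$, $\eta^\infty=w^\ast$, then monotonicity for $R_\theta^\infty$ and $\partial_2\ell=-\partial_1\ell$ on the diagonal) matches the paper. The genuine gap is in how you identify $\btheta^\infty=\btheta^\ast$. You introduce $\btheta^\infty=\btheta^\ast$ as an ansatz and "confirm" it by checking that it solves (\ref{eq:fix_1})--(\ref{eq:fix7}). But the fixed-point system may admit multiple solutions (see Remark \ref{rmk:nonunique_solution}), and Theorem \ref{thm:dmft_exp_tti_condition} only says that $\theta^\infty$ is the particular solution arising as the limit of the actual gradient-descent dynamic; exhibiting one self-consistent solution does not show the dynamic converges to it. The identification must be made at the level of the high-dimensional dynamic: on the event of Assumption \ref{ass:exp_tti_cond}, the GD limit is a stationary point of $\cL_n$ inside the convex region $\cR_n$, where the Hessian lower bound forces uniqueness of stationary points, and in the noiseless case (with no ridge penalty) $\nabla\cL_n(\btheta^\ast)=0$, so the limit is $\btheta^\ast$; this is exactly where the paper invokes the landscape result of \cite{li2020toward}, and then $\theta^\infty=\theta^\ast$ follows by passing the convergence $\pnorm{\btheta^t-\btheta^\ast}{}/\sqrt d\to0$ through the $W_2$ statement of Theorem \ref{thm: discrete dmft asymp}. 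This step is absent from your proposal.

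Two further points on Assumption \ref{ass:exp_tti_cond} itself. For the Hessian, your step (ii) "extend to $\cR_n$ by continuity" does not work as stated: the radius $1/5$ is a constant, and $\partial_1\ell^{\RWF}(a,b,0)$ is genuinely negative in parts of the ball (e.g.\ $3a^2<b^2$), so positivity at $\btheta^\ast$ plus continuity gives nothing quantitative; the actual content is the uniform local-curvature bound of \cite{li2020toward}, which you do fall back on, but their objective is not literally the same as (\ref{eq:smoothed_WF_loss}) — their truncations are $h\big((\tilde\x_i^\top\tilde\btheta)^2/\pnorm{\tilde\btheta}{}^2\big)$ and $h\big(n(\tilde\x_i^\top\tilde\btheta^\ast)^2/\pnorm{\y}{1}\big)$ — and the paper needs a dedicated comparison estimate (Lemma \ref{lm:phase_loss_comparison}, together with Lemma 3.3 of \cite{li2020toward}) to transfer their bound; your proposal glosses over this mismatch. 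For the spectral bound, your route through the limiting overlap $a$ of Proposition \ref{prop: spec est overlap} only yields an asymptotic (almost sure) statement, and the claimed upgrade to a $1-e^{-cn}$ bound via "Gaussian concentration of the leading eigenvector functional" is not a well-defined argument (the eigenvector is not a Lipschitz functional without a quantitative, finite-$n$ spectral gap). The paper instead argues directly at finite $n$: $\E[\M_n]=\delta(\xi_1\btheta^\ast\btheta^{\ast\top}/d+\xi_2\I_d)$, an $\eps$-net plus Bernstein bound on $\pnorm{\M_n-\E[\M_n]}{\op}$, and Davis--Kahan, giving $\pnorm{\hat\btheta^s-\btheta^\ast/\sqrt d}{}\leq C\delta^{-1/3}$ with probability $1-e^{-cn}$; some version of this finite-$n$ argument is needed to meet the stated probability guarantee.
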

\begin{remark}
In the noiseless case, \cite{li2020toward} showed that for large enough $\delta$, the objective $\cL_n(\btheta)$ admits no spurious local minima and a unique global minimum $\hat\btheta = \btheta^\ast$ (up to global sign symmetry). We believe that such a benign global landscape also holds for the noisy case and large enough $\delta$, in which case Theorem \ref{thm:dmft_exp_tti_condition} can be applied with $\btheta^\infty = \hat\btheta$ to yield a precise distributional characterization of the global minimum.
\end{remark}
\begin{remark}
The loss function in (\ref{eq:smoothed_WF_loss}) is a regularized version of the squared loss of Wirtinger flow, which is defined as
\begin{align}\label{eq:WF_loss}
    L^{\WF}(a,b,c) = \frac{1}{2}\big(a^2-\varphi(b,c)^2\big)^2,
\end{align}
and has been extensively studied (see e.g., \cite{ma2020implicit,chen2017solving,chen2019gradient}). Unfortunately, this example currently still falls out of the scope of Theorems \ref{thm: discrete dmft asymp} and \ref{thm:dmft_exp_tti_condition}, mostly because both existing upper and lower bounds of the Hessian in Assumption \ref{ass:exp_tti_cond} require as many as $n = Cd\log d$ samples for some large $C$, and as a consequence, the gradient descent step size $\gamma$ has to be chosen as $\gamma = c/\log n$. It is conjectured in \cite{sarao2020complex} that for the loss (\ref{eq:WF_loss}), the idealized gradient flow dynamic (i.e., gradient descent with infinitismal step size $\gamma$) with random initialization will converge to $\btheta^\ast$ under the proportional regime $n = Cd$, but a rigorous proof is still lacking, even for gradient flow with spectral initialization.
\end{remark}

\section{Proof of DMFT characterization (Theorem \ref{thm: discrete dmft asymp})}
\label{sec:dmft_asymp_proof}

\subsection{AMP algorithm with spectral initialization}
We start by introducing an AMP algorithm with spectral initialization and analyzing its state evolution properties. Let $\y = \varphi(\X\btheta^\ast, \z)$ as in (\ref{eq:model}) and $\bZ_s = \diag(\cT_s(\y)) \in \R^{n\times n}$. Suppose Assumption \ref{ass: model assumptions} holds so that
almost surely as $n,d\rightarrow\infty$, 
\begin{align}\label{eq:amp_init_cond}
    \frac{1}{n}\sum_{i=1}^n \delta_{z_i} \overset{W_2}{\to} \sP(z), \quad \frac{1}{d}\sum_{j=1}^d \delta_{ \theta^\ast_j} \overset{W_2}{\to} \sP(\theta^\ast). 
\end{align}
Let $\btheta^0$ be defined by (\ref{eq:gradient_des_init}) and $\lambda^\ast_\delta$ be given by (\ref{def:lambda_delta}). For nonlinearities $f_i = (f_{i,1}, f_{i,2}):\R^{2(i+1)+1} \rightarrow \R^2$ and $g_i = (g_{i,1}, g_{i,2}): \R^{2i+2} \rightarrow\R^2$ applied row-wise, consider the following AMP algorithm:
\begin{align}
\begin{split}
\label{eq:spec_init_AMP}
    \b^i &= \X g_i(\a^1,...,\a^i;\btheta^0,\btheta^*) + \frac{1}{\delta} \sum_{j=0}^{i-1} f_j(\b^0,...,\b^j;\z)\zeta_{i,j}-\Big(\frac{1}{\lambda_\delta^*}\bZ_s \X\btheta^0, \mathbf{0}\Big)\zeta_{i,-1} \in\R^{n\times 2},\\
    \a^{i+1} &= -\frac{1}{\delta}\X^\top f_i(\b^0,...,\b^i;\z) + \sum_{j=0}^{i} g_j(\a^1,...,\a^j;\btheta^0,\btheta^*)\xi_{i,j} \in \R^{d\times 2},
\end{split}
\end{align}
initialized with $g_0(u,v) = (u,v)$ so that $g_0(\btheta^0,\btheta^*) = (\btheta^0,\btheta^*)$ and $\b^0 = (\X\btheta^0 - \frac{1}{\lambda_\delta^*}\bZ_s \X\btheta^0,\X\btheta^\ast)$. Let $\{u^j\}_{j\geq 0} = \{(u^j_1,u^j_2)\}_{j\geq 0}$ and $\{w^j\}_{j\geq 0} = \{(w^j_1,w^j_2)\}_{j\geq 0}$ be two centered Gaussian processes in $\R^2$ with covariance recursively defined by
\begin{align}\label{eq:amp_gp_cov}
\begin{split}
\E[u^i(u^{j})^\top] &= \frac{1}{\delta}\E\Big[f_{i-1}(w^0,...,w^{i-1};z)f_{j-1}(w^0,...,w^{j-1};z)^\top\Big],   \quad 1\leq j \leq i<\infty,\\
\E[u^i(u^0)^\top] &= \begin{pmatrix}
    -\E[f_{i-1,1}(w^0,...,w^{i-1};z)\cT(\varphi(w_2^0,z))w_1^0] & 0\\
    0 & 0
\end{pmatrix},  \quad i\geq 1,\\
\E[u^0(u^0)^\top] &= \begin{pmatrix}
    1-a^2 & 0\\
    0 & 0
\end{pmatrix},  \\
\E[w^i(w^{j})^\top] &=\E\Big[g_{i}(u^1,...,u^{i};a\theta^*+u^0_1,\theta^*)g_{j}(u^1,...,u^{j};a\theta^*+u^0_1,\theta^*)^\top\Big], \quad 0\leq j \leq i<\infty.
\end{split}
\end{align}
where $a$ is the limiting overlap of the spectral estimator as given in Proposition \ref{prop: spec est overlap}, $\cT(u) = \frac{\cT_s(u)}{\lambda^\ast_\delta - \cT_s(u)}$ is defined by (\ref{def:T_func}), and $\theta^* \sim \sP(\theta^*)$, $z \sim \sP(z)$, $\{u^j\}_{j\geq 0}$, $\{w^j\}_{j\geq 0}$ are mutually independent. From this covariance structure, one may recursively define Gaussian vectors $u^0 \rightarrow w^0 \rightarrow u^1 \rightarrow w^1 \rightarrow \ldots$, and the Onsager correction terms in (\ref{eq:spec_init_AMP}) are recursively defined by
\begin{equation}\label{def:xi_zeta}
\begin{aligned}
    \xi_{ij} &= \left(\E\left[\frac{\partial}{\partial w^j}f_i(w^0,...,w^i;z)\right]\right)^\top \in \R^{2\times 2},\quad 0\leq j \leq i\\
    \zeta_{ij} &= \left(\E\left[\frac{\partial}{\partial u^{j+1}}g_i(u^1,...,u^i;a\theta^*+u^0_1,\theta^*)\right]\right)^\top \in\R^{2\times 2}, \quad -1\leq j \leq i-1,
\end{aligned}
\end{equation}
in the order of $\{\zeta_{0j}\}_{-1\leq j \leq -1} \rightarrow \{\xi_{0j}\}_{0\leq j \leq 0} \rightarrow \{\zeta_{1j}\}_{-1\leq j \leq 0} \rightarrow \{\xi_{1j}\}_{0\leq j \leq 1} \rightarrow \ldots$.

The following result, whose proof is given in Appendix \ref{sec: spec init amp}, characterizes the distribution of the AMP iterates in (\ref{eq:spec_init_AMP}). It slightly generalizes the result of \cite{mondelli_approximate_2022} to be applicable to AMP algorithms that depend on multiple past iterates.
\begin{theorem}\label{thm: amp spec} 
 Assume that the nonlinearities $\{f_i\},\{g_i\}$ are continuous and Lipschitz in their first $(i+1)$ and $i$ arguments respectively, $\lim_{n,d\rightarrow\infty} n/d = \delta$, the distribution of $\X\in\R^{n\times d}$ satisfies Assumption \ref{ass: model assumptions}-(3), and that $\psi_\delta'(\lambda_\delta^*) > 0$. Suppose $\btheta^\ast \in \R^d$ and $\z\in\R^n$ are independent of $\X$ and satisfy the distributional assumption of (\ref{eq:amp_init_cond}), where $\E[z^2], \E[(\theta^\ast)^2] < \infty$. For any fixed integer $m\geq 0$ and 2-pseudo Lipschitz functions $\psi: \R^{2m} \times \R^2 \rightarrow \R$ and $\phi:\R^{2(m+1)} \times \R\rightarrow\R$, it holds almost surely as $n,d\rightarrow\infty$ that
    \begin{align*}
        \frac{1}{d}\sum_{j=1}^d \psi\Big((\a^1)_j,...,(\a^m)_j;(\btheta^0)_j,(\btheta^*)_j\Big) &\rightarrow \E\left[\psi(u^1,...,u^m;a\theta^*+u^0_1,\theta^*)\right]\\
        \frac{1}{n}\sum_{i=1}^n \phi\Big((\b^0)_i,...,(\b^m)_i;(\z)_i\Big) &\rightarrow \E\left[\phi(w^0,...,w^m;z)\right].
    \end{align*}
\end{theorem}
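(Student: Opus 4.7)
The strategy is to reduce Theorem \ref{thm: amp spec} to the long-memory state evolution theory for AMP with independent Gaussian initialization (e.g.\ \cite{bayati_message_passing_2011,javanmard2013state}) by absorbing the spectral initialization into an auxiliary AMP that is run \emph{before} (\ref{eq:spec_init_AMP}). The single-past-iterate version of this reduction was carried out in \cite{mondelli_approximate_2022}; the additional ingredient we need is to allow each nonlinearity $f_i,g_i$ to depend on all past iterates, which is compatible with the Gaussian conditioning argument but requires extra bookkeeping of Onsager correction terms.

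Concretely, first I would construct an artificial AMP of length $K$ which implements a suitably Onsager-corrected power iteration on $\M_n = \X^\top \bZ_s \X$, with a single Gaussian seed initialization independent of $\X$. Following the framework of \cite{mondelli_approximate_2022}, this artificial sequence satisfies a state evolution whose final iterate, as $K\to\infty$, converges in Wasserstein-2 distance to the spectral estimator $\hat{\btheta}^s$, with overlap $a$ as in Proposition \ref{prop: spec est overlap}. The constants $1/\lambda_\delta^*$ and $\cT(u)=\cT_s(u)/(\lambda_\delta^*-\cT_s(u))$ appearing in (\ref{eq:spec_init_AMP}) and (\ref{eq:amp_gp_cov}) arise naturally from the fixed-point relation that $\hat\btheta^s$ satisfies as a leading eigenvector of $\M_n$ with eigenvalue converging to $\delta\psi_\delta(\lambda_\delta^*)$, and from the associated resolvent identity $(\lambda_\delta^* - Z_s)^{-1}$.

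Next I would concatenate this artificial AMP with the recursion (\ref{eq:spec_init_AMP}), producing an overall AMP indexed by $i=-K,\ldots,-1,0,1,\ldots,m$ that now has a Gaussian initialization independent of $\X$ but long-memory nonlinearities. The special ``$j=-1$'' term $(\frac{1}{\lambda_\delta^*}\bZ_s\X\btheta^0,\mathbf{0})\zeta_{i,-1}$ in (\ref{eq:spec_init_AMP}) arises precisely as the collapsed $K\to\infty$ limit of the Onsager memory transferred across the interface between the artificial and given iterations. To this concatenated system I would apply long-memory state evolution, either directly or via the standard stacking trick that encodes past iterates into a vector-valued state; under Assumption \ref{ass: model assumptions}-(3), universality yields almost sure joint Wasserstein-2 convergence of the empirical distributions of $(\a^1,\ldots,\a^m;\btheta^0,\btheta^\ast)$ and $(\b^0,\ldots,\b^m;\z)$ to centered Gaussian processes. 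Using Stein's lemma to match the Onsager coefficients with the derivative-in-Gaussian formulas (\ref{def:xi_zeta}), and marginalizing out the $K$ artificial coordinates before sending $K\to\infty$, one identifies the limit as exactly the process prescribed by (\ref{eq:amp_gp_cov}).

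The main obstacle is the bookkeeping at the interface between the artificial power-iteration AMP and the given AMP. In the single-memory setting of \cite{mondelli_approximate_2022} each new iterate requires only one Onsager correction against the immediately preceding iterate, which produces a single ``$j=-1$'' term; with long memory one must show that the cumulative effect of the entire power-iteration sequence still collapses to exactly one contribution against $j=-1$ for each $i$, and that this collapse produces both the degenerate covariance $\E[u^0(u^0)^\top]=\diag(1-a^2,0)$ and the specific cross-covariances $\E[u^i(u^0)^\top]$ involving $\cT(\varphi(w_2^0,z))w_1^0$ in (\ref{eq:amp_gp_cov}). Once this identification is verified, the pseudo-Lipschitz test-function convergence claimed in the theorem follows from standard Gaussian conditioning arguments combined with the continuity and Lipschitz assumptions on $f_i,g_i$.
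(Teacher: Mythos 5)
Your proposal follows essentially the same route as the paper's proof in Appendix \ref{sec: spec init amp}: a two-stage artificial AMP whose negative-time stage is the Onsager-corrected power iteration of \cite{mondelli_approximate_2022} seeded by side information with overlap $\alpha$ (note the seed must be correlated with $\btheta^\ast$, not merely independent of $\X$, for the state-evolution overlap to converge to $a$), to which the standard long-memory state evolution (Theorem \ref{thm:standard_amp_se}) is applied for each fixed $T$, followed by $T\rightarrow\infty$ to recover $\hat\btheta^s$, the $j=-1$ Onsager term, and the covariance structure (\ref{eq:amp_gp_cov}). The interface bookkeeping you identify as the main obstacle is exactly what the paper resolves in Lemmas \ref{lem: AMP stage 1}, \ref{lem: conv gaussian amp}, and \ref{lem: conv hd amp} before the concluding three-term triangle-inequality argument.
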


\subsection{Reduction to AMP algorithm}
To connect the AMP algorithm (\ref{eq:spec_init_AMP}) back to the gradient descent dynamic (\ref{eq: gradient descent}), we show the existence of nonlinearities $\{f_j\}_{j\geq 0},\{g_j\}_{j\geq 0}$ such that the gradient descent iterates can be represented as (separable) functions of the AMP iterates, as detailed in the following lemma.

\begin{lemma}
    \label{lm: AMP nonlinearity choice}
    There exist continuous nonlinearities $g_j=(g_{j,1},g_{j,2}):\R^{2j+2} \rightarrow\R^2$ and $f_j=(f_{j,1},f_{j,2}):\R^{2(j+1)+1} \rightarrow\R^2$ which are also Lipschitz in their first $j$ and $j+1$ arguments such that for any $j\geq 0$,
    \begin{align}\label{eq:f_g_choice}
        g_j(\a^1,...,\a^j;\btheta^0,\btheta^*) = (\btheta^{j},\btheta^*),  \quad f_j(\b^0,...,\b^j;\z) = (\ell(\X\btheta^j,\X\btheta^*,\z),\mathbf{0}).
    \end{align}
\end{lemma}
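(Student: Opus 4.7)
The plan is to construct $\{f_j,g_j\}$ by strong induction on $j$, alternating between the two families starting from the prescribed $g_0(u,v)=(u,v)$, and enforcing throughout the ansatz $f_{j,2}\equiv 0$ and $g_{j,2}(\cdot;\theta^0,\theta^\ast)\equiv\theta^\ast$. The crucial structural observation that makes this ansatz self-consistent is the following: since $g_{i,2}$ and $f_{i,2}$ are independent of every $(u^r,w^r)$, the second rows of $\partial g_i/\partial u^{j+1}$ and $\partial f_i/\partial w^j$ vanish, so after transposing both Onsager coefficients $\zeta_{i,j}$ and $\xi_{i,j}$ have vanishing second columns. Propagating this through the AMP recursion gives $b^j_{i,2}=(\X\btheta^\ast)_i$ for every $j\geq 0$ and $a^{j+1}_{k,2}=0$ for every $j\geq 0$. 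In other words, the two AMP channels decouple: column $2$ of the $\b$-iterates pins the signal $(\X\btheta^\ast)_i$ exactly, while all iterate information lives in column $1$. This decoupling is precisely what allows a coordinate-wise, Lipschitz recovery of $(\X\btheta^j)_i$ and $\theta^j_k$.

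Given the decoupling, the induction step is concrete. Assume $g_0,\ldots,g_j$ and $f_0,\ldots,f_{j-1}$ satisfying (\ref{eq:f_g_choice}) have been constructed. Substituting into the recursion for $\b^j$ yields row-wise
\begin{align*}
b^j_{i,1}=(\X\btheta^j)_i+\frac{1}{\delta}\sum_{k=0}^{j-1}\ell\!\big((\X\btheta^k)_i,(\X\btheta^\ast)_i,z_i\big)(\zeta_{j,k})_{11}-\frac{\cT_s(y_i)}{\lambda^\ast_\delta}(\X\btheta^0)_i(\zeta_{j,-1})_{11},
\end{align*}
with $y_i=\varphi(b^0_{i,2},z_i)$. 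Seeding the recursion with $(\X\btheta^\ast)_i=b^0_{i,2}$ and $(\X\btheta^0)_i=b^0_{i,1}/\big(1-\cT_s(y_i)/\lambda^\ast_\delta\big)$ (both read off from $\b^0$), one inverts for $(\X\btheta^r)_i$, $r=1,\ldots,j$, as a Lipschitz function of $(b^0_i,\ldots,b^r_i,z_i)$, and then defines $f_{j,1}$ by composing $\ell$ with these extractions and $f_{j,2}\equiv 0$. With $f_j$ now in hand, the $\a^{j+1}$-recursion gives
\begin{align*}
a^{j+1}_{k,1}=-\frac{1}{\delta}\big(\X^\top\ell(\X\btheta^j,\X\btheta^\ast,\z)\big)_k+\sum_{r=0}^{j}\big[\theta^r_k(\xi_{j,r})_{11}+\theta^\ast_k(\xi_{j,r})_{21}\big],
\end{align*}
so one solves scalar-wise for $(\X^\top\ell_j)_k$, and the gradient descent recursion $\btheta^{j+1}=(1-\gamma\lambda)\btheta^j-\gamma\X^\top\ell_j$ translates this into a formula for $g_{j+1,1}$ in terms of $(a^1_k,\ldots,a^{j+1}_k;\theta^0_k,\theta^\ast_k)$, using the previously built $g_r$'s to express $\theta^r_k$.

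The Lipschitz and continuity claims follow by composition: the scalar inversions are linear with coefficients drawn from the (deterministic, previously determined) Onsager constants $\{\xi_{i,j},\zeta_{i,j}\}$; the outer $\ell$ is Lipschitz by Assumption \ref{ass: ell lipschitz}; and the one division $b_1\mapsto b_1/(1-\cT_s(\varphi(b_2,z))/\lambda^\ast_\delta)$ has denominator bounded uniformly away from zero since $\sup\cT_s<\lambda^\ast_\delta$, already implicit in the well-posedness of $\cT$ in (\ref{def:T_func}). The main obstacle I anticipate is the bookkeeping required to rigorously propagate the block-triangular structure of $\zeta_{i,j}$ and $\xi_{i,j}$ through the induction: once this structural claim is locked in, the explicit scalar inversions and Lipschitz bounds are routine, but without the decoupling the $\btheta^\ast$-slot and the iterate slot would mix through the Onsager coefficients and the coordinate-wise reduction would collapse.
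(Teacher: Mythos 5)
Your construction is correct and is essentially the paper's own proof: the same alternating induction with the ansatz $f_{j,2}\equiv 0$, $g_{j,2}\equiv\theta^\ast$, the same inversion $\X\btheta^0=(1+\cT(\varphi(\b^0_2,\z)))\circ\b^0_1$ (your $1/(1-\cT_s/\lambda^\ast_\delta)$ is exactly $1+\cT$), and the same recursive extraction of $\X\btheta^r$ and $\btheta^{r}$ from the AMP iterates via the Onsager coefficients, with Lipschitzness by composition under Assumptions \ref{ass: model assumptions}, \ref{ass: spectral initialization}, \ref{ass: ell lipschitz}. The block-triangular structure of $\zeta_{i,j},\xi_{i,j}$ that you highlight is left implicit in the paper (it simply sets the second components and reads off the surviving $(\cdot)_{11},(\cdot)_{21}$ entries), but it is the same mechanism.
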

\begin{proof}
    We construct $g_j$ and $f_j$ inductively.
    
    \noindent\textbf{Base Case}
    By definition we have $g_0(\theta^0,\theta^\ast) = (\theta^0, \theta^\ast)$ so that $g_0(\btheta^0,\btheta^*) = (\btheta^0,\btheta^*)$, proving the first claim. This leads to $\zeta_{0,-1} = [1,0;0,0]$, and
    \begin{align}
        \b^0 = \Big(\X\btheta^0-\frac{1}{\lambda_\delta^*}\bZ_s\X\btheta^0,\X\btheta^*\Big) = \Big(\Big(\I_n-\frac{1}{\lambda_\delta^*}\bZ_s\Big)\X\btheta^0,\X\btheta^*\Big).
    \end{align}
    By direct calculation, we have
    \begin{equation*}
        \Big(\I_n-\frac{1}{\lambda_\delta^*}\bZ_s\Big)^{-1} = \diag\Big(1+\cT(\varphi(\X\btheta^*,\z))\Big) = \diag\Big(1+\cT(\y)\Big),
    \end{equation*}
    where $\cT(y) = \frac{\cT_s(y)}{\lambda_\delta^*-\cT_s(y)}$ is given by (\ref{def:T_func}). By writing $\b^0 = (\b^0_1,\b^0_2)$, this implies 
    \begin{align}\label{eq:represent_Xtheta0}
    \X\btheta^0 = \big(\bm{1}_n+\cT(\varphi(\b^0_2,\z))\big)\circ\b^0_1,
    \end{align}
    hence we can define $f_0 = (f_{0,1},f_{0,2})$ satisfying the second claim by
    \begin{align*}
        f_{0,1}(b^0;z) = \ell\big((1+\cT(\varphi(b^0_2,z)))b^0_1,b^0_2,z\big), \quad f_{0,2}(b^0;z) = 0.
    \end{align*}
    
    \noindent\textbf{Inductive Step} Assume we have defined $\{g_\ell\}_{\ell=0}^j$ and $\{f_\ell\}_{\ell=0}^j$ up to some $j\geq 0$ satisfying (\ref{eq:f_g_choice}). We now define $g_{j+1}$ and $f_{j+1}$. For $g_{j+1}$, note that
    \begin{align*}
        \a^{j+1} = -\frac{1}{\delta}\X^\top f_j(\b^0,...,\b^j;\z) +\sum_{i=0}^j g_i(\a^1,...,\a^i;\btheta^0,\btheta^*)\xi_{ji},
    \end{align*}
    which implies
    \begin{align}
    \label{eq: amp postprocess ind a}
        -\frac{1}{\delta}\X^\top f_{j,1}(\b^0,...,\b^j;\z) = \a^{j+1}_1 - \sum_{i=0}^j g_{i,1}(\a^1,...,\a^i;\btheta^0,\btheta^*)(\xi_{ji})_{11} - \sum_{i=0}^j \btheta^* (\xi_{ji})_{21}.
    \end{align}
    On the other hand, we have by (\ref{eq: gradient descent}) that
    \begin{align*}
        \btheta^{j+1} &= \btheta^{j} - \gamma\X^\top \ell(\X\btheta^{j},\X\btheta^*,\z) - \gamma\lambda \btheta^{j}\\
        &=\btheta^{j} - \gamma\X^\top f_{j,1}(\b^0,...,\b^j;\z) - \gamma\lambda \btheta^{j}\\
        &= (1-\gamma\lambda)g_{j,1}(\a^1,...,\a^j;\btheta^0,\btheta^*) - \gamma\X^\top f_{j,1}(\b^0,...,\b^j;\z)\\
        &= (1-\gamma\lambda)g_{j,1}(\a^1,...,\a^j;\btheta^0,\btheta^*) + \gamma\delta\Big[\a^{j+1}_1 - \sum_{i=0}^j g_{i,1}(\a^1,...,\a^i;\btheta^0,\btheta^*)(\xi_{ji})_{11} - \sum_{i=0}^j \btheta^* (\xi_{ji})_{21}\Big],
    \end{align*}
    where the second and third steps follow from the inductive assumption, and the last equality follows from (\ref{eq: amp postprocess ind a}).
    To satisfy $g_{j+1}(\a^1,...,\a^{j+1};\btheta^0,\btheta^*) = (\btheta^{j+1},\btheta^*)$, we define
    \begin{align}
    \label{eq:def g}
        \notag g_{j+1,1}(a^1,...,a^{j+1};\theta^0,\theta^*) &= (1-\gamma\lambda)g_{j,1}(a^1,...,a^j;\theta^0,\theta^*)\\
        \notag&\quad + \gamma\delta\Big[a^{j+1}_1 - \sum_{i=0}^j g_{i,1}(a^1,...,a^i;\theta^0,\theta^*)(\xi_{ji})_{11} - \sum_{i=0}^j \theta^* (\xi_{ji})_{21}\Big],\\
        g_{j+1,2}(a^1,...,a^{j+1};\theta^0,\theta^*) &= \theta^*.
    \end{align}
    Next we construct $f_{j+1}$. With $g_{j+1}$ defined above, we have
    \begin{align}
        \notag\b_1^{j+1} &= \X g_{j+1,1}(\a^1,...,\a^{j+1};\btheta^0,\btheta^*) + \frac{1}{\delta} \sum_{i=0}^{j} f_{i,1}(\b^0,...,\b^i;\z)(\zeta_{j+1,i})_{11}-\frac{1}{\lambda_\delta^*}\bZ_s \X\btheta^0(\zeta_{j+1,-1})_{11}\\
        \notag&=\X\btheta^{j+1} + \frac{1}{\delta} \sum_{i=0}^{j} f_{i,1}(\b^0,...,\b^i;\z)(\zeta_{j+1,i})_{11}-\frac{1}{\lambda_\delta^*}\bZ_s\diag\Big(1+\cT(\varphi(\b_2^0,\z))\Big)\b_1^0(\zeta_{j+1,-1})_{11}\\
        &=\X\btheta^{j+1} + \frac{1}{\delta} \sum_{i=0}^{j} f_{i,1}(\b^0,...,\b^i;\z)(\zeta_{j+1,i})_{11}-\diag\Big(\cT(\varphi(\b_2^{0},\z))\Big)\b_1^0(\zeta_{j+1,-1})_{11},
    \end{align}
    where we apply (\ref{eq:represent_Xtheta0}). This implies
    \begin{equation}\label{eq:represent_Xtheta}
        \X\btheta^{j+1} = \b_1^{j+1} -\frac{1}{\delta} \sum_{i=0}^{j} f_{i,1}(\b^0,...,\b^i;\z)(\zeta_{j+1,i})_{11} + \diag(\cT(\varphi(\b_2^{0},\z)))\b_1^0(\zeta_{j+1,-1})_{11}.
    \end{equation}
    To satisfy $f_{j+1}(\b^0,...,\b^{j+1};\z) = (\ell(\X\btheta^{j+1},\X\btheta^*,\z),\mathbf{0})$, define
    \begin{align}\label{eq: def f}
    \begin{split}
        f_{j+1,1}(b^0,...,b^{j+1};z) &= \ell\Big(b_1^{j+1} -\frac{1}{\delta} \sum_{i=0}^{j} f_{i,1}(b^0,...,b^i;z)(\zeta_{j+1,i})_{11} + \cT(\varphi(b_2^{0},z))b_1^0(\zeta_{j+1,-1})_{11},b_2^{0},z\Big),\\
        f_{j+1,2}(b^0,...,b^{j+1};z) &= 0.
    \end{split}
    \end{align}
    By induction, it is easy to see that the defined $f_i,g_i$ satisfy the  Lipschitz conditions of Theorem \ref{thm: amp spec} if $\pnorm{\partial_1 \ell}{\infty}, \pnorm{\partial_2 \ell}{\infty}, \pnorm{\cT'}{\infty}, \pnorm{\partial_1 \varphi}{\infty}$ are bounded, which are satisfied by Assumptions \ref{ass: model assumptions}, \ref{ass: spectral initialization} and \ref{ass: ell lipschitz}.
\end{proof}

We are now ready to prove Theorem \ref{thm: discrete dmft asymp}.
\begin{proof}[Proof of Theorem \ref{thm: discrete dmft asymp}]
Let $f_i,g_i$ be defined as in Lemma \ref{lm: AMP nonlinearity choice}. Define for $i\geq 0$ 
\begin{align*}
    \theta^i = g_{i,1}(u^1,...,u^{i};\theta^0,\theta^*),
\end{align*}
initialized from $\theta^0=a\theta^*+u_1^0$. Recalling from (\ref{eq:represent_Xtheta}) that
    \begin{align*}
        \X\btheta^{i} = \b_1^{i} -\frac{1}{\delta} \sum_{j=0}^{i-1} f_{j,1}(\b^0,...,\b^j;\z)(\zeta_{i,j})_{11} + \diag(\cT(\varphi(\b_2^{0},\z)))\b_1^0(\zeta_{i,-1})_{11},
    \end{align*}
further define for $i\geq 0$
\begin{equation}\label{eq:def_eta}
    \eta^i = w_1^{i} -\frac{1}{\delta} \sum_{j=0}^{i-1} f_{j,1}(w^0,...,w^j;z)(\zeta_{i,j})_{11} + \cT(\varphi(w^{0}_2,z))w_1^{0}(\zeta_{i,-1})_{11}.
\end{equation}
By Lemma \ref{lm: AMP nonlinearity choice}, it holds that
\begin{align*}
    f_{i,1}(w^0,\ldots,w^i;z) = \ell(\eta^i, w^0_2, z).
\end{align*}
Applying the state evolution of Theorem \ref{thm: amp spec}, we see that for any fixed integer $m\in\Z_+$, it holds almost surely as $n,d\rightarrow\infty$ that
\begin{align*}
    \frac{1}{d}\sum_{j=1}^d \delta_{\left(\theta_{j}^{0},...,\theta_{j}^{m},\theta^*_j\right)} \overset{W_2}{\rightarrow} \sP(\theta^0,...,\theta^m,\theta^*),\quad\frac{1}{n}\sum_{i=1}^n\delta_{\left(\eta_{i}^{0},...,\eta_{i}^{m},\eta^*_i,z_i\right)} \overset{W_2}{\rightarrow} \sP(\eta^0,...,\eta^m,w^0_2,z).
\end{align*}
We define the following quantities (on the left) using the state evolution variables (on the right):
\begin{gather}
    \theta_\gamma^{i} := \theta^i, \qquad \eta_\gamma^{i} := \eta^i  \label{eq:discrete_equiva_1}\\ 
    (u_\gamma^\diamond, \{u_\gamma^{i}\}_{i\geq 0}) := (u_1^0,\delta \{u_1^{i+1}\}_{i\geq 0}),  \qquad(w^*_\gamma, \{w_\gamma^{i}\}_{i\geq 0}) := (w^0_2, \{w^i_1\}_{i\geq 0}), \label{eq:discrete_equiva_2}\\
    \Big\{\frac{\partial \theta_\gamma^{i}}{\partial u_\gamma^{j}}\Big\}_{i > j \geq 0} := \Big\{\frac{1}{\delta}\frac{\partial g_{i,1}}{\partial u^{j+1}_1}\Big\}_{i>j\geq 0}, \quad \Big\{\frac{\partial \theta_\gamma^i}{\partial u^\diamond_\gamma}\Big\}_{i\geq 0} := \Big\{\frac{\partial g_{i,1}}{\partial u^0_{1}}\Big\}_{i\geq 0},\label{eq:discrete_equiva_3}\\
    \Big\{\frac{\partial \ell_\gamma^{i}}{\partial w_\gamma^{j}}\Big\}_{i>j\geq 0} := \Big\{\frac{\partial f_{i,1}}{\partial w_1^{j}}\Big\}_{i>j\geq 0}, \quad \Big\{\frac{\partial \ell_\gamma^{i}}{\partial w_\gamma^{*}}\Big\} := \Big\{\frac{\partial f_{i,1}}{\partial w_2^{0}}\Big\}_{i > 0} \label{eq:discrete_equiva_5}.
\end{gather}
Based on these definitions, further define
\begin{equation}\label{def:R_gamma}
\begin{aligned}
R_\theta^\gamma(i,j) &:= \E\Big[\frac{\partial\theta^i_\gamma}{\partial u^j_\gamma}\Big] = \frac{1}{\delta}\E\Big[\frac{\partial g_{i,1}}{\partial u^{j+1}_1}\Big] = \frac{1}{\delta}(\zeta_{ij})_{11}, \quad i > j \geq 0,\\
R_\theta^\gamma(i,\diamond) &:= \E\Big[\frac{\partial\theta^i_\gamma}{\partial u^\diamond_\gamma}\Big] = \E\Big[\frac{\partial g_{i,1}}{\partial u^0_1}\Big] = (\zeta_{i,-1})_{11}, \quad i\geq 0,\\
R_\eta^\gamma(i,j) &:= \delta\E\Big[\frac{\partial \ell^i_\gamma}{\partial w^j_\gamma}\Big] = \delta\E\Big[\frac{\partial f_{i,1}}{\partial w^j_1}\Big] = \delta(\xi_{ij})_{11}, \quad i > j \geq 0,\\
R_\eta^\gamma(i, \ast) &:= \delta\E\Big[\frac{\partial \ell^i_\gamma}{\partial w^\ast_\gamma}\Big] = \delta \E\Big[\frac{\partial f_{i,1}}{\partial w^0_2}\Big] = \delta (\xi_{i0})_{21}, \quad i \geq 0. 
\end{aligned}
\end{equation}
We check that these variables (on the left) satisfy the DMFT system. 

We first verify the covariance kernels of $(u^\diamond_\gamma, \{u^i_\gamma\}_{i\geq 0})$ and $(w^\ast_\gamma, \{w^i_\gamma\}_{i\geq 0}\})$ in (\ref{eq:discrete_equiva_2}) are identical to those in (\ref{def:eta_cov})-(\ref{def:theta_cov}). By the covariance structure of $\{u^i\}_{i\geq 0}$ and $\{w^i\}_{i\geq 0}$ in (\ref{eq:amp_gp_cov}), we have: with $y^\ast_\gamma = \varphi(w^\ast_\gamma, z)$,
\begin{equation}\label{eq:check_covariance}
\begin{aligned}
\E[u^i_\gamma u^j_\gamma] &= \delta^2 \E[u^{i+1}_1u^{j+1}_1] = \delta\E\Big[f_{i,1}(w^0,\ldots,w^i;z)f_{j,1}(w^0,\ldots,w^j;z)\Big] = \delta\E[\ell(\eta^i_\gamma,w^\ast_\gamma,z)\ell(\eta^j_\gamma, w^\ast_\gamma, z)],\\
\E[u^i_\gamma u^\diamond_\gamma] &= \delta\E[u^{i+1}_1u^0_1] = -\delta \E\Big[f_{i,1}(w^0,\ldots,w^i;z)\cT(\varphi(w^0_2,z))w^0_1\Big] = -\delta \E[\ell(\eta_\gamma^i,w^\ast_\gamma,z)\cT(y^\ast_\gamma)w^0_1]\\
&\stackrel{(*)}{=} -\frac{\delta}{\lambda^\ast_\delta}\E\Big[\ell(\eta_\gamma^i, w^\ast_\gamma,z)\cT_s(y^\ast_\gamma)\eta^0_\gamma\Big],\\
\E[(u^\diamond_\gamma)^2] &= \E[(u^0_1)^2] = 1-a^2,\\
\E[w^i_\gamma w^j_\gamma] &= \E[w^i_1w^j_1] = \E\Big[g_{i,1}(u^1,\ldots,u^i;a\theta^\ast + u^0_1,\theta^\ast)g_{j,1}(u^1,\ldots,u^j;a\theta^\ast + u^0_1,\theta^\ast)\Big] = \E[\theta^i_\gamma\theta_\gamma^j],\\
\E[w^i_\gamma w^\ast_\gamma] &= \E[w^0_2w^i_1] = \E[\theta^\ast g_{i,1}(u^1,\ldots,u^i;a\theta^\ast + u^0_1,\theta^\ast)] = \E[\theta^\ast \theta^i_\gamma],\\
\E[(w^\ast_\gamma)^2] &= \E[(w^0_2)^2] = \E[(\theta^\ast)^2] = 1,
\end{aligned}
\end{equation}
where $(*)$ follows from $w^0_1 = \frac{\eta^0}{1+\cT(y^\ast_\gamma)}$ via (\ref{eq:def_eta}). In view of (\ref{eq:discrete_equiva_1}), this shows the equivalence between covariance of  $(u^\diamond_\gamma, \{u^i_\gamma\}_{i\geq 0})$ and $(w^\ast_\gamma, \{w^i_\gamma\}_{i\geq 0}\})$ to that of $(u^\diamond, \{u^s\}_{s\geq 0})$ in (\ref{def:eta_cov}), (\ref{eq:u_cov}) and $(w^\ast, \{w^s\}_{s\geq 0})$ in (\ref{eq:w_cov}), (\ref{def:theta_cov}).

Next we derive the recursions for $\{\frac{\partial \theta_\gamma^{i}}{\partial u_\gamma^{j}}\}_{i>j\geq 0}$, $\{\frac{\partial \theta_\gamma^{i}}{\partial u_\gamma^{0}}\}_{i>0}$, $\{\frac{\partial \ell_\gamma^{i}}{\partial w_\gamma^{j}}\}_{i>j\geq 0}$, $\{\frac{\partial \ell_\gamma^{i}}{\partial w_\gamma^{*}}\}_{i>0}$. By definition, it suffices to derive the recursions for the $g$-partial derivatives $\{\frac{\partial g_{i,1}}{\partial u_1^{j+1}}(u^1,\ldots,u^i;a\theta^\ast+u^0_1,\theta^\ast)\}_{i>j\geq 0}$, $\{\frac{\partial g_{i,1}}{\partial u_1^{0}}(u^1,\ldots,u^i;a\theta^\ast+u^0_1,\theta^\ast)\}_{i>0}$, and $f$-partial derivatives $\{\frac{\partial f_{i,1}}{\partial w_1^{j}}(w^0,...,w^i;z)\}_{i>j\geq 0}$, and $\{\frac{\partial f_{i,1}}{\partial w^0_2}(w^0,...,w^i;z)\}_{i>0}$. We first compute the $f$-partial derivatives. By the construction of $\{f_i\}_{i\geq 0}$ in (\ref{eq: def f}), for any $j > 0$, we have
\begin{align}\label{eq:xi_ii_1}
 \frac{\partial f_{i,1}}{\partial w_1^j}(w^0,\ldots,w^i;z) = \partial_1\ell(\eta^i,w^0_2,z), \quad i = j,
\end{align}
and for $i > j$, we have
\begin{align}\label{eq:partial_f_w}
    \notag\frac{\partial f_{i,1}}{\partial w_1^j}(w^0,\ldots,w^i;z) &= 
        \partial_1\ell(\eta^i,w^0_2,z)\Big[-\frac{1}{\delta} \sum_{s=j+1}^{i-1}\frac{\partial f_{s,1}}{\partial w_1^j}(\zeta_{i,s})_{11}-\frac{1}{\delta}\partial_1\ell(\eta^j,w^0_2,z)(\zeta_{i,j})_{11}\Big]\\
        &= -\partial_1\ell(\eta^i,w^0_2,z)\Big[\sum_{s=j+1}^{i-1}\frac{\partial f_{s,1}}{\partial w_1^j}R_\theta^\gamma(i,s) + \partial_1\ell(\eta^j,w^0_2,z)R_\theta^\gamma(i,j)\Big].
\end{align}
For $j=0$, using $(\zeta_{0,-1})_{11} = 1$, we have
\begin{align}\label{eq:xi_ii_2}
\frac{\partial f_{i,1}}{\partial w_1^0}(w^0,\ldots,w^i;z) = 
\partial_1\ell(\eta^0,w^0_2,z)\big(1+\cT(\varphi(w^0_2,z))\big), \quad i = 0,
\end{align}
and for $i > 0$, we have
\begin{align*}
    &\frac{\partial f_{i,1}}{\partial w_1^0}(w^0,\ldots,w^i;z)\\
    &= 
        \partial_1\ell(\eta^i,w^0_2,z)\Big[-\frac{1}{\delta} \sum_{s=1}^{i-1}\frac{\partial f_{s,1}}{\partial w_1^0}(\zeta_{i,s})_{11}-\frac{1}{\delta}\partial_1\ell(\eta^0,w^0_2,z)\big(1+\mathcal{T}(\varphi(w^0_2,z))\big)(\zeta_{i,0})_{11}+\mathcal{T}(\varphi(w^0_2,z))(\zeta_{i,-1})_{11}\Big]\\
        &= \partial_1\ell(\eta^i,w^0_2,z)\Big[- \sum_{s=1}^{i-1}\frac{\partial f_{s,1}}{\partial w_1^0}R_\theta^\gamma(i,s)-\partial_1\ell(\eta^0,w^0_2,z)\big(1+\mathcal{T}(y^\ast_\gamma)\big)R^\gamma_\theta(i,0) + \mathcal{T}(y^\ast_\gamma)R_\theta^\gamma(i,\diamond)\Big],
\end{align*}
Lastly, we have 
\begin{align*}
    \frac{\partial f_{i,1}}{\partial w_2^0}(w^0,\ldots,w^i;z) &= \partial_1\ell(\eta^i,w^0_2,z)\Big[-\frac{1}{\delta}\sum_{s=0}^{i-1} \frac{\partial f_{s,1}}{\partial w_2^0}(\zeta_{i,s})_{11}+\cT'(\varphi(\omega^*,z))\varphi'(w^0_2,z)w_1^0(\zeta_{i,-1})_{11}\Big]+\partial_2\ell(\eta^i,w^0_2,z)\\
    &= \partial_1\ell(\eta^i,w^0_2,z)\Big[-\sum_{s=0}^{i-1} \frac{\partial f_{s,1}}{\partial w_2^0}R_\theta^\gamma(i,s)+\cT'(y^\ast_\gamma)\varphi'(w^0_2,z)w_1^0R_\theta^\gamma(i,\diamond)\Big]+\partial_2\ell(\eta^i,w^0_2,z).
\end{align*}

Next we compute the $g$-partial derivatives. By the construction of $\{g_j\}_{j\geq 0}$ in (\ref{eq:def g}), we have for any $i > j \geq 0$ that $\frac{\partial g_{i,1}}{\partial u_1^{j+1}} = \gamma\delta$ if $i = j+1$, and for $i \geq j+2,$
\begin{align}\label{eq:partial_g_u}
    \notag\frac{\partial g_{i,1}}{\partial u_1^{j+1}} &= 
        \Big(1-\gamma\lambda-\gamma\delta(\xi_{i-1,i-1})_{11}\Big)\frac{\partial g_{i-1,1}}{\partial u_1^{j+1}}-\gamma\delta\sum_{s=j+1}^{i-2}\frac{\partial g_{s,1}}{\partial u_1^{j+1}}(\xi_{i-1,s})_{11}\\
        &= \Big(1-\gamma\lambda-\gamma\delta \E[ \partial_1\ell(\eta^{i-1},w^0_2,z)]\Big)\frac{\partial g_{i-1,1}}{\partial u_1^{j+1}}-\gamma\sum_{s=j+1}^{i-2}\frac{\partial g_{s,1}}{\partial u_1^{j+1}}R_\eta^\gamma(i-1,s),
\end{align}
where we apply the fact
\begin{align}\label{eq:xi_ii_id}
    (\xi_{ii})_{11} &= \E\Big[\frac{\partial f_{i,1}(w^0,...,w^i;z)}{\partial w_1^i}\Big] = \E\Big[ \partial_1\ell(\eta^i,w^0_2,z)\big(1+\bm{1}\{i=0\}\mathcal{T}(\varphi(w^0_2,z))\big)\Big], \quad i \geq 0,
\end{align}
obtained by combining (\ref{eq:xi_ii_1}) and (\ref{eq:xi_ii_2}). 
Similarly, for $i > j = -1$, we have $\frac{\partial g_{i,1}}{\partial u_1^{0}} = 1$ if $i = 0$, and if $i > 0$,
\begin{align*}
\frac{\partial g_{i,1}}{\partial u_1^{0}} &=
\big(1-\gamma\lambda-\gamma\delta(\xi_{i-1,i-1})_{11}\big)\frac{\partial g_{i-1,1}}{\partial u_1^0}-\gamma\delta\sum_{s=0}^{i-2}\frac{\partial g_{s,1}}{\partial u_1^0}(\xi_{i-1,s})_{11}\\
&= \Big(1-\gamma\lambda-\gamma\delta\E\big[\partial_1\ell(\eta^{i-1},w^0_2,z)\big(1+\bm{1}\{i=1\}\mathcal{T}(y^\ast_\gamma)\big)\big]\Big)\frac{\partial g_{i-1,1}}{\partial u_1^0}-\gamma\sum_{s=0}^{i-2}\frac{\partial g_{s,1}}{\partial u_1^0}R_\eta^\gamma(i-1,s),
\end{align*}
where we apply (\ref{eq:xi_ii_id}). 

The following lemma connects the recursions derived above to the DMFT responses $r_\eta(t,s), r_\eta(t,\ast), r_\eta(t,\diamond), r_\eta(t,\diamond\diamond)$ in (\ref{eq:r_eta_response}) and $r_\theta(t,s),r_\theta(t,\diamond)$ in (\ref{eq:r_theta_response}), whose proof is given at the end of the section.
\begin{lemma}\label{lem:response_correspondence}
    Recall the processes $\{\frac{\partial \theta^t_\gamma}{\partial u^s_\gamma}\}_{t>s\geq 0}$, $\{\frac{\partial\theta^t_\gamma}{\partial u^\diamond_\gamma}\}_{t\geq 0}$, $\{\frac{\partial \ell^t_\gamma}{\partial w^s_\gamma}\}_{t>s\geq 0}$, $\{\frac{\partial \ell^t_\gamma}{\partial w^\ast_\gamma}\}_{t\geq 0}$ defined in (\ref{eq:discrete_equiva_3})-(\ref{eq:discrete_equiva_5}). The following identities hold:
    \begin{align}
    \frac{\partial\theta_\gamma^t}{\partial u_\gamma^s} &= r_\theta(t,s), \quad t>s\geq 0,\label{eq:response_id_3}\\
    \frac{\partial\theta_\gamma^t}{\partial u_\gamma^\diamond} &= r_\theta(t,\diamond), \quad t\geq 0,\label{eq:response_id_5}\\
    \frac{\partial\ell_\gamma^t}{\partial w^s_\gamma} &= r_\eta(t,s), \quad t > s \geq 1,\label{eq:response_id_4}\\
       \frac{\partial\ell_\gamma^t}{\partial w^0_\gamma}   &= r_\eta(t,0) + r_\eta(t,\diamond), \quad t\geq 1, \label{eq:response_id_1}\\
       \frac{\partial\ell_\gamma^t}{\partial w^*_\gamma} &= r_\eta(t,*) + r_\eta(t,\diamond\diamond) \quad t\geq 0. \label{eq:response_id_2}
    \end{align}
Consequently, by taking expectation on both sides and (\ref{def:R_gamma}), 
\begin{equation}\label{eq:response_R_correspond}
\begin{aligned}
R_\theta^\gamma(t,s) &= \frac{1}{\delta}(\zeta_{t,s})_{11} = R_\theta(t,s), \quad t > s \geq 0\\ 
R_\theta^\gamma(t,\diamond) &= (\zeta_{t,-1})_{11} = R_\theta(t,\diamond), \quad t \geq 0,\\ 
R_\eta^\gamma(t,s) & = \delta(\xi_{t,s})_{11} = R_\eta(t,s), \quad t > s \geq 1\\
R_\eta^\gamma(t,0) &= \delta(\xi_{t,0})_{11} = R_\eta(t,0) + R_\eta(t,\diamond), \quad t \geq 1\\
R_\eta^\gamma(t,\ast) &= \delta (\xi_{t,0})_{21} = R_\eta(t,\ast) + R_\eta(t,\diamond\diamond), \quad t\geq 0.
\end{aligned}
\end{equation}
\end{lemma}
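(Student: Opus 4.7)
The proof will proceed by joint induction on $t$ for all five identities, matching the AMP-side recursions for $\partial g_{i,1}/\partial u_1^{\bullet}$ and $\partial f_{i,1}/\partial w_1^{\bullet}$, $\partial f_{i,1}/\partial w_2^0$ computed above against the DMFT recursions (\ref{eq:r_theta_response})--(\ref{eq:r_eta_response}) for $r_\theta, r_\eta$. Base cases are verified by direct inspection: for instance $\partial g_{0,1}/\partial u_1^0 = 1 = r_\theta(0,\diamond)$, matching (\ref{eq:response_id_5}) at $t=0$; and the $t=1$ case of (\ref{eq:response_id_1}) follows from (\ref{eq:xi_ii_2}) together with $r_\eta(0,\diamond) = \cT(y)\partial_1\ell(\eta^0,w^*,z)$, the latter obtained from (\ref{eq:r_eta_response}) using the boundary value $R_\theta(0,\diamond) = 1$.

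For the $\theta$-side identities (\ref{eq:response_id_3})--(\ref{eq:response_id_5}), dividing the AMP recursion (\ref{eq:partial_g_u}) by $\delta$ converts $\partial g_{i,1}/\partial u_1^{j+1}$ into $r_\theta^\gamma(i,j)$, and the inductive identification $R_\eta^\gamma(i-1,s) = R_\eta(i-1,s)$ for $s\geq 1$, together with $R_\eta^\gamma(i-1,0) = R_\eta(i-1,0) + R_\eta(i-1,\diamond)$ at the boundary, reproduces exactly the recursions for $r_\theta(i,j)$ and $r_\theta(i,\diamond)$. The extra term $-\gamma R_\eta(t,\diamond)$ in the definition of $r_\theta(t+1,\diamond)$ is accounted for by precisely this boundary split via (\ref{eq:response_R_correspond}). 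The $\eta$-side identity (\ref{eq:response_id_4}) with $s\geq 1$ is immediate: the recursion (\ref{eq:partial_f_w}) matches the $r_\eta(t,s)$ recursion directly after identifying $R_\theta^\gamma(i,r) = R_\theta(i,r)$ by the inductive hypothesis.

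The main obstacle is the splitting encoded in (\ref{eq:response_id_1}), which asks to write the combined AMP derivative $\partial f_{i,1}/\partial w_1^0$ as a sum of two distinct DMFT responses. Substituting $\partial f_{s,1}/\partial w_1^0 = r_\eta(s,0) + r_\eta(s,\diamond)$ into the AMP recursion and expanding the $(1+\cT(y))$ factor, I would partition the bracket into two sub-expressions: one collecting all $r_\eta(s,0)$ terms together with $-\partial_1\ell(\eta^0,w^*,z)R_\theta(i,0)$, reassembling the $r_\eta(i,0)$ recursion; and another collecting all $r_\eta(s,\diamond)$ terms together with $\cT(y)R_\theta(i,\diamond) - \cT(y)\partial_1\ell(\eta^0,w^*,z)R_\theta(i,0)$, reassembling the $r_\eta(i,\diamond)$ recursion once we recognize $\cT(y)\partial_1\ell(\eta^0,w^*,z) = r_\eta(0,\diamond)$ and thereby extend the inner sum to start at $s=0$. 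For (\ref{eq:response_id_2}), chain rule decomposes $\partial f_{i,1}/\partial w_2^0$ into a $\partial_2\ell(\eta^i,w^*,z)$ contribution and a $\partial_1\ell(\eta^i,w^*,z)\cT'(y)\varphi'(w^*,z)w_1^0 R_\theta(i,\diamond)$ contribution; both propagate through linear recursions with identical transition kernel $-\partial_1\ell(\eta^i,w^*,z)\sum_s R_\theta(i,s)(\cdot)$, so they can be matched term-by-term with $r_\eta(i,*)$ and $r_\eta(i,\diamond\diamond)$ respectively. The bookkeeping around the $s=0$ boundary in (\ref{eq:response_id_1}) is the only genuinely subtle step; everything else is a routine linear-recursion unwinding, and taking expectations in (\ref{def:R_gamma}) then yields (\ref{eq:response_R_correspond}).
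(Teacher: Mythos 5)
Your proposal is correct and follows essentially the same route as the paper: match the AMP-side derivative recursions for $\partial g_{i,1}/\partial u_1^{\bullet}$ and $\partial f_{i,1}/\partial w_1^{\bullet}$, $\partial f_{i,1}/\partial w_2^0$ against the DMFT recursions by (joint) induction, with the key steps being the boundary split $R_\eta^\gamma(t,0)=R_\eta(t,0)+R_\eta(t,\diamond)$ on the $\theta$-side, the identification $r_\eta(0,\diamond)=\partial_1\ell(\eta^0,w^\ast,z)\cT(y)$ to absorb the $(1+\cT(y))$ factor and extend the sum to $s=0$ in (\ref{eq:response_id_1}), and superposition of the two linear recursions for (\ref{eq:response_id_2}) — all of which coincide with the paper's argument.
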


Lastly, let us verify the recursions of $\{\theta^i_\gamma\}_{i\geq 0}$ and $\{\eta^i_\gamma\}_{i\geq 0}$ as defined by (\ref{eq:discrete_equiva_1}). By the construction of $\{g_i\}$ and $\{f_i\}$ in (\ref{eq:def g}) and (\ref{eq: def f}), we can write
\begin{align*}
    \theta_\gamma^{i+1} &= (1-\gamma\lambda)\theta_\gamma^i + \gamma\delta\Big[u^{i+1}_1 - \sum_{j=0}^i \theta_\gamma^j(\xi_{ij})_{11} - \sum_{j=0}^i \theta^* (\xi_{ij})_{21}\Big]\\
    &= \big(1-\gamma\lambda-\gamma\delta(\xi_{ii})_{11}\big)\theta_\gamma^i - \gamma\delta\sum_{j=0}^{i-1}\theta_\gamma^j(\xi_{ij})_{11} - \gamma\delta\theta^\ast\sum_{j=0}^{i} (\xi_{ij})_{21} + \gamma\delta u_1^{i+1}\\
    &= \big(1-\gamma\lambda-\gamma\delta(\xi_{ii})_{11}\big)\theta_\gamma^i - \gamma\sum_{j=0}^{i-1}R_\eta^\gamma(i,j)\theta_\gamma^j - \gamma (R_\eta(i,\ast) + R_\eta(i,\diamond\diamond))\theta^\ast + \gamma u^i_\gamma,
\end{align*}
where we apply the fact that $\sum_{j=0}^{i} (\xi_{ij})_{21} = (\xi_{i0})_{21} = \big(R_\eta(i,\ast) + R_\eta(i,\diamond\diamond)\big)/\delta$ by (\ref{eq:response_R_correspond}), since by construction (\ref{eq: def f}), $f_{i,1}$ does not depend on $w_2^1,...,w_2^i$ but only on $w_2^0$. If $i = 0$, we have by (\ref{eq:xi_ii_id}) that $(\xi_{00})_{11} = \E[\partial_1 \ell(\eta^0,w^0_2,z)(1+\cT(y^\ast_\gamma))] = \E[\partial_1 \ell(\eta^0,w^0_2,z)] + R_\eta(0,\diamond)/\delta$ and $(\xi_{00})_{21} = (R_\eta(0,\ast) + R_\eta(0,\diamond\diamond))/\delta$, so
\begin{align*}
\theta^1_\gamma = \theta^0_\gamma + \gamma\Big[-\lambda\theta^0_\gamma - \delta\E[\partial_1 \ell(\eta^0,w^0_2,z)] \theta^0_\gamma - R_\eta(0,\diamond)\theta^0_\gamma - (R_\eta(0,\ast) + R_\eta(0,\diamond\diamond))\theta^\ast + u^0_\gamma\Big],
\end{align*}
which matches $\theta^1$ in (\ref{def:dmft_theta}). For $i\geq 1$, we have $(\xi_{ii})_{11} = \E[\partial_1 \ell(\eta^i,w^0_2,z)]$, so using $R_\eta^\gamma(i,j) = R_\eta(i,j)$ for $i > j > 0$ and $R_\eta^\gamma(i,0) = R_\eta(i,0) + R_\eta(i,\diamond)$ via (\ref{eq:response_R_correspond}), we have 
\begin{align}
    \theta^{i+1}_\gamma &= \theta_\gamma^i +\gamma\Big(- \lambda\theta_\gamma^i -\delta \E\left[\partial_1\ell(\eta^i,w_2^0,z)\right]\theta_\gamma^{i}-\sum_{j=0}^{i-1}R_\eta(i,j)\theta_\gamma^j - R_\eta(i,\diamond)\theta_\gamma^0 - \big(R_\eta(i,\ast) + R_\eta(i,\diamond\diamond)\big)\theta^\ast +  u_\gamma^{i}\Big),
\end{align}
which matches (\ref{def:dmft_theta}). We also have by (\ref{eq:def_eta}) that 
\begin{align*}
    \notag\eta^i_\gamma &= -\frac{1}{\delta} \sum_{j=0}^{i-1} \ell(\eta^j_\gamma,w^0_2,z)(\zeta_{i,j})_{11} + \cT(\varphi(w_2^{0},z))w_1^0(\zeta_{i,-1})_{11}+w_1^{i}\\
    &= - \sum_{j=0}^{i-1} R_\theta(i,j)\ell(\eta^j_\gamma,w^0_2,z) + \cT(\varphi(w_2^{0},z))w_1^0R_\theta(i,\diamond)+w_1^i\\
    &=-\sum_{j=0}^{i-1}R_\theta(i,j)\ell(\eta^j_\gamma,w^0_2,z) + \mathcal{T}(y^\ast_\gamma)w^0_1 R_\theta(i,\diamond) + w^i_1,
\end{align*}
which matches (\ref{def:dmft_eta}). The proof is complete.
\end{proof}

\begin{proof}[Proof of Lemma \ref{lem:response_correspondence}]
(\ref{eq:response_id_3}) follows by comparing the recursion (\ref{eq:partial_g_u}) with that of $r_\theta(t,s)$ in (\ref{eq:r_theta_response}), and (\ref{eq:response_id_4}) follows by comparing (\ref{eq:partial_f_w}) with that of $r_\eta(t,s)$ in (\ref{eq:r_eta_response}). To see (\ref{eq:response_id_5}), we have $\frac{\partial \theta^0_\gamma}{\partial u^\diamond_\gamma} = r_\theta(0,\diamond) = 1$, and using $R_\eta(0,\diamond) = \delta\E[\partial_1\ell(\eta^0,w^0_2,z)\cT(y)]$
\begin{align*}
\frac{\partial \theta^1_\gamma}{\partial u^\diamond_\gamma} &= \Big(1-\gamma\lambda - \gamma\delta \E[\partial_1\ell(\eta^0, w^0_2,z)]\Big) - \gamma\delta \E[\partial_1\ell(\eta^0,w^0_2,z)\cT(y^\ast_\gamma)] \\
&= \Big(1-\gamma\lambda - \gamma\delta \E[\partial_1\ell(\eta^0, w^0_2,z)]\Big) - \gamma R_\eta(0,\diamond) = r_\theta(1,\diamond). 
\end{align*}
For $t \geq 2$, we have by (\ref{eq:response_R_correspond}) that
\begin{align*}
\frac{\partial \theta^t_\gamma}{\partial u^\diamond_\gamma} &= \Big(1-\gamma\lambda-\gamma\delta\E\big[\partial_1\ell(\eta^{t-1},w^0_2,z)\big]\Big)\frac{\partial \theta^{t-1}_\gamma}{\partial u^\diamond_\gamma}-\gamma\sum_{s=0}^{t-2} \frac{\partial\theta^s_\gamma}{\partial u^\diamond_\gamma} R_\eta^\gamma(t-1,s)\\
&= \Big(1-\gamma\lambda-\gamma\delta\E\big[\partial_1\ell(\eta^{t-1},w^0_2,z)\big]\Big)\frac{\partial \theta^{t-1}_\gamma}{\partial u^\diamond_\gamma}-\gamma\sum_{s=0}^{t-2} \frac{\partial\theta^s_\gamma}{\partial u^\diamond_\gamma} R_\eta(t-1,s) - \gamma R_\eta(t-1,\diamond) = r_\theta(t,\diamond).
\end{align*}
Next we prove (\ref{eq:response_id_1}) and (\ref{eq:response_id_2}) by induction.

\textbf{Proof of (\ref{eq:response_id_1})} For the baseline case $t =1$, we have 
\begin{align*}
\frac{\partial \ell^1_\gamma}{\partial w^0_\gamma} = \partial_1\ell(\eta^1, w^0_2, z)\cdot\big[-\partial_1\ell(\eta^0, w^0_2, z)\big(1+\cT(y^\ast_\gamma)\big)R_\theta(1,0) + \cT(y^\ast_\gamma)R_\theta(1,\diamond)\big],
\end{align*}
while $r_\eta(1,0) = -\partial_1\ell(\eta^1, w^\ast, z)\partial_1\ell(\eta^0, w^\ast, z)R_\theta(1,0)$, and $r_\eta(1,\diamond) = \partial_1\ell(\eta^1, w^\ast, z)\cdot\big[-\partial_1\ell(\eta^0, w^\ast, z)\cT(y)R_\theta(1,0) + \cT(y)R_\theta(1,\diamond)\big]$, proving the baseline case. Suppose now the claim holds for some $t\geq 1$. For $t+1$, we have
\begin{align*}
\frac{\partial \ell^{t+1}_\gamma}{\partial w^0_\gamma} &= \partial_1\ell(\eta^{t+1},w^0_2,z)\Big[-\sum_{r=1}^t \frac{\partial \ell^r_\gamma}{\partial w^0_\gamma}R_\theta^\gamma(t+1,r) - \partial_1 \ell(\eta^0,w^0_2,z)\big(1+\cT(y^\ast_\gamma)\big)R_\theta^\gamma(t+1,0) + \cT(y^\ast_\gamma)R_\theta(t+1,\diamond)\Big]\\
&= \partial_1\ell(\eta^{t+1},w^0_2,z)\Big[-\sum_{r=1}^t \big(r_\eta(r,0) + r_\eta(r,\diamond)\big)R_\theta(t+1,r)\\
&\quad - \partial_1 \ell(\eta^0,w^0_2,z)\big(1+\cT(y^\ast_\gamma)\big)R_\theta(t+1,0) + \cT(y^\ast_\gamma)R_\theta(t+1,\diamond)\Big],
\end{align*}
applying the induction hypothesis. On the other hand, we have
\begin{align*}
r_\eta(t+1,0) &= \partial_1\ell(\eta^{t+1},w^*,z)\Big[ -\sum_{r=1}^{t}r_\eta(r,0)R_\theta(t+1,r)-\partial_1\ell(\eta^{0},w^*,z)R_\theta(t+1,0)\Big],\\
r_\eta(t+1,\diamond) &= \partial_1\ell(\eta^{t+1},w^*,z)\bigg(-\sum_{r=0}^{t}r_\eta(r,\diamond)R_\theta(t+1,r) + \cT(y)R_\theta(t+1,\diamond)\bigg)\\
&= \partial_1\ell(\eta^{t+1},w^*,z)\bigg(-\sum_{r=1}^{t}r_\eta(r,\diamond)R_\theta(t+1,r) - \partial_1\ell(\eta^0,w^\ast,z)\cT(y)R_\theta(t+1,0) + \cT(y)R_\theta(t+1,\diamond)\bigg),
\end{align*}
concluding the induction for (\ref{eq:response_id_1}).

\noindent \textbf{Proof of (\ref{eq:response_id_2})} The baseline case $t = 0$ holds as
\begin{align*}
\frac{\partial \ell^0_\gamma}{\partial w^\ast_\gamma} &= \partial_1 \ell(\eta^0, w^0_2, z)\cT'(y^\ast_\gamma)\varphi'(w^0_2, z)w^0_1 R_\theta(0,\diamond) + \partial_2 \ell(\eta^0, w^0_2, z),\\
r_\eta(0,\ast) &= \partial_2 \ell(\eta^0, w^\ast, z), \quad r_\eta(0,\diamond\diamond) = \partial_1 \ell(\eta^0, w^\ast, z)\cT'(y)\varphi'(w^\ast, z)w^0 R_\theta(0,\diamond).
\end{align*}
Suppose the claim holds up to some $t\geq 0$. For $t+1$, we have
\begin{align*}
\frac{\partial \ell^{t+1}_\gamma}{\partial w^\ast_\gamma} &=  \partial_1 \ell(\eta^{t+1}, w^0_2,z)\Big[- \sum_{s=0}^t \frac{\partial \ell^s_\gamma}{\partial w^\ast_\gamma} R_\theta^\gamma(t+1,s) + \cT'(y^\ast_\gamma)\varphi'(w^0_2, z)w^0_1 R_\theta^\gamma(t+1,\diamond)\Big] + \partial_2 \ell(\eta^{t+1},w^0_2,z)\\
&= \partial_1 \ell(\eta^{t+1}, w^0_2,z)\Big[- \sum_{s=0}^t \big(r_\eta(s,\ast) + r_\eta(s,\diamond\diamond)\big) R_\theta(t+1,s) + \cT'(y^\ast_\gamma)\varphi'(w^0_2, z)w^0_1 R_\theta(t+1,\diamond)\Big] + \partial_2 \ell(\eta^{t+1},w^0_2,z),
\end{align*}
by the induction hypothesis.
The claim now holds by the definition of $r_\eta(t+1,\ast)$ and $r_\eta(t+1,\diamond\diamond)$.
\end{proof}

\section{Proof of fixed point system (Proposition \ref{prop: dmft fixed point})}
\label{sec:fixed_point}


\begin{proof}[Proof of Proposition \ref{prop: dmft fixed point}]
        We first introduce some notation. For any $t\geq0$, set
        \begin{equation*}
            \Gamma^t := \E[\partial_1\ell(\eta^t,w^\ast,z)].
        \end{equation*}

        Note that under regularity conditions on $\ell$ and $\eta^t\rightarrow_{L^2}\eta^\infty$, we have
        \begin{equation*}
            \Gamma^t \rightarrow\Gamma^\infty := \E[\partial_1\ell(\eta^\infty,w^\ast,z)].
        \end{equation*}
        Further define the values 
        \begin{equation}\label{def:R_eta_theta_infty}
        R_\eta^\infty = \sum_{s=1}^\infty R_\eta(s), \qquad R_\theta^\infty = \sum_{s=1}^\infty R_\theta(s),
    \end{equation}
    which exist by Definition \ref{def:exp_conv_spec_init}-(5). Lastly, let $R_\eta^\ast := \delta\E[r_\eta^\ast]$ and $R_\eta(t) := \delta\E[r_\eta(t)]$, and by Definition \ref{def:exp_conv_spec_init}-(2), we have
    \begin{align*}
    R_\eta(t+s,t) \rightarrow R_\eta(s), \quad R_\eta(t,\ast) \rightarrow R_\eta^\ast
    \end{align*}
    as $t\rightarrow\infty$ for fixed $s$.

    We first show (\ref{eq:fix_1}).
    We know from the recursion (\ref{def:dmft_theta})  that
    \begin{equation*}
            \frac{\theta^{t+1}-\theta^t}{\gamma} =- \lambda\theta^t -\delta \Gamma^t\theta^{t}-\sum_{s=0}^{t-1}R_\eta(t,s)\theta^{s} - R_\eta(t,\diamond)\theta^0 - \big(R_\eta(t,\ast) + R_\eta(t,\diamond\diamond)\big)\theta^* + u^{t}.
    \end{equation*}
    By Definition \ref{def:exp_conv_spec_init}-(1), as $t\rightarrow \infty$, the left side satisfies $(\theta^{t+1}-\theta^t)/\gamma \overset{L^2}{\rightarrow} 0$. Under the conditions in Definition \ref{def:exp_conv_spec_init}, we further have the convergence
    \begin{gather*}
        \lambda\theta^t \overset{L^2}{\to}\lambda\theta^\infty,\quad \delta\Gamma^t\theta^t \overset{L^2}{\to}\delta\Gamma^\infty\theta^\infty, \quad u^t \overset{L^2}{\to} u^\infty\\
        R_\eta(t,\diamond)\theta^0 \overset{L^2}{\to} 0, \quad \big(R_\eta(t,\ast) + R_\eta(t,\diamond\diamond)\big)\theta^* \overset{L^2}{\to} R_\eta^\ast \theta^\ast,
    \end{gather*}
    the last one following from $R_\eta(t,\ast) = \delta\E[r_\eta(t,\ast)] \rightarrow \delta \E[r_\eta^\ast] = R_\eta^\ast$. It remains to show $\sum_{s=0}^{t-1}R_\eta(t,s)\theta^{s} \overset{L^2}{\to} R_\eta^\infty \theta^\infty$. To this end, we claim that
    \begin{align}\label{eq:R_eta_sum}
    \lim_{t\rightarrow\infty} \sum_{s=0}^{t-1} R_\theta(t,t-s-1) = R_\theta^\infty, \quad \lim_{t\rightarrow\infty} \sum_{s=0}^{t-1} R_\eta(t,t-s-1) = R_\eta^\infty,
    \end{align}
    with the two values on the right given by (\ref{def:R_eta_theta_infty}).
    We prove the second claim, and the first one follows from similar arguments. Note that
    \begin{align*}
    \Big|\sum_{s=0}^{t-1} R_\eta(t,t-s-1) - R_\eta^\infty\Big| &= \Big|\sum_{s=0}^{t-1} R_\eta(t,t-s-1) - \sum_{s=0}^{\infty} R_\eta(s+1)\Big|\\
    &\leq \sum_{s=0}^{t-1} |R_\eta(t,t-s-1) - R_\eta(s+1)| + \sum_{s=t}^\infty |R_\eta(s+1)|.
    \end{align*}
    Note that $|R_\eta(s)| \leq Ce^{-cs}$, which follows from 
    \begin{align*}
    |R_\eta(s)| \leq |R_\eta(t+s,t) - R_\eta(s)| + |R_\eta(t+s,t)| \leq |R_\eta(t+s,t) - R_\eta(s)| + Ce^{-cs},
    \end{align*}
    followed by taking $t\rightarrow\infty$. This implies $\sum_{s=t}^\infty |R_\eta(s+1)| \leq Ce^{-ct}$. On the other hand, for any integer $\Delta$ and large enough $t$, again by $|R_\eta(s)| \leq Ce^{-cs}$ we have
    \begin{align*}
    \sum_{s=0}^{t-1} |R_\eta(t,t-s-1) - R_\eta(s+1)| &\leq \Delta \max_{0\leq s\leq \Delta} |R_\eta(t,t-s) - R_\eta(s)| + \sum_{s=\Delta+1}^{t-1} (|R_\eta(t,t-s-1)| + |R_\eta(s+1)|)\\
    &\leq \Delta \max_{0\leq s\leq \Delta} |R_\eta(t,t-s) - R_\eta(s)| + Ce^{-c\Delta}. 
    \end{align*}
     Then by taking $t\rightarrow\infty$ followed by $\Delta \rightarrow\infty$, we have by Definition \ref{def:exp_conv_spec_init}-(3) that $\sum_{s=0}^{t-1} |R_\eta(t,t-s-1) - R_\eta(s+1)| \rightarrow 0$ as $t\rightarrow\infty$, proving the claim (\ref{eq:R_eta_sum}).
    We can then write
    \begin{align*}
    |\sum_{s=0}^{t-1} R_\eta(t,s)\theta^s - R_\eta^\infty\theta^\infty| &= |\sum_{s=0}^{t-1} R_\eta(t,t-s-1)\theta^{t-s-1} - R_\eta^\infty\theta^\infty|\\
    &\leq \underbrace{\sum_{s=0}^{t-1} |R_\eta(t,t-s-1)||\theta^{t-s-1} - \theta^\infty|}_{(I_t)} + \underbrace{\Big|\sum_{s=0}^{t-1} R_\eta(t,t-s-1) - R_\eta^\infty\Big| |\theta^\infty|}_{(II_t)}.
    \end{align*}
    Using claim (\ref{eq:R_eta_sum}), we have $(II_t) \overset{L^2}{\to} 0$. On the other hand, by Definition \ref{def:exp_conv_spec_init}-(1)(5), we have 
    \begin{align*}
        \pnorm{(I_t)}{L^2} \leq \sum_{s=0}^{t-1}|R_\eta(t,t-s-1)|\pnorm{\theta^{t-s-1}-\theta^\infty}{L^2} \leq C\sum_{s=0}^{t-1} e^{-c(s+1)}e^{-c(t-s-1)} = Cte^{-ct} \rightarrow 0.
    \end{align*}
    This concludes the claim $\sum_{s=0}^{t-1}R_\eta(t,s)\theta^{s} \overset{L^2}{\to} R_\eta^\infty \theta^\infty$, thereby showing (\ref{eq:fix_1}).
    
    To see (\ref{eq:fix2}), we have by (\ref{def:dmft_eta}) the recursion
    \begin{equation*}
        \eta^t = -\sum_{s=0}^{t-1}\ell(\eta^{s},w^*,z)R_\theta(t,s) + \cT(y^*)w^0R_\theta(t,\diamond) + w^t.
    \end{equation*}
    The left side satisfies $\eta^t \overset{L^2}{\to} \eta^\infty$ by Definition \ref{def:exp_conv_spec_init}-(1). By definition \ref{def:exp_conv_spec_init}-(2)(5), as $t\rightarrow\infty$, it holds that
    \begin{equation*}
        \cT(y^*)w^0R_\theta(t,\diamond) \overset{L^2}{\to} 0, \quad w^t \overset{L^2}{\to} w^\infty.
    \end{equation*}
    For the other term, we have
    \begin{align*}
        &\Big|\sum_{s=0}^{t-1}\ell(\eta^{s},w^*,z)R_\theta(t,s) - R_\theta^\infty \ell(\eta^\infty, w^\ast, z)\Big| =\Big|\sum_{s=0}^{t-1}\ell(\eta^{t-s-1},w^*,z)R_\theta(t,t-s-1) - R_\theta^\infty \ell(\eta^\infty, w^\ast, z)\Big|\\
        &\leq \underbrace{\sum_{s=0}^{t-1}\big|\ell(\eta^{t-s-1},w^*,z)-\ell(\eta^\infty,w^*,z)\big||R_\theta(t,t-s-1)|}_{(I_t)} + \underbrace{\Big|\sum_{s=0}^{t-1}R_\theta(t,t-s-1) - R_\theta^\infty\Big||\ell(\eta^\infty,w^*,z)|}_{(II_t)}.
    \end{align*}
    Term $(II_t) \overset{L^2}{\to} 0$ by claim (\ref{eq:R_eta_sum}). By Assumption \ref{ass: ell lipschitz}, term $(I_t)$ satisfies 
    \begin{align*}
    \pnorm{(I_t)}{L^2} &\leq \sum_{s=0}^{t-1} |R_\theta(t,t-s-1)|\pnorm{\ell(\eta^{t-s-1,w^\ast,z}) - \ell(\eta^\infty,w^\ast,z)}{L^2}\\
    &\leq C\sum_{s=0}^{t-1}e^{-c(t-s-1)}\pnorm{\eta^{t-s-1} - \eta^\infty}{L^2} \leq C\sum_{s=0}^{t-1}e^{-c(t-s-1)}e^{-cs} \rightarrow 0.
    \end{align*}
    This concludes that $\sum_{s=0}^{t-1}\ell(\eta^{s},w^*,z)R_\theta(t,s) \overset{L^2}{\to} R_\theta^\infty \ell(\eta^\infty, w^\ast, z)$, which implies (\ref{eq:fix2}) after some rearranging. 

    To see (\ref{eq:fix3}), since $u^t\overset{L^2}{\to} u^\infty$, we have
    \begin{align*}
        C^\infty_\eta = \lim_{t\rightarrow\infty} \E[(u^t)^2] =  \lim_{t\rightarrow\infty} C_\eta(t,t) = \E\Big[\ell^2(\eta^\infty,w^\ast,z)\Big].
    \end{align*}
    Similarly, $(w^t,w^\ast) \overset{L^2}{\to} (w^\infty,w^\ast)$, so 
    \begin{align*}
        C_\theta^\infty = \lim_{t\rightarrow\infty}\E[(w^t,w^\ast)(w^t,w^\ast)^\top] = \lim_{t\rightarrow\infty} 
        \begin{bmatrix}
        C_\theta(t,t) & C_\theta(t,\ast)\\
        C_\theta(t,\ast) & C_\theta(\ast,\ast)
        \end{bmatrix}
        = \E\Big[(\theta^\infty,\theta^\ast)(\theta^\infty,\theta^\ast)^\top\Big].
    \end{align*}
    
    Next we show (\ref{eq:fix5}). Using (\ref{eq:r_theta_response}) and that $r_\theta(t,s) = R_\theta(t,s)$, we have for any fixed $s\geq 1$ that
    \begin{equation*}
        R_\theta(t+1+s,t) = (1-\gamma\lambda-\gamma\delta\Gamma^{t+s})R_\theta(t+s,t)-\gamma\sum_{r=t+1}^{t+s-1}R_\eta(t+s,r)R_\theta(r,t).
    \end{equation*}
    For such fixed $s$, taking $t\rightarrow\infty$ yields
    \begin{equation*}
        R_\theta(s+1) = (1-\gamma\lambda-\gamma\delta\Gamma^{\infty})R_\theta(s) - \gamma\sum_{r=1}^{s-1}R_\eta(r)R_\theta(s-r),
    \end{equation*}
    where we apply the fact that
    \begin{align*}
        \sum_{r=t+1}^{t+s-1}R_\eta(t+s,r)R_\theta(r,t) = \sum_{r=1}^{s-1}R_\eta(t+s,t+r)R_\theta(t+r,t) \rightarrow \sum_{r=1}^{s-1}R_\eta(s-r)R_\theta(r) = \sum_{r=1}^{s-1}R_\eta(r)R_\theta(s-r).
    \end{align*}
    The above display can be rearranged as
    \begin{equation*}
        \frac{R_\theta(s)-R_\theta(s+1)}{\gamma} = \lambda R_\theta(s) + \delta \Gamma^\infty R_\theta(s) + \sum_{r=1}^{s-1}R_\eta(r)R_\theta(s-r).
    \end{equation*}
    Summing both sides from $s=1$ to $\infty$, we get
    \begin{equation*}
        \sum_{s=1}^\infty  \frac{R_\theta(s)-R_\theta(s+1)}{\gamma} = \lambda R_\theta^\infty + \delta \Gamma^\infty R_\theta^\infty + \sum_{s=1}^\infty\sum_{r=1}^{s-1}R_\eta(r)R_\theta(s-r).
    \end{equation*}
    Using $R_\theta(t+1,t) = \gamma$ for all $t\geq 0$, we have $R_\theta(1) = \gamma$, so we can compute the left side by
    \begin{align*}
         \sum_{s=1}^\infty  \frac{R_\theta(s)-R_\theta(s+1)}{\gamma} &= \frac{R_\theta(1)}{\gamma} = 1.
    \end{align*}
    The double sum on the right side can be computed as
    \begin{align*}
        \sum_{s=1}^\infty\sum_{r=1}^{s-1}R_\eta(r)R_\theta(s-r) &= \sum_{r=1}^\infty R_\eta(r)\sum_{s=r+1}^\infty R_\theta (s-r) = R_\theta^\infty \sum_{r=1}^\infty R_\eta(r) = R_\theta^\infty R_\eta^\infty.
    \end{align*}
    In summary, we have $1 = (\lambda+\delta\Gamma^\infty+R_\eta^\infty)R_\theta^\infty$, which yields (\ref{eq:fix5}) after some rearranging. 

    Next we show (\ref{eq:fix6}). Using (\ref{eq:r_eta_response}), we have for any fixed $s\geq 1$ that
    \begin{align*}
        r_\eta(t+s,t) = 
\partial_1\ell(\eta^{t+s},w^*,z)\Big(-\sum_{r=t+1}^{t+s-1}r_\eta(r,t)R_\theta(t+s,r)-\partial_1\ell(\eta^t,w^*,z)R_\theta(t+s,t)\Big).
    \end{align*}
    For such fixed $s$, taking the $L^2$ limit of both sides as $t\rightarrow \infty$, we get
    \begin{equation*}
             r_\eta(s) = 
\partial_1\ell(\eta^{\infty},w^*,z)\Big(-\sum_{r=1}^{s-1} R_\theta(s-r)r_\eta(r)-\partial_1\ell(\eta^\infty,w^*,z)R_\theta(s)\Big),
    \end{equation*}
    where we apply
    \begin{align*}
    \sum_{r=t+1}^{t+s-1} r_\eta(r,t) R_\theta(t+s,r) = \sum_{r=1}^{s-1} r_\eta(t+r,t) R_\theta(t+s,t+r) \overset{L^2}{\to} \sum_{r=1}^{s-1} R_\theta(s-r)r_\eta(r).
    \end{align*}
    Summing both sides from $s=1$ to $\infty$ (such $L^2$ limit exists due to (\ref{eq:response_exp_decay})), we get
    \begin{align*}
        \sum_{s=1}^\infty r_\eta(s) &= 
\partial_1\ell(\eta^{\infty},w^*,z)\Big(-\sum_{s=1}^\infty\sum_{r=1}^{s-1}r_\eta(r)R_\theta(s-r)-\partial_1\ell(\eta^\infty,w^*,z)R_\theta^\infty\Big)\\
&= \partial_1\ell(\eta^{\infty},w^*,z)\Big(-R_\theta^\infty\sum_{s=1}^\infty r_\eta(s)-\partial_1\ell(\eta^\infty,w^*,z)R_\theta^\infty\Big),
    \end{align*}
   which further implies that
    \begin{align*}
        \sum_{s=1}^\infty r_\eta(s) &= -\Big(1+\partial_1\ell(\eta^{\infty},w^*,z)R_\theta^\infty\Big)^{-1}\partial_1\ell(\eta^{\infty},w^*,z)^2 R_\theta^\infty\\
        &=-\partial_1\ell(\eta^{\infty},w^*,z) + \Big(1+\partial_1\ell(\eta^{\infty},w^*,z)R_\theta^\infty\Big)^{-1}\partial_1\ell(\eta^{\infty},w^*,z)\\
        &= -\partial_1\ell(\eta^{\infty},w^*,z) + \Big[1-\Big(1+\partial_1\ell(\eta^{\infty},w^*,z)R_\theta^\infty\Big)^{-1}\Big](R_\theta^\infty)^{-1}.
    \end{align*}
    Taking expectations on both sides and rearranging gives (\ref{eq:fix6}).

    Lastly we show (\ref{eq:fix7}). From (\ref{eq:r_eta_response}), we have 
    \begin{align}\label{eq:r_eta_ast}
        \notag r_\eta(t,*) &= \partial_1\ell(\eta^t,w^*,z)\Big(-\sum_{r=0}^{t-1}r_\eta(r,*)R_\theta(t,r)\Big) + \partial_2\ell(\eta^t,w^*,z)\\
        &=\partial_1\ell(\eta^t,w^*,z)\Big(-\sum_{r=1}^{t}r_\eta(t-r,*)R_\theta(t,t-r)\Big) + \partial_2\ell(\eta^t,w^*,z).
    \end{align}
    We claim that 
    \begin{align}\label{eq:convolution_convergence}
    A(t) := \sum_{r=1}^t r_\eta(t-r,\ast)R_\theta(t,t-r) \overset{L^2}{\to} R_\theta^\infty r_\eta^\ast.
    \end{align}
    To see this, let $A_0(t) := \sum_{r=1}^t r_\eta(t-r,\ast)R_\theta(r)$. Then
    \begin{align*}
    \pnorm{A(t) - A_0(t)}{L^2} &\leq \sum_{r=1}^t \pnorm{r_\eta(t-r,\ast)}{L^2} \big|R_\theta(t,t-r) - R_\theta(r)\big| \leq \sup_{t\geq 0}\pnorm{r_\eta(t,\ast)}{L^2}\sum_{r=1}^t \big|R_\theta(t,t-r) - R_\theta(r)\big|\\
    &\leq \sup_{t\geq 0}\pnorm{r_\eta(t,\ast)}{L^2} \Big(\Delta\max_{1\leq r\leq \Delta} \big|R_\theta(t,t-r) - R_\theta(r)\big| + \sum_{r=\Delta+1}^t \big(|R_\theta(t,t-r)| + |R_\theta(r)|\big)\Big)\\
    &\leq \sup_{t\geq 0}\pnorm{r_\eta(t,\ast)}{L^2} \Big(\Delta\max_{1\leq r\leq \Delta} \big|R_\theta(t,t-r) - R_\theta(r)\big| + Ce^{-c\Delta}\Big),
    \end{align*}
    the last step following from Definition \ref{def:exp_conv_spec_init}-(5). Since $r_\eta(t,\ast) \overset{L^2}{\to} r_\eta^\ast$, we have $\sup_{t\geq 0}\pnorm{r_\eta(t,\ast)}{L^2} < \infty$, so taking $t\rightarrow\infty$ followed by $\Delta \rightarrow\infty$, we have $\pnorm{A(t) - A_0(t)}{L^2} \rightarrow 0$ as $t\rightarrow\infty$. On the other hand, 
    \begin{align*}
    \pnorm{A_0(t) - r_\eta^\ast R_\theta^\infty}{L^2} &=  \Bigpnorm{\sum_{r=1}^t r_\eta(t-r,\ast)R_\theta(r) - r_\eta^\ast \sum_{r=1}^\infty R_\theta(r)}{L^2}\\
    &\leq \underbrace{\sum_{r=1}^t |R_\theta(r)|\pnorm{r_\eta(t-r,\ast) - r_\eta^\ast}{L^2}}_{(I_t)} + \underbrace{\pnorm{r_\eta^\ast}{L^2}\sum_{r=t+1}^\infty |R_\theta(r)|}_{(II_t)}. 
    \end{align*}
    It is clear that $(II_t) \leq Ce^{-ct}$, while it holds for any positive integer $\Delta$ that
    \begin{align*}
    (I_t) &\leq \max_{1\leq r\leq \Delta} \pnorm{r_\eta(t-r,\ast)-r_\eta^\ast}{L^2}\sum_{r=1}^\Delta |R_\theta(r)| + \Big(\sup_{t\geq 0}\pnorm{r_\eta^\ast(t,\ast)}{L^2} + \pnorm{r_\eta^\ast}{L^2}\Big)\sum_{r=\Delta+1}^t |R_\theta(r)|\\
    &\leq \max_{1\leq r\leq \Delta} \pnorm{r_\eta(t-r,\ast)-r_\eta^\ast}{L^2}\sum_{r=1}^\Delta |R_\theta(r)| + C\Big(\sup_{t\geq 0}\pnorm{r_\eta^\ast(t,\ast)}{L^2} + \pnorm{r_\eta^\ast}{L^2}\Big)e^{-c\Delta}.
    \end{align*}
    So by taking $t\rightarrow\infty$ followed by $\Delta \rightarrow\infty$, we have $(I_t) \rightarrow 0$ and hence $A_0(t) \overset{L^2}{\to} r_\eta^\ast R_\eta^\ast$, thereby showing the claim (\ref{eq:convolution_convergence}).
    
    Now taking the $L^2$ limit as $t\rightarrow \infty$ of (\ref{eq:r_eta_ast}), and using further that $\partial_1 \ell(\eta^t,w^\ast,z) \overset{L^2}{\to} \partial_1 \ell(\eta^\infty,w^\ast,z)$ and $\partial_2 \ell(\eta^t,w^\ast,z) \overset{L^2}{\to} \partial_2 \ell(\eta^\infty,w^\ast,z)$, we get
    \begin{align*}
        r_\eta^\ast = -\partial_1\ell(\eta^\infty,w^*,z)r_\eta^\ast R_\theta^\infty + \partial_2\ell(\eta^\infty,w^*,z),
    \end{align*}
    which further implies
    \begin{equation*}
        r_\eta^\ast = \Big(1+\partial_1\ell(\eta^\infty,w^*,z)R_\theta^\infty\Big)^{-1}\partial_2\ell(\eta^\infty,w^*,z).
    \end{equation*}
    Taking expectation and recalling $R_\eta^\ast = \delta\E[r_\eta^\ast]$ then gives (\ref{eq:fix7}).
\end{proof}

\section{Long-time analysis of DMFT system (Theorem \ref{thm:dmft_exp_tti_condition})}
\label{sec:conv_hd_res}
In this section, our goal is to give a high-dimensional interpretation of the response functions $r_\eta,r_\theta,R_\eta,$ and $R_\theta$ that will aid in understanding the time translation invariant and exponential decay properties of these response function for the long-time analysis in Theorem \ref{thm:dmft_exp_tti_condition}. To do this, we introduce as an intermediary, an artificial dynamic which consists of $T$ stages of power iteration and then proceeds with gradient descent.

Here is a roadmap of this section:
\begin{itemize}
    \item In Section \ref{subsec:art_dynamic}, we derive the DMFT of an artificial dynamic which starts from a random initialization, performs $T$ rounds of power iteration in the first stage then does gradient descent in the second stage. This artificial dynamic serves as an approximation of gradient descent with spectral initialization, where the artifical dynamic approaches gradient descent with spectral initalization when $T\rightarrow\infty$.

    \item In Section \ref{subsec:hd_res_art}, we derive high-dimensional responses for the DMFT of the artificial dynamic. The technical component involves understanding the response functions as perturbations of the high dimensional artificial dynamic which is given in Lemma \ref{lm: low dim response}.

    \item In Section \ref{subsec:art_res_conv}, we prove the key result Proposition \ref{prop:art_spec_dmft_coup} which ties the limit of the artificial responses, $r_{T,\eta},r_{T,\theta},R_{T,\eta},$ and $R_{T,\theta}$, as $T\rightarrow\infty$ to the spectral initialized responses $r_\eta,r_\theta,R_\eta,$ and $R_\theta$.

    \item In Section \ref{sec:proof_long_time}, we give the proof of the main result Theorem \ref{thm:dmft_exp_tti_condition} by leveraging the high-dimensional responses derived in the previous sections.
\end{itemize}

\subsection{Artificial power iteration dynamic and DMFT characterization}
\label{subsec:art_dynamic}
Let $\M_n = \X^\top \bZ_s\X \in \R^{d\times d}$ be the pre-processing matrix defined in (\ref{def:M_n}) with $\bZ_s = \diag(\cT_s(\y))$. 
Consider the artificial dynamic $\btheta_T^{-T} \rightarrow \cdots \rightarrow \btheta_T^{0} \rightarrow \btheta_T^{1}\rightarrow\cdots$ with initialization $\btheta_T^{-T} = \btheta^\ast$ (we note that this is not a practical algorithm and only developed for theoretical analysis). This dynamic is divided into two stages.

\noindent\textbf{First Stage}
    In the first stage
    \begin{equation}\label{def:art_dyn_first}
        \btheta_T^{t+1} = \frac{\M_n \btheta_T^{t}}{\hat{\beta}_T^{t+1}} \qquad -T \leq t \leq -1,
    \end{equation}
where
\begin{align}\label{def:beta_def}
    \hat{\beta}_T^{t+1} := \lim_{n,d\rightarrow\infty} \frac{\pnorm{\M_n \btheta_T^{t}}{}}{\sqrt{d}},
\end{align}
is the high dimensional limit of the normalization of the iterate $\btheta_T^{t+1}$. As a consequence of Proposition \ref{prop:discreteDmftArt} below, such limit exists almost surely.

\noindent\textbf{Second Stage} The second stage follows the original gradient descent dynamics (\ref{eq: gradient descent}): 
\begin{equation}\label{def:art_dyn_second}
    \btheta_{T}^{t+1} = \btheta_{T}^{t} -\gamma\X^\top\ell(\X\btheta_{T}^{t},\X\btheta^*,\z) - \gamma\lambda\btheta_{T}^{t} \qquad t \geq 0,
\end{equation}
where $\gamma>0$ is the stepsize for the gradient descent stage. Further define
\begin{align}\label{def:eta_T_highd}
\bbeta^t_T = \X\btheta^t_T, \quad \bbeta^\ast = \X\btheta^\ast. 
\end{align}
The artificial dynamic iterates carry the $T$ in the subscript to distinguish from the original dynamic (\ref{eq: gradient descent}).

The asymptotic behavior of the artificial high-dimensional dynamic (\ref{def:art_dyn_first})-(\ref{def:art_dyn_second}) is given by the following low-dimensional artificial DMFT system, in similar spirit to (\ref{def:dmft_theta0})-(\ref{def:theta_res}). We initialize with
\begin{equation*}
    (\theta^{-T}_T, \theta^\ast) \sim \mathsf{P}(\theta^\ast,\theta^*),
\end{equation*}
and let
\begin{align*}
C_{T,\theta}(-T,-T) = C_{T,\theta}(-T,\ast) = \E[(\theta^\ast)^2] = 1.
\end{align*}

At integer time $t \in [-T,\infty)$, given $\{R_{T,\theta}(r,s)\}_{-T\leq s<r\leq t}$, $\{C_{T,\theta}(r,s)\}_{-T\leq s\leq r\leq t}$, $\{C_{T,\theta}(r,\ast)\}_{-T \leq r\leq t}$, define
\begin{align}\label{def:artificialEta}
    \eta_{T}^{t} &= \begin{cases}
        -\cT_s(y_T)\sum_{s=-T}^{t-1}\eta_{T}^s R_{T,\theta}(t,s) + w_{T}^{t} & -T \leq t \leq 0\\
        -\sum_{s=0}^{t-1}\ell(\eta_{T}^{s},w_{T}^{*},z)R_{T,\theta}(t,s) -\cT_s(y_T)\sum_{s=-T}^{-1}\eta_{T}^s R_{T,\theta}(t,s)+ w_{T}^{t}  &t>0.
    \end{cases}
\end{align}

Moreover, given the process $\{\eta_{T}^{s}\}_{-T\leq s\leq t}$, define for $-T\leq s < t$: 
\begin{equation}\label{eq:r_T_eta_recursion}
\begin{aligned}
    r_{T,\eta}(t,s) &= \begin{cases}
         \cT_s(y_T)\Big(-\sum_{r=s+1}^{t-1}R_{T,\theta}(t,r)r_{T,\eta}(r,s) - \cT_s(y_T)R_{T,\theta}(t,s)\Big) &t < 0\\
         \partial_1\ell(\eta_{T}^{t},w^*_{T},z)\Big( -\sum_{r=s+1}^{t-1}R_{T,\theta}(t,r)r_{T,\eta}(r,s)-\partial_1\ell(\eta_{T}^{s},w^*_{T},z)R_{T,\theta}(t,s)\Big) &t\geq0, s \geq 0\\
          \partial_1\ell(\eta_{T}^{t},w^*_{T},z)\Big( -\sum_{r=s+1}^{t-1}R_{T,\theta}(t,r)r_{T,\eta}(r,s)-\cT_s(y_T) R_{T,\theta}(t,s)\Big) &t\geq0, s \leq -1
     \end{cases},\\
      r_{T,\eta}(t,\sharp) &= \begin{cases}
          \cT_s(y_T)\Big(-\sum_{r=-T}^{t-1}R_{T,\theta}(t,r)r_{T,\eta}(r,\sharp)\Big)+ \cT'_s(y_{T})\varphi'(w_T^\sharp,z)\eta_{T}^t  &t <0\\
          \partial_1\ell(\eta_{T}^{t},w^*_{T},z)\Big(-\sum_{r=-T}^{t-1}r_{T,\eta}(r,\sharp)R_{T,\theta}(t,r)\Big)   &t\geq 0
      \end{cases},\\
      r_{T,\eta}(t,*) &= \partial_1\ell(\eta_{T}^{t},w^*_{T},z)\Big[-\sum_{r=0}^{t-1}r_{T,\eta}(r,*)R_{T,\theta}(t,r)\Big] + \partial_2\ell(\eta_{T}^{t},w^*_{T},z),\quad t\geq 0.
\end{aligned}
\end{equation}
Here, $(w^*_T,w^\sharp_T,\{w_T^s\}_{-T\leq s \leq t})$ is a mean-zero Gaussian vector with covariance 
\begin{equation}\label{eq:w_art}
    \E[w_T^rw_T^s] = C_{T,\theta}(r,s) \qquad \E[w_T^rw_T^\ast] = C_{T,\theta}(r,\ast) \qquad w_T^\sharp = w_T^\ast,
\end{equation}
and is independent of $z \sim \sP(z)$, and $y_T = \varphi(w^\sharp_T, z)$.

Then we can define correlation $\{C_{T,\eta}(r,s)\}_{-T\leq s \leq r\leq t}$ up to time $t$ by
\begin{align}\label{def:artificial_C_eta}
    C_{T,\eta}(r,s) &= \begin{cases}
        \delta\E\big[\ell(\eta_{T}^{r},w_{T}^*,z)\ell(\eta_{T}^{s},w_{T}^*,z)\big]  &r\geq0,s\geq 0\\\delta\E\big[\cT_s(y_T)^2\eta_{T}^r\eta_{T}^s\big] &r<0,s<0\\\delta\E\big[\ell(\eta_T^{r},w_T^*,z)\cT_s(y_T)\eta_{T}^s\big]  &r\geq 0, s<0
    \end{cases},
\end{align}
and response $\{R_{T,\eta}(r,s)\}_{-T\leq s < r\leq t}$, $\{R_{T,\eta}(r,\ast)\}_{0\leq r\leq t}$, $\{R_{T,\eta}(r,\sharp)\}_{-T\leq r\leq t}$ up to time $t$ by
\begin{equation}
    R_{T,\eta}(r,s) = \delta\E\big[r_{T,\eta}(r,s)\big], \quad
    R_{T,\eta}(r,*) = \delta\E\big[r_{T,\eta}(r,*)\big], \quad
    R_{T,\eta}(r,\sharp) = \delta\E\big[r_{T,\eta}(r,\sharp)\big].
\end{equation}

Conversely, suppose that up to integer time $t\in[-T,\infty)$, we are given $\{R_{T,\eta}(r,s)\}_{-T\leq s<r\leq t}$, and $\{C_{T,\eta}(r,s)\}_{-T\leq s\leq r\leq t}$. Define the processes
\begin{align}
\label{def:r_T_theta}
    \theta_{T}^{t+1} &= \begin{cases}
        \frac{1}{\beta_T^{t+1}}\bigg(\delta\E[\cT_s(y_T)]\theta_{T}^t+\sum_{s=-T}^{t-1}R_{T,\eta}(t,s)\theta_{T}^s  + R_{T,\eta}(t,\sharp)\theta^*-u_{T}^t\bigg) &t<0\\
        \theta_{T}^t +\gamma\bigg(- \lambda\theta_{T}^t -\delta \E\left[\partial_1\ell(\eta_{T}^{t},w_{T}^{*},z)\right]\theta_{T}^{t}\\
        -\sum_{s=-T}^{t-1}R_{T,\eta}(t,s)\theta_{T}^{s} -(R_{T,\eta}(t,\ast) + R_{T,\eta}(t,\sharp))\theta^*  +  u_{T}^{t}\bigg) &t\geq0
    \end{cases} \\
    \label{eq:r_T_theta_recursion}
     r_{T,\theta}(t+1,s) &= \begin{cases}
         \frac{1}{\beta_T^{t+1}}\bigg(\delta\E[\cT_s(y_{T})]r_{T,\theta}(t,s)+\sum_{r=s+1}^{t-1}R_{T,\eta}(t,r)r_{T,\theta}(r,s)\bigg) &t <0\\
         \left(1-\gamma\lambda-\gamma\delta\E[\partial_1\ell(\eta_{T}^{t},w_{T}^*,z)]\right) r_{T,\theta}(t,s) - \gamma\sum_{r=s+1}^{t-1}R_{T,\eta}(t,r)r_{T,\theta}(r,s) &t \geq 0
     \end{cases}
\end{align} with initialization 
\begin{equation*}
r_{T,\theta}(s+1,s) = 
    \begin{cases}
        -\frac{1}{\beta_T^{s+1}} &\qquad s <0\\
        \gamma &\qquad s\geq0.
    \end{cases}
\end{equation*}
Here $\{u_T^s\}_{-T\leq s\leq t}$ is a mean-zero Gaussian vector independent of $z$ and with covariance
\begin{equation}\label{eq:u_art}
    \E[u_T^ru_T^s] = C_{T,\eta}(r,s),
\end{equation}
and
\begin{align}\label{def:beta_seq}
\beta_T^{t+1} = \sqrt{\E\Big[\sum_{i=-T}^t R_{T,\eta}(t,i)\theta^i_T + R_{T,\eta}(t,\sharp)\theta^\ast - u^t_T\Big]^2}, \quad t < 0.
\end{align}
Then we can define correlation $\{C_{T,\theta}(r,s)\}_{-T\leq s \leq r\leq t+1}$, $\{C_{T,\theta}(r,\ast)\}_{-T\leq s\leq t+1}$, $C_{T,\theta}(\ast,\ast)$ up to time $t+1$ by
\begin{equation}\label{def:artificial_dmft_CR_theta}
\begin{gathered}
    C_{T,\theta}(s,r) = \E[\theta^{s}_{T}\theta^{r}_{T}], \quad C_{T,\theta}(s,*) = \E[\theta^{s}_{T}\theta^{*}], \quad C_{T,\theta}(\ast,\ast) = \E[(\theta^\ast)^2],
\end{gathered} 
\end{equation}
and response $\{R_{T,\theta}(r,s)\}_{-T\leq s < r\leq t+1}$ up to time $t+1$ by
\begin{align}
R_{T,\theta}(r,s) = \E[r_{T,\theta}(r,s)].
\end{align}
In summary, the above DMFT system is recursively defined in the following order:
\begin{align*}
&\theta_T^{-T} = \theta^\ast \longrightarrow \{C_{T,\theta}(s,r)\}_{-T\leq r\leq s\leq -T}, \{C_{T,\theta}(s,\ast)\}_{-T\leq s\leq -T}, \{R_{T,\theta}(s,r)\}_{-T\leq r < s\leq -T}\\ 
 &\longrightarrow \eta^{-T}_T \longrightarrow \{C_{T,\eta}(s,r)\}_{-T\leq r\leq s\leq -T}, \{R_{T,\eta}(s,r)\}_{-T\leq r < s\leq -T}, \{R_{T,\eta}(s,\sharp)\}_{-T\leq s\leq -T}\\
&\longrightarrow \theta^{-T+1}_T \longrightarrow \{C_{T,\theta}(s,r)\}_{-T\leq r\leq s\leq -T+1}, \{C_{T,\theta}(s,\ast)\}_{-T\leq s\leq -T+1}, \{R_{T,\theta}(s,r)\}_{0\leq r < s\leq -T+1}\\
&\longrightarrow \eta_T^{-T+1} \longrightarrow \{C_{T,\eta}(s,r)\}_{0\leq r\leq s\leq -T+1}, \{R_{T,\eta}(s,r)\}_{-T\leq r < s\leq -T+1}, \{R_{T,\eta}(s,\sharp)\}_{-T\leq s\leq -T+1}\\
&\longrightarrow \theta^{-T+2}_T \longrightarrow \ldots \\
&\longrightarrow \theta^0_T \longrightarrow \{C_{T,\theta}(s,r)\}_{-T\leq r\leq s\leq 0}, \{C_{T,\theta}(s,\ast)\}_{-T\leq s\leq 0}, \{R_{T,\theta}(s,r)\}_{0\leq r < s\leq 0}\\
&\longrightarrow \eta_T^{0} \longrightarrow \{C_{T,\eta}(s,r)\}_{0\leq r\leq s\leq 0}, \{R_{T,\eta}(s,r)\}_{-T\leq r < s\leq 0}, \{R_{T,\eta}(s,\sharp)\}_{-T\leq s\leq 0}, \{R_{T,\eta}(s,\ast)\}_{0\leq s\leq 0}\\
&\longrightarrow \theta^1_T \longrightarrow \{C_{T,\theta}(s,r)\}_{-T\leq r\leq s\leq 1}, \{C_{T,\theta}(s,\ast)\}_{-T\leq s\leq 1}, \{R_{T,\theta}(s,r)\}_{0\leq r < s\leq 1}\\
&\longrightarrow \eta_T^{1} \longrightarrow \{C_{T,\eta}(s,r)\}_{0\leq r\leq s\leq 1}, \{R_{T,\eta}(s,r)\}_{-T\leq r < s\leq 1}, \{R_{T,\eta}(s,\sharp)\}_{-T\leq s\leq 1}, \{R_{T,\eta}(s,\ast)\}_{0\leq s\leq 1}\\
&\longrightarrow \theta^2_T \longrightarrow \ldots.
\end{align*}
The following result formalizes the DMFT approximation of the artificial dynamic (\ref{def:art_dyn_first})-(\ref{def:art_dyn_second}). Since its proof is similar to that of Theorem \ref{thm: discrete dmft asymp}, we defer it to Section \ref{appendix:proof_power_dmft}.
\begin{proposition}\label{prop:discreteDmftArt}
    Suppose Assumptions \ref{ass: model assumptions}, \ref{ass: spectral initialization}, and \ref{ass: ell lipschitz} hold. Then $\hat\beta^t_T$ in (\ref{def:beta_def}) is well defined and equals $\beta^t_T$ in (\ref{def:beta_seq}) for all integer $-T+1\leq t\leq 0$. Moreover, for any fixed integer $m\geq -T$, almost surely as $n,d \rightarrow\infty$, it holds that
    \begin{align*}
        \frac{1}{d}\sum_{j=1}^d \delta_{\big(\theta^{-T+1}_{T,j},...,\theta^{m}_{T,j},\theta^*_j\big)} &\overset{W_2}{\to} \sP\big(\theta_T^{-T+1},...,\theta_T^{m},\theta^*\big),\\
        \frac{1}{n}\sum_{i=1}^n\delta_{\big(\eta^{-T}_{T,i},...,\eta_{T,i}^{m},\eta^*_i,z_i\big)} &\overset{W_2}{\to} \sP\left(\eta_T^{-T},...,\eta_T^{m},w_T^*,z\right).
    \end{align*}
\end{proposition}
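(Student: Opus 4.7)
The plan is to follow the AMP reduction strategy developed for Theorem \ref{thm: discrete dmft asymp}, adapted to the artificial two-stage dynamic. The initialization here is $\btheta^*$, which is independent of $\X$, so unlike Theorem \ref{thm: discrete dmft asymp} we do not need the spectral-initialized AMP of Theorem \ref{thm: amp spec}; a standard multi-iterate AMP with independent initialization suffices. Specifically, I would construct an AMP with iterates $\{\a^i,\b^i\}_{i\geq 0}$ and nonlinearities $f_i=(f_{i,1},f_{i,2})$, $g_i=(g_{i,1},g_{i,2})$ (jointly Lipschitz in their appropriate arguments), initialized with $g_0(\btheta^*)=(\btheta^*,\btheta^*)$ so that $\b^0 = (\X\btheta^*,\X\btheta^*)$.

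The crux is an inductive construction analogous to Lemma \ref{lm: AMP nonlinearity choice}, carried out in two phases. For the power iteration phase $-T\leq i\leq -1$, observe that $\M_n\btheta = \X^\top[\cT_s(\varphi(\X\btheta^*,\z))\circ \X\btheta]$. Given the AMP structure $\a^{i+1} = -\frac{1}{\delta}\X^\top f_i(\b^0,\ldots,\b^i;\z) + \sum_j g_j \xi_{ji}$, I would choose $f_{i,1}(\b^0,\ldots,\b^i;\z)=\cT_s(\varphi(\b_2^0,\z))\, (\X\btheta_T^{-T+i})$ expressed through $\b^0,\ldots,\b^i$ via the previous step, so that $-\delta\a^{i+1}/\hat\beta_T^{-T+i+1}$ becomes $\btheta_T^{-T+i+1}$ modulo Onsager corrections. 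Absorbing the Onsager terms into the definition of $g_{i+1,1}$, I set $g_{i+1,1}=\btheta_T^{-T+i+1}$; the normalization $\hat\beta_T^{-T+i+1}=\lim \pnorm{\M_n\btheta_T^{-T+i}}{}/\sqrt d$ is simultaneously shown to exist and to equal $\beta_T^{-T+i+1}$ in (\ref{def:beta_seq}) by invoking the state evolution of the AMP on $\|\a^{i+1}-g_i\xi\|^2/d$. For the gradient descent phase $i\geq 0$, the construction of $f_i,g_i$ follows verbatim from the proof of Lemma \ref{lm: AMP nonlinearity choice}, noting only that here $\btheta^*$ (rather than the spectral estimator) plays the role of $\btheta^0$ in (\ref{eq: gradient descent}), and that the "$\diamond$"-terms in (\ref{eq:spec_init_AMP}) are absent because there is no $\bZ_s\X\btheta^0$ pre-processing at the AMP initialization.

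With $f_i,g_i$ in hand, I would apply the standard AMP state evolution (either via Theorem \ref{thm: amp spec} specialized to $\zeta_{i,-1}=0$, or directly from \cite{bayati_message_passing_2011,javanmard2013state}) to obtain the $W_2$ empirical limit. The final step is to match the limiting Gaussian and response structure to the artificial DMFT system (\ref{def:artificialEta})--(\ref{def:artificial_dmft_CR_theta}), by setting $\theta_T^{t}:=g_{t+T,1}$, $\eta_T^{t}:=$ the pre-image representation of $\X\btheta_T^{t}$ in terms of $(w_T^{-T},\ldots,w_T^t,z)$ as in (\ref{eq:def_eta}), and reading off the correspondences $R_{T,\theta}(t,s),R_{T,\eta}(t,s),R_{T,\eta}(t,\sharp),R_{T,\eta}(t,*)$ from the Onsager coefficients $(\xi_{ji})_{11},(\xi_{ji})_{21},(\zeta_{ij})_{11}$ just as Lemma \ref{lem:response_correspondence} does in Theorem \ref{thm: discrete dmft asymp}. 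In particular, the appearance of $\cT_s(y_T)$ rather than $\ell$ in (\ref{def:artificialEta}) for $t<0$ arises from the fact that $f_{i,1}$ in the power iteration phase involves $\cT_s$ directly, and the response $r_{T,\eta}(t,\sharp)$ for $t<0$ picks up the derivative $\cT_s'(y_T)\varphi'(w_T^\sharp,z)$ by the chain rule through $\b_2^0=\X\btheta^*$.

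The main obstacle is the self-referential nature of the power iteration phase: $\hat\beta_T^{t+1}$ is defined as a high-dimensional limit which requires state evolution to evaluate, but the AMP nonlinearities $f_i,g_{i+1}$ themselves depend on $\hat\beta_T^{t+1}$ through the normalization. I would resolve this by an outer induction on $t$: assume $\hat\beta_T^{-T+s}=\beta_T^{-T+s}$ exists for $s\leq i$, then the AMP up to iterate $i$ is well-defined with deterministic normalizations, state evolution applies to yield $\pnorm{\M_n\btheta_T^{-T+i}}{}^2/d \to \E[(\sum_s R_{T,\eta}(i-T,s)\theta_T^s + R_{T,\eta}(i-T,\sharp)\theta^*-u_T^{i-T})^2]=\beta_T^{-T+i+1}$, and this closes the induction. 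A secondary (minor) technicality is verifying that the $\psi'_\delta(\lambda_\delta^*)>0$ condition and Assumption \ref{ass: spectral initialization} ensure $\beta_T^{t+1}>0$ throughout the power iteration phase, so that division by $\beta_T^{t+1}$ is well-defined; this follows from the spectral gap of $\M_n$ in Proposition \ref{prop: spec est overlap}, which guarantees that power iteration from $\btheta^*$ does not degenerate.
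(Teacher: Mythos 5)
Your proposal is correct and follows essentially the same route as the paper: the paper's proof in Appendix \ref{appendix:proof_power_dmft} reduces to the standard independent-initialization AMP (\ref{eq:random_init_amp}) with $\v=(\btheta^\ast,\btheta^\ast)$, constructs the two-phase nonlinearities exactly as you describe in Lemma \ref{lem:amp_nonlin_art} (with $\cT_s$ in the power-iteration phase and the Lemma \ref{lm: AMP nonlinearity choice}-style construction in the gradient-descent phase), resolves the self-referential normalization by defining $\hat\beta_T^{j+1}$ through the state-evolution expression built from previously constructed iterates, and then matches covariances and Onsager coefficients to the artificial DMFT responses. The only small discrepancy is that you invoke $\psi_\delta'(\lambda_\delta^\ast)>0$ to guarantee $\beta_T^{t+1}>0$, whereas the proposition does not assume this condition and the paper handles positivity of $\beta_T^t$ without it; this is a cosmetic point, not a gap.
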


\subsection{High dimensional response of artificial dynamic}
\label{subsec:hd_res_art}
Consider the following perturbed dynamics. For any $j\in[d]$ and the basis vector $\e_j\in\R^d$ define
\begin{align}\label{def:perturbed_dynamic_theta}
    \btheta_T^{t+1,(s,j),\eps} &= \begin{cases}
      \frac{\X^\top \diag(\cT_s(\y)) \X \btheta_T^{t,(s,j),\eps} - \eps\e_j\mathbf{1}_{t=s}}{\beta_T^{t+1}} & -T \leq t \leq -1\\
    \btheta_{T}^{t,(s,j),\eps} -\gamma\big[\X^\top\ell(\X\btheta_{T}^{t,(s,j),\eps},\X\btheta^*,\z) + \lambda\btheta_{T}^{t,(s,j),\eps}-\eps\e_j\mathbf{1}_{t=s}\big] & t \geq 0,  
    \end{cases}
\end{align}
with initialization $\btheta_T^{-T,(s,j),\eps} = \btheta^\ast$, and $\{\beta^t_T\}$ is given by (\ref{def:beta_def}).
For any $i\in[n]$ and the basis vector $\e_i\in\R^n$ define the perturbed dynamics
\begin{equation}\label{def:perturbed_dynamic_eta}
\begin{aligned}
    \btheta_T^{t+1,[s,i],\eps}&=\begin{cases}
        \frac{\X^\top \diag(\cT_s(\y)) \X \btheta_T^{t,[s,i],\eps} -\eps\X^\top\diag(\cT_s(\y))\e_i\mathbf{1}_{t=s}}{\beta_T^{t+1}} & -T \leq t \leq -1\\
        \btheta_{T}^{t,[s,i],\eps} -\gamma\big[\X^\top\ell(\X\btheta_{T}^{t,[s,i],\eps},\X\btheta^*,\z) + \lambda\btheta_{T}^{t,[s,i],\eps}-\eps\X^\top\diag\Big(\partial_1\ell(\X\btheta_T^s,\X\btheta^*,\z)\Big)\e_i\mathbf{1}_{t=s}\big] & t \geq 0
    \end{cases}\\
        \btheta_T^{t+1,[\sharp,i],\eps} &= 
        \begin{cases}
            \frac{\X^\top \diag(\cT_s(\y)) \X \btheta_T^{t,[\sharp,i],\eps} + \eps\X^\top\diag(\cT_s'(\y)\circ\varphi'(\bbeta^*,\z)\circ\bbeta_T^t)\e_i}{\beta_T^{t+1}} & -T \leq t \leq -1\\
            \btheta_{T}^{t,[\sharp,i],\eps} -\gamma\big[\X^\top\ell(\X\btheta_{T}^{t,[\sharp,i],\eps},\X\btheta^*,\z) + \lambda\btheta_{T}^{t,[\sharp,i],\eps}\big] & t \geq 0
        \end{cases}\\
            \btheta_T^{t+1,[\ast,i],\eps} &=
            \begin{cases}
                \frac{\X^\top \diag(\cT_s(\y)) \X \btheta_T^{t,[\ast,i],\eps}}{\beta_T^{t+1}} & -T \leq t \leq -1\\
                \btheta_{T}^{t,[\ast,i],\eps} -\gamma\big[\X^\top\ell(\X\btheta_{T}^{t,[\ast,i],\eps},\X\btheta^*,\z) + \lambda\btheta_{T}^{t,[\ast,i],\eps}+\eps\X^\top\diag(\partial_2\ell(\bbeta^t_T,\bbeta^*,\z))\e_i\big] & t \geq 0,
            \end{cases}
\end{aligned}
\end{equation}
where all perturbed dynamics are initialized at $\btheta^\ast$. We further define
\begin{align}\label{def:perturbed_dynamic_eta2}
\bbeta_T^{t,[s,i],\eps} := \X\btheta_T^{t,[s,i],\eps}, \quad \bbeta_T^{t,[\sharp,i],\eps} := \X\btheta_T^{t,[\sharp,i],\eps}, \quad \bbeta_T^{t,[\ast,i],\eps} := \X\btheta_T^{t,[\ast,i],\eps}.
\end{align}

In addition, we introduce the following notation. For a fixed process $\bar{\eta} := \{(\eta^t,\eta^\ast,z)\} \in \R^3$ indexed by integers $t \in [-T,\infty)$, let $y = \varphi(\eta^\ast,z)$ and define the auxiliary processes
\begin{align*}
\label{eq:r_T_eta_recursion_aux}
    r_{T,\eta}^{(\bar{\eta})}(t,s) &= \begin{cases}
         \cT_s(y)\Big(-\sum_{r=s+1}^{t-1}R_{T,\theta}(t,r)r_{T,\eta}^{(\bar{\eta})}(r,s) - \cT_s(y)R_{T,\theta}(t,s)\Big) &t < 0\\
         \partial_1\ell(\eta^{t},\eta^*,z)\Big( -\sum_{r=s+1}^{t-1}r_{T,\eta}^{(\bar{\eta})}(r,s)R_{T,\theta}(t,r)-\partial_1\ell(\eta^{s},\eta^\ast,z)R_{T,\theta}(t,s)\Big) &t\geq0, s \geq 0\\
          \partial_1\ell(\eta^{t},\eta^\ast,z)\Big( -\sum_{r=s+1}^{t-1}r_{T,\eta}^{(\bar{\eta})}(r,s)R_{T,\theta}(t,r)-\cT_s(y) R_{T,\theta}(t,s)\Big) &t\geq0, s \leq -1
     \end{cases}\\
    r_{T,\eta}^{(\bar{\eta})}(t,t) &= \cT_s(y) \quad t< 0,\quad\quad r_{T,\eta}^{(\bar{\eta})}(t,t) = \partial_1 \ell(\eta^t,\eta^\ast,z), \quad t \geq 0,\\
    r_{T,\eta}^{(\bar{\eta})}(t,\sharp) &= \begin{cases}
          \cT_s(y)\Big(-\sum_{r=-T}^{t-1}R_{T,\theta}(t,r)r_{T,\eta}^{(\bar{\eta})}(r,\sharp)\Big)+ \cT'_s(y)\varphi'(\eta^\ast,z)\eta^t  &t <0\\
          \partial_1\ell(\eta^{t},\eta^\ast,z)\Big(-\sum_{r=-T}^{t-1}r_{T,\eta}^{(\bar{\eta})}(r,\sharp)R_{T,\theta}(t,r)\Big)   &t\geq 0
      \end{cases}\\
      r_{T,\eta}^{(\bar{\eta})}(t,\ast) &= \partial_1\ell(\eta^{t},\eta^\ast,z)\Big[-\sum_{r=0}^{t-1}r_{T,\eta}^{(\bar{\eta})}(r,\ast)R_{T,\theta}(t,r)\Big] + \partial_2\ell(\eta^{t},\eta^\ast,z),\quad t\geq 0.
\end{align*}
When $\bar\eta_T = \{(\eta^t_T, w^\ast_T, z)\}$ is the DMFT process (\ref{def:artificialEta}) for the artificial dynamic (\ref{def:art_dyn_first})-(\ref{def:art_dyn_second}), by definition it holds that
\begin{align*}
    r_{T,\eta}^{(\bar{\eta}_T)}(t,s) = r_{T,\eta}(t,s),\quad r_{T,\eta}^{(\bar{\eta}_T)}(t,\sharp) = r_{T,\eta}(t,\sharp),\quad r_{T,\eta}^{(\bar{\eta}_T)}(t,\ast) = r_{T,\eta}(t,\ast),
\end{align*}
where the responses on the right side are given by (\ref{eq:r_T_eta_recursion}), and by taking expectation,
\begin{align*}
\delta\E\big[r_{T,\eta}^{(\bar{\eta}_T)}(t,s)\big] = R_{T,\eta}(t,s), \quad \delta\E\big[r_{T,\eta}^{(\bar{\eta}_T)}(t,\sharp)\big] = R_{T,\eta}(t,\sharp), \quad \delta\E\big[r_{T,\eta}^{(\bar{\eta}_T)}(t,\ast)\big] = R_{T,\eta}(t,\ast).
\end{align*}

The following lemma is the key to relating the perturbed dynamics to the artificial response functions.

\begin{lemma}\label{lm: low dim response} 
Suppose Assumptions \ref{ass: model assumptions}, \ref{ass: spectral initialization}, and \ref{ass: ell lipschitz} hold. For each $i\in[n]$, let $\bar{\eta}_{T,i}$ be the $i$th coordinate process of $\{\bbeta^t_T, \bbeta^\ast,\z\}$ given by (\ref{def:eta_T_highd}). For each $t>s$, $j \in [d], i \in [n]$, we have 
    \begin{align*}
        \deps \theta^{t,(s,j),\eps}_{T,j} &= r_{T,\theta}(t,s) + E_{T,\theta}^{t,(s,j)},\\
        \cT_s(y_i)\deps \eta^{t,[s,i],\eps}_{T,i} &= -r_{T,\eta}^{(\bar{\eta}_{T,i})}(t,s) + E_{T,\eta}^{t,[s,i]}, \qquad t<0,\\
    \partial_1\ell(\eta_{T,i}^t,\eta_i^*,z_i)\deps \eta^{t,[s,i],\eps}_{T,i} &= -r_{T,\eta}^{(\bar{\eta}_{T,i})}(t,s) + E_{T,\eta}^{t,[s,i]}, \qquad t \geq 0,\\
       \cT_s(y_i)\deps \eta^{t,[\sharp,i],\eps}_{T,i} &= r_{T,\eta}^{(\bar{\eta}_{T,i})}(t,\sharp)- \cT'_s(y_i)\varphi'(\eta^\ast_{T,i},z_i)\eta^t_{T,i}+ E_{T,\eta}^{t,[\sharp,i]}, \qquad t<0, \\
    \partial_1\ell(\eta_{T,i}^t,\eta_i^*,\eps_i)\deps \eta^{t,[\sharp,i],\eps}_{T,i} &= r_{T,\eta}^{(\bar{\eta}_{T,i})}(t,\sharp) + E_{T,\eta}^{t,[\sharp,i]}, \qquad t \geq 0,\\
       \partial_1 \ell(\eta^t_{T,i}, \eta^\ast_i, z_i)\deps \eta^{t,[\ast,i],\eps}_{T,i} &= r_{T,\eta}^{(\bar\eta_{T,i})}(t,\ast) - \partial_2 \ell(\eta^t_{T,i}, \eta^\ast_i,z_i) +  E_{T,\eta}^{t,[\ast,i]}, 
    \end{align*}
    where, almost surely as $n,d\rightarrow\infty$,
    \begin{align*}
        \frac{1}{d}\sum_{j=1}^d|E_{T,\theta}^{t,(s,j)}|^p \vee \frac{1}{n}\sum_{i=1}^n |E_{T,\eta}^{t,[s,i]}|^p \vee \frac{1}{n}\sum_{i=1}^n |E_{T,\eta}^{t,[\sharp,i]}|^p \vee \frac{1}{n}\sum_{i=1}^n |E_{T,\eta}^{t,[\ast,i]}|^p\rightarrow 0,
    \end{align*}
    for any $p\geq 1$ and fixed $T > 0$.
\end{lemma}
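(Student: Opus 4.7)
The plan is to differentiate each of the four perturbed recursions (\ref{def:perturbed_dynamic_theta})-(\ref{def:perturbed_dynamic_eta}) at $\eps=0$ to obtain tangent dynamics, and then match the sampled coordinate of each tangent to the corresponding DMFT response via a joint AMP state evolution argument, by induction on $t$. Setting $\bUpsilon^{t,\bullet} := \deps\btheta_T^{t,\bullet,\eps}$ for each perturbation type $\bullet\in\{(s,j),[s,i],[\sharp,i],[\ast,i]\}$, and using that $\beta_T^{t+1}$ in (\ref{def:beta_def}) is a deterministic high-dimensional limit (so it carries no $\eps$-derivative), the tangents satisfy, in the power-iteration stage, $\bUpsilon^{t+1,\bullet} = (\beta_T^{t+1})^{-1}(\M_n\bUpsilon^{t,\bullet} - S^t_\bullet)$, and in the gradient-descent stage, $\bUpsilon^{t+1,\bullet} = \bUpsilon^{t,\bullet} - \gamma\bigl[\X^\top\diag(\partial_1\ell(\bbeta_T^t,\bbeta^\ast,\z))\X\bUpsilon^{t,\bullet} + \lambda\bUpsilon^{t,\bullet} - S^t_\bullet\bigr]$, all with zero initial value and with source $S^t_\bullet$ concentrated at $t=s$ taking one of the forms $\e_j$, $\X^\top\diag(\cT_s(\y))\e_i$, $\X^\top\diag(\cT_s'(\y)\circ\varphi'(\bbeta^\ast,\z)\circ\bbeta_T^t)\e_i$, or $-\X^\top\diag(\partial_2\ell(\bbeta_T^t,\bbeta^\ast,\z))\e_i$ according to $\bullet$.

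The base cases $t=s+1$ are verified by direct computation. For instance, in the $(s,j)$ case with $s\geq 0$ one finds $\bUpsilon^{s+1,(s,j)}=\gamma\e_j$, matching $r_{T,\theta}(s+1,s)=\gamma$; for $s<0$, $\bUpsilon^{s+1,(s,j)} = -(\beta_T^{s+1})^{-1}\e_j$, matching $r_{T,\theta}(s+1,s)=-(\beta_T^{s+1})^{-1}$. In the $[s,i]$ case ($s\geq 0$), $\bUpsilon^{s+1,[s,i]} = \gamma\partial_1\ell(\eta^s_{T,i},\eta^\ast_i,z_i)\X^\top\e_i$, so $\deps\eta^{s+1,[s,i],\eps}_{T,i} = \gamma\pnorm{\x_i}{}^2\partial_1\ell(\eta^s_{T,i},\eta^\ast_i,z_i) + o(1)$, which, multiplied by $\partial_1\ell(\eta^{s+1}_{T,i},\eta^\ast_i,z_i)$, matches $-r_{T,\eta}^{(\bar\eta_{T,i})}(s+1,s)=\gamma\partial_1\ell(\eta^{s+1}_{T,i},\eta^\ast_i,z_i)\partial_1\ell(\eta^s_{T,i},\eta^\ast_i,z_i)$ up to the negligible $\pnorm{\x_i}{}^2-1$ fluctuation; the $[\sharp,i]$ and $[\ast,i]$ base cases are analogous and directly produce the extra terms $\cT_s'(y)\varphi'(\eta^\ast,z)\eta^t$ and $\partial_2\ell(\eta^t,\eta^\ast,z)$. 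For the inductive step, I augment the AMP reduction of Lemma \ref{lm: AMP nonlinearity choice} (adapted to the artificial dynamic as in the proof of Proposition \ref{prop:discreteDmftArt}) by adjoining each $\bUpsilon^{r,\bullet}$, $r\leq t$, as additional AMP coordinates with nonlinearities that reproduce the tangent recursions above, and apply Theorem \ref{thm: amp spec} to derive a joint state evolution. By construction, the Onsager corrections in the augmented AMP at step $t+1$ contribute precisely the deterministic weights $R_{T,\eta}(t,r)$, $\gamma\delta\E[\partial_1\ell(\eta^t_T,w^\ast_T,z)]$, $\delta\E[\cT_s(y_T)]$, and $\lambda$ that appear in the response recursions (\ref{eq:r_T_theta_recursion})-(\ref{eq:r_T_eta_recursion}), so the joint Wasserstein-$2$ limit forces the sampled tangent coordinate at step $t+1$ to equal the prescribed response at $(t+1,s)$.

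The principal technical obstacle is the hybrid two-stage structure together with the sparse source: the tangent from a single-coordinate spike at time $s<0$ must propagate through arbitrarily many applications of $\M_n$ before entering the gradient-descent phase at $t=0$, which is why the responses in (\ref{eq:r_T_theta_recursion})-(\ref{eq:r_T_eta_recursion}) split into cases according to the signs of $s$ and $t$. The care lies in arranging the augmented AMP so that the convolution structure $\sum_{r=s+1}^{t-1}R_{T,\eta}(t,r)r_{T,\theta}(r,s)$ (and its $\eta$ counterpart) is reproduced in each of the four sign regimes without introducing spurious cross-stage terms, while the source-derived terms $\cT_s'(y)\varphi'(\eta^\ast,z)\eta^t$ and $\partial_2\ell(\eta^t,\eta^\ast,z)$ are preserved as separable functions of the limiting Gaussian process and independent noise. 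Finally, the a.s.\ vanishing of $d^{-1}\sum_j|E_{T,\theta}^{t,(s,j)}|^p$ and the $\eta$ analogues follows from the joint Wasserstein-$2$ convergence applied to $2$-pseudo-Lipschitz test functions that isolate the coordinate-wise $p$-th power discrepancy, with the requisite uniform-in-$n,d$ moment control provided by Assumptions \ref{ass: model assumptions}, \ref{ass: spectral initialization}, and \ref{ass: ell lipschitz}.
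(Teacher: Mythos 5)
Your first step (differentiating the perturbed recursions to get tangent dynamics with a single spike source, and the base-case computations such as $\deps\theta^{s+1,(s,j),\eps}_{T,j}=-1/\beta^{s+1}_T$ or $\gamma$) matches the paper, which records exactly these matrix-product formulas in Lemma \ref{lm: response matrix recursion}. The gap is in your inductive step. You propose to adjoin the tangents $\deps\btheta^{r,\bullet,\eps}_T$ as extra coordinates of the AMP reduction and invoke Theorem \ref{thm: amp spec} so that ``the Onsager corrections contribute precisely $R_{T,\eta}(t,r)$,'' but this cannot work as stated: the sources $\e_j$, $\X^\top\diag(\cT_s(\y))\e_i$, etc.\ are single-coordinate spikes, not row-wise (separable) nonlinearities of previous iterates and i.i.d.\ side information, so the tangent vectors are not generated by any admissible AMP nonlinearity; moreover the lemma is a statement for \emph{every} $j\in[d]$ and $i\in[n]$ simultaneously, with errors vanishing only after averaging over the coordinate, so you would need $d$ (resp.\ $n$) adjoined processes, which is outside the fixed-number-of-iterates scope of the state evolution theorem. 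Even granting a joint state evolution, Wasserstein-$2$ convergence of empirical laws does not by itself ``force the sampled tangent coordinate to equal the prescribed response'': the claim to be proved is a coordinatewise approximate identity between $\deps\theta^{t,(s,j),\eps}_{T,j}$ (a specific entry of a product of random matrices applied to $\e_j$) and the deterministic/process-valued responses, not a distributional statement.

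What the paper does instead, and what your argument is missing, is a dynamical cavity (leave-one-out) analysis. Expanding the tangent recursion isolates quadratic forms such as $\x_j^\top\D^t_T\x_j$ and $\x_j^\top\D^t_T\X_{-j}\bOmega^{t-1}_{T,-j}\cdots\bOmega^{\ell+1}_{T,-j}\X_{-j}^\top\D^\ell_T\x_j$ (and their $\bUpsilon$ analogues in the power-iteration stage). One replaces $\D^t_T,\bOmega^r_{T,-j},\bUpsilon^r_{T,-j}$ by their column- or row-cavity versions, controls the substitution error via the $\ell_2$ cavity estimates of Lemma \ref{lem:cavity_l2_estimate}, applies Hanson--Wright to concentrate each quadratic form around a normalized trace, and then identifies those traces with $-R_{T,\eta}(t,\ell)/\gamma$, $R_{T,\theta}(t,\ell)$, $\delta\E[\partial_1\ell(\eta^t_T,w^\ast_T,z)]$, etc., using the induction hypothesis in the form of trace limits (the displays (\ref{eq:induction_conseq_1})--(\ref{eq:induction_conseq_3})) together with Proposition \ref{prop:discreteDmftArt}. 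Comparing the resulting recursion with (\ref{eq:r_T_theta_recursion}) and (\ref{eq:r_T_eta_recursion}) closes the induction and produces errors whose normalized $\ell^p$ averages over $j$ or $i$ vanish almost surely. Without this concentration-plus-cavity mechanism (or an equally concrete substitute), your proposal does not establish the coordinatewise identities or the averaged error bounds.
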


Before giving the proof, we state a corollary of Lemma \ref{lm: low dim response} that can be applied to understand the response functions. Define the high-dimensional response matrices $\bR_{T,\theta}(t,s)\in \R^{d\times d}$, $\bR_{T,\eta}(t,s), \bR_{T,\eta}(t,\sharp), \bR_{T,\eta}(t,\ast) \in \R^{n\times n}$ as
\begin{align*}
    (\bR_{T,\theta}(t,s))_{ij} &:= \deps \theta^{t,(s,j),\eps}_{T,i},\\
    (\bR_{T,\eta}(t,s))_{ij} &:= 
    \begin{cases}
    -\cT_s(y_i)\deps \eta^{t,[s,j],\eps}_{T,i} & t< 0\\
     -\deps\ell\big(\eta^{t,[s,j],\eps}_{T,i},\eta^*_i,z_i\big) & t\geq 0
    \end{cases},\\
    (\bR_{T,\eta}(t,\sharp))_{ij} &:= 
    \begin{cases}
    \cT_s(y_i)\deps \eta_{T,i}^{t,[\sharp,j],\eps} + \cT_s'(y_i)\varphi'(\eta^\ast_i,z_i)\eta_{T,i}^t & t< 0\\
    \deps \ell\big(\eta^{t,[\sharp,j],\eps}_{T,i}, \eta^\ast_i,z_i\big) & t\geq 0
    \end{cases},\\
    (\bR_{T,\eta}(t,\ast))_{ij} &:= \deps \ell\big(\eta_{T,i}^{t,[\ast,j],\eps}, \eta^\ast_i,z_i\big) + \partial_2\ell(\eta_{T,i}^t,\eta^\ast_i,z_i).
\end{align*}

\begin{proposition}\label{prop: high dim response}
Suppose Assumptions \ref{ass: model assumptions}, \ref{ass: spectral initialization}, and \ref{ass: ell lipschitz} hold. Let $F:\R\rightarrow\R$ be a $2$-pseudo Lipschitz function. Then almost surely, it holds that
\begin{align*}
&\lim_{n,d\rightarrow\infty} \frac{1}{d}\sum_{j=1}^d F\Big(\deps \theta^{t,(s,j),\eps}_{T,j}\Big) = F( r_{T,\theta}(t,s)),\\
&\lim_{n,d\rightarrow\infty}\bigg\{\frac{1}{n}\sum_{i=1}^n F\Big(-\cT_s(y_i)\deps \eta^{t,[s,i],\eps}_{T,i}\Big),   \frac{1}{n}\sum_{i=1}^n F\big(r_{T,\eta}^{(\bar{\eta}_{T,i})}(t,s)\big)\bigg\} = \E[F\big(r_{T,\eta}(t,s)\big)], \qquad t<0,\\
    &\lim_{n,d\rightarrow\infty}\bigg\{\frac{1}{n}\sum_{i=1}^n F\Big(- \partial_1\ell(\eta_{T,i}^t,\eta_i^*,z_i)\partial_\eps|_{\eps=0} \eta^{t,[s,i],\eps}_{T,i}\Big),  \frac{1}{n}\sum_{i=1}^n F\big(r_{T,\eta}^{(\bar\eta_{T,i})}(t,s)\big)\bigg\} = \E[F(r_{T,\eta}(t,s))], \quad t\geq 0,\\
       &\lim_{n,d\rightarrow\infty}\bigg\{ \frac{1}{n}\sum_{i=1}^n F\Big(\cT_s(y_i)\deps \eta^{t,[\sharp,i],\eps}_{T,i} +  \cT'_s(y_i)\varphi'(\eta^\ast_{T,i},z_i)\eta^t_{T,i}\Big),  \frac{1}{n}\sum_{i=1}^n F\big(r_{T,\eta}^{(\bar{\eta}_{T,i})}(t,\sharp)\big)\bigg\} = \E[F(r_{T,\eta}(t,\sharp))], \qquad t<0,\\
    &\lim_{n,d\rightarrow\infty} \bigg\{\frac{1}{n}\sum_{i=1}^n F\Big(\partial_1\ell(\eta_{T,i}^t,\eta_i^*,z_i)\deps \eta^{t,[\sharp,i],\eps}_{T,i}\Big), \frac{1}{n}\sum_{i=1}^n F\big(r_{T,\eta}^{(\bar{\eta}_{T,i})}(t,\sharp)\big)\bigg\} = \E[F(r_{T,\eta}(t,\sharp))], \qquad t \geq 0,\\
       &\lim_{n,d\rightarrow\infty} \bigg\{\frac{1}{n}\sum_{i=1}^n F\Big(\partial_1 \ell(\eta^t_{T,i}, \eta^\ast_i, z_i)\deps \eta^{t,[\ast,i],\eps}_{T,i} + \partial_2 \ell(\eta^t_{T,i}, \eta^\ast_i,z_i)\Big), \frac{1}{n}\sum_{i=1}^n F\big(r_{T,\eta}^{(\bar\eta_{T,i})}(t,\ast)\big)\bigg\} = \E[F(r_{T,\eta}(t,\ast))].   
\end{align*}
In particular, as $n,d\rightarrow\infty$, it holds almost surely  that
\begin{equation*}
\begin{split}
        \frac{1}{d}\Tr(\bR_{T,\theta}(t,s)) \rightarrow R_{T,\theta}(t,s), \quad 
        \frac{1}{d}\Tr(\bR_{T,\eta}(t,s)) \rightarrow R_{T,\eta}(t,s),\\
        \frac{1}{d}\Tr(\bR_{T,\eta}(t,\sharp)) \rightarrow R_{T,\eta}(t,\sharp), \quad 
        \frac{1}{d}\Tr(\bR_{T,\eta}(t,\ast)) \rightarrow R_{T,\eta}(t,\ast).
    \end{split}
    \end{equation*}
\end{proposition}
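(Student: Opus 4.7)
The plan is to combine the coordinate-wise decomposition in Lemma \ref{lm: low dim response} with the empirical $W_2$ convergence in Proposition \ref{prop:discreteDmftArt}. By Lemma \ref{lm: low dim response}, each perturbation derivative appearing inside $F$ on the left-hand side of the proposition can be written as a principal term, either the deterministic constant $r_{T,\theta}(t,s)$ or the coordinatewise deterministic quantity $r_{T,\eta}^{(\bar\eta_{T,i})}(t,\cdot)$, plus an error term whose normalized $p$-th empirical moment vanishes almost surely as $n,d\to\infty$.

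\textbf{Reduction via pseudo-Lipschitz control.} Using the $2$-pseudo Lipschitz bound $|F(x+E)-F(x)|\leq C(1+2|x|+|E|)|E|$, Cauchy--Schwarz gives
\begin{align*}
\frac{1}{d}\sum_{j=1}^d \big|F(r_{T,\theta}(t,s)+E^{t,(s,j)}_{T,\theta})-F(r_{T,\theta}(t,s))\big|
\leq C(1+|r_{T,\theta}(t,s)|)\,\Big(\tfrac{1}{d}\sum_j |E^{t,(s,j)}_{T,\theta}|^2\Big)^{1/2}+ C\tfrac{1}{d}\sum_j |E^{t,(s,j)}_{T,\theta}|^2,
\end{align*}
which vanishes by the error estimate in Lemma \ref{lm: low dim response}. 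The same argument, applied coordinatewise with $x=r_{T,\eta}^{(\bar\eta_{T,i})}(t,\cdot)$, reduces each $\eta$-claim to showing
\begin{align*}
\frac{1}{n}\sum_{i=1}^n F\big(r_{T,\eta}^{(\bar\eta_{T,i})}(t,\cdot)\big) \longrightarrow \E\big[F(r_{T,\eta}(t,\cdot))\big],
\end{align*}
once one checks that the coordinatewise values $r_{T,\eta}^{(\bar\eta_{T,i})}(t,\cdot)$ have controlled $L^2$ moments (so that the pseudo-Lipschitz remainder terms can be absorbed).

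\textbf{Identification via $W_2$ convergence.} The key observation is that the map $\bar\eta\mapsto r_{T,\eta}^{(\bar\eta)}(t,\cdot)$ is a deterministic, finite-time functional of the trajectory $\bar\eta$, depending only on the already-computed deterministic coefficients $R_{T,\theta}(\cdot,\cdot)$. I verify by induction on $t\in[-T,\infty)$, using the explicit recursions and the uniform boundedness of $\partial_1\ell,\partial_2\ell,\partial_{11}\ell,\cT_s,\cT_s',\varphi'$ from Assumptions \ref{ass: model assumptions}, \ref{ass: spectral initialization}, and \ref{ass: ell lipschitz}, that this map is Lipschitz with at most linear growth in its coordinates (with constants depending on $T$ but independent of $n,d$). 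Composing with the $2$-pseudo Lipschitz $F$ yields a $2$-pseudo Lipschitz test function of $(\eta^{-T}_{T,i},\ldots,\eta^t_{T,i},\eta^*_i,z_i)$, so the empirical average converges to the expectation by the $W_2$ convergence established in Proposition \ref{prop:discreteDmftArt}. Since $r_{T,\eta}^{(\bar\eta_T)}(t,\cdot)=r_{T,\eta}(t,\cdot)$ when $\bar\eta_T$ is the limiting DMFT process, the limit is exactly $\E[F(r_{T,\eta}(t,\cdot))]$. The trace statements then follow by specializing $F(x)=x$, which is $2$-pseudo Lipschitz, and noting that the diagonal entries $(\bR_{T,\eta}(t,\cdot))_{ii}$ match the quantities appearing inside $F$ in the corresponding empirical averages.

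\textbf{Main obstacle.} The most delicate point is the inductive control of the growth and Lipschitz constant of $r_{T,\eta}^{(\bar\eta)}(t,\cdot)$, especially for the $\sharp$-response: its recursion contains the forcing term $\cT_s'(y)\varphi'(\eta^*,z)\eta^t$, which has linear growth in $\bar\eta$, and unrolling the convolution against $R_{T,\theta}$ over the $T$ power-iteration steps produces constants that may grow polynomially in $T$. For each fixed $T$ this is harmless, but it is precisely this $T$-dependence that prevents us from passing to the spectral initialization limit inside the proposition; that passage requires the separate stability argument carried out later in Proposition \ref{prop:art_spec_dmft_coup}.
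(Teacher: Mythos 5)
Your proposal is correct and follows essentially the same route as the paper's proof: you decompose the perturbation derivatives via Lemma \ref{lm: low dim response}, absorb the vanishing error terms using the $2$-pseudo Lipschitz property of $F$, verify the Lipschitz/linear-growth structure of $\bar\eta\mapsto r_{T,\eta}^{(\bar\eta)}(t,\cdot)$ so the composition is a valid test function, and identify the limit through the $W_2$ convergence of Proposition \ref{prop:discreteDmftArt}. The only difference is that you spell out the Cauchy--Schwarz error control and the trace specialization that the paper leaves implicit, which is fine.
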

\begin{proof}
    Under the model assumptions on $\ell$, we can recursively check that the processes $r_{T,\eta}^{(\bar{\eta}_{T,i})}(t,s),r_{T,\eta}^{(\bar{\eta}_{T,i})}(t,\sharp),$ and $r_{T,\eta}^{(\bar{\eta}_{T,i})}(t,\ast)$ are Lipschitz functions with respect to $\bar{\eta}_{T,i}$. Then the composition with $F$ is $2-$pseudo Lipschitz. Then applying Proposition \ref{prop:discreteDmftArt}, we get
    \begin{align*}
        \lim_{n,d\rightarrow\infty}\frac{1}{n}\sum_{i=1}^nF(r_{T,\eta}^{(\bar{\eta}_{T,i})}(t,s)) &= \E[F(r_{T,\eta}(t,s))]\\
        \lim_{n,d\rightarrow\infty}\frac{1}{n}\sum_{i=1}^nF(r_{T,\eta}^{(\bar{\eta}_{T,i})}(t,\sharp)) &= \E[F(r_{T,\eta}(t,\sharp))]\\
        \lim_{n,d\rightarrow\infty}\frac{1}{n}\sum_{i=1}^nF(r_{T,\eta}^{(\bar{\eta}_{T,i})}(t,\ast)) &= \E[F(r_{T,\eta}(t,\ast))].
    \end{align*}
    By definition it is true that
    \begin{equation*}
        \lim_{n,d\rightarrow\infty}\frac{1}{n}\sum_{i=1}^nF(r_{T,\theta}(t,s)) =  F(r_{T,\theta}(t,s)).
    \end{equation*}
    The remaining claims follow from applying Lemma \ref{lm: low dim response}.
\end{proof}

Proposition \ref{prop: high dim response} relates the response functions to derivatives of high-dimensional perturbed dynamics in (\ref{def:perturbed_dynamic_theta})-(\ref{def:perturbed_dynamic_eta}). This is helpful since by computing these derivatives explicitly, we can further relate the response functions to certain matrix recursions, as detailed in the following lemma. Recall the definitions of $\bbeta^{t,[s,i],\eps}_T, \bbeta^{t,[\sharp,i], \eps}_T, \bbeta^{t,[\ast,i],\eps}_T$ in (\ref{def:perturbed_dynamic_eta2}). 

\begin{lemma}\label{lm: response matrix recursion}
    For any integer $t \geq -T$, define the matrices
    \begin{equation}\label{def:D_Omega}
        \D^t_T := \diag\big(\partial_1\ell(\boldeta^t_T,\boldeta^*,\z)\big) \in \R^{n\times n} \qquad \bOmega^t_T := (1-\gamma\lambda)\I_d -\gamma \X^\top\D^t_T\X \in \R^{d\times d}. 
    \end{equation}
    For integer $t \in [-T,-1]$, define the matrix
    \begin{align}\label{def:Upsilon}
        \bUpsilon^t_T := \frac{1}{\beta_T^{t+1}}\X^\top\diag(\cT_s(\y))\X.
    \end{align}
    Then for any $j \in [d], i \in [n]$, and $t\geq s+1$,
    \begin{align}
        \deps \btheta_T^{t,(s,j),\eps} &= \begin{cases}
            -\frac{1}{\beta_T^{s+1}} \bUpsilon_T^{t-1}\cdots\bUpsilon_T^{s+1}\e_j &s\leq-1,t\leq -1\\
            -\frac{1}{\beta_T^{s+1}} \bOmega^{t-1}_T\cdots\bOmega^0_T\bUpsilon_T^{-1}\cdots\bUpsilon_T^{s+1}\e_j&s\leq-1,t> -1\\
            \gamma\bOmega^{t-1}_T\cdots\bOmega^{s+1}_T \e_j & s> -1
        \end{cases},\label{eq:matrix_recursion_1}\\
        \deps \boldeta_T^{t,[s,i],\eps} &= \begin{cases}
        -\frac{1}{\beta_T^{s+1}} \X\bUpsilon_T^{t-1}\cdots\bUpsilon_T^{s+1}\X^\top\diag(\cT_s(\y))\e_i &s\leq-1,t\leq -1\\
            -\frac{1}{\beta_T^{s+1}} \X\bOmega^{t-1}_T\cdots\bOmega^0_T\bUpsilon_T^{-1}\cdots\bUpsilon_T^{s+1}\X^\top\diag(\cT_s(\y))\e_i&s\leq-1,t> -1\\
            \gamma\X\bOmega^{t-1}_T\cdots\bOmega^{s+1}_T\X^\top\D_T^s \e_i & s> -1
        \end{cases}\label{eq:matrix_recursion_2}.
        \end{align}
        Here for $t = s+1$, $\bOmega^{t-1}_T\ldots \bOmega^{s+1}_T$ is understood as $\I_d$ in the case $s > -1$ and $\bOmega^{t-1}_T\ldots, \bOmega^0_T\bUpsilon^{-1}_{T}\ldots \bUpsilon^{s+1}_T$ is understood as $\I_d$ in the case $s = -1,t=0$.
        Furthermore, for any $i\in[n]$ it holds that 
        \begin{align}
        \deps \bbeta_T^{t,[\sharp,i],\eps} &= \begin{cases}
            \sum_{s=-T}^{t-1}\frac{1}{\beta_T^{s+1}}\X\bUpsilon_T^{t-1}\cdots\bUpsilon_T^{s+1}\X^\top\diag(\cT_s'(\y)\circ\varphi'(\bbeta^*,\z)\circ\bbeta_T^s)\e_i &t<0\\
            \X\bOmega_T^{t-1}\cdots\bOmega_T^{0}\sum_{s=-T}^{-1}\frac{1}{\beta_T^{s+1}}\bUpsilon_T^{-1}\cdots\bUpsilon_T^{s+1}\X^\top\diag(\cT_s'(\y)\circ\varphi'(\bbeta^*,\z)\circ\bbeta_T^s)\e_i & t\geq 0
        \end{cases}\label{eq:matrix_recursion_3}\\
        \deps \bbeta_T^{t,[\ast,i],\eps}&= -\gamma\sum_{s=0}^{t-1}\X\bOmega_T^{t-1}\cdots\bOmega_T^{s+1}\X^\top\diag(\partial_2\ell(\bbeta_T^s,\bbeta^\ast,\z))\e_i, \quad t\geq 0\label{eq:matrix_recursion_4}.
    \end{align}
Consequently, for any integer $H \geq 0$, there exists some $C > 0$ (depending on $H,T,\gamma$) such that almost surely for large enough $n,d\rightarrow\infty$,
\begin{align}\label{eq:derivative_bound}
\max_{-T\leq s < t \leq H}\max_{i\in[n],j\in[d]}\pnorm{\deps \btheta_T^{t,(s,j),\eps}}{} \vee \pnorm{\deps \bbeta_T^{t,[s,i],\eps}}{} \vee \pnorm{\deps \bbeta_T^{t,[\sharp,i],\eps}}{} \vee \pnorm{\deps \bbeta_T^{t,[\ast,i],\eps}}{} \leq C.  
\end{align}
\end{lemma}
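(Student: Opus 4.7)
The plan is to establish the four matrix-product formulas (\ref{eq:matrix_recursion_1})--(\ref{eq:matrix_recursion_4}) by induction on $t$, differentiating the update rules (\ref{def:perturbed_dynamic_theta})--(\ref{def:perturbed_dynamic_eta}) term-by-term at $\eps = 0$, and then deduce the uniform bound (\ref{eq:derivative_bound}) from standard operator-norm estimates. For the $(s,j)$ perturbation, I would first observe that $\btheta_T^{t,(s,j),\eps}$ is independent of $\eps$ for $t \leq s$, so $\deps\btheta_T^{t,(s,j),\eps}=0$ there. At $t = s+1$ the forcing term contributes $-\tfrac{1}{\beta_T^{s+1}}\e_j$ when $s\leq -1$ and $\gamma\e_j$ when $s\geq 0$; for $t > s+1$ the forcing vanishes, and the chain rule applied to the updates yields the linear recursion $\deps\btheta_T^{t+1,(s,j),\eps} = \bUpsilon_T^t\,\deps\btheta_T^{t,(s,j),\eps}$ in the first stage and $\deps\btheta_T^{t+1,(s,j),\eps} = \bOmega_T^t\,\deps\btheta_T^{t,(s,j),\eps}$ in the second, where the identification $\partial_1\ell(\X\btheta_T^{t,(s,j),\eps},\X\btheta^*,\z)|_{\eps=0} = \partial_1\ell(\bbeta_T^t,\bbeta^*,\z)$ produces precisely the matrix $\D_T^t$ inside $\bOmega_T^t$. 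Iterating these recursions, and in the mixed case $s\leq -1 < t$ composing the first-stage product $\bUpsilon_T^{-1}\cdots\bUpsilon_T^{s+1}$ with the second-stage product $\bOmega_T^{t-1}\cdots\bOmega_T^0$ (with empty products read as $\I_d$), gives (\ref{eq:matrix_recursion_1}).

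The $[s,i]$ recursion (\ref{eq:matrix_recursion_2}) is identical in structure, with the single forcing at $t=s$ replaced by $-\tfrac{1}{\beta_T^{s+1}}\X^\top\diag(\cT_s(\y))\e_i$ in the first stage or $\gamma\X^\top \D_T^s\e_i$ in the second, after which premultiplication by $\X$ gives the stated formula for $\deps\bbeta_T^{t,[s,i],\eps}$. The $[\sharp,i]$ case (\ref{eq:matrix_recursion_3}) differs because the additive perturbation $\eps\X^\top\diag(\cT_s'(\y)\circ\varphi'(\bbeta^*,\z)\circ\bbeta_T^t)\e_i$ is present at \emph{every} first-stage step, giving the inhomogeneous recursion $\deps\btheta_T^{t+1,[\sharp,i],\eps} = \bUpsilon_T^t\,\deps\btheta_T^{t,[\sharp,i],\eps} + \tfrac{1}{\beta_T^{t+1}}\X^\top\diag(\cT_s'(\y)\circ\varphi'(\bbeta^*,\z)\circ\bbeta_T^t)\e_i$; unrolling from the zero initial condition $\deps\btheta_T^{-T,[\sharp,i],\eps}=0$ produces the sum in (\ref{eq:matrix_recursion_3}), while propagating through the unperturbed second stage appends the factor $\bOmega_T^{t-1}\cdots\bOmega_T^0$ for $t\geq 0$. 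Formula (\ref{eq:matrix_recursion_4}) follows from the same unrolling for the inhomogeneous second-stage recursion $\deps\btheta_T^{t+1,[\ast,i],\eps} = \bOmega_T^t\,\deps\btheta_T^{t,[\ast,i],\eps} - \gamma\X^\top\diag(\partial_2\ell(\bbeta_T^t,\bbeta^*,\z))\e_i$ initialized at zero.

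For the uniform bound (\ref{eq:derivative_bound}), I would use that under Assumption \ref{ass: model assumptions}-(3) a standard covering argument gives $\pnorm{\X}{\op}\to 1+\sqrt{\delta}$ almost surely, while $\pnorm{\D_T^t}{\op} \leq \pnorm{\partial_1\ell}{\infty}$ and $\pnorm{\diag(\cT_s(\y))}{\op} \leq \pnorm{\cT_s}{\infty}$ are bounded by Assumptions \ref{ass: spectral initialization} and \ref{ass: ell lipschitz}. Hence $\pnorm{\bOmega_T^t}{\op}$ is bounded by an $(n,d)$-independent constant, and the same holds for $\pnorm{\bUpsilon_T^t}{\op}$ provided $\{\beta_T^{t+1}\}_{-T\leq t\leq -1}$ stays bounded away from zero. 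The main obstacle is precisely this lower bound on $\beta_T^{t+1}$: I would verify it by using Proposition \ref{prop:discreteDmftArt} to identify the high-dimensional $\hat\beta_T^{t+1}$ with its DMFT value, then observing that power iteration initialized at $\btheta^*$ (which has limiting overlap $a > 0$ with the leading eigenvector $\hat\btheta^s$ of $\M_n$ by Proposition \ref{prop: spec est overlap} together with $\psi'_\delta(\lambda_\delta^*) > 0$) yields $\beta_T^{t+1} \geq |\langle\hat\btheta^s,\btheta_T^t\rangle|\lambda_1(\M_n)/\sqrt{d}$, which is bounded below by a strictly positive constant depending only on $a$, $\delta$, and $\lambda_\delta^*$ since the overlap is non-decreasing along the iteration. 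With only finitely many indices $t\in[-T,H]$ to control, combining these bounds with the telescoping products in (\ref{eq:matrix_recursion_1})--(\ref{eq:matrix_recursion_4}) produces (\ref{eq:derivative_bound}) with a constant $C = C(H,T,\gamma)$.
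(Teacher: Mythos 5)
Your proposal is correct and follows essentially the same route as the paper: differentiate the perturbed dynamics at $\eps=0$, identify the forcing terms (a single forcing at $t=s$ for the $(s,j)$ and $[s,i]$ perturbations, repeated forcings for the $[\sharp,i]$ and $[\ast,i]$ cases), and unroll the resulting homogeneous/inhomogeneous linear recursions in $\bUpsilon_T^t$ and $\bOmega_T^t$ to obtain (\ref{eq:matrix_recursion_1})--(\ref{eq:matrix_recursion_4}). The only difference is that you spell out the operator-norm bound (\ref{eq:derivative_bound}), including strict positivity of the finitely many normalizers $\beta_T^{t+1}$, which the paper treats as immediate from the product formulas; your argument for that step is sound.
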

\begin{proof}
The claims (\ref{eq:matrix_recursion_1}) and (\ref{eq:matrix_recursion_2}) hold since for $t \geq s+1$, we have $\deps \btheta_T^{t,(s,j),\eps} = \bUpsilon^{t-1}_T \deps \btheta_T^{t-1,(s,j),\eps}$ for $t\leq 0$ and $\deps \btheta_T^{t,(s,j),\eps} = \bOmega^{t-1}_T \deps \btheta_T^{t-1,(s,j),\eps}$ if $t \geq 1$, and similarly for $\deps \btheta_T^{t,[s,i],\eps}$, while
\begin{align*}
\deps \btheta_T^{s+1,(s,j),\eps} = 
\begin{cases}
-\frac{\e_j}{\beta^{s+1}_T} & s \leq -1, \\
\gamma \e_j & s > -1,
\end{cases}\quad 
\deps \btheta_T^{s+1,[s,i],\eps} =
\begin{cases}
-\frac{\X^\top \diag(\cT_s(\y))\e_i}{\beta^{s+1}_T} & s \leq -1,\\
\gamma \X^\top \D^s_T \e_i & s > -1.
\end{cases}
\end{align*}
Next (\ref{eq:matrix_recursion_3}) holds since for $t \geq 0$
\begin{align*}
\deps \btheta_T^{t,[\sharp,i],\eps} = \bOmega^{t-1}_T\deps \btheta_T^{t-1,[\sharp,i],\eps} = \ldots = \bOmega^{t-1}_T\ldots \bOmega^{0}_T \deps \btheta_T^{0,[\sharp,i],\eps},
\end{align*}
where
\begin{align*}
\deps\btheta_T^{0,[\sharp,i],\eps} &= \bUpsilon^{-1}_T \deps\btheta_T^{-1,[\sharp,i],\eps} + \frac{\X^\top \diag(\cT'(\y)\circ\varphi'(\bbeta^\ast,\z)\circ\bbeta^{-1}_T)\e_i}{\beta^0_T}\\
&= \ldots = \sum_{s=-T}^{-1}  \bUpsilon^{-1}_T\ldots \bUpsilon^{s+1}_T\frac{\X^\top \diag(\cT'(\y)\circ\varphi'(\bbeta^\ast,\z)\circ\bbeta^{s}_T)\e_i}{\beta^{s+1}_T}.
\end{align*}
The case of $t < 0$ follows similarly. For (\ref{eq:matrix_recursion_4}), we clearly have $\deps \bbeta_T^{0,[\ast,i],\eps} = 0$, and for $t\geq 1$, 
\begin{align*}
\deps \btheta_T^{t,[\ast,i],\eps} &= \bOmega_T^{t-1}\deps \btheta_T^{t-1,[\ast,i],\eps} - \gamma \X^\top \diag(\partial_2\ell(\bbeta^{t-1}_T,\bbeta^\ast,\z))\e_i\\
&= \ldots = -\gamma\sum_{s=0}^{t-1} \bOmega_T^{t-1}\ldots \bOmega^{s+1}_T\X^\top \diag(\partial_2\ell(\bbeta^s_T,\bbeta^\ast,\z))\e_i,
\end{align*}
as desired.
\end{proof}
To motivate the need for the above analysis, consider for instance the response function $R_{T,\theta}(t,s)$ where $t>s>0$. A combination of Proposition \ref{prop: high dim response} and Lemma \ref{lm: response matrix recursion} shows that almost surely,
\begin{equation}\label{eq:art_response_example}
    R_{T,\theta}(t,s) = \lim_{n,d\rightarrow\infty} \frac{1}{d}\Tr\Big(\gamma\bOmega_T^{t-1}\cdots\bOmega_T^{s+1}\Big).
\end{equation}
That is, we can understand the response $R_{T,\theta}$ as a normalized trace which is in principle much easier to analyze compared to the recursion (\ref{eq:r_T_theta_recursion}). Understanding these response functions as limits of matrix values is a key step in proving the responses are approximately tti and exponentially decay. As such, this type of reasoning will be used extensively in Section \ref{sec:proof_long_time}.

The remaining parts of this section are used to prove Lemma \ref{lm: low dim response}. We first derive some dynamical cavity estimates in Section \ref{subsecsec:loo_estimate}, and then complete the proof in Section \ref{subsecsec:proof_key_lemma}.

\subsubsection{Dynamical Cavity Estimates}\label{subsecsec:loo_estimate}

In this section, we develop some dynamical cavity estimates  that will be useful in the proof of Lemma \ref{lm: low dim response}. We introduce the following cavity dynamics. Let
\begin{equation}\label{def:Xy_leave_col}
    \X^{(j)} = (X_{ik}\mathbf{1}_{k\neq j})_{i\in[n],k\in[d]} \quad \text{ and }\quad \y^{(j)} = \varphi(\X^{(j)}\btheta^*,\z)
\end{equation}
be the cavity versions of $\X$ and $\y$ by leaving out the $j$th column of $\X$. For each $j\in[d]$, define the column cavity version of (\ref{def:art_dyn_first})-(\ref{def:art_dyn_second}) as
\begin{equation}\label{def:cavity_col_dynamic}
    \btheta_T^{t+1,(j)} =\begin{cases}
        \frac{(\X^{(j)})^\top \diag(\cT_s(\y^{(j)})) \X^{(j)} \btheta_T^{t,(j)}}{\beta_T^{t+1}} & -T \leq t<0\\
        \btheta_{T}^{t,(j)} -\gamma(\X^{(j)})^\top\ell(\X^{(j)}\btheta_{T}^{t,(j)},\X^{(j)}\btheta^*,\z) - \gamma\lambda\btheta_{T}^{t,(j)} & t \geq 0,
    \end{cases}
\end{equation}
which is initialized at $\btheta_T^{-T,(j)} = \btheta^\ast$. Similarly, let
\begin{equation}\label{def:Xy_leave_row}
    \X^{[i]} = (X_{kj}\mathbf{1}_{k\neq i})_{k\in[n],j\in[d]} \quad \text{ and } \quad\y^{[i]} = \varphi(\X^{[i]}\btheta^*,\z)
\end{equation}
be the cavity versions of $\X$ and $\y$ by leaving out the $i$th row of $\X$. For each $i\in[n]$, define the row cavity version of (\ref{def:art_dyn_first})-(\ref{def:art_dyn_second}) as
\begin{equation}\label{def:cavity_row_dynamic}
    \btheta_T^{t+1,[i]} =\begin{cases}
        \frac{(\X^{[i]})^\top \diag(\cT_s(\y^{[i]})) \X^{[i]} \btheta_T^{t,[i]}}{\beta_T^{t+1}} & -T \leq t \leq -1\\
        \btheta_{T}^{t,[i]} -\gamma(\X^{[i]})^\top\ell(\X^{[i]}\btheta_{T}^{t,[i]},\X^{[i]}\btheta^*,\z) - \gamma\lambda\btheta_{T}^{t,[i]} & t \geq 0,
    \end{cases}
\end{equation}
which is initialized at $\btheta_T^{-T,[i]} = \btheta^\ast$.  Further define
\begin{align*}
\bbeta^{t,(j)}_T = \X^{(j)}\btheta^{t,(j)}_T, \quad \bbeta^{t,[i]}_T = \X^{[i]}\btheta^{t,[i]}_T, \quad \bbeta^{\ast,(j)} = \X^{(j)}\btheta^\ast, \quad \bbeta^{\ast,[i]} = \X^{[i]}\btheta^\ast.
\end{align*}
Note that for both cavity dynamics, we keep the same sequence $\{\beta_T^t\}$ as in the original artificial dynamic (\ref{def:art_dyn_first})-(\ref{def:art_dyn_second}). In the following, for any $j\in[d],i\in[n]$, we will use the notation
\begin{equation}
    \btheta_T^t = (\theta_{T,j}^t,\btheta_{T,-j}^t) \quad \text{ and } \quad \bbeta_T^t = (\eta_{T,i}^t,\bbeta_{T,-i}^t)
\end{equation}
to isolate single coordinates of $\btheta^t_T$ and $\bbeta^t_T$.

The following lemma controls the $\ell_2$ difference between the cavity dynamics (\ref{def:cavity_col_dynamic}), (\ref{def:cavity_row_dynamic}) and the artificial dynamic (\ref{def:art_dyn_first})-(\ref{def:art_dyn_second}).
\begin{lemma}\label{lem:cavity_l2_estimate}
    Suppose Assumptions \ref{ass: model assumptions}, \ref{ass: spectral initialization} and \ref{ass: ell lipschitz} hold. Fix any $T,H>0$. There exists a constant $C>0$ (which can depend on $T,H$ but $n,d$ independent) such that for any $\gamma>0$, integer $t \in [-T,H]$ and $j\in[d]$, $i\in [n]$
    \begin{align}
        \frac{\pnorm{\btheta_T^t}{}}{\sqrt{d}} \vee \frac{\pnorm{\btheta_T^{t,(j)}}{}}{\sqrt{d}} \vee \frac{\pnorm{\btheta_T^{t,[i]}}{}}{\sqrt{d}} \leq C, \label{eq:cavity_1}\\
        \pnorm{\btheta_T^{t,(j)}-\btheta_T^t}{} \leq C\sqrt{\log d}(1 + |\theta_j^\ast|),\label{eq:cavity_2}\\
        \pnorm{\btheta_T^{t,[i]} - \btheta_T^t}{} \leq C\log d(1+|z_i| )\label{eq:cavity_3},
    \end{align}
    almost surely for large $n,d$.
\end{lemma}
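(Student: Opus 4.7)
The plan is induction on $t\in[-T,H]$, carried out on a high-probability event where the usual concentration estimates hold simultaneously: $\|\X\|_{\op},\|\X^{(j)}\|_{\op},\|\X^{[i]}\|_{\op}\le C$, $\|\X_{\cdot j}\|,\|\x_i\|\le C$, $\max_{ij}|X_{ij}|\le C\sqrt{\log d/d}$, and for any vector $\v$ independent of $\X_{\cdot j}$ (resp.\ $\x_i$) with $\|\v\|\le C\sqrt d$, $|\X_{\cdot j}^\top\v|\le C\sqrt{\log d}$ (resp.\ $|\x_i^\top\v|\le C\sqrt{\log d}$), uniformly in $i,j$. I will also invoke $\beta_T^t\in[c,C]$ for $-T+1\le t\le 0$, with the upper bound coming from Proposition \ref{prop:discreteDmftArt} and positivity from Assumption \ref{ass: spectral initialization}. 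The main computational device is the rank-one cavity decomposition $\X=\X^{(j)}+\X_{\cdot j}\e_j^\top$ (resp.\ $\X=\X^{[i]}+\e_i\x_i^\top$), coupled with the crucial fact that $\btheta_T^{t,(j)}$ (resp.\ $\btheta_T^{t,[i]}$) is, by construction, independent of $\X_{\cdot j}$ (resp.\ $\x_i$).

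Bound \eqref{eq:cavity_1} is a direct induction: in the power stage, $\|\btheta_T^{t+1}\|\le\|\cT_s\|_\infty\|\X\|_{\op}^2\|\btheta_T^t\|/\beta_T^{t+1}\le C\|\btheta_T^t\|$, and in the gradient-descent stage, Assumption \ref{ass: ell lipschitz} yields $\|\ell(\X\btheta_T^t,\X\btheta^\ast,\z)\|\le C(\sqrt n+\|\X\|_{\op}(\|\btheta_T^t\|+\|\btheta^\ast\|)+\|\z\|)$, so a discrete Gr\"onwall argument keeps $\|\btheta_T^t\|/\sqrt d$ bounded over the finite horizon $H$; the same reasoning handles the cavity sequences with $\X$ replaced by $\X^{(j)}$ or $\X^{[i]}$.

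For \eqref{eq:cavity_2}, the base case $t=-T$ is trivial, and for $t\ge-T+1$ one checks directly that $(\btheta_T^{t,(j)})_j=0$, since the $j$th row of $(\X^{(j)})^\top$ vanishes and gradient descent preserves the zero coordinate. Expanding the one-step cavity difference with $\X=\X^{(j)}+\X_{\cdot j}\e_j^\top$ and $\D=\D^{(j)}+(\D-\D^{(j)})$ produces three pieces: a \emph{transport} piece $\X^\top\D\X(\btheta_T^{t,(j)}-\btheta_T^t)/\beta_T^{t+1}$ handled by the inductive hypothesis together with $\|\X^\top\D\X\|_{\op}=O(1)$; a \emph{column cavity} piece $\e_j\X_{\cdot j}^\top\D\X^{(j)}\btheta_T^{t,(j)}$, which splits as $\X_{\cdot j}^\top\D^{(j)}\X^{(j)}\btheta_T^{t,(j)}$ (sub-Gaussian in $\X_{\cdot j}$ of variance $O(1)$, hence $O(\sqrt{\log d})$ uniformly in $j$) plus $\X_{\cdot j}^\top(\D-\D^{(j)})\X^{(j)}\btheta_T^{t,(j)}$ bounded via $\|\D-\D^{(j)}\|_{\op}\le C\sqrt{\log d/d}\,|\theta_j^\ast|$; and a \emph{diagonal} piece $(\X^{(j)})^\top(\D-\D^{(j)})\X^{(j)}\btheta_T^{t,(j)}$ of norm at most $C\sqrt{\log d}\,|\theta_j^\ast|$. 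The gradient-descent step is parallel, replacing the Lipschitz bound on $\cT_s$ with that on $\ell$ from Assumption \ref{ass: ell lipschitz}.

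For \eqref{eq:cavity_3}, the key algebraic identity is $\X^\top\D\X-(\X^{[i]})^\top\D^{[i]}\X^{[i]}=\cT_s(y_i)\x_i\x_i^\top$: the rank-one correction from $\X-\X^{[i]}=\e_i\x_i^\top$ combines cleanly because $\e_i^\top\X^{[i]}=0$ eliminates the cross terms and also nullifies the sole nonzero entry $(i,i)$ of $\D-\D^{[i]}$. This yields a first-stage perturbation $\cT_s(y_i)(\x_i^\top\btheta_T^{t,[i]})\x_i$, of norm $\le C|\x_i^\top\btheta_T^{t,[i]}|\le C\sqrt{\log d}$ by the sub-Gaussian estimate above. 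In the gradient-descent stage, the analogous rank-one term is $\x_i\,\ell((\X\btheta_T^t)_i,(\X\btheta^\ast)_i,z_i)$; combining $|\ell(a,b,c)|\le C(1+|a|+|b|+|c|)$ with $|(\X\btheta_T^t)_i|\le|\x_i^\top\btheta_T^{t,[i]}|+\|\x_i\|\|\btheta_T^t-\btheta_T^{t,[i]}\|\le C\log d\,(1+|z_i|)$ (via the inductive hypothesis) delivers the advertised bound $C\log d\,(1+|z_i|)$; the explicit $|z_i|$ is inherited from the fact that the $i$th coordinate of $\ell(\X^{[i]}\btheta_T^{t,[i]},\X^{[i]}\btheta^\ast,\z)$ equals $\ell(0,0,z_i)$, controlled only by $C(1+|z_i|)$. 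The principal obstacle throughout is careful per-step accounting: each inductive pass introduces a fresh cavity contribution ($O(\sqrt{\log d})$ for columns, $O(\log d)$ for rows) that must be added \emph{additively} rather than amplified multiplicatively, which demands tight uniform-in-$(i,j)$ operator-norm control on $\X$, $\D$, and $\D-\D^{(j)}$, and leans on the two-sided bound on $\beta_T^{t+1}$ since the same normalization is used for both the cavity and the original recursions.
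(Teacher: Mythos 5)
Your proposal is correct and follows the same overall strategy as the paper: restrict to a high-probability event with operator-norm and norm bounds, set up a one-step recursion for the cavity difference (a contraction/transport term plus a fresh cavity term), control the fresh terms by conditional sub-Gaussian concentration using the independence of the left-out column $\X_{\cdot j}$ (resp.\ row $\x_i$) from the cavity dynamic, and iterate over the fixed horizon $[-T,H]$ so the multiplicative factors are absorbed into a $(T,H)$-dependent constant. Within this skeleton you deviate from the paper in two places, both legitimately and arguably more cleanly. For the column case, you exploit $(\btheta_T^{t,(j)})_j=0$ and bound the diagonal perturbation via $\pnorm{\D-\D^{(j)}}{\op}\le C\sqrt{\log d/d}\,|\theta_j^\ast|$ (from $\max_{ij}|X_{ij}|\le C\sqrt{\log d/d}$) together with operator norms; the paper instead bounds the corresponding term $(II)$ in $\Omega_{s,j}$ entrywise and invokes Hanson--Wright for the quadratic form $\x_j^\top\A^{(j)}\x_j$ — both routes give $C\sqrt{\log d}\,|\theta_j^\ast|$. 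For the row case, your exact rank-one identity $\X^\top\D\X-(\X^{[i]})^\top\D^{[i]}\X^{[i]}=\cT_s(y_i)\x_i\x_i^\top$ (valid because $\e_i^\top\X^{[i]}=0$ kills the cross terms and the $(i,i)$ discrepancy of $\D-\D^{[i]}$) makes the first-stage perturbation exactly rank one and of size $O(\sqrt{\log d})$, whereas the paper bounds $\Omega_{s,i}$ term by term and picks up the product $|\x_i^\top\btheta^\ast|\,|\x_i^\top\btheta_T^{s,[i]}|\le C\log d$; your version in fact yields a slightly sharper rate, which of course still implies the stated $C\log d(1+|z_i|)$ bound. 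Two cosmetic caveats: the lower bound $\beta_T^t\ge c>0$ is not a consequence of Assumption 2.2 per se but of the convergence $\beta_T^t\to\lambda_{1,\infty}>0$ (as the paper uses later), and the event clause ``for any vector $\v$ independent of $\X_{\cdot j}$'' should be instantiated, as in the paper, by conditioning on the finitely many specific cavity vectors $\btheta_T^{s,(j)},\btheta_T^{s,[i]},\btheta^\ast$ and union-bounding over $i,j$ and the finitely many time steps, rather than quantified over all independent vectors.
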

\begin{proof}
    For some large fixed constant $C_0>0,$ define the event
    \begin{equation}
        \label{def:loo_event}
        \mathcal{E}_n = \Big\{\pnorm{\X}{\rm op}\leq C_0,\pnorm{\btheta^*}{} \vee\pnorm{\z}{}\leq C_0\sqrt{d}\Big\}.
    \end{equation}
    By Assumption \ref{ass: model assumptions}, $\mathcal{E}_n$ holds almost surely for large enough $n,d$. We note that on $\mathcal{E}_n$, the cavity versions $\pnorm{\X^{(j)}}{\rm op}\leq C_0$ and $\pnorm{\X^{[i]}}{\rm op}\leq C_0$ as well for any $j \in [d],i\in[n]$.
    
    To see (\ref{eq:cavity_1}), first consider the case $t>0$. By iterating $\btheta^t_T = \btheta^{t-1}_T - \gamma(\X^\top \ell(\X\btheta^{t-1}_T, \X\btheta^\ast,\z) + \lambda \btheta_T^{t-1})$, we have 
    \begin{equation*}
        \btheta_T^t = \btheta_T^0 - \gamma\sum_{s=0}^{t-1} \Big[\X^\top\ell(\X\btheta_T^{s},\X\btheta^\ast,\z) + \lambda\btheta_T^s\Big].
    \end{equation*}
    Then using Assumption \ref{ass: ell lipschitz}, on the event $\cE_n$ we have
    \begin{align*}
        \frac{\pnorm{\btheta_T^t}{}}{\sqrt{d}} &\leq \frac{\pnorm{\btheta_T^0}{}}{\sqrt{d}} + \gamma\sum_{s=0}^{t-1} \Big[\pnorm{\X}{\rm op} \frac{\pnorm{\ell(\X\btheta_T^s,\X\btheta^\ast,\z)}{}}{\sqrt{d}}+ \lambda \frac{\pnorm{\btheta_T^s}{}}{\sqrt{d}}\big]\\
        &\leq \frac{\pnorm{\btheta_T^0}{}}{\sqrt{d}} + C\gamma\sum_{s=0}^{t-1} \Big[\pnorm{\X}{\rm op}\Big(1+\frac{\pnorm{\X}{\rm op}\pnorm{\btheta_T^s}{}}{\sqrt{d}}+ \frac{\pnorm{\X}{\rm op}\pnorm{\btheta^\ast}{}}{\sqrt{d}}+\frac{\pnorm{\z}{}}{\sqrt{d}}\Big)+\lambda \frac{\pnorm{\btheta_T^s}{}}{\sqrt{d}}\big]\\
        &\leq \frac{\pnorm{\btheta_T^0}{}}{\sqrt{d}} + C\gamma\sum_{s=0}^{t-1}\Big[1+\frac{\pnorm{\btheta_T^s}{}}{\sqrt{d}}\Big].
    \end{align*}
    This leads to the recursive formula
    \begin{equation*}
        1+\frac{\pnorm{\btheta_T^t}{}}{\sqrt{d}} \leq 1+\frac{\pnorm{\btheta_T^0}{}}{\sqrt{d}}+ C\gamma\sum_{s=0}^{t-1}\Big[1+\frac{\pnorm{\btheta_T^s}{}}{\sqrt{d}}\Big],
    \end{equation*}
    which after iterating yields
    \begin{equation*}
        \frac{\pnorm{\btheta_T^t}{}}{\sqrt{d}} \leq (1+C\gamma)^t\Big[1+\frac{\pnorm{\btheta_T^0}{}}{\sqrt{d}}\Big].
    \end{equation*}
    It remains to control $\limsup_{n,d\rightarrow\infty}\pnorm{\btheta_T^0}{}/\sqrt{d}$. By definition (\ref{def:art_dyn_first}), we have the expansion
    \begin{equation}
    \btheta_T^0=\frac{(\X^\top\diag(\cT_s(\y))\X)^T\btheta^\ast}{\beta_T},
    \end{equation}
    where 
    \begin{equation}
        \beta_T = \lim_{n,d\rightarrow\infty}\frac{\pnorm{(\X^\top\diag(\cT_s(\y))\X)^T\btheta^\ast}{}}{\sqrt{d}}.
    \end{equation}
    On the event $\mathcal{E}_n$ we have $\pnorm{\X^\top \diag\big(\cT_s(\y)\big)\X}{\op} \leq C$, hence $\pnorm{\btheta_T^0}{}/\sqrt{d} \leq C' \pnorm{\btheta^\ast}{}/\sqrt{d}$, where $C'>0$ is a constant depending on $T$. On the event $\mathcal{E}_n$, we have $\pnorm{\btheta^\ast}{}/\sqrt{d} \leq C_0$ which implies $\pnorm{\btheta^0_T}{}/\sqrt{d}\leq C$, completing the proof. The case $t\leq 0$ and bounds for $\pnorm{\btheta_T^{t,(j)}}{}/\sqrt{d}, \pnorm{\btheta_T^{t,[i]}}{}/\sqrt{d}$ follow from similar arguments.

    Next we show (\ref{eq:cavity_2}). Again consider the case $t>0$. Direct computation shows 
    \begin{equation*}
        \btheta_T^t-\btheta_T^{t,(j)} = (1-\gamma\lambda)(\btheta_T^{t-1}-\btheta_T^{t-1,(j)}) - \gamma\Big(\X^\top\ell(\X\btheta_T^{t-1},\X\btheta^\ast,\z)-(\X^{(j)})^\top\ell(\X^{(j)}\btheta_T^{t-1,(j)},\X^{(j)}\btheta^\ast,\z)\Big),
    \end{equation*}
    which implies
    \begin{align}\label{eq:loo_main}
        \pnorm{\btheta_T^t-\btheta_T^{t,(j)}}{} &\leq (1-\gamma\lambda)\pnorm{\btheta_T^{t-1}-\btheta_T^{t-1,(j)}}{} +\gamma \Bigpnorm{\X^\top\ell(\X\btheta_T^{t-1},\X\btheta^\ast,\z)-(\X^{(j)})^\top\ell(\X^{(j)}\btheta_T^{t-1,(j)},\X^{(j)}\btheta^\ast,\z)}{}.
    \end{align}
    The second term satisfies
    \begin{align*}
       &\Bigpnorm{\X^\top\ell(\X\btheta_T^{t-1},\X\btheta^\ast,\z)-(\X^{(j)})^\top\ell(\X^{(j)}\btheta_T^{t-1,(j)},\X^{(j)}\btheta^\ast,\z)}{}\\
       &\qquad \leq \Bigpnorm{(\X-\X^{(j)})^\top \ell(\X^{(j)}\btheta_T^{t-1,(j)},\X^{(j)}\btheta^\ast,\z)}{} + \Bigpnorm{\X^\top\Big(\ell(\X\btheta_T^{t-1},\X\btheta^\ast,\z)-\ell(\X^{(j)}\btheta_T^{t-1,(j)},\X^{(j)}\btheta^\ast,\z)\Big)}{}.
    \end{align*}
    Using Assumption \ref{ass: ell lipschitz}, the second term here can be further bounded by
    \begin{align*}
    &\Bigpnorm{\X^\top\Big(\ell(\X\btheta_T^{t-1},\X\btheta^\ast,\z)-\ell(\X^{(j)}\btheta_T^{t-1,(j)},\X^{(j)}\btheta^\ast,\z)\Big)}{}\\
    &\leq C\pnorm{\ell(\X\btheta_T^{t-1},\X\btheta^\ast,\z)-\ell(\X^{(j)}\btheta_T^{t-1,(j)},\X^{(j)}\btheta^\ast,\z)}{}\\
    &\leq C\pnorm{\X\btheta_T^{t-1}-\X^{(j)}\btheta_T^{t-1,(j)}}{} + C\pnorm{(\X-\X^{(j)})\btheta^\ast}{}\\
    &\leq C\pnorm{\btheta_T^{t-1}-\btheta_T^{t-1,(j)}}{} + C\pnorm{(\X-\X^{(j)})\btheta_T^{t-1,(j)}}{} + C\pnorm{(\X-\X^{(j)})\btheta^\ast}{}.
    \end{align*}
    Plugging this into (\ref{eq:loo_main}), we have
    \begin{align*}
        \pnorm{\btheta_T^t-\btheta_T^{t,(j)}}{} &\leq 
        (1 + C\gamma) \pnorm{\btheta_T^{t-1}-\btheta_T^{t-1,(j)}}{}\\
        &\quad + \underbrace{C\gamma\Big(\pnorm{(\X-\X^{(j)})^\top \ell(\X^{(j)}\btheta_T^{t-1,(j)},\X^{(j)}\btheta^\ast,\z)}{} + \pnorm{(\X-\X^{(j)})\btheta_T^{t-1,(j)}}{} + \pnorm{(\X-\X^{(j)})\btheta^\ast}{2}\Big)}_{\Delta_{t-1,j}}.
    \end{align*}
    We can iterate this bound to get
    \begin{align*}
        \pnorm{\btheta_T^t-\btheta_T^{t,(j)}}{} &\leq (1+C\gamma)^t\pnorm{\btheta_T^{0}-\btheta_T^{0,(j)}}{} +\sum_{s=0}^{t-1}(1+C\gamma)^{t-1-s}\Delta_{s,j} \leq C\pnorm{\btheta_T^{0}-\btheta_T^{0,(j)}}{} + C\max_{0\leq s \leq t-1}\Delta_{s,j}.
    \end{align*}
   We now bound $\pnorm{\btheta_T^{0}-\btheta_T^{0,(j)}}{}$.
    By construction, we have
    \begin{align*}
        \pnorm{\btheta_T^{0}-\btheta_T^{0,(j)}}{} &= \frac{1}{\beta_T^0}\Bigpnorm{\X^\top\diag(\cT_s(\y))\X\btheta_T^{-1}-(\X^{(j)})^\top\diag(\cT_s(\y^{(j)}))\X^{(j)}\btheta_T^{-1,(j)}}{}\\
        &\leq\frac{1}{\beta_T^0}\Big[\Bigpnorm{\X^\top\diag(\cT_s(\y))\X(\btheta_T^{-1}-\btheta_T^{-1,(j)})}{}\\
        &\qquad +\underbrace{\Bigpnorm{\Big(\X^\top\diag(\cT_s(\y))\X-(\X^{(j)})^\top\diag(\cT_s(\y^{(j)}))\X^{(j)}\Big)\btheta_T^{-1,(j)}}{}\Big]}_{\Omega_{-1,j}}\\
        &\leq \frac{C}{\beta^0_T}\Big(\pnorm{\btheta_T^{-1}-\btheta_T^{-1,(j)}}{} + \Omega_{-1,j}\Big).
    \end{align*}
    Iterating this bound and using that $\btheta_T^{-T}=\btheta_T^{-T,(j)} = \btheta^\ast$, we arrive at
    $\pnorm{\btheta_T^{0}-\btheta_T^{0,(j)}}{} \leq C\max_{-T\leq s \leq -1}\Omega_{s,j}$, which then implies
    \begin{align}\label{eq:col_cavity_main}
    \pnorm{\btheta_T^t-\btheta_T^{t,(j)}}{} \leq C\Big(\max_{0\leq s \leq t-1}\Delta_{s,j} + \max_{-T\leq s \leq -1}\Omega_{s,j}\Big).
    \end{align}

    Let us now bound $\Delta_{s,j}$ and $\Omega_{s,j}$. For $\Delta_{s,j}$, we will bound the term $\pnorm{(\X-\X^{(j)})^\top \ell(\X^{(j)}\btheta_T^{s,(j)},\X^{(j)}\btheta^\ast,\z)}{}$, and the other terms $\pnorm{(\X-\X^{(j)})\btheta_T^{s,(j)}}{}, \pnorm{(\X-\X^{(j)})\btheta^\ast}{}$ therein will follow from similar arguments. Note that $\X^{(j)} = \X-\x_j\e_j^\top$, where $\x_j$ denotes the $j$th column of $\X$. Then we can write
    \begin{equation*}
       \Bigpnorm{(\X-\X^{(j)})^\top \ell(\X^{(j)}\btheta_T^{s,(j)},\X^{(j)}\btheta^\ast,\z)}{} =  \Big|\x_j^\top\ell(\X^{(j)}\btheta_T^{s,(j)},\X^{(j)}\btheta^\ast,\z)\Big|. 
    \end{equation*}
    By independence between $\x_j$ and $\ell(\X^{(j)}\btheta_T^{s,(j)},\X^{(j)}\btheta^\ast,\z)$, conditioning on $\X_{-j},\btheta^\ast,\z$, $\x_j^\top\ell(\X^{(j)}\btheta_T^{s,(j)},\X^{(j)}\btheta^\ast,\beps)$ is sub-Gaussian with variance proxy bounded by a constant multiple of
    $\frac{1}{d}\pnorm{\ell(\X^{(j)}\btheta_T^{s,(j)},\X^{(j)}\btheta^\ast,\z)}{}^2$, which implies the tail bound
    \begin{equation*}
        \P\Bigg(|\x_j^\top\ell(\X^{(j)}\btheta_T^{s,(j)},\X^{(j)}\btheta^\ast,\z)|\geq C\sqrt{\frac{\pnorm{\ell(\X^{(j)}\btheta_T^{s,(j)},\X^{(j)}\btheta^\ast,\z)}{}^2\log d}{d}}\Bigg) \leq 2d^{-10},
    \end{equation*}
    for large enough $C,c \geq 0$.
    Note that the above computation holds for any $s,t$. Taking  union bound over $j\in[d], 0\leq s\leq t-1$ and applying Borel-Cantelli, we have
    \begin{equation*}
         \Big|\x_j^\top\ell(\X^{(j)}\btheta_T^{s,(j)},\X^{(j)}\btheta^\ast,\z)\Big| \leq C\sqrt{\frac{\pnorm{\ell(\X^{(j)}\btheta_T^{s,(j)},\X^{(j)}\btheta^\ast,\z)}{}^2\log d}{d}}
    \end{equation*}
    for all $0\leq s\leq t-1$ and $j\in[d]$ almost surely for large enough $n,d$. By Assumption \ref{ass: ell lipschitz} and (\ref{eq:cavity_1}) established above, we have
    \begin{equation*}
        \pnorm{\ell(\X^{(j)}\btheta_T^{s,(j)},\X^{(j)}\btheta^\ast,\z)}{} \leq C\big(\sqrt{n} + \pnorm{\X^{(j)}\btheta_T^{s,(j)}}{} + \pnorm{\X^{(j)}\btheta^\ast}{} + \pnorm{\z}{}\big) \leq C\sqrt{d}
    \end{equation*}
    almost surely for large enough $n,d$. Combining the above bounds yields
    \begin{equation*}
        \max_{0\leq s\leq t-1}\max_{j\in[d]} \Big|\x_j^\top\ell(\X^{(j)}\btheta_T^{s,(j)},\X^{(j)}\btheta^\ast,\z)\Big| \leq C\sqrt{\log d},
    \end{equation*}
    which, together with similar bounds for $\pnorm{(\X-\X^{(j)})\btheta_T^{s,(j)}}{}, \pnorm{(\X-\X^{(j)})\btheta^\ast}{}$, yields
    \begin{align}\label{eq:Delta_bound}
        \max_{0\leq s\leq t-1}\max_{j\in[d]} \Delta_{s,j} \leq C\sqrt{\log d}
    \end{align}
    almost surely for large enough $n,d$.
    
    Next we bound $\Omega_{s,j}$. We start with the bound
    \begin{align*}
        \Omega_{s,j} &= \Bigpnorm{\Big(\X^\top\diag(\cT_s(\y))\X-(\X^{(j)})^\top\diag(\cT_s(\y^{(j)}))\X^{(j)}\big)\btheta_T^{s,(j)}}{}\\
        &\leq \underbrace{\Bigpnorm{\Big(\X^\top\diag(\cT_s(\y^{(j)}))\X-(\X^{(j)})^\top\diag(\cT_s(\y^{(j)}))\X^{(j)}\Big)\btheta_T^{s,(j)}}{}}_{(I)}\\
        &\quad + \underbrace{\Bigpnorm{\Big(\X^\top\diag(\cT_s(\y))\X -\X^\top\diag(\cT_s(\y^{(j)}))\X\Big)\btheta_T^{s,(j)}}{}}_{(II)}.
    \end{align*}
    To bound $(I)$, using $\X^{(j)} = \X-\x_j\e_j^\top$, we have
    \begin{align*}
        &\X^\top\diag(\cT_s(\y^{(j)}))\X-(\X^{(j)})^\top\diag(\cT_s(\y^{(j)}))\X^{(j)}\\
        &= \X^\top\diag(\cT_s(\y))\x_j\e_j^\top + \e_j\x_j^\top\diag(\cT_s(\y^{(j)}))\X - \e_j\x_j^\top\diag(\cT_s(\y^{(j)}))\x_j\e_j^\top\\
        &= \X^\top\diag(\cT_s(\y^{(j)}))\x_j\e_j^\top + \e_j\x_j^\top\diag(\cT_s(\y^{(j)}))\X^{(j)}.
    \end{align*}
    Since $\cT_s(\cdot)$ is bounded, this implies that, almost surely for large enough $n,d$,
    \begin{align*}
        (I) &\leq C\Big(|\theta_{T,j}^{s,(j)}| + |\x_j^\top\diag(\cT_s(\y^{(j)}))\X^{(j)}\btheta_T^{s,(j)}|\Big).
    \end{align*}
    Note that $\theta_{T,j}^{s,(j)}=0$ for $-T < s \leq -1$ and $\theta_{T,j}^{-T,(j)}=\theta^\ast_j$. Using the independence between $\x_j$ and $\diag(\cT_s(\y^{(j)}))\X^{(j)}\btheta_T^{s,(j)}$ and  similar argument as before, we have
    \begin{equation*}
        \Big|\x_j^\top\diag(\cT_s(\y^{(j)}))\X^{(j)}\btheta_T^{s,(j)}\Big| \leq C\sqrt{\frac{\pnorm{\X^{(j)}\btheta_T^{s,(j)}}{}^2\log d}{d}} \leq C\sqrt{\log d}
    \end{equation*}
    uniformly over $-T\leq s\leq -1$ and $j\in[d]$ for large enough $n,d$, where the last step follows from (\ref{eq:cavity_1}) established above.
    Thus we conclude that 
    \begin{equation*}
        (I) \leq C(|\theta_j^\ast| + \sqrt{\log d}).
    \end{equation*}
    To bound $(II)$, we have
    \begin{align*}
        (II) &= \Bigpnorm{\X^\top\diag\big(\cT_s(\y)-\cT_s(\y^{(j)})\big)\X\btheta_T^{s,(j)}}{} \leq \Bigpnorm{\diag\big(\cT_s(\y)-\cT_s(\y^{(j)})\big)\X\btheta_T^{s,(j)}}{},
    \end{align*}
    and, with $\x_i \in \R^d$ denoting the $i$th row of $\X$ and $\x_i^{(j)} = (X_{ik}\bm{1}_{k\neq j})_{1\leq k\leq d}$ denoting its cavity version, 
    \begin{align*}
    \Big|\Big[\diag\big(\cT_s(\y)-\cT_s(\y^{(j)})\big)\X\btheta_T^{s,(j)}\Big]_i\Big| &= \Big|\big(\cT_s(\varphi(\x_i^\top \btheta^\ast)) - \cT_s(\varphi(\x_i^{(j)\top}\btheta^\ast))\big)\x_i^\top \btheta^{s,(j)}_T\Big|\\
    &\leq C|\theta_j^\ast||X_{ij}|\big(|X_{ij}||\theta^{s,(j)}_{T,j}| + |\x_i^{(j)\top} \btheta^{s,(j)}_T|\big).
    \end{align*}
    This implies
    \begin{align*}
    (II)^2 \leq C|\theta_j^\ast|^2\Big(\sum_{i=1}^n |X_{ij}|^4 |\theta_{T,j}^{s,(j)}|^2 + \sum_{i=1}^n X_{ij}^2(\x_i^{(j)}\btheta^{s,(j)}_T)^2\Big).
    \end{align*}
    The first term satisfies $\sum_{i=1}^n |X_{ij}|^4 |\theta_{T,j}^{s,(j)}|^2 \leq C\pnorm{\btheta^{s,(j)}_T}{}^2/d \leq C$ by (\ref{eq:cavity_1}) established above. For the second term, let $\A^{(j)} := \diag\big(\X^{(j)}\btheta^{s,(j)}_T\btheta^{s,(j)\top}_T\X^{(j)\top}\big)$ be the diagonal matrix in $\R^{n\times n}$ with diagonals of $\X^{(j)}\btheta^{s,(j)}_T\btheta^{s,(j)\top}_T\X^{(j)\top}$, then $\sum_{i=1}^n X_{ij}^2(\x_i^{(j)}\btheta^{s,(j)}_T)^2 = \x_j^\top \A^{(j)} \x_j$, and using independence between $\x_j$ and $\A^{(j)}$ and Hanson-Wright inequality, 
    \begin{align*}
    \P\Big(\Big|\x_j^\top \A^{(j)} \x_j - \Tr \A^{(j)}/d\Big| \geq C \big(\pnorm{\A^{(j)}}{F}\frac{\sqrt{\log d}}{d} + \pnorm{\A^{(j)}}{\op}\frac{\log d}{d}\big)\Big) \leq 2d^{-10}.
    \end{align*}
    We may evaluate $\Tr \A^{(j)}, \pnorm{\A^{(j)}}{F}, \pnorm{\A^{(j)}}{\op}$ as follows:
    \begin{align*}
    \Tr \A^{(j)} &= \Tr \X^{(j)}\btheta^{s,(j)}_T\btheta^{s,(j)\top}_T\X^{(j)\top} = \pnorm{\X^{(j)}\btheta^{s,(j)}_T}{}^2 \leq C\pnorm{\btheta^{s,(j)}_T}{}^2 \leq Cd,\\
    \pnorm{\A^{(j)}}{F} &\leq \pnorm{\X^{(j)}\btheta^{s,(j)}_T\btheta^{s,(j)\top}_T\X^{(j)\top}}{F} = \pnorm{\X^{(j)}\btheta^{s,(j)}_T}{}^2 \leq Cd,\\
     \pnorm{\A^{(j)}}{\op} &= \max_{1\leq i\leq n} (\x_i^{(j)\top} \btheta^{s,(j)}_T)^2 \leq \max_{1\leq i\leq n}\pnorm{\x_i}{}^2 \cdot \pnorm{\btheta^{s,(j)}_T}{}^2 \leq Cd.
    \end{align*}
    By Borel-Cantelli, almost surely for large enough $n,d$ it holds that $\x_j^\top \A^{(j)} \x_j \leq C\log d$, which implies $(II) \leq C|\theta^\ast_j|\sqrt{\log d}$. Combining the bounds of $(I)$ and $(II)$ yields
    \begin{align*}
    \max_{-T\leq s\leq -1} \Omega_{s,j} \leq C\sqrt{\log d}|\theta^\ast_j|,
    \end{align*}
    which, along with (\ref{eq:Delta_bound}), implies the desired bound for $\pnorm{\btheta^t_T - \btheta^{t,(j)}_T}{}$ in view of (\ref{eq:col_cavity_main}).

    Finally we prove (\ref{eq:cavity_3}). We consider the $t>0$ case and note that the $t\leq0$ case will follow by similar arguments. By similar arguments as in (\ref{eq:cavity_2}), we have the bound
    \begin{equation*}
        \pnorm{\btheta_T^t-\btheta_T^{t,[i]}}{} \leq C \Big(\max_{0\leq s\leq t-1}\Delta_{s,i} + \max_{-T\leq s\leq -1}\Omega_{s,i}\Big),
    \end{equation*}
    where
    \begin{align*}
        \Delta_{s,i} &= C\gamma\Big(\pnorm{(\X-\X^{[i]})^\top \ell(\X^{[i]}\btheta_T^{s,[i]},\X^{[i]}\btheta^\ast,\z)}{} + \pnorm{(\X-\X^{[i]})\btheta_T^{s,[i]}}{} + \pnorm{(\X-\X^{[i]})\btheta^\ast}{}\Big),\\
        \Omega_{s,i} &= \Bigpnorm{\Big(\X^\top\diag(\cT_s(\y))\X-(\X^{[i]})^\top\diag(\cT_s(\y^{[i]}))\X^{[i]}\Big)\btheta_T^{s,[i]}}{}.
    \end{align*}
    We first bound the term $\Delta_{s,i}$. To bound the first term $\pnorm{(\X-\X^{[i]})^\top \ell(\X^{[i]}\btheta_T^{s,[i]},\X^{[i]}\btheta^\ast,\z)}{}$, using that $\X-\X^{[i]} = \e_i\x_i^\top$ (where $\e_i$ is the canonical basis in $\R^n$ and $\x_i\in\R^d$ is the $i$th row of $\X$), we have
    \begin{align*}
        &\pnorm{(\X-\X^{[i]})^\top \ell(\X^{[i]}\btheta_T^{s,[i]},\X^{[i]}\btheta^\ast,\z)}{} = \pnorm{\x_i\e_i^\top \ell(\X^{[i]}\btheta_T^{s,[i]},\X^{[i]}\btheta^\ast,\z)}{}\\
        &\leq C\Big|\e_i^\top \ell(\X^{[i]}\btheta_T^{s,[i]},\X^{[i]}\btheta^\ast,\z)\Big| = C|\ell(0,0,z_i)| \leq C(1+|z_i|).
    \end{align*}
    The second term $\pnorm{(\X-\X^{[i]})\btheta_T^{s,[i]}}{}$ satisfies $\pnorm{(\X-\X^{[i]})\btheta_T^{s,[i]}}{} = |\x_i^\top \btheta_T^{s,[i]}|$, so using independence between $\x_i$ and $\btheta^{s,[i]}_T$, we can use similar arguments as above to obtain
    \begin{equation*}
        \max_{0\leq s\leq t-1}\max_{i\in [n]} |\x_i^\top \btheta_T^{s,[i]}| \leq C\sqrt{\log d}
    \end{equation*}
    almost surely for large $n,d$. A similar argument also yields $\max_{0\leq s\leq t-1}\max_{i\in [n]} \pnorm{(\X-\X^{[i]})\btheta^\ast}{} \leq C\sqrt{\log d}$, so we conclude that
    \begin{equation*}
         \max_{0\leq s\leq t-1} \Delta_{s,i} \leq C(1+|z_i|\sqrt{\log d}).
    \end{equation*}
    almost surely for large $n,d$. Next, we move on to bounding $\Omega_{s,i}$. We can bound $\Omega_{s,i}$ by two terms
    \begin{align*}
        \Omega_{s,i} &\leq \underbrace{\Bigpnorm{\Big(\X^\top\diag(\cT_s(\y^{[i]}))\X-(\X^{[i]})^\top\diag(\cT_s(\y^{[i]}))\X^{[i]}\Big)\btheta_T^{s,[i]}}{}}_{(I)}\\
        &\quad + \underbrace{\Bigpnorm{\Big(\X^\top\diag(\cT_s(\y))\X -\X^\top\diag(\cT_s(\y^{[i]}))\X\Big)\btheta_T^{s,[i]}}{}}_{(II)}.
    \end{align*}
    For $(I)$, note that
    \begin{align*}
        &\X^\top\diag(\cT_s(\y^{[i]}))\X-(\X^{[i]})^\top\diag(\cT_s(\y^{[i]}))\X^{[i]}\\
        &= \X^\top\diag(\cT_s(\y^{[i]}))\Big(\X-\X^{[i]}\Big) + \Big(\X-\X^{[i]}\Big)^\top\diag(\cT_s(\y^{[i]}))\X^{[i]}\\
        &=\X^\top\diag(\cT_s(\y^{[i]}))\e_i\x_i^\top + \x_i\e_i^\top\diag(\cT_s(\y^{[i]}))\X,
    \end{align*}
    which implies that
    \begin{align*}
        (I) &\leq \pnorm{\X^\top\diag(\cT_s(\y^{[i]}))\e_i\x_i^\top\btheta_T^{s,[i]}}{} + \pnorm{\x_i\e_i^\top\diag(\cT_s(\y^{[i]}))\X\btheta_T^{s,[i]}}{} \leq C |\x_i^\top\btheta_T^{s,[i]}|.
    \end{align*}
    Using the independence between $\x_i$ and  $\btheta_T^{s,[i]}$, we again have $|\x_i^\top\btheta_T^{s,[i]}| \leq C\sqrt{\log d}$
    almost surely for large $n,d$. For term $(II)$, we have
    \begin{align*}
        (II) &\leq C\Bigpnorm{\diag\big(\cT_s(\y)-\cT_s(\y^{[i]})\big)\X\btheta_T^{s,[i]}}{} = C\Big|\cT_s(\varphi(\x_i^\top\btheta^\ast,z_i)) - \cT_s(\varphi(0,z_i))\Big||\x_i^\top \btheta^{s,[i]}_T|\\
        &\leq  C|\x_i^\top\btheta^\ast||\x_i^\top\btheta_T^{s,[i]}|\leq C\log(d),
    \end{align*}
    where the last line uses similar independence arguments as before and holds almost surely for large $n,d$. Combining the bounds of $(I)$ and $(II)$ yields that $\max_{-T\leq s\leq -1}\Omega_{s,i} \leq C\log d$, so we conclude that $\pnorm{\btheta_T^t-\btheta_T^{t,[i]}}{} \leq C\log d(1+|z_i|)$, as desired.
\end{proof}

\subsubsection{Proof of Lemma \ref{lm: low dim response}}\label{subsecsec:proof_key_lemma}
With these lemmas, we are now ready to prove Lemma \ref{lm: low dim response}.

\begin{proof}[Proof of Lemma \ref{lm: low dim response}]    
    \noindent\textbf{Analysis for $\deps \theta^{t,(s,j),\eps}_{T,j}$ and $\deps\eta_{T,i}^{t+1,[s,i],\eps}$} We perform a joint induction starting from $s = -T$ and $t = -T+1$, in which case
    \begin{align*}
        \deps \theta^{-T+1,(-T,j),\eps}_{T,j} &= -\frac{1}{\beta_T^{-T+1}} = r_{T,\theta}(-T+1,-T),\\
        \cT_s(y_i)\deps \eta^{-T+1,[-T,i],\eps}_{T,i} &= -\frac{1}{\beta_T^{-T+1}}\cT_s(y_i)\cT_s(y_i)\pnorm{\X^\top \e_i}{2}^2 = -r_{T,\eta}^{(\bar\eta_{T,i})}(-T+1,-T) + E_\eta^{-T+1,[-T,i]},
    \end{align*}
    where standard concentration shows that $\frac{1}{n}\sum_{i=1}^n |E_\eta^{-T+1,[-T,i]}|^p \rightarrow 0$ almost surely as $n,d\rightarrow\infty$ for any $p\geq 1$. This establishes the base case. Now suppose the claim holds up to some $t$ and all $s<t$. By Lemma \ref{lm: response matrix recursion}, we have
    \begin{equation}\label{eq:induction_conseq_1}
    \begin{aligned}
    \frac{1}{d}\sum_{j=1}^d \deps \theta^{t,(s,j),\eps}_{T,j} &= 
    \begin{cases}
    -\frac{1}{\beta_T^{s+1}} \frac{1}{d}\Tr \big(\bUpsilon_T^{t-1}\cdots\bUpsilon_T^{s+1}\big) &s\leq-1,t\leq -1\\
            -\frac{1}{\beta_T^{s+1}} \frac{1}{d}\Tr\big(\bOmega^{t-1}_T\cdots\bOmega^0_T\bUpsilon_T^{-1}\cdots\bUpsilon_T^{s+1}\big) & s\leq-1,t> -1\\
            \gamma\frac{1}{d}\Tr\big(\bOmega^{t-1}_T\cdots\bOmega^{s+1}_T\big) & s> -1
    \end{cases}\\
    &\rightarrow_{a.s.} r_{T,\theta}(t,s) = R_{T,\theta}(t,s).
    \end{aligned}
    \end{equation}
    Similarly, if $t \geq 0$ we have
    \begin{equation}\label{eq:induction_conseq_2}
    \begin{aligned}
    \frac{1}{n}\sum_{i=1}^n \partial_1\ell(\eta^{t}_{T,i},\eta^\ast_i,z_i)\deps \eta_{T,i}^{t,[s,i],\eps} &= 
    \begin{cases}
        -\frac{1}{n}\Tr\Big(\frac{1}{\beta_T^{s+1}}\D^t_T \X\bOmega^{t-1}_T\cdots\bOmega^0_T\bUpsilon_T^{-1}\cdots\bUpsilon_T^{s+1}\X^\top\diag(\cT_s(\y))\Big) & s\leq-1\\
        \gamma\frac{1}{n}\Tr\Big(\D^t_T\X\bOmega^{t-1}_T\cdots\bOmega^{s+1}_T\X^\top\D_T^s\Big) & s> -1.
    \end{cases}\\
    &\rightarrow_{a.s.} -\E[r_{T,\eta}^{(\bar\eta_T)}(t,s)] = -\frac{1}{\delta}R_{T,\eta}(t,s),
    \end{aligned}
    \end{equation}
    and if $t \leq -1$, it holds that
    \begin{equation}\label{eq:induction_conseq_3}
    \begin{aligned}
    \frac{1}{n}\sum_{i=1}^n \cT_s(y_i)\deps \eta_{T,i}^{t,[s,i],\eps} &= -\frac{1}{\beta_T^{s+1}}\frac{1}{n}\Tr\Big(\diag(\cT_s(\y)) \X\bUpsilon_T^{t-1}\cdots\bUpsilon_T^{s+1}\X^\top\diag(\cT_s(\y))\Big)\\
    &\rightarrow_{a.s.} -\E[r_{T,\eta}^{(\bar\eta_T)}(t,s)] = -\frac{1}{\delta}R_{T,\eta}(t,s).
    \end{aligned}
    \end{equation}
    
    We will now show the claim for $t+1$ and all $s<t+1$. We assume that $t>0$ and $s<0$, and the other cases will follow from similar arguments.
    Let $\X =[\x_j, \X_{-j}]$ with $\x_j \in \R^n$ denoting the $j$th column of $\X$. Since $t >s $, by writing $\btheta_T^{t,(s,d),\eps} = (\theta^{t,(s,j),\eps}_j, \btheta_{T,-j}^{t,(s,j),\eps})$ with $\theta_{T,j}^{t,(s,j),\eps}$ denoting its $j$th component, we have
    \begin{align*}
        \theta^{t+1,(s,j),\eps}_{T,j} = \theta^{t,(s,j),\eps}_{T,j} - \gamma\left[\x_j^\top\ell(\X\btheta_T^{t,(s,j),\eps},\X\btheta^*,\z)+\lambda\theta_{T,j}^{t,(s,j),\eps}\right].
    \end{align*}
    Taking the $\eps$ derivative yields, with $\{\D^t_T\}$ defined in (\ref{def:D_Omega}), we have
    \begin{align}\label{eq:mu_derivative_1}
    \notag\deps\theta^{t+1,(s,j),\eps}_{T,j} &= \deps\theta^{t,(s,j),\eps}_{T,j} - \gamma\x_j^\top\Big[\partial_1\ell(\bbeta^t_T,\bbeta^*,\z) \circ \Big(\X_{-j}\deps\btheta_{T,-j}^{t,(s,j),\eps} + \x_j\deps\theta_{T,j}^{t,(s,j),\eps}\Big)\Big] - \gamma \lambda \deps\theta_{T,j}^{t,(s,j),\eps}\\
    &= (1-\gamma\lambda-\gamma\x_j^\top \D
    _T^t\x_j)\deps\theta_{T,j}^{t,(s,j),\eps}-\gamma\langle\X_{-j}^\top \D_T^t\x_j,\deps\btheta_{T,-j}^{t,(s,j),\eps}\rangle.
    \end{align}
    Next we compute $\deps\btheta_{T,-j}^{t,(s,j),\eps}$. Note that
    \begin{align*}
    \btheta_{T,-j}^{t,(s,j),\eps} = \btheta_{T,-j}^{t-1,(s,j),\eps} - \gamma \X_{-j}^\top\ell(\X\btheta_T^{t-1,(s,j),\eps},\X\btheta^*,\z)- \gamma \lambda \btheta_{T,-j}^{t-1,(s,j),\eps}.
    \end{align*}
    Taking $\eps$ derivative yields
    \begin{align*}
        \deps\btheta_{T,-j}^{t,(s,j),\eps} = \Big((1-\gamma\lambda)\I_{d-1}-\gamma\X_{-j}^\top \D^{t-1}_T\X_{-j}\Big)\deps\btheta_{T,-j}^{t-1,(s,j),\eps} - \gamma \X_{-j}^\top \D^{t-1}_T\x_j\deps \theta_{T,j}^{t-1,(s,j),\eps}.
    \end{align*}
    Define 
    \begin{align}\label{def:bOmega_reduced}
    \bOmega^{t}_{T,-j}:=(1-\gamma\lambda)\I_{d-1}-\gamma\X_{-j}^\top \D_T^t\X_{-j} \in \R^{(d-1)\times (d-1)}.
    \end{align}
    Iterating the above identity, we get
    \begin{align}\label{eq:theta_derivative_1}
    \deps\btheta_{T,-j}^{t,(s,j),\eps} = \bOmega^{t-1}_{T,-j}\cdots\bOmega^{0}_{T,-j}\deps\btheta_{T,-j}^{0,(s,j),\eps} - \gamma\sum_{\ell=0}^{t-1}\bOmega^{t-1}_{T,-j}\cdots\bOmega^{\ell+1}_{T,-j}\X_{-j}^\top \D^\ell_T\x_j\deps \theta^{\ell,(s,j),\eps}_{T,j},
    \end{align}
    where we understand $\bOmega^{t-1}_{T,-j}\cdots\bOmega^{\ell+1}_{T,-j}$ as $\I_{d-1}$ for $\ell = t-1$. To further compute $\deps \btheta_{T,-j}^{0,(s,j),\eps}$, note that
    \begin{equation*}
        \btheta_{T,-j}^{0,(s,j),\eps} = \frac{\X_{-j}^\top\diag(\cT_s(\y))\big(\X_{-j}\btheta_{T,-j}^{-1,(s,d),\eps} + \x_j\theta_{T,j}^{-1,(s,j),\eps}\big)}{\beta_T^0}.
    \end{equation*}
    Define
    \begin{align}\label{def:Upsilon_reduced}
        \bUpsilon_{T,-j}^{t} := \frac{1}{\beta_T^{t+1}}\X_{-j}^\top\diag(\cT_s(\y))\X_{-j} \in \R^{(d-1)\times (d-1)}.
    \end{align}
    Then taking $\eps$ derivative, we get
    \begin{equation}
        \deps \btheta_{T,-j}^{0,(s,j),\eps} = \bUpsilon_{T,-j}^{-1} \deps\btheta_{T,-j}^{-1,(s,j),\eps} + \frac{1}{\beta_T^0}\X_{-j}^\top\diag(\cT_s(\y)) \x_j\deps\theta_{T,j}^{-1,(s,j),\eps}.
    \end{equation}
    Iterating in this manner, we arrive at
    \begin{align*}
        \deps \btheta_{T,-j}^{0,(s,j),\eps} &=\bUpsilon_{T,-j}^{-1} \cdots \bUpsilon_{T,-j}^{s+1} \deps\btheta_{T,-j}^{s+1,(s,j),\eps} + \sum_{\ell=s+1}^{-1}\frac{1}{\beta_T^{\ell+1}}\bUpsilon_{T,-j}^{-1} \cdots \bUpsilon_{T,-j}^{\ell+1}\X_{-j}^\top\diag(\cT_s(\y)) \x_j\deps\theta_{T,j}^{\ell,(s,j),\eps}\\ 
        &=\sum_{\ell=s+1}^{-1}\frac{1}{\beta_T^{\ell+1}}\bUpsilon_{T,-j}^{-1} \cdots \bUpsilon_{T,-j}^{\ell+1} \X_{-j}^\top\diag(\cT_s(\y))\x_j\deps\theta_{T,j}^{\ell,(s,j),\eps},
    \end{align*}
    where we understand $\bUpsilon_{T,-j}^{-1} \cdots \bUpsilon_{T,-j}^{\ell+1}$ as $\I_{d-1}$ for $\ell = -1$, and the last line follows from $\deps\btheta_{T,-j}^{s+1,(s,j),\eps} = 0$. Plugging the above expression into (\ref{eq:theta_derivative_1}) and then further into (\ref{eq:mu_derivative_1}), we get
    \begin{align*}
        \deps\theta^{t+1,(s,j),\eps}_{T,j} &= \underbrace{(1-\gamma\lambda-\gamma\x_j^\top\D^t_T\x_j)\deps\theta^{t,(s,j),\eps}_{T,j}}_{(I_j)}\\
        &+\underbrace{\gamma^2\sum_{\ell=0}^{t-1}\x_j^\top \D_T^t\X_{-j}\bOmega^{t-1}_{T,-j}\cdots\bOmega^{\ell+1}_{T,-j}\X_{-j}^\top \D_T^\ell\x_j\deps\theta^{\ell,(s,j),\eps}_{T,j}}_{(II_j)}\\
        &+\underbrace{\gamma\sum_{\ell=s+1}^{-1}\frac{1}{\beta_T^{\ell+1}}\x_j^\top\D_T^t\X_{-j}\bOmega^{t-1}_{T,-j}\cdots\bOmega^{0}_{T,-j}\bUpsilon_{T,-j}^{-1} \cdots \bUpsilon_{T,-j}^{\ell+1}\diag(\cT_s(\y)) \x_j\deps\theta_{T,j}^{\ell,(s,j),\eps}}_{(III_j)}.
    \end{align*} 
    To analyze $(I_j)$, define the following cavity version of $\D^{t}_T$ in (\ref{def:D_Omega}) by leaving out the $j$th column:
    \begin{align*}
    \D^{t,(j)}_T := \diag\Big(\partial_1\ell(\bbeta_T^{t,(j)}, \bbeta^{\ast,(j)},\z)\Big) \in\R^{n\times n},
    \end{align*}
    then we have 
    \begin{align*}
    (I_j) = (1-\gamma\lambda-\gamma\x_j^\top\D^{t,(j)}_T\x_j)r_{T,\theta}(t,s) + \sr^{(j)}_{1,1} + \sr^{(j)}_{1,2}, 
    \end{align*}
    where
    \begin{align*}
    \sr^{(j)}_{1,1} = \gamma\x_j^\top (\D_T^{t,(j)} - \D_T^t)\x_j \deps \theta^{t,(s,j),\eps}_{T,j}, \quad \sr^{(j)}_{1,2} = \Big(1-\gamma\lambda-\gamma\x_j^\top\D^{t,(j)}_T\x_j\Big) E^{t,(s,j)}_{T,\theta}.
    \end{align*}
    For the first term, we have
    \begin{align*}
    \x_j^\top (\D^{t,(j)} - \D^t)\x_j \leq \sqrt{\sum_{k=1}^n (\x_j)_k^4} \cdot \pnorm{\partial_1\ell(\bbeta_T^{t,(j)}, \bbeta^{\ast,(j)},\z) - \partial_1\ell(\bbeta_T^{t}, \bbeta^{\ast},\z)}{} \leq C\frac{(\log d)^{3/2}}{\sqrt{d}}\Big(1 + |\theta^\ast_j| \Big)
    \end{align*}
    almost surely for large enough $n,d$, where we apply Assumption \ref{ass: ell lipschitz} and Lemma \ref{lem:cavity_l2_estimate}. By Assumption \ref{ass: model assumptions} and $\pnorm{\deps \btheta^{t,(s,j),\eps}}{} \leq C$ as established in (\ref{eq:derivative_bound}), we have $\frac{1}{d}\sum_{j=1}^d |\sr^{(j)}_{1,1}|^p \rightarrow_{a.s.} 0$ for any $p\geq 1$. Furthermore, using the induction hypothesis on $E^{t,(s,j)}_{T,\theta}$ and the fact that $\x_j^\top \D^{t,(j)}_T \x_j \leq \pnorm{\x_j}{}^2\pnorm{\D^{t,(j)}_T}{\op} \leq C$ almost surely for large enough $n,d$, we have $\frac{1}{d}\sum_{j=1}^d |\sr^{(j)}_{1,2}|^p \rightarrow 0$ almost surely as $n,d\rightarrow\infty$. On the other hand, by the independence between $\x_j$ and $\D^{t,(j)}_T$ and Hanson-Wright inequality, we have
    \begin{align*}
    \P\bigg(|\x_j^\top \D^{t,(j)}_T \x_j - \frac{1}{d}\Tr\D^{t,(j)}_T| \geq \max\Big(\frac{C\log d}{d}\pnorm{\D^{t,(j)}_T}{\op}, \frac{C\sqrt{\log d}}{d} \pnorm{\D^{t,(j)}_T}{F}\Big)\bigg) \leq e^{-cd}.
    \end{align*}
    Using $\pnorm{\D^{t,(j)}_T}{F} \leq C\sqrt{d}$, a union bound over $j\in[d]$, and the Borel-Cantelli lemma, we conclude that $(1-\gamma\lambda-\gamma\x_j^\top\D^{t,(j)}_T\x_j)r_{T,\theta}(t,s) = (1-\gamma\lambda-\gamma\Tr \D^{t,(j)}/d)r_{T,\theta}(t,s) + \sr^{(j)}_{1,3}$, with $\frac{1}{d}\sum_{j=1}^d |\sr^{(j)}_{1,3}|^p \rightarrow 0$ almost surely as $n,d\rightarrow\infty$ for any $p\geq 1$. Lastly, it is easy to see that $\frac{1}{d}\sum_{j=1}^d|\Tr \D^{t,(j)}_T/d - \Tr \D^{t}_T/d|^p \rightarrow 0$ a.s. for $n,d\rightarrow\infty$, and $\frac{1}{d}\Tr \D^{t}_T \rightarrow_{a.s.} \delta\E[\partial_1 \ell(\eta^t_T, w^\ast_T,z)]$ by Proposition \ref{prop:discreteDmftArt}, therefore we conclude that 
    \begin{align*}
    (I_j) = (1-\gamma\lambda-\gamma\delta\E[\partial_1\ell(\eta^t_T, w^\ast_T,z)])r_{T,\theta}(t,s) + \sr^{(j)}_1,
    \end{align*}
    satisfying $\frac{1}{d}\sum_{j=1}^d |\sr^{(j)}_1|^p \rightarrow 0$ almost surely as $n,d\rightarrow\infty$ for any $p\geq 1$.

    To analyze $(II_j)$, we have $(II_j) = \gamma^2\sum_{\ell=0}^{t-1} (II_{j,\ell})$, where
    \begin{align*}
    (II_{j,\ell}) = \x_j^\top \D_T^t\X_{-j}\bOmega^{t-1}_{T,-j}\cdots\bOmega^{\ell+1}_{T,-j}\X_{-j}^\top \D_T^\ell\x_j\deps\theta^{\ell,(s,j),\eps}_{T,j}.
    \end{align*}
    We now apply a similar leave one out argument as above to analyze each $(II_{j,\ell})$. Let
    \begin{align*}
    \bOmega^{t,(j)}_{T,-j} = (1-\gamma\lambda)\I_{d-1}-\gamma\X_{-j}^\top \D_T^{t,(j)}\X_{-j} \in \R^{(d-1)\times (d-1)}
    \end{align*}
    be the cavity version of (\ref{def:bOmega_reduced}) by leaving out the $j$th column. Then
    \begin{align}\label{eq:matrix_prod_main}
    \notag&\Big|\x_j^\top \D_T^t\X_{-j}\bOmega^{t-1}_{T,-j}\cdots\bOmega^{\ell+1}_{T,-j}\X_{-j}^\top \D_T^\ell\x_j - \x_j^\top \D_T^{t,(j)}\X_{-j}\bOmega^{t-1,(j)}_{T,-j}\cdots\bOmega^{\ell+1,(j)}_{T,-j}\X_{-j}^\top \D_T^{\ell,(j)}\x_j\Big|\\
    \notag&\leq \Big|\x_j^\top \D_T^t\X_{-j}\bOmega^{t-1}_{T,-j}\cdots\bOmega^{\ell+1}_{T,-j}\X_{-j}^\top \D_T^\ell\x_j - \x_j^\top \D_T^{t,(j)}\X_{-j}\bOmega^{t-1}_{T,-j}\cdots\bOmega^{\ell+1}_{T,-j}\X_{-j}^\top \D_T^{\ell,(j)}\x_j\Big|\\
    &\quad + \sum_{k=t-1}^{\ell+1} \Big|\x_j^\top \D^{t,(j)}_T \X_{-j}\bOmega^{t-1,(j)}_{T,-j}\ldots \bOmega^{k+1,(j)}_{T,-j}(\bOmega^{k,(j)}_{T,-j} - \bOmega^k_{T,-j})\bOmega^{k-1}_{T,-j}\ldots \bOmega^{\ell+1}_{T,-j}\X_{-j}^\top \D_{T}^{\ell,(j)}\x_j\Big|.
    \end{align}
    For each $k$ in the above summand, using the fact that $|\a^\top \D \b| \leq \pnorm{\a}{\infty}\pnorm{\b}{}\pnorm{\D}{F}$ for diagonal matrix $\D$ and vectors $\a,\b$, we have
    \begin{align*}
    &\Big|\x_j^\top \D^{t,(j)}_T \X_{-j}\bOmega^{t-1,(j)}_{T,-j}\ldots \bOmega^{k+1,(j)}_{T,-j}(\bOmega^{k,(j)}_{T,-j} - \bOmega^k_{T,-j})\bOmega^{k-1}_{T,-j}\ldots \bOmega^{\ell+1}_{T,-j}\X_{-j}^\top \D_{T}^{\ell,(j)}\x_j\Big|\\
    &=\gamma\Big|\x_j^\top \D^{t,(j)}_T \X_{-j}\bOmega^{t-1,(j)}_{T,-j}\ldots \bOmega^{k+1,(j)}_{T,-j}\X_{-j}^\top(\D^{k,(j)}_{T,-j} - \D^k_{T,-j})\X_{-j}\bOmega^{k-1}_{T,-j}\ldots \bOmega^{\ell+1}_{T,-j}\X_{-j}^\top \D_{T}^{\ell,(j)}\x_j\Big|\\
    &\leq \gamma \pnorm{\big(\x_j^\top \D^{t,(j)}_T \X_{-j}\bOmega^{t-1,(j)}_{T,-j}\ldots \bOmega^{k+1,(j)}_{T,-j}\X_{-j}^\top\big)^\top}{\infty}\pnorm{\D^{k,(j)}_{T,-j} - \D^k_{T,-j}}{F}\pnorm{\X_{-j}\bOmega^{k-1}_{T,-j}\ldots \bOmega^{\ell+1}_{T,-j}\X_{-j}^\top \D_{T}^{\ell,(j)}\x_j}{}\\
    &\leq C\frac{\log d}{\sqrt{d}}(1 + |\theta^\ast_j|),
    \end{align*}
    where we use the independence between $\x_j$ and $\D^{t,(j)}_T \X_{-j}\bOmega^{t-1,(j)}_{T,-j}\ldots \bOmega^{k+1,(j)}_{T,-j}\X_{-j}^\top$ and Lemma \ref{lem:cavity_l2_estimate}. A similar estimate holds for the first term of (\ref{eq:matrix_prod_main}). On the other hand, similar estimates as above yield
    \begin{align*}
    &\frac{1}{d}\sum_{j=1}^d\Big|\x_j^\top \D_T^{t,(j)}\X_{-j}\bOmega^{t-1,(j)}_{T,-j}\cdots\bOmega^{\ell+1,(j)}_{T,-j}\X_{-j}^\top \D_T^{\ell,(j)}\x_j - \frac{1}{d}\Tr\Big(\D_T^{t,(j)}\X_{-j}\bOmega^{t-1,(j)}_{T,-j}\cdots\bOmega^{\ell+1,(j)}_{T,-j}\X_{-j}^\top \D_T^{\ell,(j)}\Big)\Big|^p \rightarrow_{a.s.} 0,\\
    &\frac{1}{d}\sum_{j=1}^d\Big|\frac{1}{d}\Tr\Big(\D_T^{t,(j)}\X_{-j}\bOmega^{t-1,(j)}_{T,-j}\cdots\bOmega^{\ell+1,(j)}_{T,-j}\X_{-j}^\top \D_T^{\ell,(j)}\Big) - \frac{1}{d}\Tr\Big(\D_T^{t}\X_{-j}\bOmega^{t-1}_{T,-j}\cdots\bOmega^{\ell+1}_{T,-j}\X_{-j}^\top \D_T^{\ell}\Big)\Big|^p \rightarrow_{a.s.} 0,\\
    &\frac{1}{d}\sum_{j=1}^d\Big|\frac{1}{d}\Tr\Big(\D_T^{t}\X_{-j}\bOmega^{t-1}_{T,-j}\cdots\bOmega^{\ell+1}_{T,-j}\X_{-j}^\top \D_T^{\ell}\Big) - \frac{1}{d}\Tr\Big(\D_T^{t}\X\bOmega^{t-1}_{T}\cdots\bOmega^{\ell+1}_{T}\X^\top \D_T^{\ell}\Big)\Big|^p \rightarrow_{a.s.} 0.
    \end{align*}
    Lastly, by case $s > -1$ of (\ref{eq:induction_conseq_2}), we have
    \begin{align*}
    \frac{1}{d}\Tr\Big(\D_T^{t}\X\bOmega^{t-1}_{T}\cdots\bOmega^{\ell+1}_{T}\X^\top \D_T^{\ell}\Big) \rightarrow_{a.s.} \frac{\delta}{\gamma} \cdot \Big(-\frac{1}{\delta}\Big)R_{T,\eta}(t,\ell) = -\frac{1}{\gamma}R_{T,\eta}(t,\ell).
    \end{align*}
    We hence conclude that each $(II_{j,\ell}) = -R_{T,\eta}(t,\ell)r_{T,\theta}(\ell,s)/\gamma + \sr^{(j)}_{2,\ell}$, with $\frac{1}{d} \sum_{j=1}^d |\sr^{(j)}_{2,\ell}|^p \rightarrow_{a.s.} 0$ almost surely as $n,d\rightarrow\infty$ for each $p\geq 1$, which further implies that
    \begin{align*}
    (II_j) = -\gamma\sum_{\ell=0}^{t-1} R_{T,\eta}(t,\ell)r_{T,\theta}(\ell,s) + \sr^{(j)}_2, 
    \end{align*}
    with $\frac{1}{d}\sum_{j=1}^d |\sr^{(j)}_2|^p \rightarrow_{a.s.} 0$ as $n,d\rightarrow\infty$.

    The analysis of $(III_j)$ is similar. Let
    \begin{align}
        \bUpsilon_{T,-j}^{t,(j)} := \frac{1}{\beta_T^{t+1}}\X_{-j}^\top\diag(\cT_s(\y^{(j)}))\X_{-j} \in \R^{(d-1)\times (d-1)}.
    \end{align}
    be the cavity version of (\ref{def:Upsilon_reduced}) by leaving out the $j$th column of $\X$, with $\y^{(j)}$ given by (\ref{def:Xy_leave_col}). Using similar estimates as above, we have for each $s+1\leq \ell \leq -1$
    \begin{align*}
    &\frac{1}{d}\sum_{j=1}^d\Big|\x_j^\top\D_T^t\X_{-j}\bOmega^{t-1}_{T,-j}\cdots\bOmega^{0}_{T,-j}\bUpsilon_{T,-j}^{-1} \cdots \bUpsilon_{T,-j}^{\ell+1}\diag(\cT_s(\y)) \x_j\\
    &- \x_j^\top\D_T^{t,(j)}\X_{-j}\bOmega^{t-1,(j)}_{T,-j}\cdots\bOmega^{0,(j)}_{T,-j}\bUpsilon_{T,-j}^{-1,(j)} \cdots \bUpsilon_{T,-j}^{\ell+1,(j)}\diag(\cT_s(\y)) \x_j\Big|^p \rightarrow_{a.s.} 0,
    \end{align*}
    and moreover,
    \begin{align*}
    &\frac{1}{d}\sum_{j=1}^d\Big|\x_j^\top\D_T^{t,(j)}\X_{-j}\bOmega^{t-1,(j)}_{T,-j}\cdots\bOmega^{0,(j)}_{T,-j}\bUpsilon_{T,-j}^{-1,(j)} \cdots \bUpsilon_{T,-j}^{\ell+1,(j)}\diag(\cT_s(\y)) \x_j\\
    &- \frac{1}{d}\Tr\Big(\D_T^{t,(j)}\X_{-j}\bOmega^{t-1,(j)}_{T,-j}\cdots\bOmega^{0,(j)}_{T,-j}\bUpsilon_{T,-j}^{-1,(j)} \cdots \bUpsilon_{T,-j}^{\ell+1,(j)}\diag(\cT_s(\y))\Big)\Big|^p \rightarrow_{a.s.}0,\\
    &\frac{1}{d}\sum_{j=1}^d\Big|\frac{1}{d}\Tr\Big(\D_T^{t,(j)}\X_{-j}\bOmega^{t-1,(j)}_{T,-j}\cdots\bOmega^{0,(j)}_{T,-j}\bUpsilon_{T,-j}^{-1,(j)} \cdots \bUpsilon_{T,-j}^{\ell+1,(j)}\diag(\cT_s(\y))\Big)\\
    &-\frac{1}{d}\Tr\Big(\D_T^{t}\X\bOmega^{t-1}_{T}\cdots\bOmega^{0}_{T}\bUpsilon_{T}^{-1} \cdots \bUpsilon_{T}^{\ell+1}\diag(\cT_s(\y))\Big)\Big|^p \rightarrow_{a.s.} 0.
    \end{align*}
    Lastly, we have by case $s \leq -1$ of (\ref{eq:induction_conseq_2}) that
    \begin{align*}
    \frac{1}{d}\Tr\Big(\frac{1}{\beta^{t+1}_T}\D_T^{t}\X\bOmega^{t-1}_{T}\cdots\bOmega^{0}_{T}\bUpsilon_{T}^{-1} \cdots \bUpsilon_{T}^{\ell+1}\diag(\cT_s(\y))\Big) \rightarrow -\delta R_{T,\eta}(t,\ell).
    \end{align*}
    In summary, we have
    \begin{align*}
    (III_j) = -\gamma \sum_{\ell=s+1}^{-1}R_{T,\eta}(t,\ell)r_{T,\theta}(\ell,s) + \sr^{(j)}_3,
    \end{align*}
    with $\frac{1}{d}\sum_{j=1}^d |\sr^{(j)}_3|^p \rightarrow_{a.s.} 0$ as $n,d\rightarrow\infty$.

    Putting together the above estimates for $(I_j),(II_j),(III_j)$, we have
    \begin{align*}
    \deps\theta^{t+1,(s,j),\eps}_{T,j} &= (1-\gamma\lambda-\gamma\delta\E[\partial_1\ell(\eta^t_T, w^\ast_T,z)])r_{T,\theta}(t,s) - \gamma\sum_{\ell=s+1}^{t-1} R_{T,\eta}(t,\ell)r_{T,\theta}(\ell,s) + E^{t+1,(s,j)}_{T,\theta}\\
    &= r_{T,\theta}(t+1,s) + E^{t+1,(s,j)}_{T,\theta}, 
    \end{align*}
    concluding the induction for $\deps\theta^{t,(s,j),\eps}_{T,j}$.
    
    Next we show the induction for $\partial_\eps|_{\eps=0} \eta^{t,[s,i],\eps}_{T,i}$. As above, we will only show the induction for the case when $t>0$ and $s<0$ as the computations in the other cases are simpler. Let $\X = [\x_i^\top;\X_{-i}]^\top$, where $\x_i \in \R^d$ denotes the $i$th row of $\X$ and $\X_{-i} \in \R^{(n-1)\times d}$ denotes the remaining $n-1$ rows of $\X$. Write 
\begin{align*}
\bbeta^{t,[s,i],\eps}_T = (\bbeta^{t,[s,i],\eps}_{T,-i},\eta_{T,i}^{t,[s,i],\eps}), \quad \bbeta^t_T = (\bbeta_{T,-i}^t,\eta_{T,i}^{t}),\quad \bbeta^\ast = (\bbeta^\ast_{-i}, \eta^\ast_i), \quad \z = (\z_{-i}, z_i).
\end{align*}
Additionally, for any $t\in \Z_+$ we use the notation
\begin{equation}\label{def:DandOmega}
\begin{aligned}
\D_{T,-i}^t &:= \diag\Big(\partial_1 \ell(\bbeta_{T,-i}^t,\boldeta_{-i}^*,\z_{-i})\Big) \in \R^{(n-1)\times (n-1)},\\ \bOmega_{T,-i}^t &:= (1-\gamma\lambda) \I_d - \gamma\X_{-i}^\top\D_{T,-i}^t\X_{-i} \in \R^{d \times d},\\
\bUpsilon^t_{T,-i} &:= \frac{1}{\beta_T^{t+1}}\X_{-i}^\top\diag(\cT_s(\y_{-i}))\X_{-i} \in \R^{d\times d}.
\end{aligned}
\end{equation}
We may expand $\eta^{t,[s,i],\eps}_{T,i}$ as
\begin{align*}
    \eta_{T,i}^{t+1,[s,i],\eps} &= \eta_{T,i}^{t,[s,i],\eps} -\gamma\x_i^\top\X^\top\ell(\bbeta^{t,[s,i],\eps}_T,\bbeta^*,\z) - \gamma\lambda\eta_{T,i}^{t,[s,i],\eps}\\
    &= (1-\gamma\lambda)\eta_{T,i}^{t,[s,i],\eps} - \gamma\x_i^\top \Big(\X_{-i}^\top\ell(\bbeta_{T,-i}^{t,[s,i],\eps},\bbeta_{-i}^*,\z_{-i}) + \x_i \ell(\eta_{T,i}^{t,[s,i],\eps},\eta^*_i,z_i)\Big).
\end{align*}
Taking derivative on both sides yields
\begin{align}\label{eq:kappa_expansion}
    \notag\deps\eta_{T,i}^{t+1,[s,i],\eps} &= \Big(1-\gamma\lambda - \gamma\pnorm{\x_i}{}^2\partial_1\ell(\eta^t_{T,i},\eta^*_i,z_i)\Big)\deps\eta^{t,[s,i],\eps}_{T,i} -\gamma \x_i^\top \X_{-i}^\top \D^t_{T,-i} \X_{-i} \deps \btheta^{t,[s,i],\eps}_T\\
    \notag&= \Big(1-\gamma\lambda - \gamma\pnorm{\x_i}{}^2\partial_1\ell(\eta_{T,i}^t,\eta^*_i,z_i)\Big)\deps\eta_{T,i}^{t,[s,i],\eps}\\
    & - (1-\gamma\lambda)\x_i^\top\deps\btheta_T^{t,[s,i],\eps} +  \x_i^\top\bOmega_{T,-i}^t\deps\btheta_T^{t,[s,i],\eps},
\end{align}
applying $\gamma \X_{-i}^\top \D^t_{-i}\X_{-i} = (1-\gamma \lambda)\I_d - \bOmega^t_{T,-i}$ in the second step. To derive $\deps\btheta_T^{t,[s,i],\eps}$, taking derivative on both sides of the first definition of (\ref{def:perturbed_dynamic_eta}), we get
\begin{align*}
    \deps\btheta_T^{t,[s,i],\eps} &= \Big((1-\gamma\lambda)\I_d - \X_{-i}^\top\D^{t-1}_{T,-i}\X_{-i}\Big) \deps\btheta_T^{t-1,[s,i],\eps} - \gamma\x_i\partial_1\ell(\eta_{T,i}^{t-1},\eta^*_i,z_i)\deps\eta_{T,i}^{t-1,[s,i],\eps}\\
    &=\bOmega_{T,-i}^{t-1} \deps\btheta_T^{t-1,[s,i],\eps} - \gamma\x_i\partial_1\ell(\eta_{T,i}^{t-1},\eta^*_i,z_i)\deps\eta_{T,i}^{t-1,[s,i],\eps}.
\end{align*}
Iterating the above identity yields
\begin{align}\label{eq:theta_row_derivative}
    \deps\btheta_T^{t,[s,i],\eps} &= \bOmega_{T,-i}^{t-1}\cdots \bOmega_{T,-i}^{0}\deps\btheta_T^{0,[s,i],\eps} -\gamma\sum_{j=0}^{t-1} \bOmega_{T,-i}^{t-1}\cdots \bOmega_{T,-i}^{j+1}\x_i\partial_1\ell(\eta_{T,i}^j,\eta^*_i,z_i)\deps \eta_{T,i}^{j,[s,i],\eps}.
\end{align}
where we understand $\bOmega^{t-1}_{T,-i}\ldots\bOmega^{j+1}_{T,-i}$ as $\I_d$ for $j = t-1$. 
To further compute $\deps\btheta_T^{0,[s,i],\eps}$, note that
\begin{equation*}
    \deps\btheta_T^{0,[s,i],\eps} = \bUpsilon_{T,-i}^{-1}\deps\btheta_T^{-1,[s,i],\eps} +\frac{1}{\beta_T^0}\x_i\cT_s(y_i)\deps\eta_{T,i}^{-1,[s,i],\eps}.
\end{equation*}
Iterating the above identity yields 
\begin{align*}
    \deps\btheta_T^{0,[s,i],\eps} &= \bUpsilon_{T,-i}^{-1}\cdots \bUpsilon_{T,-i}^{s+1}\deps\btheta_T^{s+1,[s,i],\eps} + \sum_{j=s+1}^{-1}\frac{1}{\beta_T^{j+1}}\bUpsilon_{T,-i}^{-1}\cdots \bUpsilon_{T,-i}^{j+1}\x_i\cT_s(y_i)\deps\eta_{T,i}^{j,[s,n],\eps}\\
    &=-\frac{1}{\beta_T^{s+1}}\bUpsilon_{T,-i}^{-1}\cdots \bUpsilon_{T,-i}^{s+1}\x_i\cT_s(y_i) + \sum_{j=s+1}^{-1}\frac{1}{\beta_T^{j+1}}\bUpsilon_{T,-i}^{-1}\cdots \bUpsilon_{T,-i}^{j+1}\x_i\cT_s(y_i)\deps\eta_{T,i}^{j,[s,i],\eps}.
\end{align*}
Plugging the above expression of $\deps \btheta^{0,[s,i],\eps}_T$ into (\ref{eq:theta_row_derivative}), we have
\begin{align*}
\deps\btheta_T^{t,[s,i],\eps} &= -\frac{1}{\beta^{s+1}_T} \bOmega^{t-1}_{T,-i}\ldots \bOmega^{0}_{T,-i}\bUpsilon_{T,-i}^{-1}\cdots \bUpsilon_{T,-i}^{s+1}\x_i\cT_s(y_i) + \sum_{j=s+1}^{-1} \frac{1}{\beta^{j+1}_T} \bOmega^{t-1}_{T,-i}\ldots \bOmega^{0}_{T,-i}\bUpsilon_{T,-i}^{-1}\cdots \bUpsilon_{T,-i}^{s+1}\x_i\cT_s(y_i)\\
&-\gamma\sum_{j=0}^{t-1} \bOmega_{T,-i}^{t-1}\cdots \bOmega_{T,-i}^{j+1}\x_i\partial_1\ell(\eta_{T,i}^j,\eta^*_i,z_i)\deps \eta_{T,i}^{j,[s,i],\eps}.
\end{align*}
Plugging the above identity into (\ref{eq:kappa_expansion}), we get the decomposition
\begin{align}
    \begin{split}
        \deps\eta_{T,i}^{t+1,[s,i],\eps} &=\underbrace{-\gamma\pnorm{\x_i}{}^2\partial_1\ell(\eta_{T,i}^t,\eta^*_i,\eps_i)\deps\eta_{T,i}^{t,[s,i],\eps}}_{(I_i)} +\underbrace{(1-\gamma\lambda)\deps\eta_{T,i}^{t,[s,i],\eps}}_{(II_i)}\\
        &\underbrace{+\frac{1}{\beta_T^{s+1}}(1-\gamma\lambda)\x_i^\top \bOmega_{T,-i}^{t-1}\cdots\bOmega_{T,-i}^{s+1}\bUpsilon_{T,-i}^{-1}\cdots \bUpsilon_{T,-i}^{s+1}\x_i\cT_s(y_i)}_{(III_i)}\\
        &\underbrace{-(1-\gamma\lambda)\sum_{j=s+1}^{-1}\frac{1}{\beta_T^{j+1}}\x_i^\top\bOmega_{T,-i}^{t-1}\cdots \bOmega_{T,-i}^{0}\bUpsilon_{T,-i}^{-1}\cdots \bUpsilon_{T,-i}^{j+1}\x_i\cT_s(y_i)\deps\eta_{T,i}^{j,[s,i],\eps}}_{(IV_i)}\\
        &\underbrace{+\gamma(1-\gamma\lambda)\sum_{j=0}^{t-1}\x_i^\top \bOmega_{T,-i}^{t-1}\cdots \bOmega_{T,-i}^{j+1}\x_i\partial_1\ell(\eta_{T,i}^j,\eta^*_i,\eps_i)\deps\eta_{T,i}^{j,[s,i],\eps}}_{(V_i)}\\
        &\underbrace{-\frac{1}{\beta_T^{s+1}}\x_i^\top \bOmega_{T,-i}^{t}\cdots\bOmega_{T,-i}^{s+1}\bUpsilon_{T,-i}^{-1}\cdots \bUpsilon_{T,-i}^{s+1}\x_i\cT_s(y_i)}_{(VI_i)}\\
        &\underbrace{-\gamma\sum_{j=0}^{t-1}\x_i^\top \bOmega_{T,-i}^{t}\cdots \bOmega_{T,-i}^{j+1}\x_i\partial_1\ell(\eta_{T,i}^j,\eta^*_i,\eps_i)\deps\eta_{T,i}^{j,[s,i],\eps}}_{(VII_i)}\\
        &+\underbrace{\sum_{j=s+1}^{-1}\frac{1}{\beta_T^{j+1}}\x_i^\top\bOmega_{T,-i}^{t}\cdots \bOmega_{T,-i}^{0}\bUpsilon_{T,-i}^{-1}\cdots \bUpsilon_{T,-i}^{j+1}\x_i\cT_s(y_i)\deps\eta_{T,i}^{j,[s,i],\eps}}_{(VIII_i)}.
    \end{split}
\end{align}
By the induction hypothesis in (\ref{eq:induction_conseq_1}) and proper leave one out arguments above, we have 
\begin{align*}
    (I_i) &=  R_{T,\theta}(t+1,t)r_{T,\eta}^{(\bar\eta_{T,i})}(t,s) + \sr^{[i]}_1,\\
    (II_i) &= -(1-\gamma\lambda)(\partial_1\ell(\eta_{T,i}^t,\eta^*_i,z_i))^{-1}r^{(\bar\eta_{T,i})}_{T,\eta}(t,s) + \sr^{[i]}_2,\\
    (III_i) &= -(1-\gamma\lambda)R_{T,\theta}(t,s)\cT_s(y_i)+ \sr^{[i]}_3,\\
    (IV_i) &= -(1-\gamma\lambda)\sum_{j=s+1}^{-1} R_{T,\theta}(t,j)r_{T,\eta}^{(\bar\eta_{T,i})}(j,s)+ \sr^{[i]}_4,\\
    (V_i) &= -(1-\gamma\lambda)\sum_{j=0}^{t-1} R_{T,\theta}(t,j)r^{(\bar\eta_{T,i})}_{T,\eta}(j,s)+ \sr^{[i]}_5,\\
    (VI_i) &= R_{T,\theta}(t+1,s)\cT_s(y_i)+ \sr^{[i]}_6,\\
    (VII_i) &= \sum_{j=0}^{t-1} R_{T,\theta}(t+1,j)r^{(\bar\eta_{T,i})}_{T,\eta}(j,s)+ \sr^{[i]}_7,\\
    (VIII_i) &= \sum_{j=s+1}^{-1} R_{T,\theta}(t+1,j)r^{(\bar\eta_{T,i})}_{T,\eta}(j,s)+ \sr^{[i]}_8.
\end{align*}
Note that the main terms of $(II_i)-(IV_i)$ add up to 0 by definition of $r_{T,\eta}^{(\bar\eta_{T,i})}$:
\begin{align*}
-(\partial_1\ell(\eta_{T,i}^t,\eta^*_i,z_i))^{-1}r^{(\bar\eta_{T,i})}_{T,\eta}(t,s)-R_{T,\theta}(t,s)\cT_s(y_i) - \sum_{j=s+1}^{t-1} R_{T,\theta}(t,j)r_{T,\eta}^{(\bar\eta_{T,i})}(j,s) = 0.
\end{align*}
Adding up the rest of the terms yields
\begin{align*}
   \deps\eta_{T,i}^{t+1,[s,i],\eps} = R_{T,\theta}(t+1,s)\cT_s(y_i) + \sum_{j=s+1}^t R_{T,\theta}(t+1,j)r_{T,\eta}^{(\bar\eta_{T,i})}(j,s) + \bar E^{t+1,[s,i]}_{T,\eta},   
\end{align*}
where $\frac{1}{n}\sum_{i=1}^n |\bar E^{t+1,[s,i]}_{T,\eta}|^p \rightarrow_{a.s.} 0$ as $n,d\rightarrow\infty$. This further implies that
\begin{align*}
\partial_1\ell(\eta^t_{T,i}, \eta^\ast_i,z_i)\deps\eta_{T,i}^{t+1,[s,i],\eps} =& \partial_1\ell(\eta^t_{T,i}, \eta^\ast_i,z_i)\Big(R_{T,\theta}(t+1,s)\cT_s(y_i) + \sum_{j=s+1}^t R_{T,\theta}(t+1,j)r_{T,\eta}^{(\bar\eta_{T,i})}(j,s)\Big) + E^{t+1,[s,i]}_{T,\eta}\\
&= -r_{T,\eta}^{(\bar\eta_{T,i})}(t+1,s) + E^{t+1,[s,i]}_{T,\eta},
\end{align*}
where $\frac{1}{n}\sum_{i=1}^n |\bar E^{t+1,[s,i]}_{T,\eta}|^p \rightarrow_{a.s.} 0$ as $n,d\rightarrow\infty$. This concludes the induction for $\deps\eta_{T,i}^{t+1,[s,i],\eps}$.

\paragraph{Analysis of $\btheta_T^{t+1,[s,\sharp],\eps}$}
We first consider the case $t<0$ and prove by induction. The base case $t=-T$ clearly holds since $\deps \eta^{-T,[\sharp,i],\eps}_{T,i} = 0$, while $r_{T,\eta}^{(\bar\eta_{T,i})}(-T,\sharp) = \cT'_s(y_i)\varphi'(\eta^\ast_i,z_i)\eta^{-T}_{T,i}$, so the identity holds exactly. Assume now the claim holds up to some $t < 0$. Then by definition
\begin{align*}
\deps \btheta^{t+1,[\sharp,i],\eps}_T &= \frac{\X^\top \bZ_s\X}{\beta^{t+1}_T}\deps \btheta_T^{t,[\sharp,i],\eps} + \frac{1}{\beta^{t+1}_T}\X^\top \big(\cT_s'(\y)\circ\bvarphi'(\bbeta^\ast,\z)\circ\bbeta^{t}_T\big)\e_i\\
&= \Big(\frac{\X_{-i}^\top \bZ_{s,-i}\X_{-i}}{\beta^{t+1}_T} + \frac{1}{\beta^{t+1}_T}\cT_s(y_i)\x_i\x_i^\top\Big)\deps \btheta^{t,[\sharp,i],\eps}_T + \frac{1}{\beta^{t+1}_T}\x_i \cT_s'(y_i)\varphi'(\eta^\ast_i,z_i)\eta^t_{T,i}\\
&= \bUpsilon^{t}_{T,-i}\deps \btheta^{t-1,[\sharp,i],\eps}_T + \frac{\cT_s(y_i)\x_i}{\beta^{t+1}_T}\deps \eta^{t,[\sharp,i],\eps}_T + \frac{1}{\beta^{t+1}_T}\x_i \cT_s'(y_i)\varphi'(\eta^\ast_i,z_i)\eta^t_{T,i},
\end{align*}
where $\bZ_{s,-i} = \diag(\y_{-i})$ and $\bUpsilon^t_{T,-i}$ is given in (\ref{def:DandOmega}).
Iterating this identity yields
\begin{align}\label{eq:theta_deriv}
\deps\btheta^{t+1,[\sharp,i],\eps}_{T} = \sum_{s=-T}^{t}\bUpsilon^{t}_{T,-i}\ldots\bUpsilon^{s+1}_{T,-i} \Big[\frac{\cT_s(y_i)\x_i}{\beta^{s+1}_T}\deps \eta^s_{T,i} + \frac{1}{\beta^{s+1}_T}\x_i \cT_s'(y_i)\varphi'(\eta^\ast_i,z_i)\eta^s_{T,i}\Big].
\end{align}
This implies
\begin{align*}
\deps \eta^{t+1,[\sharp,i],\eps}_{T,i} &= \x_i^\top \deps \btheta_T^{t+1,[\sharp,i],\eps}\\
&= \sum_{s=-T}^{t} \x_i^\top \frac{1}{\beta^{s+1}_{T}}\bUpsilon^{t}_{T,-i}\ldots\bUpsilon^{s+1}_{T,-i}\x_i \cdot \Big(\cT_s(y_i)\deps \eta^{s,[\sharp,i],\eps}_{T,i} + \cT_s'(y_i)\varphi'(\eta^\ast_i,z_i)\eta^s_{T,i}\Big).
\end{align*}
By the induction hypothesis, for each $s\in [-T,t]$, it holds that $\cT_s(y_i)\deps \eta^{s,[\sharp,i],\eps}_{T,i} + \cT_s'(y_i)\varphi'(\eta^\ast_i,z_i)\eta^s_{T,i} = r_{T,\eta}^{(\bar\eta_{T,i})}(s,\sharp) + E^{s,[\sharp,i]}_{T,\eta}$, with $\frac{1}{n}\sum_{i=1}^n |E^{s,[\sharp,i]}_{T,\eta}|^p \rightarrow_{a.s.} 0$ as $n,d\rightarrow\infty$. On the other hand, a similar leave one out argument as above yields that
\begin{align*}
\frac{1}{n}\sum_{i=1}^n\Big|\x_i^\top \frac{1}{\beta^{s+1}_{T}}\bUpsilon^{t}_{T,-i}\ldots\bUpsilon^{s+1}_{T,-i}\x_i - \frac{1}{d}\Tr\Big(\frac{1}{\beta^{s+1}_T}\bUpsilon^{t}_{T,-i}\ldots\bUpsilon^{s+1}_{T,-i}\Big)\Big|^p \rightarrow_{a.s.} 0,
\end{align*}
and by (\ref{eq:induction_conseq_1}), $\frac{1}{d}\Tr\Big(\frac{1}{\beta^{s+1}_T}\bUpsilon^{t}_{T,-i}\ldots\bUpsilon^{s+1}_{T,-i}\Big) \rightarrow_{a.s.} -R_{T,\theta}(t+1,s)$. This entails that
\begin{align*}
\deps \eta^{t+1,[\sharp,i],\eps}_{T,i} =  -\sum_{s=-T}^{t} R_{T,\theta}(t+1,s)r_{\eta,T}^{(\bar\eta_{T,i})}(s,\sharp) + \bar E^{t+1,[\sharp,i]}_{T,\eta},
\end{align*}
where $\frac{1}{n}\sum_{i=1}^n |\bar E^{t+1,[\sharp,i]}_{T,\eta}|^p \rightarrow_{a.s.} 0$ as $n,d\rightarrow\infty$. Therefore we have as desired that
\begin{align*}
&\cT_s(y_i)\deps \eta^{t+1,[\sharp,i],\eps}_{T,i} + \cT_s'(y_i)\varphi'(\eta^\ast_i, z_i)\eta^{t+1}_{T,i}\\
&= \cT_s(y_i)\Big(-\sum_{s=-T}^{t} R_{T,\theta}(t+1,s)r^{(\bar\eta_{T,i})}_{T,\eta}(s,\sharp)\Big) + \cT_s'(y_i)\varphi'(\eta^\ast_i, z_i)\eta^{t+1}_{T,i} + E^{t+1,[\sharp,i]}_{T,\eta} = r_{T,\eta}^{(\bar\eta_{T,i})}(t+1,\sharp) + E^{t+1,[\sharp,i]}_{T,\eta},
\end{align*}
where $\frac{1}{n}\sum_{i=1}^n |E^{t+1,[\sharp,i]}_{T,\eta}|^p \rightarrow_{a.s.} 0$ as $n,d\rightarrow\infty$. This establishes the claim up to $r_{T,\eta}^{(\bar\eta_{T,i})}(0,\sharp)$.

Next suppose the claim holds up to some $t\geq 0$. Then we have
\begin{align*}
\deps \btheta^{t+1,[\sharp,i],\eps}_{T} &= \deps \Big[(1-\gamma\lambda)\btheta^{t,[\sharp,i],\eps}_T - \gamma\X^\top \ell(\X\btheta^{t,[\sharp,i],\eps}_T,\bbeta^\ast,\z)\Big]\\
&= (1-\gamma\lambda)\deps \btheta^{t,[\sharp,i],\eps}_T - \gamma \X^\top \diag\big(\partial_1\ell(\X\btheta^t_T, \bbeta^\ast,\z)\big)\X\deps \btheta^{t,[\sharp,i],\eps}_T\\
&= \underbrace{\Big(1-\gamma\lambda - \X_{-i}^\top \partial_1\ell(\bbeta^t_{T,-i},\bbeta^\ast_{-i},\z_{-i})\X_{-i}\Big)}_{\bOmega^t_{T,-i}}\deps \btheta^{t,[\sharp,i],\eps}_T - \gamma\partial_1 \ell(\eta^t_{T,i},\eta^\ast_i,z_i)\x_i \deps \eta^{t,[\sharp,i],\eps}_{T,i},
\end{align*}
with $\bOmega^t_{T,-i}$ given in (\ref{def:DandOmega}). Iterating this identity yields
\begin{align*}
\deps \btheta^{t+1,[\sharp,i],\eps}_T &= \bOmega^{t}_{T,-i}\ldots \bOmega^{0}_{T,-i}\deps \btheta_T^{0,[\sharp,i],\eps} - \gamma\sum_{s=0}^t \bOmega^t_{T,-i}\ldots\bOmega^{s+1}_{T,-i}\partial_1\ell(\eta^s_{T,i},\eta^\ast_i,z_i) \x_i \deps \eta^{s,[\sharp,i],\eps}_{T,i}\\
&=\bOmega^{t}_{T,-i}\ldots \bOmega^{0}_{T,-i} \Big(\sum_{s=-T}^{-1}\bUpsilon^{-1}_{T,-i}\ldots\bUpsilon^{s+1}_{T,-i} \Big[\frac{\cT_s(y_i)\x_i}{\beta^{s+1}_T}\deps \eta^{s,[\sharp,i],\eps}_{T,i} + \frac{1}{\beta^{s+1}_T}\x_i \cT_s'(y_i)\varphi'(\eta^\ast_i,z_i)\eta^s_{T,i}\Big]\Big)\\
&\quad - \gamma\sum_{s=0}^t \bOmega^t_{T,-i}\ldots\bOmega^{s+1}_{T,-i}\partial_1\ell(\eta^s_{T,i},\eta^\ast_i,z_i) \x_i \deps \eta^{s,[\sharp,i],\eps}_{T,i},
\end{align*}
where we apply (\ref{eq:theta_deriv}) with $t = -1$. This implies that
\begin{align*}
\deps \eta^{t+1,[\sharp,i],\eps}_{T,i} &= \sum_{s=-T}^{-1}\x_i^\top \frac{1}{\beta^{s+1}_T} \bOmega^{t}_{T,-i}\ldots \bOmega^{0}_{T,-i}\bUpsilon^{-1}_{T,-i}\ldots \bUpsilon^{s+1}_{T,-i}\x_i \cdot \underbrace{\Big(\cT_s(y_i)\deps \eta^{s,[\sharp,i],\eps}_{T,i} + \cT_s'(y_i)\varphi'(\eta^\ast_i,z_i)\eta^s_{T,i}\Big)}_{r_{T,\eta}^{(\bar\eta_{T,i})}(s,\sharp) + E^{s,[\sharp,i]}_{T,\eta}}\\
&\quad - \gamma\sum_{s=0}^t \x_i^\top \bOmega^{t}_{T,-i}\ldots \bOmega^{s+1}_{T,-i} \x_i \cdot \underbrace{\partial_1\ell(\eta^s_{T,i},\eta^\ast_i,z_i)\deps \eta^{s,[\sharp,i],\eps}_{T,i}}_{r_{T,\eta}^{(\bar\eta_{T,i})}(s,\sharp) + E^{s,[\sharp,i]}_{T,\eta}}.
\end{align*}
Using a similar leave one out argument as above and (\ref{eq:induction_conseq_1}), we have
\begin{align*}
&\frac{1}{n}\sum_{i=1}^n\Big|\x_i^\top \frac{1}{\beta^{s+1}_T} \bOmega^{t}_{T,-i}\ldots \bOmega^{0}_{T,-i}\bUpsilon^{-1}_{T,-i}\ldots \bUpsilon^{s+1}_{T,-i}\x_i - \frac{1}{d}\Tr\Big[\frac{1}{\beta^{s+1}_T} \bOmega^{t}_{T,-i}\ldots \bOmega^{0}_{T,-i}\bUpsilon^{-1}_{T,-i}\ldots \bUpsilon^{s+1}_{T,-i}\Big]\Big|^p\rightarrow_{a.s.} 0,\\
&\frac{1}{d}\Tr\Big[\frac{1}{\beta^{s+1}_T} \bOmega^{t}_{T,-i}\ldots \bOmega^{0}_{T,-i}\bUpsilon^{-1}_{T,-i}\ldots \bUpsilon^{s+1}_{T,-i}\Big] \rightarrow_{a.s.} -R_{T,\theta}(t+1,s),
\end{align*}
and
\begin{align*}
&\frac{1}{n}\sum_{i=1}^n \Big|\x_i^\top \bOmega^{t}_{T,-i}\ldots \bOmega^{s+1}_{T,-i} \x_i - \frac{1}{d}\Tr\Big[\bOmega^{t}_{T,-i}\ldots \bOmega^{s+1}_{T,-i}\Big]\Big|^p \rightarrow_{a.s.} 0,\\
&\frac{1}{d}\Tr\Big[\bOmega^{t}_{T,-i}\ldots \bOmega^{s+1}_{T,-i}\Big] \rightarrow_{a.s.} R_{T,\theta}(t+1,s)/\gamma.
\end{align*}
This implies
\begin{align*}
\partial_1\ell(\eta^{t+1}_{T,i},\eta^\ast_i,z_i)\deps \eta^{t+1,[\sharp,i],\eps}_{T,i} &= \partial_1\ell(\eta^{t+1}_{T,i},\eta^\ast_i,z_i)\Big(-\sum_{s=-T}^t R_{T,\theta}(t+1,s)r_{T,\eta}^{(\bar\eta_{T,i})}(s,\sharp)\Big) + E^{t+1,[\sharp,i]}_{T,\eta}\\
&= r_{T,\eta}^{(\bar\eta_{T,i})}(t+1,\sharp) + E^{t+1,[\sharp,i]}_{T,\eta}, 
\end{align*}
where $\frac{1}{n}\sum_{i=1}^n |E^{t+1,[\sharp,i]}_{T,\eta}|^p \rightarrow_{a.s.} 0$ as $n,d\rightarrow\infty$. This concludes the induction.

\paragraph{Analysis of $\btheta_T^{t+1,[s,\ast],\eps}$} 
We prove by induction. The baseline case $t = 0$ clearly holds, since $\deps \eta^{0,[\ast,i],\eps}_{T,i} = 0$ and $r_{T,\eta}^{(\bar\eta_{T,i})}(0,\ast) = \partial_2 \ell(\eta^0_{T,i}, \eta^\ast_i,z_i)$. Suppose the claim holds up to some $t\geq 0$. With $\bOmega_{T,-i}^t$ defined in (\ref{def:DandOmega}), we have
\begin{align*}
\deps\btheta^{t+1,[\ast,i],\eps}_T &= \Big((1-\gamma\lambda)\I_d - \gamma \X^\top \diag\big(\partial_1 \ell(\bbeta^{t}_T,\bbeta^\ast,\z)\big)\X\Big)\deps\btheta^{t,[\ast,i],\eps}_T - \gamma\X^\top \diag(\partial_2\ell(\bbeta_T^{t},\bbeta^\ast,\z))\e_i\\
&= \bOmega^{t}_{T,-i}\deps\btheta^{t,[\ast,i],\eps}_T - \gamma\x_i\partial_1 \ell(\eta^{t}_{T,i},\eta^\ast_i,z_i)\deps\eta^{t,[\ast,i],\eps}_{T,i} - \gamma \x_i\partial_2\ell(\eta^t_{T,i},\eta^\ast_i,z_i).
\end{align*}
Iterating this identity yields
\begin{align*}
\deps\btheta^{t+1,[\ast,i],\eps}_T = -\gamma\sum_{s=0}^{t}\bOmega^{t}_{T,-i}\ldots\bOmega^{s+1}_{T,-i}\x_i\Big(\partial_1 \ell(\eta^{s}_{T,i},\eta^\ast_i,z_i) \deps \eta^{s,[\ast,i],\eps}_{T,i} + \partial_2 \ell(\eta^s_{T,i},\eta^\ast_i,z_i)\Big),
\end{align*}
which further implies that
\begin{align*}
\deps\eta^{t+1,[\ast,i],\eps}_{T,i} = -\gamma\sum_{s=0}^{t}\x_i^\top\bOmega^{t}_{T,-i}\ldots\bOmega^{s+1}_{T,-i}\x_i\underbrace{\Big(\partial_1 \ell(\eta^{s}_{T,i},\eta^\ast_i,z_i) \deps \eta^{s,[\ast,i],\eps}_{T,i} + \partial_2 \ell(\eta^s_{T,i},\eta^\ast_i,z_i)\Big)}_{r_{T,\eta}^{(\bar\eta_{T,i})}(s,\ast) + E^{s,[\ast,i]}_{T,\eta}}.
\end{align*}
By a similar leave one out argument, we have
\begin{align*}
&\frac{1}{n}\sum_{i=1}^n \Big|\x_i^\top \bOmega^{t}_{T,-i}\ldots \bOmega^{s+1}_{T,-i}\x_i - \frac{1}{d}\Tr\Big(\bOmega^{t}_{T,-i}\ldots \bOmega^{s+1}_{T,-i}\Big)\Big|^p \rightarrow_{a.s.}0,\\
&\frac{1}{d}\Tr\Big(\bOmega^{t}_{T,-i}\ldots \bOmega^{s+1}_{T,-i}\Big) \rightarrow_{a.s.} R_{T,\theta}(t,s)/\gamma.
\end{align*}
Hence
\begin{align*}
&\partial_1\ell(\eta^{t+1}_{T,i}, \eta^\ast_i,z_i)\deps\eta^{t+1,[\ast,i],\eps}_{T,i} + \partial_2\ell(\eta^{t+1}_{T,i}, \eta^\ast_i,z_i)\\
&= \partial_1\ell(\eta^{t+1}_{T,i}, \eta^\ast_i,z_i)\Big(-\sum_{s=0}^t R_{T,\theta}(t,s)r_{T,\eta}^{(\bar\eta_{T,i})}(s,\ast)\Big)+ \partial_2\ell(\eta^{t+1}_{T,i}, \eta^\ast_i,z_i) + E^{t+1,[\ast,i]}_{T,\eta}\\
&= r_{T,\eta}^{(\bar\eta_{T,i})}(t+1,\ast) + E^{t+1,[\ast,i]}_{T,\eta},
\end{align*}
where $\frac{1}{n}\sum_{i=1}^n |E^{t+1,[\ast,i]}_{T,\eta}|^p \rightarrow_{a.s.} 0$ as $n,d\rightarrow\infty$. The proof is complete.
\end{proof}

\subsection{Convergence of artificial DMFT system}
\label{subsec:art_res_conv}
The goal of this section is to show the convergence of the artificial DMFT (\ref{def:artificialEta})-(\ref{def:artificial_dmft_CR_theta}) to the spectral initialization DMFT (\ref{def:dmft_eta})-(\ref{def:theta_res}) as the number of iterates in the first stage $T \rightarrow \infty$, in the sense of the following proposition. Recall that for a sequence of variables $\{X_i\}_{i\geq 0}$ and $X$ in the same space, $\lim_{n\rightarrow \infty} X_n \overset{L^2}{=} X$ is notation for $\lim_{n\rightarrow\infty} \pnorm{X_n - X}{L^2} = \lim_{n\rightarrow\infty} \sqrt{\E (X_n -X)^2} = 0$.

\begin{proposition}
\label{prop:art_spec_dmft_coup}
Suppose that Assumptions \ref{ass: model assumptions}, \ref{ass: spectral initialization} and \ref{ass: ell lipschitz} hold and moreover $\psi_\delta'(\lambda_\delta^*) > 0$. For any fixed integer $h > 0$, there exists a coupling of the randomness in (\ref{def:artificialEta})-(\ref{def:artificial_dmft_CR_theta}) and (\ref{def:dmft_eta})-(\ref{def:theta_res}) such that the following holds.
\begin{itemize}
\item[(1)] As $T\rightarrow\infty$,
\begin{align*}
&\sum_{t=0}^h \E (\theta^t - \theta^t_T)^2 + \sum_{t=0}^h \E (\eta^t - \eta^t_T)^2 \rightarrow 0,\\
&\E(u^{-1}_T + \sqrt{\delta}\lambda^\ast_\delta u^\diamond)^2 + \sum_{t=0}^h \E(u^t_T - u^t)^2 + \E(w^\ast_T - w^\ast)^2 + \sum_{t=0}^h \E(w^t_T - w^t)^2 \rightarrow 0.
\end{align*}
\item[(2)] For any $t > s \geq 0$, we have that
    \begin{align*}
        \lim_{T\rightarrow\infty}r_{T,\theta}(t,s) = r_\theta(t,s), \quad \lim_{T\rightarrow\infty}r_{T,\eta}(t,s) \overset{L^2}{=} r_\eta(t,s), \quad 
        \lim_{T\rightarrow\infty}r_{T,\eta}(s,\ast) \overset{L^2}{=} r_\eta(s,\ast).
    \end{align*}
    As a consequence, for any $t > s \geq 0$, it holds that
    \begin{align*}
        \lim_{T\rightarrow\infty}R_{T,\theta}(t,s) = R_\theta(t,s), \quad \lim_{T\rightarrow\infty}R_{T,\eta}(t,s) = R_\eta(t,s),\quad \lim_{T\rightarrow\infty}R_{T,\eta}(s,\ast) = R_\eta(s,\ast).
    \end{align*}
\item[(3)] For any $t>s$ with $s<0$, we have that
 \begin{align*}
        -\lambda_\delta^\ast \lim_{T\rightarrow\infty} \sum_{s=-T}^{-1} R_{T,\theta}(t,s) = R_\theta(t,\diamond), \quad
        \lim_{T\rightarrow\infty}\sum_{s=-T}^{-1}R_{T,\eta}(t,s)  = R_\eta(t,\diamond), \quad
        \lim_{T\rightarrow\infty}R_{T,\eta}(t,\sharp) = R_{\eta}(t,\diamond\diamond).
    \end{align*}
\end{itemize}
\end{proposition}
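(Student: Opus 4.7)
The plan is to use the high-dimensional dynamics as a bridge: both the artificial and the spectral DMFT systems describe the large-$(n,d)$ limit of concrete high-dimensional processes, so one can transfer the quantitative convergence of power iteration to the spectral estimator into DMFT convergence via an appropriate coupling of the underlying randomness.

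The crucial step is to handle the base case at $t=0$. For fixed $(n,d)$, the artificial dynamic during $t \in [-T,0]$ is exactly power iteration on $\M_n$. Under $\psi'_\delta(\lambda^\ast_\delta) > 0$, Proposition \ref{prop: spec est overlap} yields an asymptotic eigengap, so that $\pnorm{\btheta_T^0 - \sqrt{d}\hat\btheta^s}{}/\sqrt d \leq C\rho^T$ for some $\rho \in (0,1)$ independent of $n,d$, almost surely for large $n,d$. Since Proposition \ref{prop:discreteDmftArt} captures $\btheta_T^0$ by $\theta_T^0$ and Proposition \ref{prop: spec est overlap} captures $\sqrt d \hat\btheta^s$ by $a\theta^\ast + u^\diamond$, constructing a coupling that uses the same underlying sources of randomness lets us conclude $\theta_T^0 \overset{L^2}{\to} a\theta^\ast + u^\diamond$. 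The same analysis applied to $\bbeta_T^{-1}$, combined with $\beta_T^0 \to \delta\psi_\delta(\lambda^\ast_\delta)$ and the high-dimensional identity $\btheta_T^0 = \M_n \btheta_T^{-1}/\beta_T^0$, identifies the limit of $u_T^{-1}$ with $-\sqrt\delta \lambda^\ast_\delta u^\diamond$, matching the variance $C_{T,\eta}(-1,-1) \to \delta(\lambda^\ast_\delta)^2(1-a^2)$.

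Given the base case, Parts (1) and (2) follow by induction on $t \geq 0$: the artificial DMFT recursion (\ref{def:artificialEta})--(\ref{def:r_T_theta}) restricted to non-negative indices has the same structure as the spectral DMFT (\ref{def:dmft_eta})--(\ref{def:theta_res}), with the negative-time contributions in the artificial system serving as surrogates for the diamond-indexed contributions in the spectral system. Given $L^2$ convergence of $(\theta_T^s, \eta_T^s, u_T^s, w_T^s)$ and of the relevant covariance and response values up to time $t$, Lipschitz continuity of $\ell$ (Assumption \ref{ass: ell lipschitz}) propagates convergence to time $t+1$, and the response recursions (\ref{eq:r_T_eta_recursion})--(\ref{eq:r_T_theta_recursion}) are likewise propagated. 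For Part (3), one uses Lemma \ref{lm: response matrix recursion} to express each $R_{T,\theta}(t,s)$ with $s < 0$ as a normalized trace involving the operators $\bUpsilon_T^s$ and $\bOmega_T^s$; summing over $s \in [-T,-1]$ collapses a geometric series in $\bUpsilon_T$ restricted to the spectral subspace, which in the $T \to \infty$ limit reproduces $R_\theta(t,\diamond)$ with the prefactor $-1/\lambda^\ast_\delta$ arising from $(\I - \bUpsilon_T)^{-1}$ on the leading eigendirection. The identities for $R_{T,\eta}(t,s)$ and $R_{T,\eta}(t,\sharp)$ follow analogously, with the $\sharp$ variant producing $R_\eta(t,\diamond\diamond)$ via a chain-rule computation through $y_T = \varphi(w_T^\ast, z)$.

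The principal obstacle is the base-case coupling. The DMFT variables live in the abstract $(n,d)\to\infty$ limit and cannot be directly coupled to the high-dimensional iterates; the required coupling must be constructed by matching joint limiting laws on a canonical probability space, and the power-iteration contraction must be transferred through this abstraction. A clean implementation is to iterate Theorem \ref{thm: amp spec} to track the $T$-indexed state evolution uniformly in $T$, using the eigengap to obtain uniform-in-$T$ Wasserstein bounds between the finite-$T$ and infinite-$T$ limiting distributions, which then justifies the interchange of $\lim_T$ and $\lim_{n,d}$ needed to upgrade the high-dimensional power-iteration convergence to an $L^2$ statement at the level of the DMFT variables.
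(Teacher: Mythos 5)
Your plan for Parts (1)--(2) is essentially the paper's: use the high-dimensional dynamics as a bridge, exploit the eigengap ($\psi_\delta'(\lambda_\delta^\ast)>0$) to get power-iteration contraction of $\btheta^t_T$ onto the spectral-initialized trajectory (the paper's Lemma \ref{lem:hdFirstStage}), convert this into $W_2$ convergence of the limiting laws via the two DMFT characterizations (Theorem \ref{thm: discrete dmft asymp} and Proposition \ref{prop:discreteDmftArt}), build the coupling abstractly (the paper uses optimal $W_2$ couplings glued by Kolmogorov extension), and then propagate the second-stage responses by induction through the recursions using Lipschitz continuity of $\partial_1\ell,\partial_2\ell$. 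One caveat even here: the paper proves Part (1) at \emph{all} times $0\le t\le h$ directly from the high-dimensional comparison, not by inducting on the DMFT recursions. Your proposed DMFT-level induction for (1) is not self-contained, because the artificial recursions for $\theta^{t+1}_T$ and $\eta^t_T$ contain the negative-time sums $\sum_{s=-T}^{-1}R_{T,\eta}(t,s)\theta^s_T$ and $\cT_s(y_T)\sum_{s=-T}^{-1}\eta^s_T R_{T,\theta}(t,s)$, and matching these to the diamond terms of the spectral DMFT is exactly Part (3); treating them as ``surrogates'' inside the induction is circular unless Part (3) is established first or (1) is obtained via the high-dimensional route.

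The genuine gap is Part (3). Your geometric-series-of-traces argument does not work as described: the prefactor $-\lambda_\delta^\ast$ does not arise from $(\I-\bUpsilon_T)^{-1}$ on the leading eigendirection (the leading eigenvalue of $\bUpsilon_T$ tends to $1$, and $\lambda_\delta^\ast$ is not $\lambda_{1,\infty}=\delta\psi_\delta(\lambda_\delta^\ast)$); in the paper it comes from the algebraic relation between the pre-processing functions, namely $\eta^0=(1+\cT(y))w^0$ and $\cT_s(y)(1+\cT(y))=\lambda_\delta^\ast\,\cT(y)$. More importantly, even granting a trace formula for $\lim_T\sum_{s<0}R_{T,\theta}(t,s)$, you never connect that limit to the quantity $R_\theta(t,\diamond)$ \emph{as defined by the spectral DMFT recursion} (\ref{eq:r_theta_response}); no analogue of Lemma \ref{lm: low dim response} is available for the spectral-initialized dynamic itself. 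The paper instead identifies the limits by comparing the two DMFT equations in $L^2$ (using Part (1) and Part (2)): the sums $\sum_{s=-T}^{-1}R_{T,\theta}(t,s)\eta^s_T$ and $\sum_{s=-T}^{-1}R_{T,\eta}(t,s)\theta^s_T$ are collapsed onto $\eta^0$ and $\theta^0$ using two quantitative ingredients you omit --- uniform-in-$T$ exponential decay of the first-stage responses in $|s|$ (Lemma \ref{lm:first_stage_res_exp_decay}, which is where the trace representation actually enters) and exponential closeness $\|\eta^s_T-\eta^0_T\|_{L^2},\|\theta^s_T-\theta^0_T\|_{L^2}\le C(\lambda_{2,\infty}/\lambda_{1,\infty})^{T+s}$ (Lemma \ref{lm:first_stage_reg}) --- and then $R_\eta(t,\diamond)$ and $R_\eta(t,\diamond\diamond)$ are separated by writing $\theta^0=a\theta^\ast+u^\diamond$ and using independence of $u^\diamond$ and $\theta^\ast$, rather than a chain-rule computation through $y_T$. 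Without these steps your Part (3), and hence the induction structure you build on it, does not go through.
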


Proposition \ref{prop:art_spec_dmft_coup} is the main result of this subsection. In the remainder of this subsection, we prove this result in three steps, each corresponding to one part of the Proposition. 

\paragraph{Constructing the Coupling}
Recall that the artificial DMFT is the low-dimensional characterization of a high-dimensional artificial power iteration dynamic $\btheta_T^{-T},...,\btheta_T^0,\btheta_T^1,...$ given in Section \ref{subsec:art_dynamic} while the spectral initialized DMFT is the low-dimensional characterization of a spectral initialized gradient descent $\btheta^0,\btheta^1,...$ given by (\ref{eq: gradient descent}). We first show that the second stage of the artificial high-dimensional dynamic $\btheta_T^0,\btheta_T^1,...$ converges to the high-dimensional dynamic of interest $\btheta^0,\btheta^1,...$ as $T\rightarrow\infty$.

For notation, we use $\lambda_{1,\infty} = \delta \psi_\delta(\lambda_\delta^\ast),\lambda_{2,\infty} := \delta\psi_\delta(\bar\lambda_\delta)$ to denote the almost sure limit of the first and second eigenvalues of $\M_n = \X^\top\diag(\cT_s(\y))\X$ as shown by Proposition \ref{prop: spec est overlap}. We note that $\lambda_{1,\infty} > \lambda_{2,\infty}$ with a strictly positive gap under our assumption that $\psi_\delta'(\lambda_\delta^*) > 0$.

\begin{lemma}\label{lem:hdFirstStage}
Suppose Assumptions \ref{ass: model assumptions}  and \ref{ass: spectral initialization} hold and that $\psi_\delta'(\lambda_\delta^*) > 0$. Then almost surely it holds that
\begin{align*}
    \limsup_{n,d\rightarrow\infty}\frac{1}{\sqrt{d}}\pnorm{\sqrt{d}\hat{\btheta}^s - \btheta_T^0}{} \leq C \Big(\frac{\lambda_{2,\infty}}{\lambda_{1,\infty}}\Big)^T
\end{align*}
for some universal $C > 0$. Moreover, suppose Assumption \ref{ass: ell lipschitz}, then there exists $C > 0$ depending on $\gamma,\lambda,\pnorm{\partial_1 \ell}{\infty}$ such that 
\begin{align*}
\limsup_{n,d\rightarrow\infty}\frac{1}{\sqrt{d}}\pnorm{\btheta^t - \btheta_T^t}{} \leq C^t \Big(\frac{\lambda_{2,\infty}}{\lambda_{1,\infty}}\Big)^T.
\end{align*}
\end{lemma}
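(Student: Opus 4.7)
The plan is to prove the two bounds in sequence: Part 1 is a classical power iteration convergence rate, while Part 2 follows by propagating this initial error through $t$ steps of gradient descent, which is globally Lipschitz with bounded Lipschitz constant.

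\textbf{Part 1.} First I would argue that in the large-$(n,d)$ limit,
\[
\btheta_T^0 = \sqrt{d} \cdot \frac{\M_n^T \btheta^\ast}{\pnorm{\M_n^T \btheta^\ast}{}},
\]
since the $T$ power iteration steps in (\ref{def:art_dyn_first}) only rescale $\btheta^\ast$, and by the choice of $\hat\beta_T^{t+1}$ an easy induction gives $\pnorm{\btheta_T^t}{}/\sqrt{d}\to 1$ almost surely for each $t \in [-T,0]$. Next I would decompose $\btheta^\ast = \alpha_1 \hat{\btheta}^s + \btheta^\ast_\perp$ with $\alpha_1 = \langle \btheta^\ast, \hat{\btheta}^s\rangle$ (chosen nonnegative by the appropriate sign convention on $\hat{\btheta}^s$) and $\btheta^\ast_\perp \perp \hat{\btheta}^s$, so that
\[
\M_n^T \btheta^\ast = \alpha_1 \lambda_1(\M_n)^T \hat{\btheta}^s + \M_n^T\btheta^\ast_\perp, \qquad \pnorm{\M_n^T \btheta^\ast_\perp}{} \leq \lambda_2(\M_n)^T \pnorm{\btheta^\ast_\perp}{}.
\]
By Proposition \ref{prop: spec est overlap}, almost surely $\alpha_1/\sqrt{d}\to a > 0$, $\pnorm{\btheta^\ast_\perp}{}/\sqrt{d} \to \sqrt{1-a^2}$, and $\lambda_i(\M_n) \to \lambda_{i,\infty}$ for $i \in \{1,2\}$, so that $\v_T := \M_n^T \btheta^\ast_\perp/(\alpha_1 \lambda_1(\M_n)^T)$ satisfies $\pnorm{\v_T}{} \leq C(\lambda_{2,\infty}/\lambda_{1,\infty})^T$ in the limit. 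A short algebraic computation expanding $\pnorm{(\hat{\btheta}^s + \v_T)/\sqrt{1+\pnorm{\v_T}{}^2} - \hat{\btheta}^s}{} \leq \pnorm{\v_T}{} + \tfrac12\pnorm{\v_T}{}^2$ then yields the claimed bound.

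\textbf{Part 2.} I would define the gradient descent map $F(\btheta) := (1-\gamma\lambda)\btheta - \gamma \X^\top \ell(\X\btheta, \X\btheta^\ast, \z)$, so that $\btheta^{t+1} = F(\btheta^t)$ and $\btheta_T^{t+1} = F(\btheta_T^t)$ for $t \geq 0$. By Assumption \ref{ass: ell lipschitz}, for any $\btheta_1, \btheta_2 \in \R^d$,
\[
\pnorm{F(\btheta_1) - F(\btheta_2)}{} \leq \big(|1-\gamma\lambda| + \gamma \pnorm{\X}{\op}^2 \pnorm{\partial_1\ell}{\infty}\big) \pnorm{\btheta_1 - \btheta_2}{} =: L_n \pnorm{\btheta_1 - \btheta_2}{}.
\]
Since $\pnorm{\X}{\op}$ is almost surely bounded by a constant (by Assumption \ref{ass: model assumptions}-(3) and standard random matrix bounds), $L_n \leq L$ for some $L$ depending only on $\gamma, \lambda, \pnorm{\partial_1 \ell}{\infty}$. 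Iterating then yields $\pnorm{\btheta^t - \btheta_T^t}{} \leq L^t \pnorm{\btheta^0 - \btheta_T^0}{}$, and combining with Part 1 together with $\btheta^0 = \sqrt{d}\hat{\btheta}^s$ gives the conclusion.

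The main technicality I foresee is justifying the identification $\btheta_T^0 = \sqrt{d}\,\M_n^T\btheta^\ast/\pnorm{\M_n^T \btheta^\ast}{}$ in the $\limsup$ sense, since the definitions (\ref{def:art_dyn_first})--(\ref{def:beta_def}) use the deterministic limiting normalizations $\hat\beta_T^{t+1}$ rather than the random finite-$(n,d)$ norms. This should follow from the almost sure convergences $\pnorm{\M_n\btheta_T^t}{}/\sqrt{d} \to \hat\beta_T^{t+1}$ at each step $t \in [-T,0]$ and the fact that $a > 0$ keeps $\pnorm{\M_n^T\btheta^\ast}{}/\sqrt{d}$ bounded away from zero, so that the $T$-fold product of normalizations is well-behaved in the limit.
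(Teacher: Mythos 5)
Your proposal is correct and follows essentially the same route as the paper: compare $\btheta_T^0$ with the genuinely normalized power iterate $\sqrt{d}\,\M_n^T\btheta^\ast/\pnorm{\M_n^T\btheta^\ast}{}$ (the paper handles the deterministic-versus-random normalization discrepancy as a separate vanishing term, exactly as you sketch at the end), bound the latter's distance to $\hat{\btheta}^s$ by the standard power-iteration rate $C(\lambda_{2,\infty}/\lambda_{1,\infty})^T$ using $a>0$, and then propagate the error through gradient descent via the Lipschitz constant $|1-\gamma\lambda|+\gamma\pnorm{\X}{\op}^2\pnorm{\partial_1\ell}{\infty}$ with $\pnorm{\X}{\op}$ almost surely bounded. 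The only cosmetic difference is that you derive the power-iteration bound explicitly from the eigendecomposition of $\M_n$, whereas the paper quotes it as standard.
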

\begin{proof}
    We consider the following dynamic
    \begin{equation}
        \label{def:true_power_iter}
        \btheta_{PI,T}^{t+1} = \sqrt{d}\frac{\X^\top \diag(\cT_s(\y)) \X \btheta_{PI,T}^{t}}{\pnorm{\X^\top \diag(\cT_s(\y)) \X \btheta_{PI,T}^{t}}{2}} \qquad -T \leq t \leq -1,
    \end{equation}
    which is initialized at $\btheta_{PI,T}^{-T} = \btheta^\ast$. Then we have
    \begin{align*}
        \frac{1}{\sqrt{d}}\pnorm{\sqrt{d}\hat{\btheta}^s - \btheta_T^0}{2} &\leq \underbrace{\frac{1}{\sqrt{d}}\pnorm{\sqrt{d}\hat{\btheta}^s - \btheta_{PI,T}^0}{2}}_{(I)} + \underbrace{\frac{1}{\sqrt{d}}\pnorm{\btheta_{PI,T}^0 - \btheta_T^0}{2}}_{(II)}.
    \end{align*}
    For the first term, let $\lambda_{1,n}$ and $\lambda_{2,n}$ denote the first and second eigenvalues (ordered by magnitude) of $\M_n = \X^\top\diag(\cT_s(\y))\X$, and $a_n = \langle\hat\btheta^{s},\btheta^\ast\rangle/\sqrt{d}$. By standard analysis of power iteration, we have the bound
    \begin{equation*}
        (I) \leq C\frac{(\lambda_{2,n}/\lambda_{1,n})^T\sqrt{1-a_{n}^2}}{\sqrt{a_{n}^2 + (\lambda_{2,n}/\lambda_{1,n})^{2T}(1-a_{n}^2)}}.
    \end{equation*}
    Thus, in view of the almost sure limit $a_n \rightarrow a$ given by Proposition \ref{prop: spec est overlap}, taking the high-dimensional limit yields
    \begin{equation*}
        \lim_{n,d\rightarrow\infty} (I) \leq C \Big(\frac{\lambda_{2,\infty}}{\lambda_{1,\infty}}\Big)^T.
    \end{equation*}
    For the second term, we have $\btheta_{PI,T}^0 = \sqrt{d}\frac{\M_n^T\btheta^\ast}{\pnorm{\M_n^T\btheta^\ast}{}}$, and $\btheta_{T}^0 = \frac{\M_n^T\btheta^\ast}{\lim_{n,d\rightarrow\infty}(\pnorm{\M_n^T\btheta^\ast}{}/\sqrt{d})}$, hence
    \begin{align*}
        (II) \leq \pnorm{\M_n^T}{\rm op}\frac{\pnorm{\btheta^\ast}{}}{\sqrt{d}}\Big(\frac{\sqrt{d}}{\pnorm{\M_n^T\btheta^\ast}{}} - \lim_{n,d\rightarrow\infty}\frac{\sqrt{d}}{\pnorm{\M_n^T\btheta^\ast}{}}\Big).
    \end{align*}
    Almost surely the term $\pnorm{\M_n^T}{\op} = \pnorm{\M_n}{\op}^T$ converges to $\lambda_{1,\infty}^T$ as $n,d \rightarrow \infty$, $\frac{\pnorm{\btheta^\ast}{}}{\sqrt{d}} = 1$, and 
    \begin{align*}
    \Big[\frac{\sqrt{d}}{\pnorm{\M_n^T\btheta^\ast}{}} - \lim_{n,d\rightarrow\infty}\frac{\sqrt{d}}{\pnorm{\M_n^T\btheta^\ast}{}}\Big] \xrightarrow{n,d\rightarrow\infty} 0,
    \end{align*}
    so $\lim_{n,d\rightarrow\infty} (II) = 0$, concluding the first claim. Next note that for any $t\geq 0$
    \begin{align*}
    \pnorm{\btheta^{t+1} - \btheta^{t+1}_T}{} &\leq |1-\gamma\lambda|\pnorm{\btheta^{t} - \btheta^{t}_T}{} + \gamma \pnorm{\X^\top \big(\ell(\X\btheta^t, \X\btheta^\ast,\z) - \ell(\X\btheta^t_T, \X\btheta^\ast,\z)\big)}{}\\
    &\leq \big(|1-\gamma\lambda| + \gamma\pnorm{\X}{\op}^2\pnorm{\partial_1\ell}{\infty}\big)\pnorm{\btheta^t - \btheta^t_T}{}.
    \end{align*}
    Almost surely for large enough $n,d$ we have $\pnorm{\X}{\op} \leq C(\delta)$, yielding the second claim. 
\end{proof}

Using the above high-dimensional convergence and the corresponding state evolutions of the spectral initialized DMFT (Theorem \ref{thm: discrete dmft asymp}) and artificial DMFT (Proposition \ref{prop:discreteDmftArt}), we can establish the following convergence of the law of the artificial DMFT to the law of the spectral initialized DMFT as $T\rightarrow\infty$.

\begin{lemma}\label{lm:dmft_T_conv}
Suppose Assumptions \ref{ass: model assumptions}, \ref{ass: spectral initialization}, \ref{ass: ell lipschitz} hold and $\psi_\delta'(\lambda_\delta^*) > 0$. Then for any fixed $h\geq 0$, as $T\rightarrow \infty$ it holds that
\begin{equation}
\begin{gathered}
     \sP(\theta^\ast,\{\theta_T^t\}_{t=0}^h) \overset{W_2}{\to} \sP(\theta^\ast,\{\theta^t\}_{t=0}^h), \quad \sP(z,w^\ast_T,\{\eta_T^t\}_{t=0}^h) \overset{W_2}{\to} \sP(z,w^\ast,\{\eta^t\}_{t=0}^h),\\
        \sP(u_T^{-1},\{u_T^t\}_{t=0}^h) \overset{W_2}{\to} \sP(-\lambda^\ast_\delta u^{\diamond},\{u^t\}_{t=0}^h), \quad \sP(w_T^\ast,\{w_T^t\}_{t=0}^h) \overset{W_2}{\to} \sP(w^\ast,\{w^t\}_{t=0}^h).
\end{gathered}
\end{equation}
\end{lemma}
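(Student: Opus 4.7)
The strategy will be to reduce the convergence of DMFT laws to the high-dimensional $\ell_2$ control of Lemma \ref{lem:hdFirstStage}, bridging the two sides through the state evolutions (Theorem \ref{thm: discrete dmft asymp} for spectral initialized GD and Proposition \ref{prop:discreteDmftArt} for the artificial dynamic). The underlying observation is that both DMFT laws arise as almost-sure $W_2$ limits of empirical distributions built from the same data $(\X, \btheta^\ast, \z)$, so the natural matched-index coupling on empirical distributions yields a quantitative $W_2$ comparison.

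For the first two convergences, I will invoke the elementary inequality $W_2(\tfrac1d\sum_j \delta_{x_j}, \tfrac1d\sum_j \delta_{y_j})^2 \leq \tfrac1d\sum_j \pnorm{x_j - y_j}{}^2$ for matched-index empirical distributions. Combined with Lemma \ref{lem:hdFirstStage}, this gives
\begin{align*}
\limsup_{n,d\to\infty} W_2\Big(\tfrac1d\sum_j \delta_{(\theta^\ast_j, \theta^0_j,\ldots,\theta^h_j)}, \tfrac1d\sum_j \delta_{(\theta^\ast_j, \theta^0_{T,j},\ldots,\theta^h_{T,j})}\Big)^2 \leq \sum_{t=0}^h C^{2t}(\lambda_{2,\infty}/\lambda_{1,\infty})^{2T}.
\end{align*}
Together with the triangle inequality and the two state evolutions, this will produce $W_2(\sP(\theta^\ast, \{\theta^t_T\}), \sP(\theta^\ast,\{\theta^t\})) \to 0$ as $T\to\infty$. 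The $\eta$-side will follow analogously from $\pnorm{\bbeta^t - \bbeta^t_T}{} \leq \pnorm{\X}{\op}\pnorm{\btheta^t - \btheta^t_T}{}$ and the almost-sure boundedness of $\pnorm{\X}{\op}$, matching the shared variables $z$ and $\bbeta^\ast$ on both sides.

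For the Gaussian convergences, I will use that $(w^\ast_T, \{w^t_T\})$ and $\{u^t_T\}_{t\geq 0}$ are centered Gaussians, so $W_2$ convergence reduces to convergence of the covariance matrices $C_{T,\theta}$ and $\{C_{T,\eta}(r,s)\}_{r,s\geq 0}$. Since $W_2$ convergence on $\R^{h+2}$ implies convergence of all second moments, the two convergences above will yield $C_{T,\theta}(r,s) \to C_\theta(r,s)$ and $C_{T,\eta}(r,s) \to C_\eta(r,s)$ for $r,s\geq 0$. For the boundary variable $u^{-1}_T$, the cross-covariances $C_{T,\eta}(s,-1) = \delta\E[\ell(\eta^s_T, w^\ast_T, z)\cT_s(y_T)\eta^{-1}_T]$ for $s\geq 0$ will be handled by first extending Lemma \ref{lem:hdFirstStage} to show $\pnorm{\btheta^{-1}_T - \sqrt{d}\hat\btheta^s}{}/\sqrt{d} \to 0$ as $T\to\infty$ (which follows from essentially the same power-iteration argument), whereupon $\eta^{-1}_T$ converges to $\eta^0$ at the empirical level, giving $C_{T,\eta}(s,-1) \to \delta\E[\ell(\eta^s, w^\ast,z)\cT_s(y)\eta^0] = -\lambda^\ast_\delta C_\eta(s,\diamond)$ by (\ref{def:eta_cov}), matching the required covariance of $-\lambda^\ast_\delta u^\diamond$ with $u^s$.

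The main obstacle will be verifying the variance identity
\begin{align*}
\lim_{T\to\infty} C_{T,\eta}(-1,-1) = \delta\E[\cT_s(y)^2(\eta^0)^2] = \lambda^{\ast 2}_\delta(1-a^2),
\end{align*}
which requires combining the algebraic relation $\cT_s(y)(1+\cT(y)) = \lambda^\ast_\delta \cT(y)$ (arising from $\cT(y) = \cT_s(y)/(\lambda^\ast_\delta - \cT_s(y))$) with the representation $\eta^0 = (1+\cT(y))w^0$ and $\Cov(w^0,w^\ast) = a$, then appealing to the defining fixed-point equation $\zeta_\delta(\lambda^\ast_\delta) = \phi(\lambda^\ast_\delta)$ and the overlap formula in Proposition \ref{prop: spec est overlap} to collapse the resulting Gaussian expectations over $(w^0,w^\ast,z)$ into $\lambda^{\ast 2}_\delta(1-a^2)$. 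This is the step where the specific structure linking the spectral initialization (through $\lambda^\ast_\delta$ and $a$) and the terminal noise of the power-iteration DMFT must be precisely reconciled; the rest of the proof is purely a state-evolution $+$ Gaussian covariance-matching exercise.
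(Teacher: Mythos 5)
Your proposal follows essentially the same route as the paper: the law convergences for $\theta$ and $\eta$ are reduced to the high-dimensional comparison of Lemma \ref{lem:hdFirstStage} via the two state evolutions (Theorem \ref{thm: discrete dmft asymp} and Proposition \ref{prop:discreteDmftArt}), and the Gaussian convergences are reduced to covariance matching through quadratic-growth test functions together with the exponential closeness of $\eta^{-1}_T$ to $\eta^0_T$ coming from the power-iteration stage. Your explicit verification of the boundary variance $C_{T,\eta}(-1,-1)\to(\lambda^\ast_\delta)^2(1-a^2)$, which the paper's own proof leaves implicit, is correct: since $\phi'(\lambda^\ast_\delta)=-\E[\cT(Y)^2G^2]$ and $\psi_\delta'(\lambda^\ast_\delta)=1/\delta-\E[\cT(Y)^2]$, the identity follows directly from the overlap formula in Proposition \ref{prop: spec est overlap}.
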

\begin{proof}
    To see the convergence of $\sP(\theta^\ast, \{\theta^t_T\}_{t=0}^h)$, let $\phi$ be any $2$-pseudo Lipschitz test function and it suffices to show
    \begin{equation*}
        \lim_{T\rightarrow\infty} \E[\phi(\theta^\ast,\theta_T^0,...,\theta_T^h)] = \E[\phi(\theta^\ast,\theta^0,...,\theta^h)].
    \end{equation*}
    Almost surely for any finite $n,d$, we have
    \begin{align*}
        &\Big|\E[\phi(\theta^\ast,\theta_T^0,...,\theta_T^h)] - \E[\phi(\theta^\ast,\theta^0,...,\theta^h)]\Big| \leq \underbrace{\Big|\E[\phi(\theta^\ast,\theta_T^0,...,\theta_T^h)] - \frac{1}{d}\sum_{j=1}^d\phi(\theta^\ast_j,\theta_{T,j}^0,...,\theta_{T,j}^h)\Big|}_{(I_{n,d})}\\
        &+\underbrace{\Big|\frac{1}{d}\sum_{j=1}^d\phi(\theta^\ast_j,\theta_{T,j}^0,...,\theta_{T,j}^h)-\frac{1}{d}\sum_{j=1}^d\phi(\theta^\ast_j,\theta_{j}^0,...,\theta_{j}^h)\Big|}_{(II_{n,d})} + \underbrace{\Big|\frac{1}{d}\sum_{j=1}^d\phi(\theta^\ast_j,\theta_{j}^0,...,\theta_j^h)-\E[\phi(\theta^\ast,\theta^0,...,\theta^h)]\Big|}_{(III_{n,d})}.
    \end{align*}
    For any $T$, $\lim_{n,d\rightarrow\infty}(I_{n,d}) = \lim_{n,d\rightarrow\infty}(III_{n,d}) = 0$ by the DMFT characterization in Theorem \ref{thm: discrete dmft asymp} and Proposition \ref{prop:discreteDmftArt}. For $(II_{n,d})$, using the definition of 2-pseudo Lipschitz, we have
    \begin{align*}
        (II_{n,d}) \leq C\Big(1+\sum_{t=0}^h\frac{\pnorm{\btheta_T^t}{}}{\sqrt{d}}+\sum_{t=0}^h\frac{\pnorm{\btheta^t}{}}{\sqrt{d}}\Big)\sum_{t=0}^h\frac{\pnorm{\btheta_T^t-\btheta^t}{}}{\sqrt{d}}.
    \end{align*}
    By Lemma \ref{lem:hdFirstStage}, we have $\lim_{T\rightarrow\infty}\limsup_{n,d\rightarrow\infty}\frac{1}{\sqrt{d}}\sum_{t=0}^h\pnorm{\btheta_T^t-\btheta^t}{} = 0$ almost surely. On the other hand, by Theorem \ref{thm: discrete dmft asymp} and again Lemma \ref{lem:hdFirstStage}, it is easy to see that   $\limsup_{T\rightarrow\infty}\limsup_{n,d\rightarrow\infty}\frac{1}{\sqrt{d}}(1+\sum_{t=0}^h\pnorm{\btheta_T^t}{}+\sum_{t=0}^h\pnorm{\btheta^t}{}) < \infty$, therefore we conclude that $\lim_{T\rightarrow\infty}\limsup_{n,d\rightarrow\infty} (II_{n,d}) = 0$. The claim follows by taking $n,d\rightarrow\infty$ and then $T\rightarrow\infty$. A similar argument yields the convergence of $\sP(z,w^\ast_T,\{\eta^t_T\}_{t=0}^h)$.

    To establish the convergence of $\sP(u^{-1}_T, \{u^t_T\}_{t=0}^h)$, it suffices to show the convergence of the covariance kernel. For $r,s\geq 0$, we have $\E[u^r_Tu^s_T] = C_{T,\eta}(r,s) = \delta\E[\ell(\eta^r_T,w^\ast_T,z)\ell(\eta^s_T,w^\ast_T,z)]$ by (\ref{def:artificial_C_eta}) and $\E[u^ru^s] = C_\eta(r,s) = \delta\E[\ell(\eta^r,w^\ast,z)\ell(\eta^s,w^\ast,z)]$ by (\ref{def:eta_cov}). By Assumption \ref{ass: ell lipschitz}, the map $(\eta,w^\ast,z) \mapsto \ell(\eta,w^\ast,z)$ has linear growth, then $(\eta,\eta',w^\ast,z) \mapsto \ell(\eta,w^\ast,z)\ell(\eta',w^\ast,z)$ is continuous and has quadratic growth, so by dominated convergence we have $\E[u^r_Tu^s_T] \rightarrow \E[u^ru^s]$. On the other hand, for $r\geq 0$, we have $\E[u^{-1}_Tu^r_T] = \delta\E[\ell(\eta^r_T,w^\ast_T,z)\cT_s(\varphi(w^\ast_T,z))\eta^{-1}_T]$ by (\ref{def:artificial_C_eta}) and $\E[u^\diamond u^r] = -\frac{\delta}{\lambda^\ast_\delta}\E[\ell(\eta^s,w^\ast,z)\cT_s(\varphi(w^\ast,z))\eta^0]$ by (\ref{def:eta_cov}). Note that
    \begin{align*}
    &\Big|\E\big[\ell(\eta^r_T, w^\ast_T, z)\cT_s(\varphi(w^\ast_T,z))\eta^{-1}_T\big] - \E\big[\ell(\eta^r_T, w^\ast_T, z)\cT_s(\varphi(w^\ast,z))\eta^0\big]\Big|\\
    &\leq \E\Big[\big|\ell(\eta^r_T, w^\ast_T, z)\big|\big|\cT_s(\varphi(w^\ast_T,z))\big|\big|\eta^{-1}_T - \eta^0_T\big|\Big] + \Big|\E\big[\ell(\eta^r_T, w^\ast_T, z)\cT_s(\varphi(w^\ast_T,z))\eta^{0}_T\big] - \E\big[\ell(\eta^r, w^\ast, z)\cT_s(\varphi(w^\ast,z))\eta^0\big]\Big|.
    \end{align*}
    By Lemma \ref{lm:first_stage_reg}-(1), we have $\E(\eta^{-1}_T - \eta^0_T)^2 \leq 2\big(\E[(\eta^{-1}_T - \eta^0)^2] + \E[(\eta^{0}_T - \eta^0)^2]\big) \leq Ce^{-cT}$, and moreover $\limsup_{T\rightarrow\infty}\E\Big[\big(\ell(\eta^r_T, w^\ast_T, z)\cT_s(\varphi(w^\ast_T,z))\big)^2\Big] < \infty$, so the first term converges to 0 as $T\rightarrow\infty$. The second term also converges to 0 since $(\eta^0,\eta^r,w^\ast,z) \rightarrow \ell(\eta^r,w^\ast,z)\cT_s(\varphi(w^\ast,z))\eta^0$ is continuous and has quadratic growth.  This conclude that $\sP(u_T^{-1},\{u_T^t\}_{t=0}^h) \overset{W_2}{\to} (-\lambda^\ast_\delta u^{\diamond},\{u^t\}_{t=0}^h)$. A similar argument yields $\sP(w_T^\ast,\{w_T^t\}_{t=0}^h) \overset{W_2}{\to} \sP(w^\ast,\{w^t\}_{t=0}^h)$, completing the proof.
\end{proof}

\begin{proof}[Proof of Proposition \ref{prop:art_spec_dmft_coup}-(1)]
For each $T\in \Z_+$, we can construct optimal $W_2$ couplings $\pi_{T,\theta},\pi_{T,\eta},\pi_{T,u}$ and $\pi_{T,w}$ of 
\begin{gather*}
    (\theta^\ast,\{\theta_T^t\}_{t=-T}^h) \text{ and } (\theta^\ast,\{\theta^t\}_{t=0}^h), \quad (z,w^\ast_T,\{\eta_T^t\}_{t=-T}^h) \text{ and } (z,w^\ast,\{\eta^t\}_{t=0}^h)\\
    (u_T^{-1},\{u_T^t\}_{t=-T}^h) \text{ and } (-\lambda^\ast_\delta u^{\diamond},\{u^t\}_{t=0}^h), \quad (w_T^\ast,\{w_T^t\}_{t=-T}^h) \text{ and } (w^\ast,\{w^t\}_{t=0}^h)
\end{gather*}
respectively.

Given a finite collection $\T=(T_1,...,T_k) \in \Z_+^k$, we can form a joint coupling $\pi_{\T,\theta}$ using $\pi_{T_1,\theta},...,\pi_{T_k,\theta}$ by
\begin{enumerate}
    \item Sample $(\theta^\ast,\{\theta^t\}_{t=0}^h) \sim \sP(\theta^\ast,\{\theta^t\}_{t=0}^h)$.
    \item For $j=1,...,k$, sample $(\theta^\ast,\{\theta_{T_j}^t\}_{t=-{T_j}}^h)$ from $\pi_{T_j,\theta}|(\theta^\ast,\{\theta^t\}_{t=0}^h).$
\end{enumerate}
We can use a similar construction to define $\pi_{\T,\eta},\pi_{\T,u},$ and $\pi_{\T,w}$. Verifying that these joint couplings satisfy standard consistency conditions, we can apply Kolmogorov extension theorem to show the existence of couplings $\pi_\theta,\pi_\eta,\pi_u,$ and $\pi_w$ of 
\begin{gather*}
    \{(\theta^\ast,\{\theta_T^t\}_{t=-T}^h)\}_{T\in\Z_+} \text{ and } (\theta^\ast,\{\theta^t\}_{t=0}^h), \quad \{(z,w^\ast_T,\{\eta_T^t\}_{t=-T}^h) \}_{T\in\Z_+}\text{ and } (z,w^\ast,\{\eta^t\}_{t=0}^h)\\
    \{(u_T^{-1},\{u_T^t\}_{t=-T}^h)\}_{T\in\Z_+} \text{ and } (-\lambda^\ast_\delta u^{\diamond},\{u^t\}_{t=0}^h), \quad \{(w_T^\ast,\{w_T^t\}_{t=-T}^h)\}_{T\in\Z_+} \text{ and } (w^\ast,\{w^t\}_{t=0}^h),
\end{gather*}
which preserves the projections $\pi_{\T,\theta},\pi_{\T,\eta},\pi_{\T,u},$ and $\pi_{\T,w}$ for any finite collection $\T=(T_1,...,T_k) \in \Z_+^k$. In particular, this coupling satisfies part (1) of Proposition \ref{prop:art_spec_dmft_coup} using the convergence in law of Lemma \ref{lm:dmft_T_conv}. 

\end{proof}

\paragraph{Convergence of second stage responses}

Next we establish the convergences of the response functions in the second stage ($t> s \geq 0$). As the following lemma shows, a simple induction argument yields the convergences of the artificial response functions $\{r_{T,\theta}(t,s)\}_{t>s\geq 0},\{r_{T,\eta}(t,s)\}_{t>s\geq 0},$ and $\{r_{T,\eta}(t,\ast)\}_{t\geq0}$ in Proposition \ref{prop:art_spec_dmft_coup}-(2). Recall that $\lim_n X_n \overset{L^2}{=} X$ denotes that $X$ is the $L^2$-limit of $X_n$. 
\begin{proof}[Proof of Proposition \ref{prop:art_spec_dmft_coup}-(2)]
    We prove by induction. First note that
    \begin{align*}
        r_{T,\theta}(1,0) = \gamma = r_\theta(1,0)
    \end{align*}
    for all $T$, which shows the base case for $r_{\theta}$. This further implies $R_{T,\theta}(1,0) = R_\theta(1,0)$. Under our chosen coupling, by Lemma \ref{lm:dmft_T_conv},
    \begin{equation*}
    \begin{aligned}
    (\theta^\ast, \{\theta^t_T\}_{t=0}^h) \overset{L^2}{\to} (\theta^\ast, \{\theta^t\}_{t=0}^h), \quad (z,w^\ast_T, \{\eta^t_T\}_{t\geq 0}) \overset{L^2}{\to} (z,w^\ast,\{\eta^t\}_{t\geq 0}),\\
    (u_T^{-1},\{u_T^t\}_{t=0}^h) \overset{L^2}{\to} (-\lambda^\ast_\delta u^{\diamond},\{u^t\}_{t=0}^h), \quad (w^\ast, \{w^t\}_{t=0}^h) \overset{L^2}{\to} (w^\ast, \{w^t\}_{t=0}^h)
    \end{aligned}
    \end{equation*}
    for any fixed $h \geq 0$. Hence
    \begin{align*}
        r_{T,\eta}(1,0) = -\partial_1\ell(\eta^0_T,w^\ast_T,z)\partial_1\ell(\eta^1_T,w^\ast_T,z)R_{T,\theta}(1,0) \overset{L^2}{\rightarrow} -\partial_1\ell(\eta^0,w^\ast,z)\partial_1\ell(\eta^1,w^\ast,z)R_{\theta}(1,0) = r_\eta(1,0),
    \end{align*}
    and 
    \begin{align*}
        r_{T,\eta}(0,\ast) = \partial_2\ell(\eta^0_T,w^\ast_T,z) \overset{L^2}{\to} \partial_2\ell(\eta^0,w^\ast,z) =r_\eta(0,\ast),
    \end{align*}
    concluding the other two base cases. Taking expectation shows the base case for $R_\theta(1,0)$, $R_\eta(1,0)$ and $R_\eta(0,\ast)$.
    
     Now suppose we have shown the claims up to time $t$ and for all $0\leq s<t$. We now show the claims for time $t+1$. Let $t+1>s\geq0$. Taking $T\rightarrow\infty$ limit in (\ref{def:r_T_theta}), we get
    \begin{align*}
        \lim_{T\rightarrow\infty}r_{T,\theta}(t+1,s) &= \lim_{T\rightarrow\infty}\Bigg[\left(1-\gamma\lambda-\gamma\delta\E[\partial_1\ell(\eta_{T}^{t},w_{T}^*,z)]\right) r_{T,\theta}(t,s) - \gamma\sum_{r=s+1}^{t-1}R_{T,\eta}(t,r)r_{T,\theta}(r,s)\Bigg]\\
        &= \left(1-\gamma\lambda-\gamma\delta\E[\partial_1\ell(\eta^{t},w^*,z)]\right) r_{\theta}(t,s) - \gamma\sum_{r=s+1}^{t-1}R_{\eta}(t,r)r_{\theta}(r,s)\\
        &= r_\theta(t+1,s),
    \end{align*}
    which further implies that
    \begin{equation*}
        \lim_{T\rightarrow\infty}R_{T,\theta}(t+1,s) = R_\theta(t+1,s).
    \end{equation*}
    By Assumption \ref{ass: ell lipschitz}, $\partial_1 \ell$ is Lipschitz in its first two arguments, which implies $\partial_1\ell(\eta^t_T, w^\ast_T,z) \overset{L^2}{\to} \partial_1\ell(\eta^t, w^\ast,z)$, so taking $T\rightarrow\infty$ limit in the first display of (\ref{eq:r_T_eta_recursion}), we get
    \begin{align*}
        \lim_{T\rightarrow\infty}r_{T,\eta}(t+1,s) &\overset{L^2}{=} \lim_{T\rightarrow\infty}\Bigg[\partial_1\ell(\eta_{T}^{t+1},w^*_{T},z)\Big( -\sum_{r=s+1}^{t}R_{T,\theta}(t+1,r)r_{T,\eta}(r,s)-\partial_1\ell(\eta_{T}^{s},w^*_{T},z)R_{T,\theta}(t+1,s)\Big)\Bigg]\\
        &\overset{L^2}{=}\partial_1\ell(\eta^{t+1},w^*,z)\Big( -\sum_{r=s+1}^{t}R_{\theta}(t+1,r)r_{\eta}(r,s)-\partial_1\ell(\eta^{s},w^*,z)R_{\theta}(t+1,s)\Big)\\
        &= r_\eta(t+1,s).
    \end{align*}
    Similarly, Assumption \ref{ass: ell lipschitz} guarantees that $\partial_2 \ell(\eta^t_T, w^\ast_T,z) \overset{L^2}{\to} \partial_2\ell(\eta^t, w^\ast,z)$, so taking $T\rightarrow\infty$ limit in the last display of (\ref{eq:r_T_eta_recursion}), we have
    \begin{align*}
        \lim_{T\rightarrow\infty}r_{T,\eta}(t,\ast) &\overset{L^2}{=} \lim_{T\rightarrow\infty}\Big[\partial_1\ell(\eta_{T}^{t+1},w^*_{T},z)\Big(-\sum_{r=0}^{t}r_{T,\eta}(r,*)R_{T,\theta}(t+1,r)\Big) + \partial_2\ell(\eta_{T}^{t+1},w^*_{T},z)\Big]\\
        &\overset{L^2}{=} \partial_1\ell(\eta^{t+1},w^*,z)\Big(-\sum_{r=0}^{t}r_{\eta}(r,*)R_{\theta}(t+1,r)\Big) + \partial_2\ell(\eta^{t+1},w^*,z)\\
        &=r_\eta(t+1,\ast).
    \end{align*}
    Taking expectation shows that $\lim_{T\rightarrow\infty}R_{T,\eta}(t+1,s) = R_\eta(t+1,s)$ and $\lim_{T\rightarrow\infty}R_{T,\eta}(t+1,\ast) = R_\eta(t+1,\ast)$, concluding the induction.
\end{proof}

\paragraph{Convergence of first stage responses}
We have yet to analyze the responses $\{R_\theta(t,\diamond)\}_{t\geq 0},\{R_\eta(t,\diamond)\}_{t\geq 0}$ and $\{R_\eta(t,\diamond\diamond)\}_{t\geq0}$ which arise from the spectral initialization, which requires a deeper analysis of the first stage of the artificial DMFT. We first state two lemmas concerning the convergence of the first stage iterates and exponential decay of the first stage responses. Recall that $\lambda_{1,\infty} = \delta \psi_\delta(\lambda_\delta^\ast),\lambda_{2,\infty} := \delta\psi_\delta(\bar\lambda_\delta)$ denote the almost sure limit of the first and second eigenvalues of $\M_n = \X^\top\diag(\cT_s(\y))\X$ as shown by Proposition \ref{prop: spec est overlap}.

\begin{lemma}\label{lm:first_stage_reg}
    Suppose that assumptions \ref{ass: model assumptions},\ref{ass: spectral initialization} and \ref{ass: ell lipschitz} hold and that $\psi_\delta'(\lambda_\delta^*) > 0$. Then for any $t<0$, there exists constant $C>0$ such that 
    \begin{gather*}
            \pnorm{\theta_T^{t+1}-\theta_T^0}{L^2} \leq C\Big(\frac{\lambda_{2,\infty}}{\lambda_{1,\infty}}\Big)^{T+t+1}, \quad  \pnorm{\eta_T^{t}-\eta_T^0}{L^2} \leq C\Big(\frac{\lambda_{2,\infty}}{\lambda_{1,\infty}}\Big)^{T+t}
        \end{gather*}
    Moreover, The normalizing factor $\beta_T^t$ in (\ref{def:beta_def}) converges exponentially fast to the almost sure limit of the leading eigenvalue of $\M_n$. That is
        \begin{equation}
        \label{eq:beta_exp_conv}
            |\beta_T^t - \lambda_{1,\infty}| \leq C \Big(\frac{\lambda_{2,\infty}}{\lambda_{1,\infty}}\Big)^{T+t}.
        \end{equation}
\end{lemma}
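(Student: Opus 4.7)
The proof strategy is to first establish analogous estimates for the high-dimensional artificial dynamic $\{\btheta_T^t\}$, then pass them through the state evolution of Proposition \ref{prop:discreteDmftArt} to recover the DMFT-level $L^2$ bounds. Unfolding the first-stage recursion (\ref{def:art_dyn_first}) gives $\btheta_T^t = \M_n^{T+t}\btheta^\ast / \prod_{s=-T+1}^t \hat\beta_T^s$, so these iterates are normalized power-iteration outputs on $\M_n$ starting from $\btheta^\ast$. The assumption $\psi_\delta'(\lambda_\delta^*) > 0$ guarantees a limiting eigengap $\lambda_{1,\infty} > \lambda_{2,\infty}$, so the standard analysis of power iteration applies.

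Writing the spectral decomposition $\btheta^\ast = a_n\sqrt{d}\,\hat\btheta^s + \w$ with $\w \perp \hat\btheta^s$ and $\pnorm{\w}{}^2 = (1-a_n^2)d$, and using $\pnorm{\M_n^{T+t}\w}{} \leq \lambda_{2,n}^{T+t}\pnorm{\w}{}$, I plan a joint induction on $t$ that establishes simultaneously (i) $\hat\beta_T^{t+1}$ is bounded away from zero and satisfies $|\hat\beta_T^{t+1}-\lambda_{1,n}| \leq C\lambda_{1,n}(\lambda_{2,n}/\lambda_{1,n})^{T+t+1}$, and (ii)
\begin{align*}
\frac{1}{\sqrt{d}}\pnorm{\btheta_T^{t} - \sqrt{d}\,\hat\btheta^s}{} \leq C\Big(\frac{\lambda_{2,n}}{\lambda_{1,n}}\Big)^{T+t} \quad \text{almost surely.}
\end{align*}
Combining (ii) with its analogue at time $0$ and using $t+1 \leq 0$ (so the $\btheta_T^{t+1}$ bound dominates the $\btheta_T^{0}$ bound), the triangle inequality yields $\pnorm{\btheta_T^{t+1}-\btheta_T^0}{}/\sqrt{d} \leq C(\lambda_{2,n}/\lambda_{1,n})^{T+t+1}$ almost surely.

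To pass to the DMFT, observe that $(x,y)\mapsto(x-y)^2$ is $2$-pseudo-Lipschitz, so Proposition \ref{prop:discreteDmftArt} gives
\begin{align*}
\E[(\theta_T^{t+1} - \theta_T^0)^2] = \lim_{n,d\to\infty}\frac{1}{d}\pnorm{\btheta_T^{t+1}-\btheta_T^0}{}^2 \leq C\Big(\frac{\lambda_{2,\infty}}{\lambda_{1,\infty}}\Big)^{2(T+t+1)},
\end{align*}
proving the $\theta$-estimate. For the $\eta$-estimate, the identity $\bbeta_T^t-\bbeta_T^0 = \X(\btheta_T^t-\btheta_T^0)$ together with $\pnorm{\X}{\op} \leq C$ almost surely (Assumption \ref{ass: model assumptions}) gives $\pnorm{\bbeta_T^t-\bbeta_T^0}{}^2/n \leq (d/n)\pnorm{\X}{\op}^2\pnorm{\btheta_T^t-\btheta_T^0}{}^2/d$, and the same state-evolution argument transfers the bound. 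Finally, (\ref{eq:beta_exp_conv}) is precisely item (i) of the induction together with the identification $\beta_T^t = \hat\beta_T^t$ from Proposition \ref{prop:discreteDmftArt}.

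The main technical obstacle is the joint induction, since $\hat\beta_T^{t+1}$ is defined as the high-dimensional limit of $\pnorm{\M_n\btheta_T^t}{}/\sqrt{d}$ and therefore couples with all previous normalizations; one must verify that the product $\prod_s \hat\beta_T^s$ stays within a constant factor of $\lambda_{1,n}^{T+t}$ uniformly in $T,t$, and that the orthogonal component $\pnorm{\v_t}{}/\sqrt{d}$ in the decomposition $\btheta_T^t = \alpha_t\sqrt{d}\,\hat\btheta^s + \v_t$ contracts at the required geometric rate. Both are controlled by a geometric-series argument once the single-step Rayleigh-quotient estimate $\pnorm{\M_n\btheta_T^t}{}^2/d = \alpha_t^2\lambda_{1,n}^2 + O(\lambda_{2,n}^2(\lambda_{2,n}/\lambda_{1,n})^{2(T+t)})$ is in place.
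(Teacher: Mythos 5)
Your proposal is correct, and the overall skeleton matches the paper's: establish the estimates at the level of the high-dimensional iterates $\{\btheta_T^t\}$, then transfer them to the DMFT variables via the $W_2$ convergence of Proposition \ref{prop:discreteDmftArt} (using that $(x,y)\mapsto(x-y)^2$ is $2$-pseudo-Lipschitz), handle $\eta$ through $\bbeta_T^t-\bbeta_T^0=\X(\btheta_T^t-\btheta_T^0)$ with $\pnorm{\X}{\op}\leq C$, and obtain the $\beta_T^t$ bound from closeness of $\btheta_T^{t-1}$ to $\sqrt{d}\hat\btheta^s$. Where you diverge is in how the key high-dimensional bound $\pnorm{\btheta_T^{t}-\sqrt{d}\hat\btheta^s}{}/\sqrt{d}\lesssim(\lambda_{2}/\lambda_{1})^{T+t}$ is produced: you redo the power-iteration analysis from scratch for the \emph{intermediate} iterates, via a joint induction that simultaneously controls the orthogonal component of $\btheta_T^t$ and shows each normalization $\hat\beta_T^{s}$ (hence their product) stays within a geometric correction of $\lambda_{1,n}^{T+t}$; this is a legitimate, self-contained route, and your single-step Rayleigh-quotient estimate is exactly what closes the induction. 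The paper avoids this bookkeeping entirely by observing that the normalization constants depend only on the number of steps taken, so $\btheta_T^{t+1}=\btheta_{T+t+1}^{0}$ exactly, and then the triangle inequality through $\sqrt{d}\hat\btheta^s$ reduces everything to the already-proven Lemma \ref{lem:hdFirstStage} (which compares only the \emph{final} iterate to the spectral estimator); similarly $|\beta_T^t-\lambda_{1,\infty}|$ is bounded in one line by $\limsup\pnorm{\M_n}{\op}\pnorm{\btheta_T^{t-1}-\sqrt{d}\hat\btheta^s}{}/\sqrt{d}$ rather than carried through the induction. Your approach buys independence from Lemma \ref{lem:hdFirstStage} at the cost of the heavier induction; if you keep your route, be careful to phrase the comparison between the deterministic limits $\hat\beta_T^{s}$ and the random eigenvalues $\lambda_{1,n},\lambda_{2,n}$ via limsups (or pass to $\lambda_{1,\infty},\lambda_{2,\infty}$ first), and to note that the uniform lower bound on the overlap $a_n\to a>0$ (from $\psi_\delta'(\lambda_\delta^\ast)>0$) is what makes your constants uniform in $T+t$.
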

\begin{proof}
Using the DMFT characterization in Proposition \ref{prop:discreteDmftArt}, it holds almost surely that
\begin{align*}
    \pnorm{\theta_T^{t+1}-\theta_T^0}{L^2}^2 &= \lim_{n,d\rightarrow\infty}\frac{1}{d}\pnorm{\btheta_T^{t+1}-\btheta_T^0}{}^2\\
    &\leq 2\limsup_{n,d\rightarrow\infty}\frac{1}{d}\pnorm{\btheta_T^{t+1}-\sqrt{d}\hat{\btheta}^s}{}^2 + 2\limsup_{n,d\rightarrow\infty}\frac{1}{d}\pnorm{\sqrt{d}\hat{\btheta}^s-\btheta_T^0}{}^2\\
    &\stackrel{(i)}{=} 2\limsup_{n,d\rightarrow\infty}\frac{1}{d}\pnorm{\btheta_{T+t+1}^{0}-\sqrt{d}\hat{\btheta}^s}{}^2 + 2\limsup_{n,d\rightarrow\infty}\frac{1}{d}\pnorm{\sqrt{d}\hat{\btheta}^s-\btheta_T^0}{}^2\\
    &\stackrel{(ii)}{\leq} C\Big[\Big(\frac{\lambda_{2,\infty}}{\lambda_{1,\infty}}\Big)^{2(T+t+1)} + \Big(\frac{\lambda_{2,\infty}}{\lambda_{1,\infty}}\Big)^{2T}\Big] \leq C\Big(\frac{\lambda_{2,\infty}}{\lambda_{1,\infty}}\Big)^{2(T+t+1)}.
\end{align*}
Here in $(i)$ we apply the fact that $\btheta^{t+1}_T = \btheta^0_{T+t+1}$, where $\{\btheta^s_{T+t+1}\}_{s\geq T+t+1}$ denotes the same power iteration sequence but starting from $-(T+t+1)$ for such fixed $t$, and $(ii)$ follows from Lemma \ref{lem:hdFirstStage}. A similar argument shows the claim $\pnorm{\eta_T^{t}-\eta_T^0}{L^2} \leq C (\lambda_{2,\infty}/\lambda_{1,\infty})^{T+t}$.

For the convergence of $\beta_T^t$, we have by definition that $\lambda_{1,\infty} = \lim_{n,d\rightarrow\infty} \frac{1}{\sqrt{d}}\pnorm{\M_n\sqrt{d}\hat{\btheta}^s}{}$ and      $\beta_T^{t} = \lim_{n,d\rightarrow\infty} \frac{1}{\sqrt{d}}\pnorm{\M_n \btheta_T^{t-1}}{}$. Hence we have as desired that
 \begin{align*}
|\beta_T^t - \lambda_{1,\infty}| \leq \limsup_{n,d\rightarrow\infty} \pnorm{\M_n}{\op} \frac{1}{\sqrt{d}} \pnorm{\sqrt{\d}\hat{\btheta}^s-\btheta_T^t}{} \leq C\lambda_{1,\infty}\Big(\frac{\lambda_{2,\infty}}{\lambda_{1,\infty}}\Big)^{T+t}.
\end{align*}
\end{proof}

\begin{lemma}\label{lm:first_stage_res_exp_decay}
    Assume that $\psi_\delta'(\lambda_\delta^*) > 0$ and that Assumptions \ref{ass: model assumptions}, \ref{ass: spectral initialization} and \ref{ass: ell lipschitz} hold. For any $t\in \Z_+$ and $M \in (\lambda_{2,\infty},\lambda_{1,\infty})$, there exist $C_1 = C_1(M,t) > 0$ and $c = c(M) > 0$ (but independent of $t$) such that whenever $T > c$, we have
    \begin{align*}
        |R_{T,\theta}(t,s)| \leq C_1\Big(\frac{\lambda_{2,\infty}}{M}\Big)^{-s}, \quad 
        |R_{T,\eta}(t,s)| \leq C_1\Big(\frac{\lambda_{2,\infty}}{M}\Big)^{-s}
    \end{align*}
    for any $s<0$. 
\end{lemma}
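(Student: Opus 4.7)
The plan is to reduce the responses $R_{T,\theta}(t,s)$ and $R_{T,\eta}(t,s)$ to asymptotic trace formulas involving the spectral decomposition of $\M_n$, and then exploit the spectral gap between $\lambda_{1,\infty}$ and $\lambda_{2,\infty}$. Concretely, combining Proposition \ref{prop: high dim response} with Lemma \ref{lm: response matrix recursion} and the identity $\bUpsilon^k_T = \M_n/\beta_T^{k+1}$, for $s < 0$ we get almost sure representations
\begin{align*}
R_{T,\theta}(t,s) &= -\lim_{n,d\to\infty}\frac{1}{d\prod_{k=s}^{t-1}\beta_T^{k+1}}\Tr(\M_n^{t-s-1}), \qquad t\leq -1,\\
R_{T,\theta}(t,s) &= -\lim_{n,d\to\infty}\frac{1}{d\prod_{k=s}^{-1}\beta_T^{k+1}}\Tr\big(\bOmega_T^{t-1}\cdots\bOmega_T^{0}\M_n^{-s-1}\big), \qquad t\geq 0,
\end{align*}
and analogous formulas for $R_{T,\eta}(t,s)$ with extra $\X$, $\bZ_s$ and $\D_T^t$ factors whose operator norms are bounded by constants under Assumptions \ref{ass: model assumptions}, \ref{ass: spectral initialization} and \ref{ass: ell lipschitz}.

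For the numerator, I will use the spectral decomposition $\M_n = \lambda_{1,n}\hat\btheta^s(\hat\btheta^s)^\top + \M_n^\perp$, where Proposition \ref{prop: spec est overlap} gives $\lambda_{1,n}\to\lambda_{1,\infty}$ and $\pnorm{\M_n^\perp}{\op}\to\lambda_{2,\infty}$ almost surely. For a bounded matrix product $\A\M_n^k\B$ with $\pnorm{\A}{\op}\vee\pnorm{\B}{\op}\leq C^{\,t}$ (coming from at most $t$ factors of $\bOmega$, $\D$, $\X$, $\bZ_s$), the contribution from the rank-one piece is at most $\lambda_{1,n}^k\cdot C^{2t}$, which vanishes when divided by $d$ as $n,d\to\infty$; the bulk contribution satisfies $|\Tr(\A(\M_n^\perp)^k\B)|/d \leq C^{2t}\lambda_{2,n}^k$, using $|\Tr(\cdot)|/d\leq \pnorm{\cdot}{\op}$ after absorbing the rank into the operator norm bound. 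Choosing $n,d$ large enough so that $\lambda_{2,n}\leq M\cdot(\lambda_{2,\infty}/M)^{1-\epsilon}$ for some tiny $\epsilon$, the numerator is bounded by $C(t)\lambda_{2,\infty}^{-s-1}$ up to a vanishing error.

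For the denominator, the critical observation is that the power iteration normalization can be written as a ratio of norms:
\begin{equation*}
\prod_{k=s}^{t-1}\beta_T^{k+1} = \lim_{n,d\to\infty}\frac{\pnorm{\M_n^{T+t}\btheta^\ast}{}}{\pnorm{\M_n^{T+s}\btheta^\ast}{}}.
\end{equation*}
Using that $\btheta^\ast$ has overlap $a>0$ with $\hat\btheta^s$ (Proposition \ref{prop: spec est overlap}, under the assumption $\psi_\delta'(\lambda_\delta^\ast)>0$), the expansion $\pnorm{\M_n^N\btheta^\ast}{}^2/d = a^2\lambda_{1,n}^{2N}+ \int\lambda^{2N}\tilde\rho_n(d\lambda)$ yields the pointwise bound $a\lambda_{1,n}^N\leq\pnorm{\M_n^N\btheta^\ast}{}/\sqrt{d}\leq \lambda_{1,n}^N$, and hence $\prod_{k=s}^{t-1}\beta_T^{k+1}\geq a\lambda_{1,n}^{t-s}(1+o(1))$. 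Combining with the numerator bound gives $|R_{T,\theta}(t,s)|\leq C(t)(\lambda_{2,\infty}/\lambda_{1,\infty})^{-s}$, and since $\lambda_{2,\infty}/\lambda_{1,\infty}<\lambda_{2,\infty}/M$ when $M<\lambda_{1,\infty}$, the claimed bound $C_1(\lambda_{2,\infty}/M)^{-s}$ follows with slack. The condition $T>c(M)$ is needed purely to make the additive errors $|\beta_T^{k+1}-\lambda_{1,\infty}|$ supplied by Lemma \ref{lm:first_stage_reg} small enough to convert multiplicative $(1+o(1))$ terms into an absolute constant factor uniform in the range of $s$.

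The analysis of $R_{T,\eta}(t,s)$ follows the same template. The cleanest way to handle the outer $\bZ_s^{1/2}\X$ and $\X^\top\bZ_s^{1/2}$ factors that appear in Lemma \ref{lm: response matrix recursion} is to introduce $\tilde\M_n = \bZ_s^{1/2}\X\X^\top\bZ_s^{1/2}\in\R^{n\times n}$, which shares the nonzero spectrum of $\M_n$, so $\diag(\cT_s(\y))\X\M_n^k\X^\top\diag(\cT_s(\y)) = \bZ_s^{1/2}\tilde\M_n^{k+1}\bZ_s^{1/2}$ and the same spectral-gap argument applies. I expect the main technical obstacle to be the case $t\geq 0$, where the $\bOmega_T^k$ matrices do not commute with $\M_n$, preventing direct diagonalization; this forces the use of operator-norm bounds that give only the $t$-dependent constant $C(t)=C^{t}$, rather than a sharper rate. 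Tracking this constant carefully, and verifying that the $(1+o(1))$ errors from $\lambda_{2,n}\to\lambda_{2,\infty}$, from the empirical eigenvalue tail, and from the $\beta_T^{k+1}\to\lambda_{1,\infty}$ corrections can all be uniformly absorbed into the slack $\lambda_{2,\infty}/M<\lambda_{2,\infty}/\lambda_{1,\infty}$ after $T>c(M)$, is the main bookkeeping task.
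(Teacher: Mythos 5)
Your proposal is correct, and while the numerator treatment coincides with the paper's, your handling of the normalizing product is a genuinely different route. Both arguments start from the same trace representations (Proposition \ref{prop: high dim response} plus Lemma \ref{lm: response matrix recursion}, with $\bUpsilon^k_T=\M_n/\beta_T^{k+1}$) and bound the numerator identically: an operator-norm bound $\pnorm{\bOmega_T^{t-1}\cdots\bOmega_T^0}{\op}\le C^t$ together with the H\"older/nuclear-norm estimate, splitting the spectrum of $\M_n$ so that the spike term $\lambda_{1,n}^{-s-1}/d$ vanishes and the bulk contributes $\lambda_{2,n}^{-s-1}\to\lambda_{2,\infty}^{-s-1}$. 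The difference is the denominator $\beta_T^{0}\cdots\beta_T^{s+1}$. The paper lower bounds each factor separately: by (\ref{eq:beta_exp_conv}), $\beta_T^k\ge M$ once $T+k$ exceeds a threshold $c_0(M)$, and $\beta_T^k\ge M'>0$ always, so for $T>c:=3c_0$ at most $c$ factors fall below $M$ and are absorbed into $C_1$; this is precisely why the constants $M,M'$, the base $\lambda_{2,\infty}/M$, and the condition $T>c$ appear. You instead telescope the product into $\lim_{n,d}\pnorm{\M_n^{T}\btheta^\ast}{}/\pnorm{\M_n^{T+s}\btheta^\ast}{}$ and lower bound this ratio by $a_n\lambda_{1,n}^{-s}$ using the nonzero limiting overlap $a>0$ (available since $\psi_\delta'(\lambda_\delta^\ast)>0$); this yields the sharper base $\lambda_{2,\infty}/\lambda_{1,\infty}<\lambda_{2,\infty}/M$, uniformly in $T$, so the stated bound follows with slack. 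What each approach buys: yours is cleaner and in fact proves a stronger, $T$-independent estimate, at the price of invoking Proposition \ref{prop: spec est overlap}; the paper's only uses the convergence of the normalizers and never needs the overlap. One small inaccuracy in your closing remark: in your route the condition $T>c(M)$ is not needed at all — the telescoping identity is exact for every $T$, and the $(1+o(1))$ corrections are absorbed in the $n,d\to\infty$ limit for fixed $s,t,T$ — whereas in the paper it is needed to guarantee that all but boundedly many $\beta_T^k$ exceed $M$; this does not affect the validity of your argument, since proving the bound for all $T$ is stronger than for $T>c$. Also note your first display (the case $t\le -1$) is outside the lemma's stated range $t\in\Z_+$, which is harmless.
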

\begin{proof}
    Fix any $M \in (\lambda_{2,\infty},\lambda_{1,\infty})$. First note that by (\ref{eq:beta_exp_conv}), there exists a constant $c_0 = c_0(M)>0$ such that for any $k\in[-T,0]$ such that $T + k > c_0$, we have 
    \begin{equation}
        \beta_T^k \geq M >\lambda_{2,\infty}.
    \end{equation}
    We also use that there exists a constant $M'>0$ such that 
    \begin{equation}
       \beta_T^k>M'>0,
    \end{equation}
    for all $T,k$. This is since we know that $\beta_T^k >0$ for any $T,k$ and that $\beta_T^k\rightarrow \lambda_{1,\infty} > 0,$ uniformly with respect to $T,k$, so it must be bounded away from $0$. 
    Let $T>c := 3c_0.$

     When $t >0$, using the fact that $\bUpsilon^s_T = \frac{\M_n}{\beta_T^{s+1}}$ (with $\bUpsilon^s_T$ defined in (\ref{def:Upsilon})) and with $\{\lambda_{j,n}\}_{j=1}^d \in \R_+^{d}$ denoting the $d$ eigenvalues of $\M_n$, we have
     \begin{align*}
     &\Big|\Tr\Big(-\frac{1}{\beta_T^{s+1}}\bOmega_T^{t-1}\cdots\bOmega_T^0\bUpsilon_T^{-1}\cdots\bUpsilon_T^{s+1}\Big)\Big| = \frac{1}{\beta^0_T\ldots \beta^{s+1}_T}\Big|\Tr\Big(\bOmega^{t-1}_T\ldots \bOmega^{0}_T \M_n^{-s-1}\Big)\Big|\\
     &\leq \frac{1}{\beta^0_T\ldots \beta^{s+1}_T} \pnorm{\bOmega^{t-1}_T\ldots \bOmega^{0}_T}{\op}\pnorm{\M_n^{-s-1}}{\ast} = \frac{1}{\beta^0_T\ldots \beta^{s+1}_T} \pnorm{\bOmega^{t-1}_T\ldots \bOmega^{0}_T}{\op} \sum_{j=1}^d \lambda_{j,n}^{-s-1},
     \end{align*}
     where $\pnorm{\cdot}{\ast}$ denotes the matrix nuclear norm. By Lemma \ref{lm: response matrix recursion}, we have
     \begin{align*}
     R_{T,\theta}(t,s) = \lim_{n,d\rightarrow\infty} \frac{1}{d}\Tr\Big(-\frac{1}{\beta_T^{s+1}}\bOmega_T^{t-1}\cdots\bOmega_T^0\bUpsilon_T^{-1}\cdots\bUpsilon_T^{s+1}\Big)
     \end{align*}
     almost surely, so using $\pnorm{\bOmega^t_T}{\op}\leq C$ almost surely for large enough $n,d$ via a similar argument as in Lemma \ref{lm:Omega_exp_decay} below, we have
    \begin{align}
    \begin{split}
    \label{eq:R_T_theta_exp_comp}
            |R_{T,\theta}(t,s)| &= \Big|\lim_{n,d\rightarrow\infty} \frac{1}{d}\Tr\Big(-\frac{1}{\beta_T^{s+1}}\bOmega_T^{t-1}\cdots\bOmega_T^0\bUpsilon_T^{-1}\cdots\bUpsilon_T^{s+1}\Big)\Big|\\
            &\leq\frac{C}{\beta_T^{0}\cdots\beta_T^{s+1}}\limsup_{n,d\rightarrow\infty}\frac{1}{d}\sum_{i=1}^d \lambda_{i,n}^{-s-1}\\
            &\leq \frac{C}{\beta_T^{0}} \Big[\limsup_{n,d\rightarrow\infty}\frac{1}{d}\Big(\frac{\lambda_1^{-s-1}}{\beta_T^{-1}\cdots\beta_T^{s+1}}\Big) + \limsup_{n,d\rightarrow\infty}\Big(\frac{\lambda_2^{-s-1}}{\beta_T^{-1}\cdots\beta_T^{s+1}}\Big)\Big]\\
            &\leq \frac{C}{M'}\Big[\underbrace{\limsup_{n,d\rightarrow\infty}\frac{1}{d}\Big(\frac{\lambda_1^{-s-1}}{\beta_T^{-1}\cdots\beta_T^{s+1}}\Big)}_{(I_s)} + \underbrace{\limsup_{n,d\rightarrow\infty}\Big(\frac{\lambda_{2}^{-s-1}}{\beta_T^{-1}\cdots\beta_T^{s+1}}\Big)}_{(II_s)}\Big].
        \end{split}
        \end{align}
        Note that for any $s<0$, we have $(I_s) = 0.$ To analyze the term $(II_s)$, first consider the case when $s>-T+c.$ In this case since for any $k\in[s+1,-1]$, we have $T+k \geq T+s+1 > c$ which implies $\beta_T^k \geq M$, hence
        \begin{equation*}
            (II_s) \leq \Big(\frac{\lambda_{2,\infty}}{M}\Big)^{-s-1}.
        \end{equation*}
        Now consider the case when $s\in[-T,-T+c].$ For any $k\in[s+c+1,-1]$, we have $T+k \geq T+s+c+1 > c$, so $\beta_T^k \geq M$ whenever $k\in[s+c+1,-1]$. The remaining $c$ terms of $\beta_T^k$ with $k\in[s+1,s+c]$ satisfy $\beta_T^k \geq M'$, so
        \begin{equation*}
            (II_s) \leq \Big(\frac{\lambda_{2,\infty}}{M}\Big)^{-s-c-1}\Big(\frac{\lambda_{2,\infty}}{M'}\Big)^{c}.
        \end{equation*}
        Hence the claim holds by taking $C_1 = \frac{C}{M'} \Big(\frac{\lambda_{2,\infty}}{M}\Big)^{-c-1}\Big(\frac{\lambda_{2,\infty}}{M'}\Big)^{c}$.
The proof for $R_{T,\eta}$ is similar.
\end{proof}

With these lemmas, we are ready to prove part (3) of Proposition \ref{prop:art_spec_dmft_coup}.
\begin{proof}[Proof of Proposition \ref{prop:art_spec_dmft_coup}-(3)]

    \noindent\textbf{Analysis of $R_\theta(t,\diamond)$:} Let $t>0$. By (\ref{def:artificialEta}), we know that
    \begin{equation*}
        \eta_T^t = -\sum_{s=0}^{t-1}\ell(\eta_{T}^{s},w_{T}^{*},z)R_{T,\theta}(t,s) -\cT_s(y_T)\sum_{s=-T}^{-1}\eta_{T}^s R_{T,\theta}(t,s)+ w_{T}^{t}.
    \end{equation*}
    Taking $T\rightarrow\infty$ and using properties of the coupling given in Proposition \ref{prop:art_spec_dmft_coup}-(1)(2), we get
    \begin{equation*}
        \lim_{T\rightarrow\infty}\eta_T^t \overset{L^2}{=} -\sum_{s=0}^{t-1}\ell(\eta^{s},w^{*},z)R_{\theta}(t,s) -\cT_s(y)\lim_{T\rightarrow\infty}\sum_{s=-T}^{-1}\eta_{T}^s R_{T,\theta}(t,s)+ w^{t}.
    \end{equation*}
    By (\ref{def:dmft_eta}), we know that
    \begin{equation*}
        \eta^t = -\sum_{s=0}^{t-1}\ell(\eta^{s},w^*,z)R_\theta(t,s) + \cT(y)w^0R_\theta(t,\diamond) + w^t.
    \end{equation*}
    Since by Proposition \ref{prop:art_spec_dmft_coup}-(1) $ \lim_{T\rightarrow\infty}\eta_T^t \overset{L^2}{=}\eta^t$, we see that 
    \begin{equation}
    \label{eq:eta_equiv_art}
         \cT_s(y)\lim_{T\rightarrow\infty}\sum_{s=-T}^{-1}\eta_{T}^s R_{T,\theta}(t,s) \overset{L^2}{=} -\cT(y)w^0R_\theta(t,\diamond).
    \end{equation}
    We can write the left side as
    \begin{align*}
        \cT_s(y)\lim_{T\rightarrow\infty}\sum_{s=-T}^{-1}\eta_{T}^s R_{T,\theta}(t,s) &= \cT_s(y)\lim_{T\rightarrow\infty}\sum_{s=-T}^{-1}R_{T,\theta}(t,s)(\eta_T^s-\eta_T^0) + \cT_s(y)\lim_{T\rightarrow\infty}\sum_{s=-T}^{-1} R_{T,\theta}(t,s)\eta_T^0\\
        &\overset{L^2}{=}\cT_s(y)\lim_{T\rightarrow\infty}\sum_{s=-T}^{-1}R_{T,\theta}(t,s)(\eta_T^s-\eta_T^0) + \eta^0\cT_s(y)\lim_{T\rightarrow\infty}\sum_{s=-T}^{-1}R_{T,\theta}(t,s).
    \end{align*}
    Next, we claim that $\cT_s(y)\lim_{T\rightarrow\infty}\sum_{s=-T}^{-1}R_{T,\theta}(t,s)(\eta_T^s-\eta_T^0) \overset{L^2}{=} 0$, since 
    \begin{align*}
        &\biggpnorm{\sum_{s=-T}^{-1}R_{T,\theta}(t,s)(\eta_T^s-\eta_T^0)}{L^2} \leq \sum_{s=-T}^{-1}\pnorm{R_{T,\theta}(t,s)(\eta_T^s-\eta_T^0)}{L^2}\\
        &\leq C\sum_{s=-T}^{-1}|R_{T,\theta}(t,s)|\pnorm{\eta_T^s-\eta_T^0}{L^2} \overset{(*)}{\leq} C\sum_{s=-T}^{-1}\Big(\frac{\lambda_{2,\infty}}{M}\Big)^{-s}\Big(\frac{\lambda_{2,\infty}}{M}\Big)^{T+s} = T\Big(\frac{\lambda_{2,\infty}}{M}\Big)^{T} \rightarrow 0
    \end{align*}
    as $T\rightarrow\infty$, where we apply Lemmas \ref{lm:first_stage_reg} and \ref{lm:first_stage_res_exp_decay} in $(\ast)$, and $M \in (\lambda_{2,\infty}, \lambda_{1,\infty})$ is a constant given in Lemma \ref{lm:first_stage_res_exp_decay}. Returning to (\ref{eq:eta_equiv_art}), we now have
    \begin{equation*}
        \eta^0\cT_s(y)\lim_{T\rightarrow\infty}\sum_{s=-T}^{-1}R_{T,\theta}(t,s) \overset{L^2}{=} -\cT(y)w^0R_\theta(t,\diamond).
    \end{equation*}
    Using that $\eta^0 =\cT(y)w^0R_\theta(0,\diamond) + w^0 = (1+\cT(y))w^0$ and some algebra, we conclude that 
    \begin{equation*}
        -\lambda_\delta^\ast\lim_{T\rightarrow\infty}\sum_{s=-T}^{-1}R_{T,\theta}(t,s) = R_\theta(t,\diamond).
    \end{equation*}

    \noindent\textbf{Analysis of $R_\eta(t,\diamond),R_\eta(t,\diamond\diamond)$:} Let $t>0$. By (\ref{def:r_T_theta}) we know that
    \begin{align*}
        \theta_T^{t+1} &= \theta_{T}^t +\gamma\bigg(- \lambda\theta_{T}^t -\delta \E\left[\partial_1\ell(\eta_{T}^{t},w_{T}^{*},z)\right]\theta_{T}^{t}
        \\
        &\quad-\sum_{s=0}^{t-1}R_{T,\eta}(t,s)\theta_{T}^{s} -\sum_{s=-T}^{-1}R_{T,\eta}(t,s)\theta_{T}^{s}-(R_{T,\eta}(t,\ast) + R_{T,\eta}(t,\sharp))\theta^*  +  u_{T}^{t}\bigg).
    \end{align*}
    Taking $T\rightarrow\infty$ and using properties of the coupling given in part (1,2) of Proposition \ref{prop:art_spec_dmft_coup}, we get
    \begin{align*}
           \lim_{T\rightarrow\infty}\theta_T^{t+1} &\overset{L^2}{=} \theta^t +\gamma\bigg(- \lambda\theta^t -\delta \E\left[\partial_1\ell(\eta^{t},w^{*},z)\right]\theta^{t}
        \\
        &\quad-\sum_{s=0}^{t-1}R_{\eta}(t,s)\theta^{s} -\lim_{T\rightarrow\infty}\sum_{s=-T}^{-1}R_{T,\eta}(t,s)\theta_{T}^{s}-(R_{\eta}(t,\ast) + \lim_{T\rightarrow\infty}R_{T,\eta}(t,\sharp))\theta^*  +  u^{t}\bigg).
    \end{align*}
    By (\ref{def:dmft_theta}), we have
    \begin{equation*}
        \theta^{t+1} = \theta^t +\gamma\Big(- \lambda\theta^t -\delta \E\big[\partial_1\ell(\eta^t,w^*,z)\big]\theta^{t}-\sum_{s=0}^{t-1}R_\eta(t,s)\theta^{s} - R_\eta(t,\diamond)\theta^0 - \big(R_\eta(t,\ast) + R_\eta(t,\diamond\diamond)\big)\theta^* + u^{t}\Big).
    \end{equation*} 
    Since $\lim_{T\rightarrow\infty}\theta_T^{t+1}\overset{L^2}{=} \theta^{t+1}$ by Proposition \ref{prop:art_spec_dmft_coup}-(1), we have
    \begin{equation}
    \label{eq:theta_comp}
        \lim_{T\rightarrow\infty}\sum_{s=-T}^{-1}R_{T,\eta}(t,s)\theta_{T}^{s} + \lim_{T\rightarrow\infty}R_{T,\eta}(t,\sharp)\theta^* \overset{L^2}{=} R_\eta(t,\diamond)\theta^0 +  R_\eta(t,\diamond\diamond)\theta^*.
    \end{equation}
    We can write
    \begin{align*}
        \lim_{T\rightarrow\infty}\sum_{s=-T}^{-1}R_{T,\eta}(t,s)\theta_{T}^{s} &= \lim_{T\rightarrow\infty}\sum_{s=-T}^{-1}R_{T,\eta}(t,s)(\theta_{T}^{s}-\theta_T^0) + \lim_{T\rightarrow\infty}\sum_{s=-T}^{-1}R_{T,\eta}(t,s)\theta_{T}^{0}\\
        &=\lim_{T\rightarrow\infty}\sum_{s=-T}^{-1}R_{T,\eta}(t,s)(\theta_{T}^{s}-\theta_T^0) + \theta^0 \lim_{T\rightarrow\infty}\sum_{s=-T}^{-1}R_{T,\eta}(t,s).
    \end{align*}
    Similar to the analysis of $R_\theta(t,\diamond)$ above, we have $\sum_{s=-T}^{-1}R_{T,\eta}(t,s)(\theta_{T}^{s}-\theta_T^0) \overset{L^2}{=} 0 $, since
    \begin{align*}
        &\Bigpnorm{\sum_{s=-T}^{-1}R_{T,\eta}(t,s)(\theta_{T}^{s}-\theta_T^0)}{L^2} \leq \sum_{s=-T}^{-1}|R_{T,\eta}(t,s)|\pnorm{\theta_{T}^{s}-\theta_T^0}{L^2}\\
        &\leq C\sum_{s=-T}^{-1} \Big(\frac{\lambda_{2,\infty}}{M}\Big)^{-s}\Big(\frac{\lambda_{2,\infty}}{M}\Big)^{T+s} = T\Big(\frac{\lambda_{2,\infty}}{M}\Big)^{T} \rightarrow 0
    \end{align*}
    as $T\rightarrow\infty$, where we again use Lemmas \ref{lm:first_stage_reg} and \ref{lm:first_stage_res_exp_decay}. This implies that 
    \begin{align*}
    \lim_{T\rightarrow\infty}\sum_{s=-T}^{-1}R_{T,\eta}(t,s)\theta_{T}^{s} 
        =\theta^0 \lim_{T\rightarrow\infty}\sum_{s=-T}^{-1}R_{T,\eta}(t,s).
    \end{align*}
    Plugging this into (\ref{eq:theta_comp}), we get
    \begin{equation*}
        \theta^0 \lim_{T\rightarrow\infty}\sum_{s=-T}^{-1}R_{T,\eta}(t,s) + \lim_{T\rightarrow\infty}R_{T,\eta}(t,\sharp)\theta^* \overset{L^2}{=} R_\eta(t,\diamond)\theta^0 +  R_\eta(t,\diamond\diamond)\theta^*.
    \end{equation*}
    Using that $\theta^0 = a\theta^\ast+u^\diamond$, we get
    \begin{equation*}
         \lim_{T\rightarrow\infty}\sum_{s=-T}^{-1}R_{T,\eta}(t,s) u^\diamond+\Big( a\lim_{T\rightarrow\infty}\sum_{s=-T}^{-1}R_{T,\eta}(t,s)+ \lim_{T\rightarrow\infty}R_{T,\eta}(t,\sharp)\Big)\theta^* \overset{L^2}{=} R_\eta(t,\diamond)u^\diamond +(aR_\eta(t,\diamond)+  R_\eta(t,\diamond\diamond))\theta^*.
    \end{equation*}
    Since $u^\diamond$ is independent of $\theta^\ast$, we get
    \begin{equation*}
        \lim_{T\rightarrow\infty}\sum_{s=-T}^{-1}R_{T,\eta}(t,s) = R_\eta(t,\diamond).
    \end{equation*} 
    This further implies
    \begin{equation*}
        \lim_{T\rightarrow\infty}R_{T,\eta}(t,\sharp) = R_\eta(t,\diamond\diamond),
    \end{equation*}
    as desired.
\end{proof}

\subsection{Proof of long-time characterization}\label{sec:proof_long_time}

For any integer $t \in \Z_+$, define the matrices
    \begin{equation}\label{def:D_Omega_lim}
        \D^t := \diag\big(\partial_1\ell(\boldeta^t,\boldeta^*,\z)\big) \in \R^{n\times n}, \qquad \bOmega^t := (1-\gamma\lambda)\I_d -\gamma \X^\top\D^t\X \in \R^{d\times d}. 
    \end{equation}
    Furthermore, define the matrix
    \begin{align}\label{def:Upsilon_lim}
        \bUpsilon := \frac{1}{\lambda_{1,\infty}}\X^\top\diag(\cT_s(\y))\X.
    \end{align}
    These could be understood as the $T\rightarrow\infty$ limits of $\D^t_T, \bOmega^t_T, \bUpsilon^t_T$ defined in (\ref{def:D_Omega}) and (\ref{def:Upsilon}).

    We first give a series of lemmas which will be used multiple times in the proof of Theorem \ref{thm:dmft_exp_tti_condition}. The first one states some facts about the high dimensional dynamic when Assumption \ref{ass:exp_tti_cond} is satisfied.

    \begin{lemma}\label{lm:long-time-properties}
        Suppose Assumptions \ref{ass: model assumptions},\ref{ass: spectral initialization}, \ref{ass: ell lipschitz} and \ref{ass:exp_tti_cond} are satisfied. Let $\cE_n$ be the event where Assumption \ref{ass:exp_tti_cond} holds and $\pnorm{\X}{\rm op} < 2(1+\sqrt{\delta})$. Suppose the step size $\gamma < 2/L$ with $L=\lambda + 4(1+\sqrt{\delta})^2\pnorm{\partial_1\ell}{\infty}$. Then on $\cE_n$, there exists some $r\in(0,1)$  independent of $n,d$ such that
            \begin{equation}\label{eq:grad_descent_cauchy}
            \pnorm{\btheta^{t+1}-\btheta^t}{}^2 \leq r^{t-t_0}\pnorm{\btheta^{t_0+1}-\btheta^{t_0}}{}^2
        \end{equation}
        for all $t\geq t_0$. Moreover, $\cE_n$ holds almost surely for large $n,d$.
    \end{lemma}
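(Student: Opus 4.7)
The plan is to turn the gradient descent update into a contraction via the integrated Hessian along the segment joining consecutive iterates, which lies inside the benign region $\cR_n$ once $t\geq t_0$. Concretely, for any $t\geq t_0$, Assumption \ref{ass:exp_tti_cond}-(1) places both $\btheta^t$ and $\btheta^{t+1}$ in $\cR_n$, and convexity of $\cR_n$ then guarantees $\btheta^t+s(\btheta^{t+1}-\btheta^t)\in\cR_n$ for every $s\in[0,1]$. Setting
\begin{equation*}
\H_t := \int_0^1 \nabla_{\btheta}^2 \cL_n\bigl(\btheta^t + s(\btheta^{t+1}-\btheta^t)\bigr)\,ds,
\end{equation*}
Assumption \ref{ass:exp_tti_cond}-(2) and the Hessian upper bound noted in its statement yield $c_0 \I \preceq \H_t \preceq L\I$, with $L = \lambda + 4(1+\sqrt{\delta})^2\pnorm{\partial_1\ell}{\infty}$ on the event $\cE_n$ (the operator norm bound on $\X$ is exactly what is needed to get the constant $L$ uniformly).

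Next I would use the fundamental theorem of calculus on $\nabla_{\btheta}\cL_n$ together with the gradient descent update $\btheta^{t+1}-\btheta^t = -\gamma\nabla_{\btheta}\cL_n(\btheta^t)$ to write
\begin{equation*}
\btheta^{t+2}-\btheta^{t+1} = (\btheta^{t+1}-\btheta^t) - \gamma\bigl(\nabla_{\btheta}\cL_n(\btheta^{t+1}) - \nabla_{\btheta}\cL_n(\btheta^t)\bigr) = (\I - \gamma\H_t)(\btheta^{t+1}-\btheta^t).
\end{equation*}
Since the eigenvalues of $\I-\gamma\H_t$ lie in $[1-\gamma L,\, 1-\gamma c_0]$, the step size condition $\gamma < 2/L$ ensures $\pnorm{\I-\gamma\H_t}{\op} \leq \rho := \max(|1-\gamma c_0|,|1-\gamma L|) < 1$. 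Taking $r := \rho^2 \in (0,1)$ independent of $n,d$ and iterating the one-step contraction $\pnorm{\btheta^{t+2}-\btheta^{t+1}}{}^2 \leq r\pnorm{\btheta^{t+1}-\btheta^t}{}^2$ from $t=t_0$ onward yields the claimed bound.

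For the last claim, I would combine Assumption \ref{ass:exp_tti_cond} with standard concentration of $\pnorm{\X}{\op}$. The event in Assumption \ref{ass:exp_tti_cond} fails with probability at most $n^{-c}$ for some $c>1$, so by Borel--Cantelli it holds almost surely for all large $n,d$; similarly, under Assumption \ref{ass: model assumptions}-(3), $\pnorm{\X}{\op} \leq 2(1+\sqrt{\delta})$ holds with probability at least $1-2e^{-cn}$ (this is the standard sub-Gaussian operator norm bound, with $\X$ having entries of variance $1/d$ and $n/d \to \delta$), so another Borel--Cantelli argument gives the same conclusion. Intersecting the two events shows $\cE_n$ occurs almost surely for large $n,d$. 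There is no genuine obstacle here; the only point requiring care is the convexity-based inclusion of the entire segment in $\cR_n$, which is precisely why we assumed $\cR_n$ convex.
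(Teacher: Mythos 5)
Your proposal is correct and follows essentially the same route as the paper: both use the convexity of $\cR_n$ to place the integrated Hessian along the segment between consecutive iterates inside the benign region, obtain the one-step contraction factor $\max(|1-\gamma c_0|,|1-\gamma L|)<1$ under $\gamma<2/L$, and iterate, with the last claim handled by the operator-norm tail bound for $\X$ plus Borel--Cantelli. The only cosmetic difference is that the paper phrases the contraction through the map $F(\x)=\x-\gamma\nabla\cL_n(\x)$ applied to the pair $(\btheta^t,\btheta^{t-1})$, while you expand the gradient difference directly along the segment from $\btheta^t$ to $\btheta^{t+1}$; these are the same argument.
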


    \begin{proof}
    On the event $\cE_n$, the contraction (\ref{eq:grad_descent_cauchy}) follows from standard analysis of gradient descent, which we reproduce here for the reader's convenience. Let $F(\x) = \x - \gamma\nabla_{\x}\cL_n(\x)$, so that $\btheta^{t+1} = F(\btheta^t)$. For any $\u,\v\in\R^d$, Taylor expansion yields
    \begin{align*}
    F(\u) - F(\v) = \Big(\I_d - \gamma\int_0^1 \nabla_{\btheta}^2 \cL_n(\v + \tau(\u - \v))\d\tau\Big)(\u - \v).  
    \end{align*}
    When $\u,\v \in \cR_n$, we have $\v + \tau(\u - \v) \in \cR_n$ for any $\tau\in[0,1]$ since $\cR_n$ is convex, hence $\textbf{H} := \int_0^1 \nabla_{\btheta}^2 F(\v + \tau(\u - \v))\d\tau$ satisfies $c_0 \I_d \preceq \textbf{H} \preceq L\I_d$, and
    \begin{align*}
    \pnorm{F(\u) - F(\v)}{} \leq \biggpnorm{\I_d - \gamma\int_0^1 \nabla_{\btheta}^2 F(\v + \tau(\u - \v))\d\tau}{\op}\pnorm{\u - \v}{} \leq \underbrace{\max(|1-\gamma c_0|, |1-\gamma L|)}_{:= \sqrt{r}} \pnorm{\u - \v}{}.
    \end{align*}
    When $\gamma < 2/L$, we have $r\in (0,1)$. Applying this fact to successive iterates of the gradient descent dynamic, we have $\pnorm{\btheta^{t+1} - \btheta^t}{} \leq \sqrt{r}\pnorm{\btheta^{t} - \btheta^{t-1}}{}$ for $t \geq t_0+1$, which yields (\ref{eq:grad_descent_cauchy}) after iteration. Standard tail estimate of $\pnorm{\X}{\op}$ yields that $\cE_n$ holds with probability $1-Cn^{-c}$ for large enough $n$, so the event holds almost surely for large enough $n,d$ by Borel-Cantelli lemma. 
    \end{proof}

    \begin{lemma}\label{lm:Omega_exp_decay}
    Suppose Assumptions \ref{ass: model assumptions},\ref{ass: spectral initialization}, \ref{ass: ell lipschitz} and \ref{ass:exp_tti_cond} hold and the step size $\gamma < 2/L$. Then there exists some $C,c>0$ independent of $n,d$ such that almost surely for large enough $n,d$, $\pnorm{\bOmega^t}{\op} \leq C$ for all $t\in\Z_+$ and $\pnorm{\bOmega^t}{\op} \leq e^{-c}$ for $t\geq t_0$.
    \end{lemma}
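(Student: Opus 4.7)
The key observation is that $\bOmega^t$ is literally the descent operator associated to one gradient step: from the definition of $\cL_n$ in (\ref{eq:GD_objective}), a direct computation gives $\nabla_{\btheta}^2 \cL_n(\btheta^t) = \X^\top \D^t \X + \lambda \I_d$, so that
\begin{equation*}
\bOmega^t = (1-\gamma\lambda)\I_d - \gamma \X^\top \D^t \X = \I_d - \gamma \nabla_{\btheta}^2 \cL_n(\btheta^t).
\end{equation*}
Consequently, bounds on $\pnorm{\bOmega^t}{\op}$ translate directly to bounds on the eigenvalues of $\nabla_\btheta^2 \cL_n(\btheta^t)$.

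Working on the event $\cE_n$ from Lemma \ref{lm:long-time-properties}, which holds almost surely for large $n,d$, I would first establish the universal bound $\pnorm{\bOmega^t}{\op} \leq C$ for all $t \in \Z_+$ by the triangle inequality: using Assumption \ref{ass: ell lipschitz} we have $\pnorm{\D^t}{\op} \leq \pnorm{\partial_1 \ell}{\infty} \leq C$, and on $\cE_n$ we have $\pnorm{\X}{\op}^2 \leq 4(1+\sqrt{\delta})^2$, so
\begin{equation*}
\pnorm{\bOmega^t}{\op} \leq |1-\gamma\lambda| + \gamma\pnorm{\X}{\op}^2 \pnorm{\D^t}{\op} \leq |1-\gamma\lambda| + 4\gamma(1+\sqrt{\delta})^2 \pnorm{\partial_1 \ell}{\infty},
\end{equation*}
which is a deterministic constant independent of $n,d$.

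For the sharper bound when $t \geq t_0$, I would invoke the landscape assumption. By Assumption \ref{ass:exp_tti_cond}, on $\cE_n$ we have $\btheta^t \in \cR_n$ for $t \geq t_0$, and hence $c_0 \I_d \preceq \nabla_\btheta^2 \cL_n(\btheta^t) \preceq L \I_d$ with $L = \lambda + 4(1+\sqrt{\delta})^2 \pnorm{\partial_1\ell}{\infty}$. Combined with the identity $\bOmega^t = \I_d - \gamma\nabla_\btheta^2\cL_n(\btheta^t)$, this gives
\begin{equation*}
\pnorm{\bOmega^t}{\op} \leq \max\bigl(|1-\gamma c_0|,\, |1-\gamma L|\bigr).
\end{equation*}
Since $0 < c_0 \leq L$ and $\gamma < 2/L$, we have $\gamma c_0 \leq \gamma L < 2$ and $\gamma c_0 > 0$, so both $|1-\gamma c_0|$ and $|1-\gamma L|$ lie strictly in $[0,1)$. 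Taking $c := -\log\max(|1-\gamma c_0|, |1-\gamma L|) > 0$ yields the claimed exponential bound $\pnorm{\bOmega^t}{\op} \leq e^{-c}$ for all $t \geq t_0$.

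This proof is essentially a bookkeeping exercise once the identification $\bOmega^t = \I_d - \gamma\nabla_\btheta^2 \cL_n(\btheta^t)$ is made; there is no real obstacle, and the constants $C, c$ depend only on $\gamma, \lambda, c_0, \pnorm{\partial_1\ell}{\infty}, \delta$ as required.
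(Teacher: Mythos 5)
Your proof is correct and follows essentially the same route as the paper: identify $\bOmega^t = \I_d - \gamma\nabla_{\btheta}^2\cL_n(\btheta^t)$, bound $\pnorm{\bOmega^t}{\op}$ crudely via $\pnorm{\X}{\op}$ and $\pnorm{\partial_1\ell}{\infty}$ for all $t$, and invoke the Hessian bounds of Assumption \ref{ass:exp_tti_cond} on the event $\cE_n$ for $t\geq t_0$. The only difference is cosmetic: the paper concludes via $\bOmega^t\succeq 0$ together with $\I_d-\gamma A\preceq\exp(-\gamma A)$, whereas you use the two-sided spectral bound $\pnorm{\bOmega^t}{\op}\leq\max(|1-\gamma c_0|,|1-\gamma L|)<1$ (the same contraction factor as in Lemma \ref{lm:long-time-properties}), which if anything covers the full range $\gamma<2/L$ a bit more transparently.
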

    \begin{proof}
    Since $\bOmega^t = \I_d - \gamma(\lambda \I_d + \X^\top \D^t \X)$, we have for all $t\in \Z_+$ that $\pnorm{\bOmega^t}{\op} \leq (1 - \gamma\lambda) + \pnorm{\X}{\op}^2\pnorm{\partial_1\ell}{\infty} \leq C$ almost surely for large enough $n,d$. Next let $\cE_n$ be the event described in Lemma \ref{lm:long-time-properties}. We will show that on $\cE_n$, $0\preceq \bOmega^t \preceq e^{-c\I_d}$ for all $t \geq t_0$, which implies the desired claim since $\cE_n$ holds almost surely for large enough $n,d$. It is easy to see $\bOmega^t \succeq 0$ for all $t\in \Z_+$ since $\pnorm{\lambda \I_d + \X^\top \D^t\X}{\op} \leq \lambda + 4(1+\sqrt{\delta})^2\pnorm{\partial_1\ell}{\infty}$. On the other hand, note that $\lambda \I_d + \X^\top \D^t \X = \nabla_{\btheta}^2 \cL_n(\btheta^t)$, so using Assumption \ref{ass:exp_tti_cond}, for $t\geq t_0$ we have
    \begin{align*}
    \bOmega^t = \I_d - \gamma\nabla_{\btheta}^2 \cL_n(\btheta^t) \preceq \exp(-\gamma \nabla_{\btheta}^2 \cL_n(\btheta^t)) \preceq \exp(-c_0\gamma\I_d),
    \end{align*}
    as desired.
    \end{proof}

\begin{lemma}
     \label{lm: response_mat_relation_2}
     Suppose Assumptions \ref{ass: model assumptions},\ref{ass: spectral initialization}, \ref{ass: ell lipschitz} and \ref{ass:exp_tti_cond} hold, $\psi_\delta'(\lambda_\delta^*) > 0$, and $\gamma < 2/L$. For any $t,s,t',s' \in \Z_+$ such that $t>s, t' > s'$, there exists a constant $C>0$ such that almost surely 
     \begin{equation}
     \begin{aligned}
         &\pnorm{r_\eta(t,s)-r_\eta(t',s')}{L^2}^2 \leq C\limsup_{n,d\rightarrow\infty}\frac{1}{n}\pnorm{\bOmega^{t-1}\cdots\bOmega^{s+1}-\bOmega^{t'-1}\cdots\bOmega^{s'+1}}{F}^2\\
         &\quad+ C^{t-s} \limsup_{n,d\rightarrow\infty}\bigg\{\frac{1}{n}\pnorm{\partial_1\ell(\bbeta^{t'})-\partial_1\ell(\bbeta^{t})}{}^2 + \frac{1}{n}\pnorm{\partial_1\ell(\bbeta^{s'})-\partial_1\ell(\bbeta^{s})}{}^2\bigg\},\label{eq:r_eta_L^2_bound}\\
          &\pnorm{r_\eta(t,\ast)-r_\eta(t',\ast)}{L^2}^2 \leq C\limsup_{n,d\rightarrow\infty}\frac{1}{n}\biggpnorm{\sum_{s=0}^{t-1}\bOmega^{t-1}\cdots\bOmega^{s+1}\X^\top \diag\big(\partial_2\ell(\bbeta^s)\big) - \sum_{s=0}^{t'-1}\bOmega^{t'-1}\cdots\bOmega^{s+1}\X^\top \diag\big(\partial_2\ell(\bbeta^s)\big)}{F}^2 \\
          &\quad + C\limsup_{n,d\rightarrow\infty} \bigg\{\frac{1}{n}\pnorm{\partial_2\ell(\bbeta^t) - \partial_2\ell(\bbeta^{t'})}{}^2  +  \frac{1}{n}\pnorm{\partial_1\ell(\bbeta^{t})-\partial_1\ell(\bbeta^{t'})}{}^2\bigg\},\\
          &\pnorm{r_\eta(t,s)}{L^2}^2 \leq C\limsup_{n,d\rightarrow\infty}\frac{1}{n}\pnorm{\bOmega^{t-1}\cdots\bOmega^{s+1}}{F}^2.
        \end{aligned}
    \end{equation}
    Here $\partial_1 \ell(\bbeta^t)$ (resp. $\partial_2 \ell(\bbeta^t)$) is shorthand for $\partial_1 \ell(\bbeta^t,\bbeta^\ast,\z)$ (resp. $\partial_2 \ell(\bbeta^t,\bbeta^\ast,\z)$).
\end{lemma}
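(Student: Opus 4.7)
The plan is to give each coordinate process $r_\eta^{(\bar\eta_i)}(t,s)$ and $r_\eta^{(\bar\eta_i)}(t,\ast)$ an explicit high-dimensional representation as (up to a vanishing error) the $ii$-th entry of a matrix built from $\X$, $\bOmega^k$, and $\D^k$, and then to convert all $L^2$-norm statements about $r_\eta$ into Frobenius/operator-norm statements about these matrices. Concretely, by repeating the cavity analysis of Section \ref{subsecsec:loo_estimate} with the spectral initialization $\btheta^0 = \sqrt{d}\hat\btheta^s$ in place of the power iteration (for $t \geq 0$ the recursions and leave-one-out bounds are essentially identical to those of the artificial dynamic), or equivalently by taking $T \to \infty$ in Lemma \ref{lm: low dim response} via Proposition \ref{prop:art_spec_dmft_coup} and Lemma \ref{lem:hdFirstStage}, I would establish, for $t > s \geq 0$ and each $i \in [n]$,
\begin{align*}
r_\eta^{(\bar\eta_i)}(t,s) &= -\gamma\,\partial_1\ell(\eta_i^t,\eta_i^*,z_i)\,\big[\X\bOmega^{t-1}\cdots\bOmega^{s+1}\X^\top\D^s\big]_{ii} + E_\eta^{t,[s,i]},\\
r_\eta^{(\bar\eta_i)}(t,\ast) &= -\gamma\,\partial_1\ell(\eta_i^t,\eta_i^*,z_i)\,\Big[\sum_{r=0}^{t-1}\X\bOmega^{t-1}\cdots\bOmega^{r+1}\X^\top\diag(\partial_2\ell(\bbeta^r,\bbeta^*,\z))\Big]_{ii}\\
&\quad + \partial_2\ell(\eta_i^t,\eta_i^*,z_i) + E_\eta^{t,[\ast,i]},
\end{align*}
with $\frac{1}{n}\sum_i |E_\eta^{t,[s,i]}|^p \to 0$ and $\frac{1}{n}\sum_i |E_\eta^{t,[\ast,i]}|^p \to 0$ almost surely for every $p \geq 1$. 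Combined with the $2$-pseudo-Lipschitz empirical convergence analog of Proposition \ref{prop: high dim response}, this identifies $\pnorm{r_\eta(t,s)-r_\eta(t',s')}{L^2}^2$ with $\lim_{n,d\to\infty}\frac{1}{n}\sum_i[r_\eta^{(\bar\eta_i)}(t,s) - r_\eta^{(\bar\eta_i)}(t',s')]^2$, and similarly for the $(t,\ast)$ and unperturbed variants.

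The second step is the decomposition and matrix-norm bookkeeping. Setting $\mathbf{P}^{t,s} := \X\bOmega^{t-1}\cdots\bOmega^{s+1}\X^\top\D^s$, I split
\begin{align*}
r_\eta^{(\bar\eta_i)}(t,s) - r_\eta^{(\bar\eta_i)}(t',s') &= -\gamma\big[\partial_1\ell(\eta_i^t)-\partial_1\ell(\eta_i^{t'})\big]\mathbf{P}^{t,s}_{ii}\\
&\quad-\gamma\,\partial_1\ell(\eta_i^{t'})\big[\X\big(\bOmega^{t-1}\cdots\bOmega^{s+1}-\bOmega^{t'-1}\cdots\bOmega^{s'+1}\big)\X^\top\D^s\big]_{ii}\\
&\quad-\gamma\,\partial_1\ell(\eta_i^{t'})\big[\X\bOmega^{t'-1}\cdots\bOmega^{s'+1}\X^\top(\D^s-\D^{s'})\big]_{ii} + (\text{error}),
\end{align*}
square, sum over $i$, and apply $\sum_i M_{ii}^2 \leq \pnorm{M}{F}^2$, sub-multiplicativity $\pnorm{AB}{F}\leq\pnorm{A}{\op}\pnorm{B}{F}$, together with the almost sure operator bounds $\pnorm{\X}{\op}\leq 2(1+\sqrt{\delta})$, $\pnorm{\D^k}{\op}\leq \pnorm{\partial_1\ell}{\infty}$, and $\pnorm{\bOmega^k}{\op}\leq C$ from Lemma \ref{lm:Omega_exp_decay}. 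The middle line yields the $\bOmega$-product Frobenius-difference summand of the target bound; the $(\D^s-\D^{s'})$ line produces $C^{t-s}\cdot\frac{1}{n}\pnorm{\partial_1\ell(\bbeta^s)-\partial_1\ell(\bbeta^{s'})}{}^2$ once the product of operator norms of $\X$ and the $\bOmega^k$'s (at most $C^{t-s}$ up to universal constants) is absorbed; the first line is handled via $\max_i |\mathbf{P}^{t,s}_{ii}| \leq \pnorm{\mathbf{P}^{t,s}}{\op}\leq C^{t-s}$ to give $C^{t-s}\cdot\frac{1}{n}\pnorm{\partial_1\ell(\bbeta^t)-\partial_1\ell(\bbeta^{t'})}{}^2$. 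The parallel decomposition of $r_\eta(t,\ast)-r_\eta(t',\ast)$ around differences of the summed matrix and of $\partial_2\ell(\bbeta^t)$ gives the second bound, and the unperturbed estimate $\pnorm{r_\eta(t,s)}{L^2}^2\leq C\cdot\limsup\frac{1}{n}\pnorm{\bOmega^{t-1}\cdots\bOmega^{s+1}}{F}^2$ follows immediately from $\frac{1}{n}\sum_i[\mathbf{P}^{t,s}_{ii}]^2 \leq \frac{1}{n}\pnorm{\mathbf{P}^{t,s}}{F}^2 \leq \frac{\pnorm{\X}{\op}^4\pnorm{\D^s}{\op}^2}{n}\pnorm{\bOmega^{t-1}\cdots\bOmega^{s+1}}{F}^2$.

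The principal obstacle is the first step—establishing the high-dimensional representation of $r_\eta^{(\bar\eta_i)}(t,s)$ for the spectral initialized dynamic, since Lemma \ref{lm: low dim response} is only stated for the artificial dynamic. The direct route of repeating the Section \ref{subsecsec:loo_estimate} cavity estimates with $\btheta^0 = \sqrt{d}\hat\btheta^s$ works cleanly in the $t,s \geq 0$ regime at hand because those leave-one-out bounds depend only on post-initialization behavior shared by both dynamics; the $T\to\infty$ route requires controlling an interchange of limits using the uniform approximation $\frac{1}{\sqrt{d}}\pnorm{\btheta^t_T-\btheta^t}{} \to 0$ from Lemma \ref{lem:hdFirstStage} together with the second-stage response convergence of Proposition \ref{prop:art_spec_dmft_coup}. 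Either path is quantitatively routine given the machinery already developed in Sections \ref{subsec:hd_res_art}--\ref{subsec:art_res_conv}; no genuinely new tool is needed beyond careful matrix-norm bookkeeping.
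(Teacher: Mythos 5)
Your second step (decompose the difference of responses, square, sum over coordinates, and use $\sum_i M_{ii}^2\le\pnorm{M}{F}^2$ together with the operator-norm bounds on $\X$, $\D^k$, $\bOmega^k$) is exactly the bookkeeping the paper does, and the unperturbed bound and the $(t,\ast)$ bound go through the same way. The divergence, and the weak point, is your first step. The paper never establishes a coordinate-wise matrix representation of $r_\eta^{(\bar\eta_i)}(t,s)$ for the spectral-initialized dynamic. Instead it stays entirely at the artificial-dynamic level: it writes $\pnorm{r_\eta(t,s)-r_\eta(t',s')}{L^2}^2=\lim_{T}\lim_{n,d}\frac1n\sum_i\big(r_{T,\eta}^{(\bar\eta_{T,i})}(t,s)-r_{T,\eta}^{(\bar\eta_{T,i})}(t',s')\big)^2$ using Proposition \ref{prop: high dim response} and Proposition \ref{prop:art_spec_dmft_coup}-(2), bounds the inner quantity via Lemma \ref{lm: low dim response} and Lemma \ref{lm: response matrix recursion} with the matrices $\bOmega_T^k$, $\D_T^k$, and only then converts the resulting Frobenius quantities to ones involving $\bOmega^k$, $\D^k$ by a telescoping argument combined with $\pnorm{\bOmega^k_T-\bOmega^k}{F}\le C\pnorm{\btheta^k_T-\btheta^k}{}$ and the coupling bound $\pnorm{\theta^k_T-\theta^k}{L^2}\to0$ (the claim (\ref{eq:inner_prod_claim})). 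Your route (b) is in substance this same argument, but the step you dismiss as requiring only "controlling an interchange of limits" is precisely the technical core of the paper's proof of this lemma, not a routine afterthought; to be complete you would have to carry out the telescoping estimates showing $\lim_{T\to\infty}\limsup_{n,d\to\infty}\frac1n\pnorm{\bOmega_T^{t-1}\cdots\bOmega_T^{s+1}-\bOmega^{t-1}\cdots\bOmega^{s+1}}{F}^2=0$ and the analogous statement for $\D^s_T-\D^{s'}_T$, which is what licenses replacing the artificial matrices by the limit ones in the final bound.

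Your route (a) has a genuine gap. The claim that the leave-one-out bounds "depend only on post-initialization behavior shared by both dynamics" is not correct: the cavity dynamics of Section \ref{subsecsec:loo_estimate} are re-initialized from the cavity data, so for the spectral-initialized dynamic the initialization itself becomes the leading eigenvector of the leave-one-out matrix $(\X^{(j)})^\top\diag(\cT_s(\y^{(j)}))\X^{(j)}$, and you would need leave-one-out eigenvector perturbation estimates of order $\sqrt{\log d/d}$ (uniformly over $j$ and $i$), plus a re-derivation of the trace-limit inputs (\ref{eq:induction_conseq_1})--(\ref{eq:induction_conseq_3}) for the spectral system, none of which is in the paper. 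Avoiding exactly this is the reason the artificial power-iteration dynamic with initialization $\btheta^\ast$ was introduced. So route (a) is not "quantitatively routine"; route (b), fleshed out with the limit-interchange estimates above (which are the content of the paper's proof), is the correct path.
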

\begin{proof}
    We prove the first claim. By Proposition \ref{prop: high dim response} (with slight extension to two sets of $(t,s)$), almost surely it holds that
    \begin{equation*}
        \lim_{n,d\rightarrow\infty} \frac{1}{n} \sum_{i=1}^n (r_{T,\eta}^{(\bar{\eta}_{T,i})}(t,s)-r_{T,\eta}^{(\bar{\eta}_{T,i})}(t',s'))^2 = \E[(r_{T,\eta}(t,s)-r_{T,\eta}(t',s'))^2] = \pnorm{r_{T,\eta}(t,s)-r_{T,\eta}(t',s')}{L^2}^2.
    \end{equation*}
    Using Proposition \ref{prop:art_spec_dmft_coup}-(2) and taking $T\rightarrow\infty$ limits on both sides, we have almost surely
    \begin{equation*}
        \lim_{T\rightarrow\infty}\lim_{n,d\rightarrow\infty} \frac{1}{n} \sum_{i=1}^n (r_{T,\eta}^{(\bar{\eta}_{T,i})}(t,s)-r_{T,\eta}^{(\bar{\eta}_{T,i})}(t',s'))^2 =  \pnorm{r_{\eta}(t,s)-r_\eta(t',s')}{L^2}^2.
    \end{equation*}
    Then it is sufficient to provide an upper bound to $\lim_{T\rightarrow\infty}\lim_{n,d\rightarrow\infty} \frac{1}{n} \sum_{i=1}^n (r_{T,\eta}^{(\bar{\eta}_{T,i})}(t,s)-r_{T,\eta}^{(\bar{\eta}_{T,i})}(t',s'))^2$.
    Using Lemma \ref{lm: low dim response}, we have 
    \begin{align*}
        &\lim_{T\rightarrow\infty}\lim_{n,d\rightarrow\infty} \frac{1}{n} \sum_{i=1}^n (r_{T,\eta}^{(\bar{\eta}_{T,i})}(t,s)-r_{T,\eta}^{(\bar{\eta}_{T,i})}(t',s'))^2\\
        &\leq \lim_{T\rightarrow\infty}\lim_{n,d\rightarrow\infty}\frac{2}{n}\Bigg[\sum_{i=1}^n (E_{T,\eta}^{t,[s,i]})^2 + \sum_{i=1}^n (E_{T,\eta}^{t',[s',i]})^2\Bigg]\\
        &\quad + 2\lim_{T\rightarrow\infty}\limsup_{n,d\rightarrow\infty} \frac{1}{n} \sum_{i=1}^n \Big(\partial_1\ell(\eta_{T,i}^{t'},\eta^\ast_i,z_i)\deps \eta_{T,i}^{t',[s',i],\eps} - \partial_1\ell(\eta_{T,i}^{t}, \eta^\ast_i, z_i)\deps \eta_{T,i}^{t,[s,i],\eps}\Big)^2\\
        &\leq \underbrace{\lim_{T\rightarrow\infty}\lim_{n,d\rightarrow\infty}\frac{2}{n}\Bigg[\sum_{i=1}^n (E_{T,\eta}^{t,[s,i]})^2 + \sum_{i=1}^n (E_{T,\eta}^{t',[s',i]})^2\Bigg]}_{(I)}\\
        &\quad + \underbrace{4\lim_{T\rightarrow\infty}\limsup_{n,d\rightarrow\infty} \frac{1}{n} \sum_{i=1}^n \big(\partial_1\ell(\eta_{T,i}^{t'},\eta^\ast_i,z_i)\big)^2\Big(\deps \eta_{T,i}^{t',[s',i],\eps} - \deps \eta_{T,i}^{t,[s,i],\eps}\Big)^2}_{(II)}\\
        &\quad + \underbrace{4\lim_{T\rightarrow\infty}\limsup_{n,d\rightarrow\infty} \frac{1}{n} \sum_{i=1}^n \Big(\partial_1\ell(\eta_{T,i}^{t},\eta^\ast_i,z_i)-\partial_1\ell(\eta_{T,i}^{t'},\eta^\ast_i,z_i)\Big)^2\big(\deps \eta_{T,i}^{t,[s,i],\eps}\big)^2}_{(III)}.
    \end{align*}
    Again by Lemma \ref{lm: low dim response}, we know that almost surely
    \begin{equation*}
        (I) = \lim_{T\rightarrow\infty}\lim_{n,d\rightarrow\infty}\frac{2}{n}\Bigg[\sum_{i=1}^n (E_{T,\eta}^{t,[s,i]})^2 + \sum_{i=1}^n (E_{T,\eta}^{t',[s',i]})^2\Bigg] = 0.
    \end{equation*}
    Using Assumption \ref{ass: ell lipschitz} and the matrix computations of the derivatives given in Lemma \ref{lm: response matrix recursion}, we can bound the second term by
    \begin{align*}
        (II)&\leq 4\pnorm{\partial_1 \ell}{\infty}^2\lim_{T\rightarrow\infty}\limsup_{n,d\rightarrow\infty} \frac{1}{n} \sum_{i=1}^n \Big(\deps \eta_{T,i}^{t',[s',i],\eps} - \deps \eta_{T,i}^{t,[s,i],\eps}\Big)^2\\
        &\leq 4\gamma^2C^2\lim_{T\rightarrow\infty}\limsup_{n,d\rightarrow\infty} \frac{1}{n} \sum_{i=1}^n \Big(\X\bOmega^{t'-1}_T\cdots\bOmega^{s'+1}_T\X^\top\D_T^{s'} - \X\bOmega^{t-1}_T\cdots\bOmega^{s+1}_T\X^\top\D_T^s\Big)_{ii}^2\\
         &\leq 4\gamma^2C^2\lim_{T\rightarrow\infty}\limsup_{n,d\rightarrow\infty} \frac{1}{n} \Bigpnorm{\X\bOmega^{t'-1}_T\cdots\bOmega^{s'+1}_T\X^\top\D_T^{s'} - \X\bOmega^{t-1}_T\cdots\bOmega^{s+1}_T\X^\top\D_T^{s}}{F}^2\\
         &\leq 4\gamma^2C^2\lim_{T\rightarrow\infty}\limsup_{n,d\rightarrow\infty} \frac{1}{n} \Bigpnorm{\X\bOmega^{t'-1}_T\cdots\bOmega^{s'+1}_T\X^\top\D_T^{s'} - \X\bOmega^{t-1}_T\cdots\bOmega^{s+1}_T\X^\top\D_T^{s'}}{F}^2\\
         &\quad+ 4\gamma^2C^2\lim_{T\rightarrow\infty}\limsup_{n,d\rightarrow\infty} \frac{1}{n} \Bigpnorm{\X\bOmega^{t-1}_T\cdots\bOmega^{s+1}_T\X^\top\D_T^{s'} - \X\bOmega^{t-1}_T\cdots\bOmega^{s+1}_T\X^\top\D_T^{s}}{F}^2\\
         &\leq 8\gamma^2C^8\lim_{T\rightarrow\infty}\limsup_{n,d\rightarrow\infty} \frac{1}{n} \Bigpnorm{\bOmega^{t'-1}_T\cdots\bOmega^{s'+1}_T - \bOmega^{t-1}_T\cdots\bOmega^{s+1}_T}{F}^2\\
         &\quad + 8\gamma^2 C^{t-s}\lim_{T\rightarrow\infty}\limsup_{n,d\rightarrow\infty}\pnorm{\D_T^{s'}-\D_T^s}{F}^2,
    \end{align*}
    Here $C>0$ is a constant such that $\pnorm{\X}{\op}\vee \pnorm{\partial_1 \ell}{\infty} \vee \pnorm{\bOmega^{k}_T}{\op} \leq C$ for all $k\in\Z_+$ which we know holds almost surely for large $n,d$.
    Next, we claim that 
    \begin{equation*}
        \lim_{T\rightarrow\infty}\limsup_{n,d\rightarrow\infty} \frac{1}{n} \Bigpnorm{\bOmega^{t'-1}_T\cdots\bOmega^{s'+1}_T - \bOmega^{t-1}_T\cdots\bOmega^{s+1}_T}{F}^2 = \limsup_{n,d\rightarrow\infty} \frac{1}{n} \Bigpnorm{\bOmega^{t'-1}\cdots\bOmega^{s'+1} - \bOmega^{t-1}\cdots\bOmega^{s+1}}{F}^2.
    \end{equation*}
    To this end, it suffices to show that
    \begin{equation}\label{eq:inner_prod_claim}
    \begin{gathered}
    \lim_{T\rightarrow\infty}\limsup_{n,d\rightarrow\infty} \frac{1}{n} \pnorm{\bOmega^{t-1}_T\cdots\bOmega^{s+1}_T}{F}^2 = \limsup_{n,d\rightarrow\infty} \frac{1}{n} \pnorm{\bOmega^{t-1}\cdots\bOmega^{s+1} }{F}^2,\\
        \lim_{T\rightarrow\infty}\limsup_{n,d\rightarrow\infty} \frac{1}{n} \pnorm{\bOmega^{t'-1}_T\cdots\bOmega^{s'+1}_T}{F}^2 = \limsup_{n,d\rightarrow\infty} \frac{1}{n} \pnorm{\bOmega^{t'-1}\cdots\bOmega^{s'+1} }{F}^2,\\
        \lim_{T\rightarrow\infty}\limsup_{n,d\rightarrow\infty} \frac{1}{n}\langle \bOmega^{t'-1}_T\cdots\bOmega^{s'+1}_T, \bOmega^{t-1}_T\cdots\bOmega^{s+1}_T\rangle = \limsup_{n,d\rightarrow\infty} \frac{1}{n}\langle \bOmega^{t'-1}\cdots\bOmega^{s'+1}, \bOmega^{t-1}\cdots\bOmega^{s+1}\rangle.
    \end{gathered}
    \end{equation}
    We will show the first claim, and the other two follow from similar arguments. By reverse triangle inequality, we have
    \begin{align*}
        \limsup_{n,d\rightarrow\infty}\frac{1}{\sqrt{n}}\Big|\pnorm{\bOmega^{t-1}_T\cdots\bOmega^{s+1}_T}{F}-\pnorm{\bOmega^{t-1}\cdots\bOmega^{s+1} }{F}\Big| \leq \limsup_{n,d\rightarrow\infty}\frac{1}{\sqrt{n}}\pnorm{\bOmega^{t-1}_T\cdots\bOmega^{s+1}_T-\bOmega^{t-1}\cdots\bOmega^{s+1}}{F}.
    \end{align*}
    For any $k\in \Z_+$, we have
    \begin{align*}
        \pnorm{\bOmega_T^k-\bOmega^k}{F} &= \pnorm{\X^\top(\D^k-\D_T^k)\X}{F}\leq \pnorm{\X}{\op}^2 \pnorm{\D^k-\D_T^k}{F}\leq\pnorm{\X}{\op}^2\pnorm{\partial_1 \ell}{\infty}\pnorm{\btheta^k-\btheta_T^k}{} \leq C^3\pnorm{\btheta^k-\btheta_T^k}{}.
    \end{align*}
    Now using a telescoping decomposition, we have
    \begin{align*}
        \limsup_{n,d\rightarrow\infty}\frac{1}{\sqrt{n}}\pnorm{\bOmega^{t-1}_T\cdots\bOmega^{s+1}_T-\bOmega^{t-1}\cdots\bOmega^{s+1}}{F} &\leq \sum_{k=s+1}^{t-1}\limsup_{n,d\rightarrow\infty}\frac{1}{\sqrt{n}}\pnorm{\bOmega_T^{t-1}\cdots\bOmega_T^{k+1}(\bOmega_T^{k}-\bOmega^k)\bOmega^{k-1}\cdots\bOmega^{s+1}}{F}\\
        &\leq C^{t-s-2} \sum_{k=s+1}^{t-1}\limsup_{n,d\rightarrow\infty}\frac{1}{\sqrt{n}}\pnorm{\bOmega_T^{k}-\bOmega^k}{F}\\
        &\leq C^{t-s+1}\sum_{k=s+1}^{t-1}\lim_{n,d\rightarrow\infty}\frac{1}{\sqrt{n}}\pnorm{\btheta^k-\btheta_T^k}{}\\
        &=C^{t-s+1}\sum_{k=s+1}^{t-2}\pnorm{\theta^k-\theta_T^k}{L^2} \rightarrow 0,
    \end{align*}
    as $T\rightarrow \infty$, with the last step following from Proposition \ref{prop:art_spec_dmft_coup}-(1). This concludes (\ref{eq:inner_prod_claim}). A similar argument yields that
    \begin{align*}
    \lim_{T\rightarrow\infty}\lim_{n,d\rightarrow\infty} \pnorm{\D^{s'}_T - \D^s_T}{F}^2 = \lim_{n,d\rightarrow\infty} \pnorm{\D^{s'} - \D^s}{F}^2,
    \end{align*}
    so in summary, it holds that
    \begin{equation*}
        (II) \leq C\lim_{n,d\rightarrow\infty} \frac{1}{n} \pnorm{\bOmega^{t'-1}\cdots\bOmega^{s'+1} - \bOmega^{t-1}\cdots\bOmega^{s+1}}{F}^2 + C^{t-s}\lim_{n,d\rightarrow\infty}\frac{1}{n}\pnorm{\partial_1\ell(\bbeta^{s'},\bbeta^\ast,\z)-\partial_1\ell(\bbeta^{s},\bbeta^\ast,\z)}{}^2.
    \end{equation*}
    By similar arguments as term $(II)$, we have
    \begin{equation*}
        (III) \leq C^{t-s}\lim_{n,d\rightarrow\infty}\frac{1}{n}\pnorm{\partial_1\ell(\bbeta^{t'},\bbeta^\ast,\z)-\partial_1\ell(\bbeta^{t},\bbeta^\ast,\z)}{}^2.
    \end{equation*}
    This concludes the proof of the first claim. The remaining claims are similar.
\end{proof}

Next we present two lemmas that give the high-dimensional representations of the responses in the DMFT system (\ref{def:dmft_theta0})-(\ref{def:theta_res}), which are analogues of (\ref{eq:art_response_example}) for the responses in the artificial DMFT system (\ref{def:artificialEta})-(\ref{def:artificial_dmft_CR_theta}).

\begin{lemma}
    \label{lm: response_mat_relation_1}
    Suppose Assumptions \ref{ass: model assumptions},\ref{ass: spectral initialization}, \ref{ass: ell lipschitz} and \ref{ass:exp_tti_cond} hold, $\psi_\delta'(\lambda_\delta^*) > 0$, and $\gamma < 2/L$. For $t,s \in \Z_+$ such that $t>s$, it holds almost surely that
    \begin{align}
    \label{eq:R_theta_hd}
        R_\theta(t,s) &= \lim_{n,d\rightarrow\infty} \frac{1}{d} \Tr\Big(\gamma\bOmega^{t-1}\cdots\bOmega^{s+1}\Big),\\
        R_\eta(t,s) &= \lim_{n,d\rightarrow\infty} \frac{1}{d}\Tr\Big(-\gamma \D^t\X\bOmega^{t-1}\cdots\bOmega^{s+1}\X^\top\D^s\Big),\\
        R_\eta(t,\ast) &= \lim_{n,d\rightarrow\infty} \frac{1}{d}\Tr\Big(-\gamma\sum_{s=0}^{t-1}\D^t\X\bOmega^{t-1}\cdots\bOmega^{s+1}\X^\top\diag(\partial^2\ell(\bbeta^s,\bbeta^\ast,\z)) + \diag(\partial_2\ell(\bbeta^t,\bbeta^\ast,\z))\Big),
    \end{align}
    where the limits on the right side exist almost surely. Moreover, it holds that
    \begin{align}\label{eq:R_bivariate_decay}
    |R_\theta(t+s,t)| \leq Ce^{-cs}, \quad \pnorm{r_\eta(t+s,t)}{L^2} \leq Ce^{-cs}.
    \end{align}
\end{lemma}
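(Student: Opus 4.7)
The plan is to derive the three identities by starting from the analogous formulas for the artificial dynamic (which are provided by Proposition \ref{prop: high dim response} together with Lemma \ref{lm: response matrix recursion}) and then passing to the limit $T \to \infty$. Concretely, for $t > s \geq 0$ the artificial version of the first identity reads
\begin{equation*}
R_{T,\theta}(t,s) = \lim_{n,d\to\infty}\frac{1}{d}\Tr\big(\gamma\,\bOmega_T^{t-1}\cdots\bOmega_T^{s+1}\big),
\end{equation*}
and similar expressions hold for $R_{T,\eta}(t,s)$ and $R_{T,\eta}(t,*)$. By Proposition \ref{prop:art_spec_dmft_coup}-(2), the left-hand sides converge to $R_\theta(t,s)$, $R_\eta(t,s)$, $R_\eta(t,*)$ as $T\to\infty$. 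Thus it suffices to justify the exchange of limits on the right-hand side and to show that $\bOmega_T^{t-1}\cdots\bOmega_T^{s+1}$ converges, in a strong enough sense, to $\bOmega^{t-1}\cdots\bOmega^{s+1}$ inside the normalized trace.

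For the exchange of limits, I would proceed exactly as in the proof of Lemma \ref{lm: response_mat_relation_2}. By Lemma \ref{lem:hdFirstStage}, almost surely $\pnorm{\btheta^t - \btheta_T^t}{}/\sqrt{d} \to 0$ as $T\to\infty$ for each fixed $t\geq 0$. Using that $\partial_1 \ell$ is Lipschitz (Assumption \ref{ass: ell lipschitz}), this yields $\pnorm{\D^t - \D_T^t}{F}/\sqrt{n} \to 0$, and then $\pnorm{\bOmega^t - \bOmega_T^t}{F}/\sqrt{d} \to 0$ by $\pnorm{\X}{\op} \leq C$. A telescoping decomposition, exactly as in the verification of (\ref{eq:inner_prod_claim}), combined with the uniform operator norm bounds $\pnorm{\bOmega_T^k}{\op} \vee \pnorm{\bOmega^k}{\op} \vee \pnorm{\D_T^k}{\op} \vee \pnorm{\D^k}{\op} \vee \pnorm{\X}{\op} \leq C$ (holding almost surely for large $n,d$ by the model assumptions) gives
\begin{equation*}
\limsup_{n,d\to\infty}\frac{1}{d}\Big|\Tr\big(\bOmega_T^{t-1}\cdots\bOmega_T^{s+1}\big) - \Tr\big(\bOmega^{t-1}\cdots\bOmega^{s+1}\big)\Big| \xrightarrow{T\to\infty} 0,
\end{equation*}
and analogously for the expressions defining $R_\eta(t,s)$ and $R_\eta(t,*)$. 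This yields the three claimed representations.

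For the exponential decay in (\ref{eq:R_bivariate_decay}), I would invoke Lemma \ref{lm:Omega_exp_decay}, which under Assumption \ref{ass:exp_tti_cond} gives almost surely for large $n,d$ that $\pnorm{\bOmega^t}{\op} \leq C$ for all $t$ and $\pnorm{\bOmega^t}{\op} \leq e^{-c}$ for $t \geq t_0$. Applying $|\Tr(A)| \leq d\pnorm{A}{\op}$ and taking $s$ large enough so that at least $s - t_0$ of the factors in $\bOmega^{t+s-1}\cdots\bOmega^{t+1}$ are contractive, the representation of $R_\theta$ above gives
\begin{equation*}
|R_\theta(t+s,t)| \leq \gamma\limsup_{n,d\to\infty}\pnorm{\bOmega^{t+s-1}\cdots\bOmega^{t+1}}{\op} \leq C e^{-cs}.
\end{equation*}
For the $L^2$-bound on $r_\eta(t+s,t)$, I would apply the third inequality of Lemma \ref{lm: response_mat_relation_2}, namely
\begin{equation*}
\pnorm{r_\eta(t+s,t)}{L^2}^2 \leq C\limsup_{n,d\to\infty}\frac{1}{n}\pnorm{\bOmega^{t+s-1}\cdots\bOmega^{t+1}}{F}^2,
\end{equation*}
and then use $\pnorm{A}{F}^2 \leq d\pnorm{A}{\op}^2$ together with the same contraction estimate to obtain $\pnorm{r_\eta(t+s,t)}{L^2} \leq Ce^{-cs}$.

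The main obstacle is the interchange-of-limits step: the matrix products are of length $t-s$ which is arbitrary, so we must carefully propagate the entrywise convergence $\bOmega_T^k \to \bOmega^k$ through the product without accumulating an uncontrolled combinatorial factor. This is handled by the telescoping used in Lemma \ref{lm: response_mat_relation_2} once uniform operator-norm bounds are in hand, but verifying those bounds depends on the high-probability event from Assumption \ref{ass:exp_tti_cond}, which is why we must work ``almost surely for large enough $n,d$'' rather than pointwise.
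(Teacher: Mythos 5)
Your proposal is correct and follows essentially the same route as the paper: represent the responses via the artificial dynamic (Proposition \ref{prop: high dim response} with Lemma \ref{lm: response matrix recursion}), pass to $T\to\infty$ using Proposition \ref{prop:art_spec_dmft_coup}, justify the interchange of limits by a telescoping comparison of the matrix products under uniform operator-norm bounds, and obtain the decay (\ref{eq:R_bivariate_decay}) from Lemma \ref{lm:Omega_exp_decay} together with the last inequality of Lemma \ref{lm: response_mat_relation_2}. The only cosmetic difference is that you invoke Lemma \ref{lem:hdFirstStage} for $\pnorm{\btheta^t-\btheta^t_T}{}/\sqrt{d}\to 0$ where the paper uses the coupling $\pnorm{\theta^t_T-\theta^t}{L^2}\to 0$ from Proposition \ref{prop:art_spec_dmft_coup}-(1), and the paper makes the "exchange of limits implies existence of the limit" step explicit via a small Cauchy-type argument, which your sketch leaves implicit but which follows from exactly the facts you assemble.
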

\begin{proof}
    We prove (\ref{eq:R_theta_hd}). We first show that almost sure the limit $\lim_{n,d\rightarrow}\frac{1}{d}\Tr(\bOmega^{t-1}\ldots \bOmega^{s+1})$ exists. To this end, we first prove the following fact. Suppose a double indexed real sequence $\{a_{n,m}\}_{n,m\geq 0}$ satisfies $a_{n,m}\rightarrow a^\ast_m$ as $n\rightarrow\infty$ for each fixed $m$, and another real sequence $\{b_n\}$ satisfies
    \begin{align}\label{eq:conv_cond}
    \lim_{m\rightarrow\infty}\limsup_{n\rightarrow\infty} |b_n - a_{n,m}| = 0. 
    \end{align}
    Then $\{b_n\}$ must admit a limit. To see this, note that
    \begin{align*}
    \limsup_{n\rightarrow\infty} |b_n - a_m^\ast| \leq \limsup_{n\rightarrow\infty} |b_n - a_{n,m}| + \limsup_{n\rightarrow\infty} |a_{n,m} - a_m^\ast| = \limsup_{n\rightarrow\infty} |b_n - a_{n,m}|.
    \end{align*}
    Taking $m\rightarrow\infty$ on both sides yields
    \begin{align*}
    \lim_{m\rightarrow\infty} \limsup_{n\rightarrow\infty} |b_n - a_m^\ast| = \lim_{m\rightarrow\infty}\limsup_{n\rightarrow\infty} |b_n - a_{n,m}| = 0.
    \end{align*}
    This implies that for any $\eps > 0$, there exists some $m_0 = m_0(\eps)$ such that $\limsup_{n\rightarrow\infty} |b_n - a_{m_0}^\ast| \leq \eps/2$, which further implies the existence of some $N = N(\eps)$ such that $|b_n - a_{m_0}^\ast| \leq \eps/2$ for any $n\geq N$. Hence any $n,n' \geq N$ satisfies $|b_n - b_{n'}| \leq |b_n - a_{m_0}^\ast| + |b_{n'} - a_{m_0}^\ast| \leq \eps$, which implies that $\{b_n\}$ is Cauchy and hence admits a limit.

    Now apply this fact with $b_n = \frac{1}{d}\Tr(\bOmega^{t-1}\ldots \bOmega^{s+1})$ and $a_{n,T} = \frac{1}{d}\Tr(\bOmega_T^{t-1}\ldots \bOmega_T^{s+1})$. Since $a_{n,T} \rightarrow R_{T,\theta}(t,s)/\gamma$ as $n,d\rightarrow\infty$ by Lemma \ref{lm: response matrix recursion} and Proposition \ref{prop: high dim response}, it suffices to show that almost surely
    \begin{align}\label{eq:omega_double_limit}
    \lim_{T\rightarrow\infty}\limsup_{n,d\rightarrow\infty} \Big|\frac{1}{d}\Tr(\bOmega^{t-1}\ldots \bOmega^{s+1}) - \frac{1}{d}\Tr(\bOmega^{t-1}_T\ldots \bOmega_T^{s+1})\Big| = 0.
    \end{align}
    To see this, note that
    \begin{align*}
    &\Big|\Tr\Big(\bOmega^{t-1}_T\ldots \bOmega^{s+1}_T - \bOmega^{t-1}\ldots \bOmega^{s+1}\Big)\Big| \leq \sqrt{d}\pnorm{\bOmega^{t-1}_T\ldots \bOmega^{s+1}_T - \bOmega^{t-1}\ldots \bOmega^{s+1}}{F}\\
    &\leq \sqrt{d}\sum_{k=s+1}^{t-1} \pnorm{\bOmega^{t-1}_T\ldots \bOmega^{k+1}_T (\bOmega^k_T - \bOmega^k)\bOmega^{k-1}\ldots \bOmega^{s+1}}{F} \leq \sqrt{d}C^{t-s}\sum_{k=s+1}^{t-1}\pnorm{\bOmega^k_T - \bOmega^k}{F}, 
    \end{align*}
    where the last step follows since $\pnorm{\bOmega^k}{\op}\vee\pnorm{\bOmega^k_T}{\op} \leq C$ via Lemma \ref{lm:Omega_exp_decay}. On the other hand, for each $s+1\leq k\leq t-1$, Assumption \ref{ass: ell lipschitz} yields that
    \begin{align}\label{eq:omega_bound}
    \pnorm{\bOmega^k_T - \bOmega^k}{F} = \gamma\pnorm{\X^\top (\D^k_T - \D^k)\X}{F} \leq \gamma\pnorm{\X}{\op}^2\pnorm{\partial_1\ell(\bbeta^k_T) - \partial_1\ell(\bbeta^k)}{} \leq C\pnorm{\btheta^k_T - \btheta^k}{}
    \end{align}
    almost surely for large enough $n,d$, which implies
    \begin{align*}
    &\limsup_{n,d\rightarrow\infty} \Big|\frac{1}{d}\Tr(\bOmega^{t-1}\ldots \bOmega^{s+1}) - \frac{1}{d}\Tr(\bOmega^{t-1}_T\ldots \bOmega_T^{s+1})\Big|\\
    &\leq C^{t-s}\sum_{k=s+1}^{t-1}\lim_{n,d\rightarrow\infty} \frac{1}{\sqrt{d}}\pnorm{\btheta^k_T - \btheta^k}{} = C^{t-s}\sum_{k=s+1}^{t-1}\pnorm{\theta^k_T - \theta^k}{L^2} \rightarrow 0,
    \end{align*}
    almost surely as $T\rightarrow\infty$, where the last step follows from Proposition \ref{prop:art_spec_dmft_coup}. This shows (\ref{eq:omega_double_limit}) and hence almost sure existence of limit $\lim_{n,d\rightarrow\infty} \frac{1}{d}\Tr (\bOmega^{t-1}\ldots \bOmega^{s+1})$.

    Next we show the identity of (\ref{eq:R_theta_hd}). Using the artificial dynamic as an intermediary, we have the decomposition
    \begin{align*}
        &\Big| R_\theta(t,s) - \lim_{n,d\rightarrow\infty} \frac{1}{d} \Tr\Big(\gamma\bOmega^{t-1}\cdots\bOmega^{s+1}\Big)\Big|\\
        &\quad \leq \underbrace{|R_\theta(t,s) - R_{T,\theta}(t,s)|}_{(I)} + \underbrace{\Big| R_{T,\theta}(t,s) - \lim_{n,d\rightarrow\infty} \frac{1}{d} \Tr\Big(\gamma\bOmega_T^{t-1}\cdots\bOmega_T^{s+1}\Big)\Big|}_{(II)}\\
        &\quad + \underbrace{\Big| \lim_{n,d\rightarrow\infty} \frac{1}{d} \Tr\Big(\gamma\bOmega_T^{t-1}\cdots\bOmega_T^{s+1}\Big) - \lim_{n,d\rightarrow\infty} \frac{1}{d} \Tr\Big(\gamma\bOmega^{t-1}\cdots\bOmega^{s+1}\Big)\Big|}_{(III)}.
    \end{align*}
    By Proposition \ref{prop:art_spec_dmft_coup}-(2), we have $(I)\rightarrow 0$ as $T\rightarrow\infty$. Moreover, $(II) = 0$ almost surely by Lemma \ref{lm: response matrix recursion} and Proposition \ref{prop: high dim response}, and as $T\rightarrow\infty$
    \begin{align*}
    (III) \leq \limsup_{n,d\rightarrow\infty} \Big|\frac{1}{d}\Tr(\bOmega^{t-1}\ldots \bOmega^{s+1}) - \frac{1}{d}\Tr(\bOmega^{t-1}_T\ldots \bOmega_T^{s+1})\Big| \rightarrow 0
    \end{align*}
    by (\ref{eq:omega_double_limit}). This concludes the proof of (\ref{eq:R_theta_hd}). The other  two claims follow by similar arguments.

    Next we show (\ref{eq:R_bivariate_decay}). Using Lemma \ref{lm: response_mat_relation_1}, it holds almost surely that
    \begin{align*}
        \big|R_\theta(t+s,t)\big| &\leq C\limsup_{n,d\rightarrow\infty}\Big|\frac{1}{d}\Tr(\bOmega^{t+s-1}\cdots\bOmega^{t+1})\Big| \leq C\pnorm{\bOmega^{t+s-1}}{\rm \op}\cdots \pnorm{\bOmega^{t+1}}{\rm \op}.
    \end{align*}
    It suffices to consider the case $s \geq 2t_0$, for which $\pnorm{\bOmega^{t+1}}{\op},\ldots,\pnorm{\bOmega^{t_0}}{\op}\leq C$, and the remaining\\ $\pnorm{\bOmega^{t_0+1}}{\op},\ldots,\pnorm{\bOmega^{t+s-1}}{\op} \leq e^{-c}$ by Lemma \ref{lm:Omega_exp_decay}, so we conclude that $|R_\theta(t+s,t)| \leq C'e^{-cs}$ for some $C' = C'(t_0)$, as desired. To see the claim for $r_\eta$, the last part of Lemma \ref{lm: response_mat_relation_2} yields that
    \begin{align*}
        \pnorm{r_\eta(t+s,t)}{L^2} &\leq C\limsup_{n,d\rightarrow\infty}\frac{1}{\sqrt{n}}\pnorm{\bOmega^{t+s-1}\cdots\bOmega^{t+1}}{F} \leq C\limsup_{n,d\rightarrow\infty}\pnorm{\bOmega^{t+s-1}}{\rm \op}\cdots \pnorm{\bOmega^{t+1}}{\rm \op} \leq Ce^{-cs},
    \end{align*}
    using a similar consideration as above. The proof is complete.
\end{proof}

\begin{lemma}
    \label{lm: response_mat_relation_3}
    Suppose Assumptions \ref{ass: model assumptions},\ref{ass: spectral initialization}, \ref{ass: ell lipschitz} and \ref{ass:exp_tti_cond} hold, $\psi_\delta'(\lambda_\delta^*) > 0$, and $\gamma < 2/L$. For $t\in \Z_+$, with $\D^t, \bOmega^t,\bUpsilon$ defined in (\ref{def:D_Omega_lim}) and (\ref{def:Upsilon_lim}), we have almost surely that
    \begin{align}
    \label{eq:spec_init_res_mat_theta}
        R_\theta(t,\diamond) &=  \frac{\lambda_\delta^\ast}{\lambda_{1,\infty}}\sum_{s=-\infty}^{-1}\lim_{n,d\rightarrow\infty} \frac{1}{d}\Tr\Big(\bOmega^{t-1}\cdots\bOmega^0(\bUpsilon)^{-(s+1)}\Big),\\
        R_\eta(t,\diamond) &= \frac{-1}{\lambda_{1,\infty}}\sum_{s=-\infty}^{-1}\lim_{n,d\rightarrow\infty} \frac{1}{d}\Tr\Big(\D^t\X\bOmega^{t-1}\cdots\bOmega^0(\bUpsilon)^{-(s+1)}\X^\top\diag\big(\cT_s(\y)\big)\Big),\\
        R_\eta(t,\diamond\diamond) &= 
        \label{eq:spec_init_res_mat_2_diamond}\frac{1}{\lambda_{1,\infty}}\sum_{s=-\infty}^{-1}\lim_{n,d\rightarrow\infty}\frac{1}{d}\Tr\Big(\D^t\X\bOmega^{t-1}\cdots\bOmega^{0}(\bUpsilon)^{-(s+1)}\X^\top\diag\big(\cT_s'(\y)\circ\varphi'(\bbeta^*,\z)\circ\bbeta^0\big)\Big),
    \end{align}
    where the limits on the right side exist almost surely, and $(\bUpsilon)^k$ denotes the $k$th power of $\bUpsilon$.
\end{lemma}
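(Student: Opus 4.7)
The plan is to follow exactly the same strategy as in the proof of Lemma \ref{lm: response_mat_relation_1}, using the artificial DMFT system as an intermediate approximation; the new ingredient is Proposition \ref{prop:art_spec_dmft_coup}-(3), which relates the limits of the first-stage artificial responses to the spectral responses $R_\theta(t,\diamond)$, $R_\eta(t,\diamond)$, $R_\eta(t,\diamond\diamond)$. Concretely, for $s\leq -1$, Lemma \ref{lm: response matrix recursion} combined with Proposition \ref{prop: high dim response} gives almost surely
\begin{align*}
R_{T,\theta}(t,s) = \lim_{n,d\to\infty}\frac{1}{d}\Tr\Big(-\tfrac{1}{\beta_T^{s+1}}\bOmega^{t-1}_T\cdots\bOmega^0_T\,\bUpsilon^{-1}_T\cdots\bUpsilon^{s+1}_T\Big),
\end{align*}
and analogous trace formulas hold for $R_{T,\eta}(t,s)$ and $R_{T,\eta}(t,\sharp)$. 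Using $\bUpsilon^k_T=\M_n/\beta_T^{k+1}$, the product $\bUpsilon^{-1}_T\cdots\bUpsilon^{s+1}_T$ equals $\M_n^{-s-1}/\prod_{k=s+1}^{-1}\beta_T^{k+1}$, so after replacing every $\beta_T^k$ by $\lambda_{1,\infty}$ and every $\bOmega^k_T$ by $\bOmega^k$ we would obtain $\bOmega^{t-1}\cdots\bOmega^0\,(\bUpsilon)^{-s-1}/\lambda_{1,\infty}$. Summing over $s=-T,\ldots,-1$ and invoking Proposition \ref{prop:art_spec_dmft_coup}-(3), namely $R_\theta(t,\diamond)=-\lambda_\delta^\ast\lim_{T\to\infty}\sum_{s=-T}^{-1}R_{T,\theta}(t,s)$, would yield the identity (\ref{eq:spec_init_res_mat_theta}), with the factor $\lambda_\delta^\ast/\lambda_{1,\infty}$ absorbing the $-\lambda_\delta^\ast$ from Proposition \ref{prop:art_spec_dmft_coup}-(3) together with the sign and $1/\beta_T^{s+1}\to 1/\lambda_{1,\infty}$. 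The analogous identities for $R_\eta(t,\diamond)$ and $R_\eta(t,\diamond\diamond)$ follow in the same way from the $R_{T,\eta}(t,s)$ and $R_{T,\eta}(t,\sharp)$ trace formulas in Lemma \ref{lm: response matrix recursion}.

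The two technical items I would then carry out in detail are (i) proving almost sure existence of the limits $\lim_{n,d\to\infty}\frac{1}{d}\Tr(\bOmega^{t-1}\cdots\bOmega^0(\bUpsilon)^{-s-1})$ etc., and (ii) justifying the interchange of $\lim_{T\to\infty}$, $\lim_{n,d\to\infty}$, and the (now infinite) sum over $s$. For (i), I would reuse the Cauchy-sequence trick from Lemma \ref{lm: response_mat_relation_1}: comparing $\frac{1}{d}\Tr(\bOmega^{t-1}\cdots\bOmega^0(\bUpsilon)^{-s-1})$ to $\frac{1}{d}\Tr(\bOmega^{t-1}_T\cdots\bOmega^0_T\bUpsilon^{-1}_T\cdots\bUpsilon^{s+1}_T)$ (which converges to $-\beta_T^{s+1}R_{T,\theta}(t,s)$ for fixed $T$), the difference admits a telescoping estimate of the form
\begin{align*}
\Big|\tfrac{1}{d}\Tr(\cdots)-\tfrac{1}{d}\Tr(\cdots_T)\Big|\leq C(t,s)\Big(\sum_{k=0}^{t-1}\tfrac{1}{\sqrt d}\|\btheta^k-\btheta^k_T\|+\sum_{k=s+1}^{-1}|\beta_T^{k+1}-\lambda_{1,\infty}|\Big),
\end{align*}
which vanishes as $T\to\infty$ by Proposition \ref{prop:art_spec_dmft_coup}-(1) and Lemma \ref{lm:first_stage_reg}; combined with the fact that the first limit exists by Lemma \ref{lm: response matrix recursion}/Proposition \ref{prop: high dim response}, the Cauchy criterion gives existence of the second.

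For (ii), the crucial point is uniform summability in $s$ and in $T$. Lemma \ref{lm:first_stage_res_exp_decay} provides the bound $|R_{T,\theta}(t,s)|\leq C(\lambda_{2,\infty}/M)^{-s}$ (and similarly for $R_{T,\eta}$), valid for all $T$ large, so the tails of $\sum_{s=-T}^{-1}R_{T,\theta}(t,s)$ are dominated uniformly in $T$ by a geometric series; on the limit side, Lemma \ref{lm:Omega_exp_decay} gives $\|\bOmega^k\|_{\rm op}\leq e^{-c}$ for $k\geq t_0$, hence $\|\bOmega^{t-1}\cdots\bOmega^0\|_{\rm op}\leq C'e^{-c t}$, and $\|(\bUpsilon)^{-s-1}\|_*\leq d\,(\lambda_{2,\infty}/\lambda_{1,\infty})^{-s-1}$ up to a correction for the top eigenvalue (which does not contribute to the trace after multiplication by $\bOmega^{t-1}\cdots\bOmega^0$, since this operator contracts). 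This controls the tail of the limiting series uniformly and permits the dominated-convergence–style interchange. The only extra wrinkle is for $R_\eta(t,\diamond\diamond)$, where (\ref{eq:matrix_recursion_3}) expresses $\partial_\varepsilon \bbeta^{t,[\sharp,i],\varepsilon}_T$ as a sum over $s$ involving $\cT'_s(\y)\circ\varphi'(\bbeta^\ast,\z)\circ\bbeta^s_T$; here I would split $\bbeta^s_T = \bbeta^0_T + (\bbeta^s_T-\bbeta^0_T)$, show via Lemma \ref{lm:first_stage_reg} that the correction contributes a $T$-vanishing term, and then identify the main piece $\bbeta^0_T\to\bbeta^0$ to recover the expression in (\ref{eq:spec_init_res_mat_2_diamond}).

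The main obstacle is the combined limit interchange in (ii): one must propagate Proposition \ref{prop:art_spec_dmft_coup}-(3) (an $L^2$/scalar statement about $\sum_{s=-T}^{-1}R_{T,\eta}(t,s)$) to a statement involving the infinite sum of $(n,d)$-level traces, and the geometric control from Lemma \ref{lm:first_stage_res_exp_decay} needs to be matched by a uniform (in $T$ and $n,d$) version of the trace bound via Lemma \ref{lm:Omega_exp_decay}. Once that uniform summability is in place, the interchange reduces to standard dominated convergence and the remaining algebra is routine.
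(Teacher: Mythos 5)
Your proposal follows essentially the same route as the paper's proof: Proposition \ref{prop:art_spec_dmft_coup}-(3) combined with the trace formulas from Lemma \ref{lm: response matrix recursion}/Proposition \ref{prop: high dim response}, the Cauchy-criterion-plus-telescoping argument to replace $\bOmega^k_T$ by $\bOmega^k$ and $\beta^k_T$ by $\lambda_{1,\infty}$ for each fixed $s$, the uniform-in-$T$ geometric decay of Lemma \ref{lm:first_stage_res_exp_decay} to interchange $\lim_{T\to\infty}$ with the $s$-sum (the paper's estimate (\ref{eq:move_T_lim})), and the replacement $\bbeta^s_T\to\bbeta^0$ for the $\diamond\diamond$ term via Lemma \ref{lm:first_stage_reg}, exactly as the paper does. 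One cosmetic correction: the top-eigenvalue contribution of $(\bUpsilon)^{-(s+1)}$ is negligible because of the $1/d$ normalization of the trace (as in Lemma \ref{lm:first_stage_res_exp_decay}), not because $\bOmega^{t-1}\cdots\bOmega^0$ contracts; otherwise your summability and limit-interchange plan matches the paper's argument.
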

\begin{proof} 
By Proposition \ref{prop:art_spec_dmft_coup}-(3), we have almost surely
\begin{align}\label{eq:R_theta_diamond_1}
R_\theta(t,\diamond) = -\lambda_\delta^\ast \lim_{T\rightarrow\infty} \sum_{s=-T}^{-1} R_{T,\theta}(t,s),
\end{align}
where we recall from Proposition \ref{prop: high dim response} and Lemma \ref{lm: response matrix recursion} that almost surely 
\begin{align*}
R_{T,\theta}(t,s) = \lim_{n,d\rightarrow\infty} \frac{1}{d} \Tr\big(-\frac{1}{\beta^{s+1}_T}\bOmega_T^{t-1}\cdots\bOmega_T^0\bUpsilon_T^{-1}\cdots\bUpsilon_T^{s+1}\big).
\end{align*}
We first show that almost surely
\begin{align}\label{eq:omega_upsilon_claim}
\lim_{T\rightarrow\infty} \limsup_{n,d\rightarrow\infty} \frac{1}{d}\Big|\Tr\Big(-\frac{1}{\beta^{s+1}_T}\bOmega_T^{t-1}\cdots\bOmega_T^0\bUpsilon_T^{-1}\cdots\bUpsilon_T^{s+1}\Big) - \Tr\Big(-\frac{1}{\lambda_{1,\infty}}\bOmega^{t-1}\cdots\bOmega^0(\bUpsilon)^{-(s+1)}\Big) \Big| = 0.
\end{align}
By the same reasoning surrounding (\ref{eq:conv_cond}), this implies the almost sure existence of the limit $\lim_{n,d\rightarrow\infty}\Tr\big(-\frac{1}{\lambda_{1,\infty}}\bOmega^{t-1}\cdots\bOmega^0(\bUpsilon)^{-(s+1)}\big)$, as well as the identity
\begin{align}\label{eq:R_infty_theta}
R_{\infty,\theta}(t,s) := \lim_{T\rightarrow\infty}R_{T,\theta} (t,s) = \lim_{n,d\rightarrow\infty}\frac{1}{d}\Tr\Big(-\frac{1}{\lambda_{1,\infty}}\bOmega^{t-1}\cdots\bOmega^0(\bUpsilon)^{-(s+1)}\Big).
\end{align}
To see the claim (\ref{eq:omega_upsilon_claim}), 
\begin{align*}
&\frac{1}{d}\Big|\Tr\Big(-\frac{1}{\beta^{s+1}_T}\bOmega_T^{t-1}\cdots\bOmega_T^0\bUpsilon_T^{-1}\cdots\bUpsilon_T^{s+1}\Big) - \Tr\Big(-\frac{1}{\lambda_{1,\infty}}\bOmega^{t-1}\cdots\bOmega^0(\bUpsilon)^{-(s+1)}\Big) \Big|\\
&\leq \Big|\frac{1}{\beta^{s+1}_T} - \frac{1}{\lambda_{1,\infty}}\Big|\cdot\frac{1}{d}\Big|\Tr\Big(\bOmega^{t-1}_T\ldots\bOmega^0_T\bUpsilon^{-1}_T\ldots \bUpsilon^{s+1}_T)\Big)\Big|\\
&\quad + \frac{1}{\lambda_{1,\infty}}\cdot\frac{1}{d}\Big|\Tr\Big(\bOmega^{t-1}_T\ldots\bOmega^0_T\bUpsilon^{-1}_T\ldots \bUpsilon^{s+1}_T)\Big) - \Tr\Big(\bOmega^{t-1}\ldots\bOmega^0(\bUpsilon)^{-s-1}\Big) \Big| := (I) + (II).
\end{align*}
Using $\beta^{s+1}_T \rightarrow \lambda_{1,\infty}$ for fixed $s$, and $\pnorm{\bOmega^k_T}{\op} \vee \pnorm{\bUpsilon^k_T}{\op} \leq C$ uniformly over $T$, we have $\lim_{T\rightarrow\infty}\limsup_{n,d} (I) = 0$. For $(II)$, we have by (\ref{eq:omega_bound}) that $\pnorm{\bOmega_T^k-\bOmega^k}{F} \leq C\pnorm{\btheta^t-\btheta_T^t}{}$, and moreover
\begin{align*}
    \pnorm{\bUpsilon_T^k-\bUpsilon}{F} &\leq \Big|\frac{1}{\beta^{k+1}_T}-\frac{1}{\lambda_{1,\infty}}\Big|\pnorm{\X^\top\diag(\cT_s(\y))\X}{F} \leq \sqrt{n}\Big|\frac{1}{\beta^{k+1}_T}-\frac{1}{\lambda_{1,\infty}}\Big|\pnorm{\X}{\op}^2 \leq C\sqrt{n}\Big|\frac{1}{\beta^{k+1}_T}-\frac{1}{\lambda_{1,\infty}}\Big|
\end{align*}
almost surely for large enough $n,d$. Hence $\lim_{T\rightarrow\infty}\limsup_{n,d\rightarrow\infty} (II) = 0$ by $\beta^{k+1}_T \rightarrow \lambda_{1,\infty}$ and a similar telescoping decomposition as in (\ref{eq:omega_double_limit}). This establishes the claim (\ref{eq:omega_upsilon_claim}) and thus the identity (\ref{eq:R_infty_theta}). Moreover, by taking $T\rightarrow\infty$ in Lemma \ref{lm:first_stage_res_exp_decay}, it holds that $R_{\infty,\theta}(t,s) \leq Ce^{cs}$.

Next we claim that the right side of (\ref{eq:R_theta_diamond_1}) satisfies
\begin{align}\label{eq:R_theta_diamond_2}
    \lim_{T\rightarrow\infty} \sum_{s=-T}^{-1} R_{T,\theta}(t,s) = \lim_{T\rightarrow\infty}\sum_{s=-T}^{-1} R_{\infty,\theta}(t,s),
\end{align}
where the limit on the right exists for each fixed $t$ since $\sum_{s=-\infty}^{-1}|R_{\infty,\theta}(t,s)| \leq C\sum_{s=-\infty}^{-1}e^{cs} < \infty$.
To see the claim, we have
\begin{align}
\begin{split}
\label{eq:move_T_lim}
    &\Big|\sum_{s=-T}^{-1} R_{T,\theta}(t,s) - \sum_{s=-T}^{-1} R_{\infty,\theta}(t,s)\Big| \leq \sum_{s=-\Delta}^{-1} \Big|R_{T,\theta}(t,s) - R_{\infty,\theta}(t,s)\Big| + \sum_{s=-T}^{-\Delta} \big(|R_{T,\theta}(t,s)| + |R_{\infty,\theta}(t,s)|\big)\\
    &\leq \sum_{s=-\Delta}^{-1} \Big|R_{T,\theta}(t,s) - R_{\infty,\theta}(t,s)\Big|  + C\sum_{s=-T}^{-\Delta} e^{cs} \leq \sum_{s=-\Delta}^{-1} \Big|R_{T,\theta}(t,s) - R_{\infty,\theta}(t,s)\Big| + Ce^{-c\Delta},
\end{split}
\end{align}
where we use the exponential decay of $|R_{T,\theta}(t,s)|, |R_{\infty,\theta}(t,s)|$ in Lemma \ref{lm:first_stage_res_exp_decay}. The result follows by first taking $T\rightarrow\infty$ then $\Delta\rightarrow\infty$.
Combining (\ref{eq:R_theta_diamond_1}) and (\ref{eq:R_theta_diamond_2}) and further applying (\ref{eq:R_infty_theta}), we have almost surely
\begin{align*}
    R_\theta(t,\diamond) = -\lambda_\delta^\ast \sum_{s=-\infty}^{-1} R_{\infty,\theta}(t,s) = -\lambda_\delta^\ast \sum_{s=-\infty}^{-1} \lim_{n,d\rightarrow\infty}\frac{1}{d}\Tr\Big(-\frac{1}{\lambda_{1,\infty}}\bOmega^{t-1}\cdots\bOmega^0(\bUpsilon)^{-(s+1)}\Big),
\end{align*}
as desired. The proof for $R_\eta(t,\diamond)$ is similar.

Now we prove the claim for $R_\eta(t,\diamond\diamond)$. We first show that the limit
\begin{align}\label{eq:r_eta_diamond_limit}
    \sum_{s=-\infty}^{-1}\lim_{n,d\rightarrow\infty}\frac{1}{d}\Tr\Big(\D^t\X\bOmega^{t-1}\cdots\bOmega^{0}(\bUpsilon)^{-(s+1)}\X^\top\diag\big(\cT_s'(\y)\circ\varphi'(\bbeta^*,\z)\circ\bbeta^0\big)\Big)
\end{align}
exists. To simplify some notation, let $\tilde{\D}_T^k = \diag(\cT_s'(\y)\circ\varphi'(\bbeta^*,\z)\circ\bbeta_T^k)$ and $\tilde{\D}^0 = \diag(\cT_s'(\y)\circ\varphi'(\bbeta^*,\z)\circ\bbeta^0)$. We proceed in two steps. First, we show that the inner $n,d$ limit exists for fixed $t,s$.
Appealing to condition (\ref{eq:conv_cond}), we need to verify that
\begin{align}\label{eq:r_eta_diamond_claim}
\lim_{T\rightarrow\infty}\limsup_{n,d\rightarrow\infty}\frac{1}{d}\Big|\Tr\Big(\D_T^t\X\bOmega_T^{t-1}\cdots\bOmega_T^0\bUpsilon_T^{-1}\cdots\bUpsilon_T^{s+1}\tilde{\D}_T^s-\D^t\X\bOmega^{t-1}\cdots\bOmega^0(\bUpsilon)^{-(s+1)}\tilde{\D}^0\Big)\Big| = 0.
\end{align}
We can bound 
\begin{align*}
    &\Big|\Tr\Big(\D_T^t\X\bOmega_T^{t-1}\cdots\bOmega_T^0\bUpsilon_T^{-1}\cdots\bUpsilon_T^{s+1}\tilde{\D}_T^s-\D^t\X\bOmega^{t-1}\cdots\bOmega^0(\bUpsilon)^{-(s+1)}\tilde{\D}^0\Big)\Big|\\
    &\leq \sqrt{n}\biggpnorm{\D_T^t\X\bOmega_T^{t-1}\cdots\bOmega_T^0\bUpsilon_T^{-1}\cdots\bUpsilon_T^{s+1}\tilde{\D}_T^s-\D^t\X\bOmega^{t-1}\cdots\bOmega^0(\bUpsilon)^{-(s+1)}\tilde{\D}^0}{F}\\
    &\leq C\sqrt{n}\Big(\pnorm{\D_T^t-\D^t}{F} + \sum_{k=0}^{t-1}\pnorm{\bOmega_T^k-\bOmega^k}{F} + \sum_{k=s+1}^{-1}\pnorm{\bUpsilon_T^k-\bUpsilon}{F}+\pnorm{\tilde{\D}_T^s-\tilde{\D}^0}{F}\Big),
\end{align*}
where the last line follows from a telescoping decomposition and the fact that $\pnorm{\D_T^t}{\op} \vee \pnorm{\X}{\op}\vee\pnorm{\bOmega_T^k}{\op} \leq C$ almost surely for large enough $n,d$. Using further the bounds
\begin{equation*}
\begin{gathered}
\pnorm{\D^t_T - \D^t}{F} \leq C\pnorm{\btheta^t_T - \btheta^t}{},\quad \pnorm{\bOmega^k_T - \bOmega^t}{F} \leq C\pnorm{\btheta^k_T - \btheta^k}{},\\
\pnorm{\bUpsilon^k_T - \bUpsilon^k}{F} \leq C\sqrt{n}\Big|\frac{1}{\beta^{k+1}_T} - \frac{1}{\lambda_{1,\infty}}\Big| \leq C\sqrt{n}e^{-c(T+k)}, \quad \pnorm{\tilde\D^s_T - \tilde\D^0}{F} \leq C\pnorm{\btheta^s_T - \btheta^0}{},
\end{gathered}
\end{equation*}
we have
\begin{align*}
    &\limsup_{n,d\rightarrow\infty}\frac{1}{d}\Big|\Tr\Big(\D_T^t\X\bOmega_T^{t-1}\cdots\bOmega_T^0\bUpsilon_T^{-1}\cdots\bUpsilon_T^{s+1}\tilde\D^s_T - \D^t\X\bOmega^{t-1}\cdots\bOmega^0(\bUpsilon)^{-(s+1)}\tilde\D^0\Big)\Big|\\
    &\leq C\sum_{k=0}^t\pnorm{\theta_T^k-\theta^k}{L^2} + C\sum_{k=s}^{-1}e^{-c(T+k)} + Ce^{-c(T+s)} \rightarrow 0
\end{align*}
as $T\rightarrow\infty$. Similar considerations as above further shows that
\begin{align}\label{eq:r_eta_diamond_claim_2}
\lim_{T\rightarrow\infty}\limsup_{n,d\rightarrow\infty}\frac{1}{d}\Big|\Tr\Big(\frac{1}{\beta^{s+1}_T}\D_T^t\X\bOmega_T^{t-1}\cdots\bOmega_T^0\bUpsilon_T^{-1}\cdots\bUpsilon_T^{s+1}\tilde{\D}_T^s - \frac{1}{\lambda_{1,\infty}}\D^t\X\bOmega^{t-1}\cdots\bOmega^0(\bUpsilon)^{-(s+1)}\tilde{\D}^0\Big)\Big| = 0.
\end{align}
Denote the inner limit by 
\begin{align*}
A(t,s) := \lim_{n,d\rightarrow\infty}\frac{1}{d}\Tr\Big(\D^t\X\bOmega^{t-1}\cdots\bOmega^{0}(\bUpsilon)^{-(s+1)}\X^\top\diag\big(\cT_s'(\y)\circ\varphi'(\bbeta^*,\z)\circ\bbeta^0\big)\Big).
\end{align*}
By similar considerations as in Lemma \ref{lm:first_stage_res_exp_decay}, we have $|A(t,s)| \leq C(t)e^{cs}$, hence $\sum_{s=-\infty}^{-1} |A(t,s)| \leq C(t)\sum_{s=-\infty}^{-1}e^{cs} < \infty$, which shows the existence of the outer limit in (\ref{eq:r_eta_diamond_limit}).

Lastly, to see (\ref{eq:spec_init_res_mat_2_diamond}), by Proposition \ref{prop:art_spec_dmft_coup}-(3), we have $R_{\eta}(t,\diamond\diamond)=\lim_{T\rightarrow\infty}R_{T,\eta}(t,\sharp)$, so further applying the expression of $R_{T,\eta}(t,\sharp)$ in Proposition \ref{prop: high dim response} and Lemma \ref{lm: response matrix recursion}, we have
\begin{align*}
    R_{\eta}(t,\diamond\diamond)&=\lim_{T\rightarrow\infty}\sum_{s=-T}^{-1}\lim_{n,d\rightarrow\infty}\frac{1}{d}\frac{1}{\beta_T^{s+1}}\Tr\Big(\D_T^t\X\bOmega_T^{t-1}\cdots\bOmega_T^{0}\bUpsilon_T^{-1}\cdots\bUpsilon_T^{-s+1}\X^\top\tilde{\D}_T^s\Big)\\
    &\overset{(i)}{=}\sum_{s=-\infty}^{-1}\lim_{T\rightarrow\infty}\lim_{n,d\rightarrow\infty}\frac{1}{d}\frac{1}{\beta_T^{s+1}}\Tr\Big(\D_T^t\X\bOmega_T^{t-1}\cdots\bOmega_T^{0}\bUpsilon_T^{-1}\cdots\bUpsilon_T^{-s+1}\X^\top\tilde{\D}_T^s\Big)\\
    &\overset{(ii)}{=}\frac{1}{\lambda_{1,\infty}}\sum_{s=-\infty}^{-1}\lim_{n,d\rightarrow\infty}\frac{1}{d}\Tr\Big(\D^t\X\bOmega^{t-1}\cdots\bOmega^{0}(\bUpsilon)^{-(s+1)}\X^\top\tilde{\D}^0\Big).
\end{align*}
Here $(i)$ follows from similar reasoning as in (\ref{eq:R_theta_diamond_2}), and $(ii)$ follows from (\ref{eq:r_eta_diamond_claim_2}). The proof is complete.
\end{proof}

We are now ready to complete the proof of Theorem \ref{thm:dmft_exp_tti_condition}.
\begin{proof}[Proof of Theorem \ref{thm:dmft_exp_tti_condition}]
Let us check individually the five conditions of Definition \ref{def:exp_conv_spec_init}.
    \begin{enumerate}
        \item Let $\cE_n$ be the event given in 
        Lemma \ref{lm:long-time-properties}. By Lemma \ref{lm:long-time-properties}, we have
        \begin{equation*}
            \pnorm{\btheta^{t+1}-\btheta^t}{}^2 \leq r^{t-t_0}\pnorm{\btheta^{t_0+1}-\btheta^{t_0}}{}^2,
        \end{equation*}
        on the event $\cE_n$. Let $D_0 := \lim_{n,d\rightarrow\infty}\frac{1}{d}\pnorm{\btheta^{t_0+1} - \btheta^{t_0}}{}^2$ be the almost sure limit. By Theorem \ref{thm: discrete dmft asymp}, we have $D_0<\infty$. We show the existence of $\theta^\infty$ by showing that $\theta^t$ is a Cauchy sequence. For any $t_0 \leq s<t$, we have
        \begin{align*}
        \pnorm{\btheta^t - \btheta^s}{} \leq \sum_{k=s}^{t-1} \pnorm{\btheta^{k+1} - \btheta^k}{} \leq \sum_{k=s}^{t-1} r^{\frac{k-t_0}{2}} \pnorm{\btheta^{t_0+1} - \btheta^{t_0}}{} \leq \frac{r^{\frac{s-t_0}{2}}}{1-\sqrt{r}}\pnorm{\btheta^{t_0+1} - \btheta^{t_0}}{}.
        \end{align*}
        Then by Theorem \ref{thm: discrete dmft asymp}, we have
        \begin{align*}
            \pnorm{\theta^t-\theta^s}{L^2}^2 &= \lim_{n,d\rightarrow\infty} \frac{1}{d}\pnorm{\btheta^t-\btheta^s}{}^2 \leq \frac{r^{s-t_0}}{(1-\sqrt{r})^2}D_0 \rightarrow 0,
        \end{align*}
        as $s\rightarrow\infty$. This shows that $\{\theta^t\}$ is an $L^2$ Cauchy sequence, and hence it admits an $L^2$ limit $\theta^\infty$. Moreover, the convergence is exponential with rate
        \begin{align*}
            \pnorm{\theta^t-\theta^\infty}{L^2}^2 \leq  \frac{r^{t-t_0}}{(1-\sqrt{r})^2}D_0.
        \end{align*} To show the corresponding result for $\{\eta^t\}$, note that 
        \begin{align*}
            \pnorm{\eta^t-\eta^s}{L^2}^2 &= \lim_{n,d\rightarrow\infty} \frac{1}{n}\pnorm{\bbeta^t-\bbeta^s}{}^2 \leq (C')^2\lim_{n,d\rightarrow\infty}\frac{1}{n}\pnorm{\btheta^t-\btheta^s}{}^2,
        \end{align*}
        from which we can apply the same arguments as for $\{\theta^t\}$.
        \item We show $\{u^t\}$ is an $L^2$ Cauchy sequence. For any $t_0 \leq s<t$, we have
        \begin{align*}
            \frac{1}{\delta}\pnorm{u^t-u^s}{L^2}^2 &\overset{(i)}{=}\E[\ell(\eta^t,\eta^\ast,z)^2] +\E[\ell(\eta^s,\eta^\ast,z)^2] - 2\E[\ell(\eta^t,\eta^\ast,z)\ell(\eta^s,\eta^\ast,z)] 
            \\
            &\overset{(ii)}{=}\lim_{n,d\rightarrow\infty}\Bigg[\frac{1}{n}\sum_{i=1}^n\ell(\eta^t_i,\eta_i^\ast,z_i)^2 + \frac{1}{n}\sum_{i=1}^n\ell(\eta^s_i,\eta_i^\ast,z_i)^2 - \frac{2}{n}\sum_{i=1}^n\ell(\eta^t_i,\eta^\ast_i,z_i)\ell(\eta^s_i,\eta_i^\ast,z_i)\Bigg]\\
            &= \lim_{n,d\rightarrow\infty}\frac{1}{n}\sum_{i=1}^n (\ell(\eta_i^t,\eta_i^\ast,z_i)-\ell(\eta_i^s,\eta_i^\ast,z_i))^2\\
            &\leq C'\lim_{n,d\rightarrow\infty}\frac{1}{n}\sum_{i=1}^n (\eta_i^t-\eta_i^s)^2 = \frac{(C')^3}{\delta}\lim_{n,d\rightarrow\infty}\frac{1}{d}\pnorm{\btheta^t-\btheta^s}{2}^2\overset{(iii)}{\leq} \frac{(C')^3D_0}{\delta(1-\sqrt{r})^2}r^{s-t_0}.
        \end{align*}
    Here $(i)$ follows from the covariance structure of $u^t$, i.e., $\E[u^tu^s] = C_\eta(t,s) = \delta\E[\ell(\eta^t,w^\ast,z)\ell(\eta^s,w^\ast,z)]$, $(ii)$  follows from Theorem \ref{thm: discrete dmft asymp}, and $(iii)$ follows from similar computations as in part 1. This shows the existence of $u^\infty$, and the same convergence rate of $\pnorm{u^t - u^\infty}{L^2}$. Similarly, for any $t_0 \leq s<t$, we have
    \begin{align*}
            \pnorm{w^t-w^s}{L^2}^2 &=\E[(\theta^t)^2] +\E[(\theta^s)^2] - 2\E[\theta^t\theta^s] 
            \\
            &=\lim_{n,d\rightarrow\infty}\Bigg[\frac{1}{d}\sum_{j=1}^d(\theta^t_j)^2 + \frac{1}{d}\sum_{j=1}^d(\theta^s_j)^2 - \frac{2}{d}\sum_{j=1}^d\theta^t_j\theta^s_j\Bigg]\\
            &= \lim_{n,d\rightarrow\infty}\frac{1}{d}\sum_{j=1}^d (\theta_j^t-\theta_j^s)^2 = \lim_{n,d\rightarrow\infty}\frac{1}{d}\pnorm{\btheta^t-\btheta^s}{}^2 \leq \frac{D_0}{(1-\sqrt{r})^2}r^{s-t_0},
        \end{align*}
        concluding the claim for $\{w^t\}$.

    \item \textbf{Claim for $r_\eta(s)$} To show the existence of $r_\eta(s)$, we check that for any fixed $s\in\Z_+$, the sequence $r_\eta(t+s,t)$ in $t$ is Cauchy. A sufficient condition to check is that $\sum_{t=t_0}^\infty \pnorm{r_\eta(t+1+s,t+1)-r_\eta(t+s,t)}{L^2} <\infty.$ Using Lemma \ref{lm: response_mat_relation_2}-(1), we have for some $C > 0$ that may depend on $s$ that
    \begin{align*}
        &\pnorm{r_\eta(t+1+s,t+1)-r_\eta(t+s,t)}{L^2} \leq C\limsup_{n,d\rightarrow\infty}\frac{1}{\sqrt{n}}\pnorm{\bOmega^{t+s}\cdots\bOmega^{t+2}-\bOmega^{t+s-1}\cdots\bOmega^{t+1}}{F}\\
        &\quad+ C\limsup_{n,d\rightarrow\infty}\frac{1}{\sqrt{n}}\pnorm{\partial_1\ell(\bbeta^{t+1+s})-\partial_1\ell(\bbeta^{t+s})}{} +  C\limsup_{n,d\rightarrow\infty}\frac{1}{\sqrt{n}}\pnorm{\partial_1\ell(\bbeta^{t+1})-\partial_1\ell(\bbeta^{t})}{}.
    \end{align*}
    Using a telescoping decomposition and $\pnorm{\bOmega^k}{\op}\leq e^{-c} \leq 1$ for $k\geq t\geq t_0$ via Lemma \ref{lm:Omega_exp_decay}, we have
    \begin{equation*}
        \limsup_{n,d\rightarrow\infty}\frac{1}{\sqrt{n}}\pnorm{\bOmega^{t+s}\cdots\bOmega^{t+2}-\bOmega^{t+s-1}\cdots\bOmega^{t+1}}{F} \leq  \sum_{k=t+1}^{t+s-1} \limsup_{n,d\rightarrow\infty}\frac{1}{\sqrt{n}}\pnorm{\bOmega^{k+1}-\bOmega^k}{F},
    \end{equation*}
    where for any $k\geq t+1$, almost surely it holds that
    \begin{align*}
        &\limsup_{n,d\rightarrow\infty}\frac{1}{n}\pnorm{\bOmega^{k+1}-\bOmega^k}{F}^2 \leq \gamma^2\limsup_{n,d\rightarrow\infty}\frac{1}{n}\pnorm{\X}{\rm op}^4\pnorm{\partial_1\ell(\bbeta^{k+1},\bbeta^\ast,\z)-\partial_1\ell(\bbeta^{k},\bbeta^\ast,\z)}{}^2\\
        &\leq C\limsup_{n,d\rightarrow\infty}\frac{1}{n}\pnorm{\bbeta^{k+1}-\bbeta^k}{}^2 = C\pnorm{\eta^{k+1}-\eta^k}{L^2}^2 \leq Cr^k
    \end{align*}
    for some $r\in (0,1)$, where the last step follows from part (1) proved above. This entails that
    \begin{align}\label{eq:Omega_cauchy_bound}
    \limsup_{n,d\rightarrow\infty}\frac{1}{\sqrt{n}}\pnorm{\bOmega^{t+s}\cdots\bOmega^{t+2}-\bOmega^{t+s-1}\cdots\bOmega^{t+1}}{F} \leq Cr^{t/2}.
    \end{align}
    Similar consideration yields
    \begin{align*}
    \limsup_{n,d\rightarrow\infty} \frac{1}{\sqrt{n}}\pnorm{\partial_1\ell(\bbeta^{t+1+s})-\partial_1\ell(\bbeta^{t+s})}{} \vee \limsup_{n,d\rightarrow\infty}\frac{1}{\sqrt{n}}\pnorm{\partial_1\ell(\bbeta^{t+1})-\partial_1\ell(\bbeta^{t})}{} \leq Cr^{t/2}.
    \end{align*}
    Substituting these estimates back gives $\pnorm{r_\eta(t+1+s,t+1)-r_\eta(t+s,t)}{L^2}\leq Cr^{t/2}$ for any $t\geq t_0$, which implies $\sum_{t=t_0}^\infty \pnorm{r_\eta(t+1+s,t+1)-r_\eta(t+s,t)}{L^2} < \infty$, as desired. This establishes the existence of the limit of $r_\eta(t+s,t)$ as $t\rightarrow\infty$ for fixed $s$, which we denote by $r_\eta(s)$.

    \noindent\textbf{Claim for $r_\eta^\ast$} Next we show that $r_\eta(t,\ast)$ is a Cauchy sequence in $L^2$. For any $t,t' \in \Z_+$, Lemma \ref{lm: response_mat_relation_2} yields that
    \begin{align*}
        &\pnorm{r_\eta(t,\ast)-r_\eta(t',\ast)}{L^2}\\
        &\leq C \underbrace{\limsup_{n,d\rightarrow\infty}\frac{1}{\sqrt{n}}\biggpnorm{\sum_{s=0}^{t-1}\bOmega^{t-1}\cdots\bOmega^{s+1}\X^\top\diag\big(\partial_2\ell(\bbeta^s)\big)-\sum_{s=0}^{t'-1}\bOmega^{t'-1}\cdots\bOmega^{s+1}\X^\top\diag\big(\partial_2\ell(\bbeta^s)\big)}{F}}_{(I)}\\
        &+C\underbrace{\limsup_{n,d\rightarrow\infty}\frac{1}{\sqrt{n}}\pnorm{\partial_2\ell(\bbeta^{t})-\partial_2\ell(\bbeta^{t'})}{}}_{(II)} + C\underbrace{\limsup_{n,d\rightarrow\infty}\frac{1}{\sqrt{n}}\pnorm{\partial_1\ell(\bbeta^{t})-\partial_1\ell(\bbeta^{t'})}{}}_{(III)}.
    \end{align*}
    Using the assumption that $\partial_2\ell$ is Lipschitz, we can bound the second term by
    \begin{align*}
        (II) \leq C\limsup_{n,d\rightarrow\infty}\frac{1}{\sqrt{n}}\pnorm{\bbeta^t-\bbeta^{t'}}{2} = \pnorm{\eta^t-\eta^{t'}}{L^2} \rightarrow 0
    \end{align*}
    as $t\wedge t'\rightarrow\infty$, where the last step follows from part (1) established above.
    Similarly, it holds that $(III) \rightarrow 0$ as $t\wedge t'\rightarrow\infty$.
    For the first term, we can reindex to get
    \begin{equation*}
        (I) = \limsup_{n,d\rightarrow\infty}\frac{1}{\sqrt{n}}\biggpnorm{\sum_{s=0}^{t-1}\bOmega^{t-1}\cdots\bOmega^{t-s}\X^\top \diag\big(\partial_2 \ell(\bbeta^{t-s-1})\big)-\sum_{s=0}^{t'-1}\bOmega^{t'-1}\cdots\bOmega^{t'-s}\X^\top \diag\big(\partial_2 \ell(\bbeta^{t'-s-1})\big)}{F}.
    \end{equation*}
    On the event $\cE_n$, the exponential decay proved in Lemma \ref{lm:long-time-properties} shows that there exists a limit $\btheta^\infty := \lim_{t\rightarrow\infty}\btheta^t$ and we call $\bbeta^\infty := \X\btheta^\infty.$ Define the matrices
    \begin{equation}\label{def:D_Omega_lim_infty}
        \D^\infty := \diag\big(\partial_1\ell(\boldeta^\infty,\boldeta^*,\z)\big) \in \R^{n\times n}, \qquad \bOmega^\infty := (1-\gamma\lambda)\I_d -\gamma \X^\top\D^\infty\X \in \R^{d\times d}. 
    \end{equation}
    We emphasize that these quantities are almost surely well-defined for large enough $n,d$ (since $\cE_n$ holds so), hence all the $\limsup_{n,d\rightarrow\infty}$ in the following are well-defined.

    By adding and subtracting $(\bOmega^\infty)^{s}$ ($s$th power of $\bOmega^\infty$) and triangle inequality, we have $(I)\leq (I_1) + (I_2) + (I_3)$, where
    \begin{align*}
        (I_1) &= \sum_{s=0}^{t-1}\limsup_{n,d\rightarrow\infty} \frac{1}{\sqrt{n}}\Bigpnorm{\bOmega^{t-1}\cdots\bOmega^{t-s}\X^\top \diag\big(\partial_2 \ell(\bbeta^{t-s-1})\big)-(\bOmega^\infty)^{s}\X^\top \diag\big(\partial_2 \ell(\bbeta^\infty)\big)}{F},\\
        (I_2) &= \sum_{s=0}^{t'-1}\limsup_{n,d\rightarrow\infty} \frac{1}{\sqrt{n}}\Bigpnorm{\bOmega^{t'-1}\cdots\bOmega^{t'-s}\X^\top \diag\big(\partial_2 \ell(\bbeta^{t'-s-1})\big)-(\bOmega^\infty)^s\X^\top \diag\big(\partial_2 \ell(\bbeta^\infty)\big)}{F},\\
        (I_3) &= \limsup_{n,d\rightarrow\infty}\frac{1}{\sqrt{n}}\biggpnorm{\sum_{s=0}^{t-1}(\bOmega^\infty)^s\X^\top \diag\big(\partial_2 \ell(\bbeta^\infty)\big) - \sum_{s=0}^{t'-1}(\bOmega^\infty)^s\X^\top \diag\big(\partial_2 \ell(\bbeta^\infty)\big)}{F}.
    \end{align*}
    We claim that $(I) \rightarrow 0$ as $t\wedge t'\rightarrow\infty$, since such claim holds for $(I_1)$-$(I_3)$. To see that the claim holds for $(I_3)$, note that $\pnorm{\bOmega^\infty}{\op} \leq e^{-c}$ almost surely for large enough $n,d$ for some $c > 0$ using a similar argument of Lemma \ref{lm:Omega_exp_decay}, so assuming $t' \geq t$, we have
    \begin{align*}
    (I_3) \leq C\pnorm{\partial_2\ell}{\infty}\lim_{n,d\rightarrow\infty} \pnorm{\X}{\op}\sum_{s=t}^{t'-1} \pnorm{(\bOmega^\infty)^s}{\op} \leq C\sum_{s=t}^{\infty} e^{-cs} \rightarrow 0
    \end{align*}
    as $t\rightarrow\infty$. Next we show $(I_1) \rightarrow 0$ as $t\rightarrow\infty$, and a similar argument applies to $(I_2)$. Let
    \begin{align*}
        A_{t}(s) &:=\limsup_{n,d\rightarrow\infty} \frac{1}{\sqrt{n}}\Bigpnorm{\bOmega^{t-1}\cdots\bOmega^{t-s}\X^\top \diag\big(\partial_2\ell(\bbeta^{t-s-1})\big)-(\bOmega^\infty)^{s}\X^\top \diag\big(\partial_2\ell(\bbeta^\infty)\big)}{F},
    \end{align*}
    which can be further bounded by $A_t(s) \leq A_t'(s) + A_t''(s)$, where
    \begin{align*}
    A_t'(s) &= \limsup_{n,d\rightarrow\infty} \frac{1}{\sqrt{n}}\Bigpnorm{\bOmega^{t-1}\cdots\bOmega^{t-s}\X^\top \diag\big(\partial_2\ell(\bbeta^{t-s-1})\big)-(\bOmega^\infty)^{s}\X^\top \diag\big(\partial_2\ell(\bbeta^{t-s-1})\big)}{F},\\
    A_t''(s) &= \limsup_{n,d\rightarrow\infty} \frac{1}{\sqrt{n}}\Bigpnorm{(\bOmega^\infty)^s\X^\top \diag\big(\partial_2\ell(\bbeta^{t-s-1})\big)-(\bOmega^\infty)^{s}\X^\top \diag\big(\partial_2\ell(\bbeta^\infty)\big)}{F}.
    \end{align*}
    We first bound $A_t'(s)$. Note that
    \begin{align*}
    A_t'(s) &\leq \pnorm{\partial_2\ell}{\infty}\limsup_{n,d\rightarrow\infty} \pnorm{\X}{\op}\frac{1}{\sqrt{n}}\pnorm{\bOmega^{t-1}\cdots\bOmega^{t-s}-(\bOmega^\infty)^{s}}{F} \leq C\limsup_{n,d\rightarrow\infty} \frac{1}{\sqrt{n}}\pnorm{\bOmega^{t-1}\cdots\bOmega^{t-s}-(\bOmega^\infty)^{s}}{F}.
    \end{align*}
    To bound $\limsup_{n,d\rightarrow\infty} \frac{1}{\sqrt{n}}\pnorm{\bOmega^{t-1}\cdots\bOmega^{t-s}-(\bOmega^\infty)^{s}}{F}$, we consider two cases. For the case $s \leq c_0 t$ for some small enough $c_0$. Note that almost surely for large $n,d$,
    we have
    \begin{equation*}
        \limsup_{n,d\rightarrow\infty}\frac{1}{\sqrt{n}}\pnorm{\bOmega^k-\bOmega^\infty}{F} \leq C\lim_{n,d\rightarrow\infty} \frac{1}{\sqrt{n}}\pnorm{\btheta^k - \btheta^\infty}{}.
    \end{equation*}
    We can further bound almost surely
    \begin{align}
    \begin{split}
    \label{eq:theta_inf_hd_bound}
        \lim_{n,d\rightarrow\infty} \frac{1}{\sqrt{n}}\pnorm{\btheta^k - \btheta^\infty}{}
        &\leq C \lim_{n,d\rightarrow\infty}\limsup_{s\rightarrow\infty} \frac{1}{\sqrt{n}}\pnorm{\btheta^k - \btheta^s}{} + C \lim_{n,d\rightarrow\infty}\limsup_{s\rightarrow\infty} \frac{1}{\sqrt{n}}\pnorm{\btheta^s - \btheta^\infty}{}\\
        &\leq C \lim_{n,d\rightarrow\infty}\limsup_{s\rightarrow\infty} \frac{1}{\sqrt{n}}\sum_{v=k}^{s-1} \pnorm{\btheta^{v+1}-\btheta^v}{}\\
        &\overset{(i)}{\leq} C \lim_{n,d\rightarrow\infty}\frac{1}{\sqrt{n}}\pnorm{\btheta^{t_0+1}-\btheta^{t_0}}{}\limsup_{s\rightarrow\infty} \sum_{v=k}^{s-1} r^{\frac{v-t_0}{2}}\\
        &\overset{(ii)}{\leq} C\sum_{v=k}^{\infty} r^{\frac{v-t_0}{2}} \leq Ce^{-ck}.
    \end{split}
    \end{align}
    Here for (i), we apply Lemma \ref{lm:long-time-properties} and for (ii), we apply Theorem \ref{thm: discrete dmft asymp}.
     So using a telescoping decomposition and $\pnorm{\bOmega^t}{\op}\leq C$  for all $t\geq 0$ via Lemma \ref{lm:Omega_exp_decay}, we have
    \begin{align*}
        &\limsup_{n,d\rightarrow\infty} \frac{1}{\sqrt{n}}\pnorm{\bOmega^{t-1}\cdots\bOmega^{t-s-1}-(\bOmega^\infty)^s}{F} \\
        &\leq C^s\sum_{k=t-s}^{t-1}\limsup_{n,d\rightarrow\infty}\frac{1}{\sqrt{n}}\pnorm{\bOmega^k-\bOmega^\infty}{F}\leq Cse^{Cs}e^{-c(t-s)} \leq Ce^{-c_0't}
    \end{align*}
    for some $c_0' > 0$. For the other case $s > c_0t$, we apply
    \begin{align*}
        \limsup_{n,d\rightarrow\infty} \frac{1}{\sqrt{n}}\pnorm{\bOmega^{t-1}\cdots\bOmega^{t-s-1}-(\bOmega^\infty)^s}{F} &\leq \limsup_{n,d\rightarrow\infty} \frac{1}{\sqrt{n}}\pnorm{\bOmega^{t-1}\cdots\bOmega^{t-s}}{F} + \limsup_{n,d\rightarrow\infty} \frac{1}{\sqrt{n}}\pnorm{(\bOmega^\infty)^s}{F}\\
        &\leq C\limsup_{n,d\rightarrow\infty}\pnorm{\bOmega^{t-1}}{\rm op}\cdots\pnorm{\bOmega^{t-s}}{\rm op} + C\limsup_{n,d\rightarrow\infty} \pnorm{\bOmega^\infty}{\op}^s.
    \end{align*}
    When $s > c_0t$ and $t$ is large enough, it can be checked via Lemma \ref{lm:Omega_exp_decay} that all the operator norms above are bounded by $e^{-c}$ except for at most $t_0$ terms in the first product, which implies that $\limsup_{n,d\rightarrow\infty} \frac{1}{\sqrt{n}}\pnorm{\bOmega^{t-1}\cdots\bOmega^{t-s-1}-(\bOmega^\infty)^s}{F} \leq Ce^{-c_0't}$ again holds in this case. So we conclude that $A_t'(s) \leq Ce^{-c_0't}$ for all $s\in [0,t-1]$. Next, to bound $A_t''(s)$, we have
    \begin{align*}
    A_t''(s) \leq \limsup_{n,d\rightarrow\infty} \pnorm{\bOmega^\infty}{\op}^s\pnorm{\X}{\op}\frac{1}{\sqrt{n}}\pnorm{\bbeta^{t-s-1} - \bbeta^\infty}{}\leq Ce^{-cs}\limsup_{n,d\rightarrow\infty}\frac{1}{\sqrt{n}}\pnorm{\bbeta^{t-s-1} - \bbeta^\infty}{}. 
    \end{align*}
    When $s \geq c_0t$ for some $c_0 > 0$, using $\limsup_{n,d\rightarrow\infty}\frac{1}{\sqrt{n}}\pnorm{\bbeta^{t-s-1} - \bbeta^\infty}{} \leq C$, we have $A_t''(s) \leq e^{-c_0't}$, otherwise we have 
    \begin{align*}
    \limsup_{n,d\rightarrow\infty} \frac{1}{\sqrt{n}}\pnorm{\bbeta^{t-s-1} - \bbeta^{\infty}}{} \leq C\limsup_{n,d\rightarrow\infty} \frac{1}{\sqrt{n}}\pnorm{\btheta^{t-s-1} - \btheta^{\infty}}{} \leq Ce^{-c(t-s)} \leq Ce^{-c_0't}.
    \end{align*}
    This concludes that $A_t''(s) \leq Ce^{-c_0't}$ for all $s\in[0,t-1]$, so the same bound holds for $A_t(s)$, and hence $(I_1) = \sum_{s=0}^{t-1} A_t(s) \rightarrow 0$ as $t\rightarrow\infty$. This concludes that $r_\eta(t,\ast)$ is an $L^2$-Cauchy sequence in $t$ which shows the existence of $r_\eta^\ast$.

    \noindent\textbf{Claim for $R_\theta(s)$}
    Lastly, to show the convergence of $R_\theta(t+s,t)$, we check that $R_\theta(t+s,t)$ is a Cauchy sequence in $t$. A sufficient condition to verify is that $\sum_{t=t_0}^\infty|R_\theta(t+s+1,t+1)-R_\theta(t+s,t)| < \infty.$
    Using the high-dimensional representation in Lemma \ref{lm: response_mat_relation_1}, we have
    \begin{align*}
        |R_\theta(t+s+1,t+1)-R_\theta(t+s,t)| &\leq \limsup_{n,d\rightarrow\infty}\frac{1}{d}\Big|\Tr\Big(\gamma\bOmega^{t+s}\cdots\bOmega^{t+2}\Big)-\Tr\Big(\gamma\bOmega^{t+s-1}\cdots\bOmega^{t+1}\Big)\Big|\\
        &= \gamma\limsup_{n,d\rightarrow\infty}\frac{1}{d}\Big|\Tr\Big(\bOmega^{t+s}\cdots\bOmega^{t+2}-\bOmega^{t+s-1}\cdots\bOmega^{t+1}\Big)\Big|\\
        &\leq\gamma\limsup_{n,d\rightarrow\infty}\frac{1}{\sqrt{d}}\pnorm{\bOmega^{t+s}\cdots\bOmega^{t+2}-\bOmega^{t+s-1}\cdots\bOmega^{t+1}}{F}.
    \end{align*}
    Using the same computations as in (\ref{eq:Omega_cauchy_bound}), there exists some constant $C'>0$ such that
    \begin{equation*}
        \gamma\limsup_{n,d\rightarrow\infty}\frac{1}{\sqrt{d}}\pnorm{\bOmega^{t+s}\cdots\bOmega^{t+2}-\bOmega^{t+s-1}\cdots\bOmega^{t+1}}{F} \leq C'r^{t/2}.
    \end{equation*}
    As $r\in(0,1)$, we see that
    \begin{equation*}
        \sum_{t=t_0}^\infty|R_\theta(t+s+1,t+1)-R_\theta(t+s,t)| < \sum_{t=t_0}^\infty C'r^{t/2} < \infty,
    \end{equation*}
    concluding that $R_\theta(t+s,t)$ is Cauchy as a sequence in $t$ which shows the existence of $R_\theta(s) = \lim_{t\rightarrow\infty} R_\theta(t+s,t).$
    
    \item We first check that $R_\theta(t,\diamond) \rightarrow 0$. Using the expression of $R_\theta(t,\diamond)$ in Lemma \ref{lm: response_mat_relation_3}, we have
    \begin{align*}
        R_\theta(t,\diamond) &\leq C\sum_{s=-\infty}^{-1}\underbrace{\limsup_{n,d\rightarrow\infty}\frac{1}{\sqrt{d}}\pnorm{\bOmega^{t-1}\cdots\bOmega^0}{F}}_{(I)}\cdot\underbrace{\limsup_{n,d\rightarrow\infty}\frac{1}{\sqrt{d}}\pnorm{(\bUpsilon)^{-(s+1)}}{F}}_{(II)}.
    \end{align*}
    Using Lemma \ref{lm:Omega_exp_decay}, we have $(I) \leq C\limsup_{n,d\rightarrow\infty} \pnorm{\bOmega^{t-1}}{\op}\ldots \pnorm{\bOmega^0}{\op} \leq Ce^{-c(t-t_0)}$ for $t\geq t_0$. Next, note that with $\lambda_{i,n}$ denoting the $i$th largest eigenvalue of $\X^\top \diag\big(\cT_s(\y)\big)\X$, 
    \begin{align*}
        \frac{1}{d}\pnorm{(\bUpsilon)^{-(s+1)}}{F}^2 = \frac{1}{d}\sum_{i=1}^n\Big(\frac{\lambda_{i,n}}{\lambda_{1,\infty}}\Big)^{-2(s+1)}\leq \frac{1}{d}\Big(\frac{\lambda_{1,n}}{\lambda_{1,\infty}}\Big)^{-2(s+1)} +\frac{n-1}{d}\Big(\frac{\lambda_{2,n}}{\lambda_{1,\infty}}\Big)^{-2(s+1)},
    \end{align*}
    which further implies that
    \begin{equation*}
        (II) = \limsup_{n,d\rightarrow\infty} \frac{1}{d}\pnorm{(\bUpsilon)^{-(s+1)}}{F}^2 \leq C \Big(\frac{\lambda_{2,\infty}}{\lambda_{1,\infty}}\Big)^{-2(s+1)} = Ce^{cs}.
    \end{equation*}
    Combining the bounds of $(I)$ and $(II)$, we have
    \begin{equation*}
        R_\theta(t,\diamond) \leq Ce^{-c(t-t_0)}\sum_{s=-\infty}^{-1}e^{cs} \leq Ce^{-c(t-t_0)} \rightarrow 0
    \end{equation*}
    as $t\rightarrow\infty$. Similar argument holds for $R_\eta(t,\diamond)$.

    We now check the claim for $R_\eta(t,\diamond\diamond)$. By Lemma \ref{lm: response_mat_relation_3}, we have
    \begin{align*}
        R_\eta(t,\diamond\diamond) &= \sum_{s=-\infty}^{-1}\lim_{n,d\rightarrow\infty}\frac{1}{d}\Tr\Big(\frac{1}{\lambda_{1,\infty}}\D^t\X\bOmega^{t-1}\cdots\bOmega^{0}\bUpsilon^{-(s+1)}\X^\top\tilde{\D}^0\Big)\\
        &\leq \frac{1}{\lambda_{1,\infty}}\sum_{s=-\infty}^{-1}\limsup_{n,d\rightarrow\infty}\frac{1}{\sqrt{d}}\pnorm{\tilde{\D}^0\D^t\X\bOmega^{t-1}\cdots\bOmega^{0}}{F}\limsup_{n,d\rightarrow\infty}\frac{1}{\sqrt{d}}\pnorm{\bUpsilon^{-(s+1)}\X^\top}{F}\\
        &\leq C\sum_{s=-\infty}^{-1}\underbrace{\limsup_{n,d\rightarrow\infty}\pnorm{\bOmega^{t-1}}{\op}\ldots\pnorm{\bOmega^{0}}{\op}\frac{1}{\sqrt{d}}\pnorm{\tilde{\D}^0}{F}}_{(I)} \cdot \underbrace{\limsup_{n,d\rightarrow\infty}\frac{1}{\sqrt{d}}\pnorm{\bUpsilon^{-(s+1)}}{F}}_{(II)},
    \end{align*}
    where we recall that $\tilde{\D}^0=\diag(\cT_s'(\y)\circ\varphi'(\bbeta^*,\z)\circ\bbeta^0)$, and in the last line we use the fact that $\pnorm{\X}{\op} \vee \pnorm{\D^t}{\op}\leq C$ almost surely for large enough $n,d$.
    By similar arguments as in the above $R_\theta(t,\diamond)$ case and further $\limsup_{n,d}\pnorm{\tilde \D^0}{F}/\sqrt{d} \leq C\limsup_{n,d\rightarrow\infty} \pnorm{\bbeta^0}{}/\sqrt{d} \leq C$, we can conclude that for $t>t_0$, we have $(I) \leq Ce^{-c(t-t_0)}$ and $(II) \leq Ce^{cs}$, so once again $R_\eta(t,\diamond\diamond)\rightarrow 0$ as $t\rightarrow\infty$, as desired.
    
    \item This has been shown in Lemma \ref{lm: response_mat_relation_1}.
    \end{enumerate}

    Lemma \ref{lm:long-time-properties} shows the existence of $\btheta^\infty$ on the event $\cE_n$, which holds almost surely for large enough $n,d$, so with probability $1$, $\btheta^\infty$ is well-defined for large enough $n,d$. It remains to check that $\frac{1}{d}\sum_{j=1}^d \delta_{\theta^\infty_j} \overset{W_2}{\to} \sP(\theta^\infty)$. For any $t\geq 0$, we have the decomposition
    \begin{align*}
    W_2\Big(\frac{1}{d}\sum_{j=1}^d \delta_{\theta^\infty_j},\sP(\theta^\infty)\Big)\leq \underbrace{W_2\Big(\frac{1}{d}\sum_{j=1}^d \delta_{\theta^\infty_j},\frac{1}{d}\sum_{j=1}^d \delta_{\theta^t_j}\Big)}_{(I)} + \underbrace{W_2\Big(\frac{1}{d}\sum_{j=1}^d \delta_{\theta^t_j},\sP(\theta^t)\Big)}_{(II)} + \underbrace{W_2\Big(\sP(\theta^t),\sP(\theta^\infty)\Big)}_{(III)}.
\end{align*}
By Theorem \ref{thm: discrete dmft asymp}, $(II) \rightarrow 0$ as $n,d\rightarrow \infty$ for any $t$, and part (1) of Definition \ref{def:exp_conv_spec_init} yields that $(III) \rightarrow 0$ as $t\rightarrow \infty$. It remains to note that $\limsup_{n,d\rightarrow\infty}(I) \rightarrow 0$ as $t\rightarrow \infty$, which holds because
\begin{align*}
   \limsup_{n,d\rightarrow\infty}(I) \leq \limsup_{n,d\rightarrow\infty} \frac{1}{\sqrt{d}} \pnorm{\btheta^t-\btheta^\infty}{} \overset{(\ast)}{\leq} Ce^{-ct} \xrightarrow{t} 0,
\end{align*}
where the step $(\ast)$ follows from the computations in (\ref{eq:theta_inf_hd_bound}). The proof is now complete.
\end{proof}

\appendix

\section{State evolution of spectral initialized AMP (Theorem \ref{thm: amp spec})}
\label{sec: spec init amp}
In this section, we prove the state evolution of the spectral initialized AMP given in Theorem \ref{thm: amp spec}. Throughout the remainder of this section, we implicitly assume the same assumptions as in \ref{thm: amp spec}, namely we assume Assumptions \ref{ass: model assumptions}, \ref{ass: spectral initialization}, \ref{ass: ell lipschitz} hold, and that $\psi_\delta'(\lambda_\delta^*) > 0$. The proof strategy is to approximate the spectral initialized AMP in (\ref{eq:spec_init_AMP}), by an AMP algorithm with random initialization. For ease of reference, we first state this latter algorithm. 

Let $\s = (s_i)_{i=1}^n\in \R^n$ and $\v = (v_j)_{j=1}^d\in \R^{d\times 2}$ be auxiliary randomness independent of $\X$, such that for some variables $S\in\R,V\in\R^2$, almost surely their empirical distributions satisfy
\begin{align}\label{eq:auxiliary_converge}
\frac{1}{d}\sum_{j=1}^d \delta_{v_j} \overset{W_2}{\to} \sP(V), \quad \frac{1}{n}\sum_{i=1}^n \delta_{s_i} \overset{W_2}{\to} \sP(S). 
\end{align}

For any integer $T > 0$, we consider the following AMP algorithm indexed from $-T$. For any $i\geq -T$, let  $f_i^T = (f^T_{i,1}, f^T_{i,2}):\R^{2(i+T+1)}\times \R \rightarrow \R^2$ and $g_i^T = (g^T_{i,1}, g^T_{i,2}): \R^{2(i+T)}\times \R^2 \rightarrow\R^2$ be Lipschitz functions, and
\begin{align}\label{eq:random_init_amp}
\begin{split}
    \b^i_T &= \X g_i^T(\a^{-T+1}_T,...,\a^i_T;\v) + \frac{1}{\delta} \sum_{j=-T}^{i-1} f_j^T(\b^{-T}_T,...,\b^j_T;\s)\zeta^T_{i,j} \in \R^{n\times 2},\\
    \a^{i+1}_T &= -\frac{1}{\delta}\X^\top f^T_i(\b^{-T}_T,...,\b^i_T;\s) + \sum_{j=-T}^{i} g^T_j(\a^{-T+1}_T,...,\a^j_T;\v)\xi^T_{i,j} \in \R^{d\times 2},
\end{split}
\end{align}
where the initialization is $\b^{-T}_T = \X g_{-T}^T(\v)$.

Let $\{u^i_T\}_{i\geq -T+1} = \{(u^i_{T,1}, u^i_{T,2})\}_{i\geq -T+1}, \{w^i_T\}_{i\geq -T} = \{(w^i_{T,1}, w^i_{T,2})\}_{i\geq -T}$ be centered Gaussian vectors with covariance
\begin{align}\label{eq:vector_amp_se}
\begin{split}
\E\big[u^{i+1}_T(u^{j+1}_T)^\top\big] &= \frac{1}{\delta}\E\Big[f^T_{i}(w^{-T}_T,...,w^{i}_T;S)f^T_{j}(w^{-T}_T,...,w^{j}_T;S)^\top\Big] \in \R^{2\times 2}, \quad -T\leq j \leq i < \infty,\\
\E\big[w^i_T(w^{j}_T)^\top\big] &=\E\Big[g^T_{i}(u_T^{-T+1},...,u^{i}_T;V)g^T_{j}(u^{-T+1}_T,...,u^{j}_T;V)^\top\Big]\in \R^{2\times 2}, \quad -T\leq j \leq i < \infty,
\end{split}
\end{align}
initialized from $\E[w^{-T}_T (w^{-T}_T)^\top] = \E[g_{-T}^T(V)(g_{-T}^T(V))^\top]$, with $V \sim \sP(V)$. Then the Onsager correction terms in (\ref{eq:random_init_amp}) are defined by
\begin{align}\label{eq:vector_amp_onsager}
\begin{split}
    \xi^T_{ij} &= \Big(\E\Big[\frac{\partial}{\partial w^j_T}f_i^T(w^{-T}_T,...,w^i_T;S)\Big]\Big)^\top \in \R^{2\times 2}, \quad -T\leq j \leq i,\\
    \zeta^T_{ij} &= \Big(\E\Big[\frac{\partial}{\partial u_T^{j+1}}g^T_i(u^{-T+1}_T,...,u^i_T;V)\Big]\Big)^\top\in\R^{2\times 2}, \quad -T\leq j \leq i-1.
\end{split}
\end{align}

The state evolution characterizes the asymptotic law of the AMP iterates as in the following theorem.

\begin{theorem}\label{thm:standard_amp_se} 
 Assume that the nonlinearities $\{f_i^T\},\{g_i^T\}$ are Lipschitz, $\lim_{n,d\rightarrow\infty} n/d = \delta$, and the distribution of $\X\in\R^{n\times d}$ satisfies Assumption \ref{ass: model assumptions}-(3). Suppose $\z$ and $\s$ are independent of $\X$ and satisfy (\ref{eq:auxiliary_converge}) almost surely for some variables $S,V$ such that $\E[S^2], \E[\pnorm{V}{}^2] < \infty$. Then for any fixed integer $m > -T$, almost surely as $n,d \rightarrow \infty$, we have
    \begin{align}
        \begin{split}
            \frac{1}{d}\sum_{j=1}^d \delta_{(\a^{-T+1}_T)_j,...,(\a^{m}_T)_j,(\v)_j} &\overset{W_2}{\to} \mathsf{P}(u^{-T+1}_T,...,u^m_{T},V),\\
            \frac{1}{n}\sum_{i=1}^n \delta_{(\b^{-T}_T)_i,...,(\b^{m}_T)_i,(\s)_i}  &\overset{W_2}{\to}\mathsf{P}(w^{-T}_T,...,w^m_{T},S).
        \end{split}
    \end{align}
\end{theorem}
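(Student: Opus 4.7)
}
The plan is to recognize that (\ref{eq:random_init_amp}) is, up to a harmless relabeling of the time index, a standard vector AMP algorithm with random initialization, and then invoke known state evolution results for such algorithms. Concretely, I would introduce the shifted iterates $\tilde\a^{i} := \a^{i-T}_T$, $\tilde\b^{i} := \b^{i-T}_T$ and the shifted nonlinearities $\tilde g_i(\cdot;\v) := g^T_{i-T}(\cdot;\v)$, $\tilde f_i(\cdot;\s) := f^T_{i-T}(\cdot;\s)$ for $i\geq 0$, together with the shifted Onsager coefficients $\tilde \xi_{ij} := \xi^T_{i-T,j-T}$, $\tilde \zeta_{ij} := \zeta^T_{i-T,j-T}$. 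After this reindexing, (\ref{eq:random_init_amp}) becomes the canonical vector AMP recursion
\begin{align*}
\tilde\b^i &= \X\tilde g_i(\tilde \a^1,\ldots,\tilde\a^i;\v) + \frac{1}{\delta}\sum_{j=0}^{i-1} \tilde f_j(\tilde\b^0,\ldots,\tilde\b^j;\s)\tilde\zeta_{i,j},\\
\tilde\a^{i+1} &= -\frac{1}{\delta}\X^\top \tilde f_i(\tilde\b^0,\ldots,\tilde\b^i;\s) + \sum_{j=0}^i \tilde g_j(\tilde\a^1,\ldots,\tilde\a^j;\v)\tilde\xi_{i,j},
\end{align*}
initialized from $\tilde\b^0 = \X\tilde g_0(\v)$ with $\v$ independent of $\X$. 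The covariance and Onsager definitions in (\ref{eq:vector_amp_se})--(\ref{eq:vector_amp_onsager}) translate verbatim to the shifted quantities.

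Next, I would appeal to the standard state evolution for vector AMP with Lipschitz nonlinearities that may depend on all past iterates (but act row-wise), under a sub-Gaussian i.i.d.\ design with entries of variance $1/d$. Such a theorem is well-established: for example, it is a direct instance of the general first order method framework of Celentano--Montanari--Wu (which handles matrix-valued iterates, non-separable-in-time but row-separable nonlinearities, and Wasserstein-$2$ convergence), and can also be derived from the matrix AMP of Javanmard--Montanari or the universality extensions that apply beyond Gaussian $\X$. The hypotheses of these results are exactly what Theorem \ref{thm:standard_amp_se} assumes: Assumption \ref{ass: model assumptions}-(3) on $\X$ (i.i.d., mean zero, variance $1/d$, sub-Gaussian), Lipschitz $\tilde f_i,\tilde g_i$, independence of the side information $(\s,\v)$ from $\X$, and the empirical $W_2$ convergence of $\{s_i\},\{v_j\}$ with finite second moments. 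Applying the cited state evolution to the shifted recursion gives, for any fixed $m\geq 0$,
\begin{align*}
\frac{1}{d}\sum_{j=1}^d \delta_{(\tilde\a^1)_j,\ldots,(\tilde\a^m)_j,(\v)_j} \xrightarrow{W_2} \sP(u^1,\ldots,u^m,V),\quad
\frac{1}{n}\sum_{i=1}^n \delta_{(\tilde\b^0)_i,\ldots,(\tilde\b^m)_i,(\s)_i} \xrightarrow{W_2} \sP(w^0,\ldots,w^m,S),
\end{align*}
where $\{u^i\},\{w^i\}$ satisfy exactly the recursion (\ref{eq:vector_amp_se})--(\ref{eq:vector_amp_onsager}) in the shifted indices. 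Unshifting the indices ($i\mapsto i-T$) produces the claimed state evolution limits in terms of $(u^{-T+1}_T,\ldots,u^m_T)$ and $(w^{-T}_T,\ldots,w^m_T)$.

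The main points of care are essentially bookkeeping rather than analytic. First, I must verify that the Lipschitz constants and $L^2$ moments propagate from step $-T$ to any fixed step $m$; this follows by induction on the iteration number, using the Lipschitz property of $\tilde f_i,\tilde g_i$, the finite second moments of $(S,V)$, and the almost sure boundedness of $\|\X\|_{\op}$ under Assumption \ref{ass: model assumptions}-(3). Second, the Onsager coefficients in (\ref{eq:vector_amp_onsager}) must be well-defined at each step; since the dependence on past iterates is through Lipschitz functions and Gaussians, the weak derivatives exist and have the correct integrability, so no additional smoothing is required. Finally, $W_2$ convergence (as opposed to the often-stated pseudo-Lipschitz test-function convergence) follows from the standard observation that pseudo-Lipschitz convergence together with uniform integrability of the second moments, both of which are outputs of the cited theorem, yield convergence in $W_2$. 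There is no genuine obstacle beyond these standard verifications, since the novelty of this paper's AMP analysis lies not in Theorem \ref{thm:standard_amp_se} but in Theorem \ref{thm: amp spec}, where the above abstract AMP is used as a black box to approximate the genuine spectral initialization via power iteration.
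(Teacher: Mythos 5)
Your proposal is correct and matches the paper's treatment: the paper proves Theorem \ref{thm:standard_amp_se} simply by noting that it "follows from standard analysis of AMP state evolution \cite{javanmard2013state}, where we have re-indexed the algorithm to start at $-T$," which is precisely your reindexing-plus-black-box-citation argument (including the routine bookkeeping you flag about Lipschitz/moment propagation and $W_2$ convergence).
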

Theorem \ref{thm:standard_amp_se} follows from standard analysis of AMP state evolution \cite{javanmard2013state}, where we have re-indexed the algorithm to start at $-T$.

\subsection{Construction of artificial AMP}
To connect to the spectral initialized AMP (\ref{eq:spec_init_AMP}), we split the above AMP algorithm into two stages, with negative and positive indices respectively. In the first stage, we choose the nonlinearities so that the iterates approximate power iteration of the matrix $\M_n = \sum_{i=1}^n \mathcal{T}_s(y_i)\x_i\x_i^\top$. The nonlinearities in the second stage are chosen to match the spectral initialized AMP in (\ref{eq:spec_init_AMP}). By Theorem \ref{thm:standard_amp_se} and taking the limit $T\rightarrow\infty$, this allows us to derive the state evolution of spectral initialized AMP (\ref{eq:spec_init_AMP}) in Theorem \ref{thm: amp spec}. 

\paragraph{First Stage}
The first stage consists of AMP iterates $\{\b^{-T}_T, \a_T^{-T+1},\b_T^{-T+1},\ldots,\b_T^{-1},\a^0_T\}$ in (\ref{eq:random_init_amp}). The external randomness $\v\in\R^{d\times 2}$ and $\s \in \R^n$ are specified as
\begin{equation}\label{eq:randomness_v}
    \v = (\v_1,\v_2) = (\alpha \btheta^*+\sqrt{1-\alpha^2}\n,\btheta^*), \quad \s = \z,
\end{equation}
where $0<\alpha<1$ and $\n \sim \N(0,\I_d)$ independent of $\btheta^*$. The nonlinearities $\{g^T_i\}_{-T\leq i\leq -1}$ and $\{f^T_i\}_{-T \leq i \leq -1}$ in this stage are specified as
\begin{align}\label{eq:nonlinearity_first_stage}
\begin{split}
g_{-T}^T(v) &= \Big(\frac{1}{\sqrt{\delta}}v_1,v_2\Big),\\
g_i^T(a_T^{-T+1},...,a_T^i;v) &= \left(\frac{1}{\beta_i^T\sqrt{\delta}}((a_T^i)_1-v_2(\xi_{i-1,i-1}^T)_{21}),v_2\right), \quad -T+1\leq i\leq -1,\\
f_{i}^T(b_T^{-T},...,b_T^{i};s) &= \left(-\delta\cT(\varphi((b_T^i)_2,s))(b_T^i)_1,0\right), \quad -T\leq i\leq -1,
\end{split}
\end{align}
where $\cT(y) = \frac{\cT_s(y)}{\lambda_\delta^*-\cT_s(y)}$ is given in (\ref{def:T_func}), $\beta_0^T = 1$, and $\beta_{i+1}^T = \sqrt{(\xi_{ii}^T)_{21}^2+\E[(u_T^{i+1})_1^2]}$ for $-T\leq i\leq -2$. With the above choice, $g_i^T(a_T^{-T+1},\ldots,a_T^i;v)$ only depends on $a_T^i$ and $f_i^T(b_T^{-T},\ldots,b^i_T;s)$ only depends on $b^i_T$, leading to 
\begin{align}\label{eq:onsager_simplify_1}
\zeta^T_{ij} = 0 \text{ for } -T\leq i\leq -1, j < i-1, \quad \xi_{ij}^T = 0, -T \leq i\leq -1, j < i.
\end{align}


The key result of the first stage is that the spectral estimator $\hat{\btheta}^{s}$ can be approximated based on the final AMP iterate $\a_T^0$, as shown by the following lemma. We present its proof and the subsequent Lemmas \ref{lem: conv gaussian amp} and \ref{lem: conv hd amp} in Appendix \ref{subsec:proof_auxiliary_amp}.
\begin{lemma}\label{lem: AMP stage 1} 
    The final AMP iterate of the first stage satisfies
    \begin{equation}
        \lim_{T\rightarrow\infty}\lim_{n,d\rightarrow\infty} \frac{1}{d} \pnorm{\sqrt{d}\hat{\btheta}^s - \sqrt{\delta}\big((\a_T^0)_1 - (\xi_{-1,-1}^T)_{21}\btheta^*\big)}{}^2 \overset{a.s.}{\to} 0.
    \end{equation}
    Moreover, $(\xi^T_{-1,1})_{21} \rightarrow -a/\sqrt{\delta}$ as $T \rightarrow \infty$, and almost surely, 
    \begin{align*}
        \lim_{n,d\rightarrow\infty} W_2\Big(\frac{1}{d}\sum_{j=1}^d \delta_{\sqrt{d}(\hat{\btheta}^s)_j,(\btheta^*)_j}, \sP(\theta^0,\theta^\ast)\Big) &= 0,\\
        \lim_{T\rightarrow\infty} \lim_{n,d\rightarrow\infty} W_2\Big(\frac{1}{d}\sum_{j=1}^d \delta_{\sqrt{\delta}\big(\a^0_T-(\xi_{-1,-1}^T)_{21}\btheta^*\big)_j,(\btheta^*)_j}, \mathsf{P}(\theta^0,\theta^*)\Big) &= 0,
    \end{align*}
    where $\sP(\theta^0,\theta^\ast)$ is given by the joint distribution of $(\theta^0,\theta^\ast)$ in (\ref{def:dmft_theta0}).
\end{lemma}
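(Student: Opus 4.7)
The plan is to exploit the design of the first-stage nonlinearities in (\ref{eq:nonlinearity_first_stage}), which are engineered so that the AMP iterates implement an Onsager-corrected, normalized power iteration on $\M_n$. The argument follows the template of \cite{mondelli_approximate_2022}, adapted to the present vector-valued AMP with auxiliary side-information $\v_2 = \btheta^*$ and $\s = \z$. The role of the second coordinate is to propagate $\X\btheta^*$ (and hence $\y$) into every iterate, so that $f_{i,1}^T = -\delta\,\diag(\cT(\y))(\b^i_T)_1$; combined with the scaling by $\beta_i^T\sqrt{\delta}$ in $g_i^T$, each composite step applies (a normalized version of) $\X^\top\diag(\cT(\y))\X$ to the first coordinate. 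Since $\cT(y) = \cT_s(y)/(\lambda^\ast_\delta - \cT_s(y))$ is a monotone increasing transform of $\cT_s(y)$ on the relevant range, $\X^\top\diag(\cT(\y))\X$ and $\M_n = \X^\top\diag(\cT_s(\y))\X$ share the same top eigenvector $\hat\btheta^s$, and power iteration in either operator converges to $\hat\btheta^s$ exponentially fast with rate governed by the asymptotic spectral gap from Proposition \ref{prop: spec est overlap}.

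The first step is to unroll the AMP recursion to obtain, by induction on $i$, a representation of the form
\begin{equation*}
(\a^{i+1}_T)_1 = \frac{1}{\beta_i^T\sqrt{\delta}}\X^\top\diag(\cT(\y))\X(\a^i_T)_1 \;+\; c_i^T\,\btheta^\ast \;+\; (\text{Onsager tail})_i,
\end{equation*}
in which the $\btheta^\ast$-correction accumulates the contributions from $\v_2 = \btheta^\ast$ sitting in $g_i^T$ and from the Onsager coefficients $\xi^T_{i-1,i-1}$ appearing on successive steps. By the simplifications recorded in (\ref{eq:onsager_simplify_1}), only nearest-neighbor Onsager terms survive, so the recursion collapses to a scalar-coefficient power iteration plus a single correction proportional to $\btheta^\ast$; the total correction, evaluated at $i=-1$, is exactly $(\xi^T_{-1,-1})_{21}\btheta^\ast$. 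Thus $\sqrt{\delta}\big((\a^0_T)_1 - (\xi^T_{-1,-1})_{21}\btheta^\ast\big)$ equals, up to a deterministic scalar $\kappa^T_n$, the $T$-fold iterate of $\X^\top\diag(\cT(\y))\X$ applied to $\v_1 = \alpha\btheta^\ast + \sqrt{1-\alpha^2}\,\n$, and standard high-probability bounds on $\pnorm{\X}{\op}$ plus Proposition \ref{prop: spec est overlap} (ensuring a strictly positive asymptotic spectral gap and nonzero overlap $\langle \v_1,\hat\btheta^s\rangle/d \to \alpha a$) give the advertised convergence
\begin{equation*}
\lim_{T\to\infty}\lim_{n,d\to\infty}\tfrac{1}{d}\bigpnorm{\sqrt{d}\hat\btheta^s - \sqrt{\delta}\big((\a^0_T)_1 - (\xi^T_{-1,-1})_{21}\btheta^\ast\big)}{}^2 = 0.
\end{equation*}

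For the identification of $\lim_{T\to\infty}(\xi^T_{-1,-1})_{21}$, apply Theorem \ref{thm:standard_amp_se} to obtain the state evolution of the artificial AMP and compute $(\xi^T_{-1,-1})_{21}$ via (\ref{eq:vector_amp_onsager}) as an expectation involving $\partial_{w_{T,2}^{-1}}f^T_{-1,1}$, which reduces to a Gaussian integral in $(w^{-1}_{T,1}, w^{-1}_{T,2}, S)$ with covariance determined by iterating (\ref{eq:vector_amp_se}). Passing $T\to\infty$ turns this into a fixed-point equation whose solution is governed by the pair $(\phi,\psi_\delta)$ at $\lambda^\ast_\delta$; matching it against the explicit formula for the limiting overlap $a$ in Proposition \ref{prop: spec est overlap} yields $(\xi^T_{-1,-1})_{21}\to -a/\sqrt{\delta}$. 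Finally, the two $W_2$ convergences are obtained as follows: the first, for $(\sqrt{d}\hat\btheta^s,\btheta^\ast)$, is the content of \cite[Theorem~1]{lu2020phase}/\cite{mondelli_optimal_2022} combined with the definition (\ref{def:dmft_theta0}) of $\theta^0 = a\theta^\ast + u^\diamond$ with $u^\diamond \sim \N(0,1-a^2)$ independent of $\theta^\ast$; the second, for the AMP approximation, follows from the first by the triangle inequality for $W_2$ on $\cP_2(\R^2)$ together with the $L^2$ approximation already established.

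The main obstacle is the bookkeeping in the first step: verifying that, with the particular choice of $\beta_i^T$ in (\ref{eq:nonlinearity_first_stage}) (defined recursively via $\beta_{i+1}^T = \sqrt{(\xi_{ii}^T)_{21}^2 + \E[(u_T^{i+1})_1^2]}$), the normalization of the AMP iterates is exactly right to produce the identity $\sqrt{\delta}\big((\a^0_T)_1 - (\xi^T_{-1,-1})_{21}\btheta^\ast\big) = \kappa^T_n \cdot (\X^\top\diag(\cT(\y))\X)^T \v_1 / \|(\cdots)^T\v_1\|$ (up to vanishing Onsager residuals). Once this algebraic identity is in hand, everything else reduces to routine applications of Theorem \ref{thm:standard_amp_se}, Proposition \ref{prop: spec est overlap}, and power-iteration convergence; the delicate part is matching the scalar $\kappa^T_n$ with the normalization $1/\sqrt{d}$ implicit in $\hat\btheta^s$, which is where the definition of $\lambda^\ast_\delta$ via (\ref{def:lambda_delta}) enters to pin down the normalizing constant in the $T\to\infty$ limit.
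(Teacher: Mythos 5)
Your proposal has a genuine gap at its central step. You claim that, after unrolling, the first-stage iterates satisfy an algebraic identity of the form $\sqrt{\delta}\big((\a^0_T)_1-(\xi^T_{-1,-1})_{21}\btheta^\ast\big)=\kappa^T_n\,(\X^\top\diag(\cT(\y))\X)^T\v_1/\pnorm{(\cdots)^T\v_1}{}$ ``up to vanishing Onsager residuals,'' and then conclude by a spectral-gap power-iteration argument. This identity is false: writing $\tilde\x^{i}=(\a^i_T)_1-\btheta^\ast(\xi^T_{i-1,i-1})_{21}$ and composing the two AMP updates with the nonlinearities (\ref{eq:nonlinearity_first_stage}), one gets
\begin{equation*}
\tilde\x^{i+1}=\frac{1}{\beta_i^T\sqrt{\delta}}\X^\top\diag(\cT(\y))\X\,\tilde\x^{i}-\frac{\sqrt{\delta}\,\E[\cT(Y)]}{\beta_i^T}\,\tilde\x^{i}-\frac{1}{\beta_i^T\sqrt{\delta}}\X^\top\diag(\cT(\y))^2(\b^{i-1}_T)_1,
\end{equation*}
so the Onsager/memory terms are $O(1)$ contributions at every step (they are exactly what makes the state evolution Gaussian), not residuals that vanish; the iterates are those of the GAMP of \cite{mondelli_approximate_2022}, not of plain power iteration. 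Relatedly, your justification that $\X^\top\diag(\cT(\y))\X$ and $\M_n=\X^\top\diag(\cT_s(\y))\X$ ``share the same top eigenvector because $\cT$ is a monotone transform of $\cT_s$'' is not valid: sandwiched matrices $\X^\top D_1\X$ and $\X^\top D_2\X$ with entrywise-monotone related diagonals are not simultaneously diagonalizable in general. The connection between the $\cT$-based iteration and the leading eigenvector of $\M_n$ is precisely the delicate point that requires the specific choice of $\lambda_\delta^\ast$ in (\ref{def:lambda_delta}) and the fixed-point/contraction analysis of the Onsager-corrected dynamics, not a generic spectral-gap argument.

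For contrast, the paper's proof does not attempt any reduction to power iteration. It verifies, by induction, an exact iterate-by-iterate correspondence between the first stage of the constructed AMP and the artificial GAMP of \cite{mondelli_approximate_2022} (matching iterates, state-evolution variables, Onsager coefficients via Gaussian integration by parts, and the normalizations $\beta_i^T=\beta_{i+T}$), and then invokes their Theorem 1 — which is exactly the result that the Onsager-corrected dynamics converge to (a rescaling of) $\hat\btheta^s$ and that the joint empirical law of $(\sqrt{d}\hat\btheta^s,\btheta^\ast)$ converges in $W_2$ to $\sP(\theta^0,\theta^\ast)$ with $\theta^0=a\theta^\ast+u^\diamond$. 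Your citation of the overlap result (Proposition \ref{prop: spec est overlap} / \cite{lu2020phase}) is also insufficient for the first $W_2$ claim, since it only controls $\langle\hat\btheta^s,\btheta^\ast\rangle$, not the entrywise joint distribution; that stronger statement again comes from the GAMP analysis. Your computation of $(\xi^T_{-1,-1})_{21}\rightarrow -a/\sqrt{\delta}$ via state evolution and a fixed-point limit is in the right spirit (the paper obtains $(\xi^T_{ii})_{21}=-\mu_{\tilde X,i+T+1}$ and then uses the GAMP fixed-point lemma), but it does not rescue the main approximation step.
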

\paragraph{Second Stage}
This stage consists of the iterates $\{\b^{0}_T, \a^1_T,\b^{1}_T,\ldots\}$ in (\ref{eq:random_init_amp}). 
The nonlinearities $\{g^T_i\}_{i\geq 0}$ and $\{f^T_i\}_{i\geq 0}$ in this stage are specified as
\begin{align*}
g_0^T\big(a_T^{-T+1},...,a_T^{0};v\big) = g_{0}\Big(\sqrt{\delta}\big((a_T^0)_1 - (\xi_{-1,-1}^T)_{21}v_2\big),v_2\Big) = \Big(\sqrt{\delta}\big((a_T^0)_1 - (\xi_{-1,-1}^T)_{21}v_2\big),v_2\Big),
\end{align*}
and for $i \geq 0$,
\begin{align}
\begin{split}
    g_{i+1}^T(a_T^{-T+1},...,a_T^{i+1};v) &= g_{i+1}\Big(a_T^{1},...,a_T^{i+1};\sqrt{\delta}\big((a_T^0)_1 - (\xi_{-1,-1}^T)_{21}v_2\big),v_2\Big),\\
    f_{i}^T(b_T^{-T},...,b_T^i;s) &= f_{i}(b_T^{0},...,b_T^i;s),
\end{split}
\end{align}
where $\{g_i\}_{i\geq 0}, \{f_i\}_{i\geq 0}$ are the nonlinearities from the spectral initialized AMP in (\ref{eq:spec_init_AMP}). In other words, $\{g^T_i\}_{i\geq 0}, \{f^T_i\}_{i\geq 0}$ are chosen to be the same as $\{g_i\}_{i\geq 0}, \{f_i\}_{i\geq 0}$, except that the argument $\btheta^0 = \sqrt{d}\hat\btheta^s$ in $g_i^T$ is replaced by its approximation $\sqrt{\delta}\big((\a_T^0)_1 - (\xi_{-1,-1}^T)_{21}\btheta^*\big)$ given in Lemma \ref{lem: AMP stage 1}. 

With this choice of nonlinearity and the auxiliary randomness in (\ref{eq:randomness_v}), the AMP iterates in the second stage satisfy: for $i\geq 0$, 
\begin{align}\label{eq:vector_amp_second_stage}
\begin{split}
    \b^i_T &= \X g_i\Big(\a^{1}_T,...,\a^i_T;\sqrt{\delta}\big((\a_T^0)_1 - (\xi_{-1,-1}^T)_{21}\btheta^*\big),\btheta^*\Big) + \frac{1}{\delta} \sum_{j=-T}^{i-1} f_j^T(\b^{-T}_T,...,\b^j_T;\z)\zeta^T_{i,j}, \\
    \a^{i+1}_T &= -\frac{1}{\delta}\X^\top f_i(\b^{0}_T,...,\b^i_T;\z) + \sum_{j=-T}^{i} g^T_j\Big(\a^{-T+1}_T,...,\a^j_T;\sqrt{\delta}\big((\a_T^0)_1 - (\xi_{-1,-1}^T)_{21}\btheta^*\big),\btheta^\ast\Big)\xi^T_{i,j}.
\end{split}
\end{align}
We can make a few simplifications for the Onsager correction terms. For $i\geq0$, $f_i^T(w^{-T}_T,...,w^i_T;z)$ in (\ref{eq:vector_amp_onsager}) does not depend on $w_T^{-T},...,w_T^{-1}$, and similarly, $g_i^T(u^{-T+1}_T,...,u^i_T;v)$ in (\ref{eq:vector_amp_onsager}) does not depend on $u_T^{-T+1},...,u_T^{-1}$, which implies that
\begin{align}\label{eq:onsager_simplify_2}
    \zeta_{ij}^T = 0 \text{ for } i\geq 0, j< -1, \quad\quad
    \xi_{ij}^T = 0 \text{ for }  i\geq 0, j< 0.
\end{align}
Then (\ref{eq:vector_amp_second_stage}) can be simplified as 
\begin{align}
\begin{split}
\label{eq: artifical AMP iterates}
    \b^i_T &= \X g_i\Big(\a^{1}_T,...,\a^i_T;\sqrt{\delta}\big((\a_T^0)_1 - (\xi_{-1,-1}^T)_{21}\btheta^*\big),\btheta^*\Big) + \frac{1}{\delta} \sum_{j=0}^{i-1} f_j(\b^{0}_T,...,\b^j_T;\z)\zeta^T_{i,j} + \frac{1}{\delta}f_{-1}^T(\b^{-T}_T,...,\b^{-1}_T;\z)\zeta^T_{i,-1},\\
    \a^{i+1}_T &= -\frac{1}{\delta}\X^\top f_i(\b^{0}_T,...,\b^i_T;\z) + \sum_{j=0}^{i} g_j\Big(\a^{1}_T,...,\a^j_T;\sqrt{\delta}\big((\a_T^0)_1 - (\xi_{-1,-1}^T)_{21}\btheta^*\big),\btheta^*\Big)\xi^T_{i,j}.
\end{split}
\end{align}
\paragraph{Taking the limit}
The behavior of the spectral initialized AMP can be approximated by that of the second stage in the above two-stage AMP algorithm, upon taking $T \rightarrow \infty$ in the first stage. To quantify this, we first show convergence of the Gaussian variables $\{u_T^i\}_{i\geq0}$ and $\{w_T^i\}_{i\geq 0}$.

\begin{lemma}\label{lem: conv gaussian amp}
    For any fixed $k>0$, it holds as $T\rightarrow \infty$ that
    \begin{align*}
    W_2\Big(\sP(\sqrt{\delta}u_T^0,u_T^1,...,u_T^k),\sP(u^0,u^1,...,u^k)\Big) &\rightarrow 0,\\
        W_2\Big(\sP(w_T^0,w_T^1,...,w_T^k),\sP(w^0,w^1,...,w^k)\Big) &\rightarrow 0.
    \end{align*}
\end{lemma}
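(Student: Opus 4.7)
Since the laws on both sides of each stated $W_2$ convergence are centered multivariate Gaussians in $\R^{2(k+1)}$ (after stacking the two-dimensional components), $W_2$ convergence reduces to convergence of the covariance matrices. My plan is to prove this covariance convergence by induction on $k$, exploiting the recursive definitions (\ref{eq:vector_amp_se}) and (\ref{eq:amp_gp_cov}) together with the Lipschitz regularity of the nonlinearities $f_i, g_i$.

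For the base case $k=0$, the key inputs are the two assertions of Lemma \ref{lem: AMP stage 1}: $(\xi_{-1,-1}^T)_{21}\to -a/\sqrt{\delta}$, and the convergence of the empirical joint distribution of $\sqrt{\delta}(\a_T^0-(\xi_{-1,-1}^T)_{21}\btheta^*)$ together with $\btheta^*$ to $\sP(\theta^0,\theta^*)$ in the iterated $n,d\to\infty$ then $T\to\infty$ limit. Applying Theorem \ref{thm:standard_amp_se} to the artificial AMP identifies this empirical distribution, at each fixed $T$, with the law of $(\sqrt{\delta}(u^0_{T,1}-(\xi_{-1,-1}^T)_{21}\theta^*),\theta^*)$. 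Combining and using independence of $u^0_T$ from $\theta^*$ in the state evolution, subtracting the deterministic linear shift (with convergent coefficient) yields $\sqrt{\delta}u^0_{T,1}\to u^\diamond\sim \N(0,1-a^2)$ in $W_2$, i.e., $\delta\E[(u^0_{T,1})^2]\to 1-a^2$; all other entries of $\mathrm{Cov}(\sqrt{\delta}u^0_T)$ vanish by the structure of $f^T_{-1}$, matching $\mathrm{Cov}(u^0)$. The convergence $w^0_T=(\sqrt{\delta}(u^0_{T,1}-(\xi_{-1,-1}^T)_{21}\theta^*),\theta^*)\to(\theta^0,\theta^*)=w^0$ in $W_2$ is then immediate.

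For the inductive step, I would exploit the structural simplifications (\ref{eq:onsager_simplify_2}): for $i\geq 0$, $f_i^T=f_i$ depends only on $w^0_T,\ldots,w^i_T$, while $g_i^T(u^{-T+1}_T,\ldots,u^i_T;V)$ depends only on $u^1_T,\ldots,u^i_T$ together with $(\sqrt{\delta}(u^0_{T,1}-(\xi_{-1,-1}^T)_{21}\theta^*),\theta^*)$, which is precisely $w^0_T$. Consequently, the covariances $\E[u^{k+1}_T(u^j_T)^\top]$ for $j\geq 1$ and $\E[w^{k+1}_T(w^j_T)^\top]$ for $j\geq 0$ prescribed by (\ref{eq:vector_amp_se}) are $2$-pseudo Lipschitz functionals of the inductively converging Gaussian vectors $(w^0_T,\ldots,w^k_T)$ and $(\sqrt{\delta}u^0_T,u^1_T,\ldots,u^k_T)$, with continuity following from the regularity of $f_k, g_k$ and convergence of the Onsager constant $(\xi_{-1,-1}^T)_{21}$. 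These covariances therefore converge to their target analogues in (\ref{eq:amp_gp_cov}) and (\ref{eq:w_cov}).

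The main obstacle is the cross-covariance $\sqrt{\delta}\E[u^{k+1}_T(u^0_T)^\top]$, whose definition via (\ref{eq:vector_amp_se}) with $j=-1$ genuinely involves $f^T_{-1}$ applied to $w^{-1}_T$, a first-stage quantity. Unpacking (\ref{eq:nonlinearity_first_stage}), the target in (\ref{eq:amp_gp_cov}) requires identification of $\sqrt{\delta}\,\cT(\varphi((w^{-1}_T)_2,S))(w^{-1}_T)_1$ with $\cT(\varphi(w^0_2,z))w^0_1$ in the limit. The second coordinate is immediate since $(w^{-1}_T)_2=V_2=\theta^*=w^0_2$ for all $T$ and $S=z$. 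The remaining identification $\sqrt{\delta}(w^{-1}_T)_1=((u^{-1}_T)_1-\theta^*(\xi_{-2,-2}^T)_{21})/\beta^T_{-1}\to\theta^0=w^0_1$ in $W_2$, jointly with the other inductively handled variables, is established by the same power iteration argument underlying the proof of Lemma \ref{lem: AMP stage 1}: the first-stage AMP realizes a power iteration on $\M_n$, and its penultimate Gaussian iterate, after the analogous linear shift, also converges in $W_2$ to $\sP(\theta^0,\theta^*)$, differing from the final iterate by only an extra factor of order $\lambda_{2,\infty}/\lambda_{1,\infty}<1$ in the exponential rate. Once this identification is in place, Lipschitz continuity of $\cT$ and $\varphi$ combined with the inductive convergence of $(w^0_T,\ldots,w^k_T)$ delivers $\sqrt{\delta}\E[u^{k+1}_T(u^0_T)^\top]\to \E[u^{k+1}(u^0)^\top]$, closing the induction.
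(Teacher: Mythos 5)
Your proposal is correct and follows essentially the same route as the paper: reduce $W_2$ convergence of centered Gaussians to covariance convergence, establish the base case $\delta\E[(u^0_{T,1})^2]\to 1-a^2$ from the first-stage GAMP analysis behind Lemma \ref{lem: AMP stage 1} together with $(\xi^T_{-1,-1})_{21}\to -a/\sqrt{\delta}$, and then alternate inductively between the $u$- and $w$-processes using the Lipschitz (hence quadratic-growth) structure of $f_i,g_i$ and the simplifications (\ref{eq:onsager_simplify_2}). The one place you go beyond the paper's write-up is the cross-covariance $\sqrt{\delta}\E[u^{k+1}_T(u^0_T)^\top]$ involving $f^T_{-1}$ and $w^{-1}_T$, which the paper dispatches with ``(iii) follows from similar arguments''; your resolution via the geometric contraction of the first-stage power iteration (consecutive iterates coincide in the limit, as the paper itself uses around (\ref{eq: conv first onsager})) is the right mechanism and correctly notes that the identification must hold jointly, not merely in marginal law.
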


As a corollary, using Lemma 6 in \citet{bayati_message_passing_2011} extended to general $\R^k$-valued functions, we also have convergence of the Onsager correction terms
\begin{align}\label{eq:converge_onsager}
    \lim_{T\rightarrow \infty} \xi_{ij}^T \rightarrow \xi_{ij},\quad \lim_{T\rightarrow \infty} \zeta_{ij}^T \rightarrow \zeta_{ij} \text{ for } -1 < j \leq i-1, \quad \lim_{T\rightarrow \infty} \zeta_{i,-1}^T \rightarrow \sqrt{\delta}\zeta_{i,-1} \text{ for } i\geq 0.
\end{align}

We also have convergence of high-dimensional AMP iterates in the high-dimensional limit.

\begin{lemma}\label{lem: conv hd amp}
    For $i\geq 0$, it holds almost surely that
    \begin{align*}
        \lim_{T\rightarrow\infty}\lim_{n,d\rightarrow\infty}\frac{1}{d}\pnorm{(\a_T^{i+1})_1-\a^{i+1}_1}{}^2 \rightarrow 0, &\qquad \lim_{T\rightarrow\infty}\lim_{n,d\rightarrow\infty}\frac{1}{d}\pnorm{(\a_T^{i+1})_2-\a^{i+1}_2}{}^2 \rightarrow 0, \\
        \lim_{T\rightarrow\infty}\lim_{n,d\rightarrow\infty}\frac{1}{n}\pnorm{(\b_T^i)_1-\b^i_1}{}^2 \rightarrow 0, &\qquad \lim_{T\rightarrow\infty}\lim_{n,d\rightarrow\infty}\frac{1}{n}\pnorm{(\b_T^i)_2-\b^i_2}{}^2 \rightarrow 0.
    \end{align*}
\end{lemma}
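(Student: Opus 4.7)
The plan is to proceed by induction on $i\geq 0$, unfolding the recursions (\ref{eq:spec_init_AMP}) and (\ref{eq: artifical AMP iterates}) term by term. Three ingredients drive the argument: (a) Lemma \ref{lem: AMP stage 1}, which identifies $\sqrt{\delta}\bigl((\a^0_T)_1 - (\xi^T_{-1,-1})_{21}\btheta^*\bigr) \approx \btheta^0 = \sqrt{d}\hat{\btheta}^s$ in normalized $\ell_2$ as $T\to\infty$; (b) the Lipschitz continuity of $g_i, f_i$ in their first $i$ and $i+1$ arguments; and (c) the Onsager convergences $\xi^T_{ij}\to\xi_{ij}$ and $\zeta^T_{ij}\to\zeta_{ij}$ for $j\geq 0$, together with the rescaled limit $\zeta^T_{i,-1}\to\sqrt{\delta}\,\zeta_{i,-1}$ from (\ref{eq:converge_onsager}), combined with $\pnorm{\X}{\op}\leq C$ almost surely under Assumption \ref{ass: model assumptions}-(3).

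The base case $i=0$ is the crux. The second components match trivially, since the stage-1 nonlinearities carry $\btheta^*$ through unchanged, the second entry of $f^T_{-1}$ vanishes, and the bottom row of $\zeta^T_{0,-1}$ is zero. For the first components, unfolding the stage-1 definitions in (\ref{eq:nonlinearity_first_stage}) and computing $\zeta^T_{0,-1}$ via the chain rule yields
\begin{align*}
(\b^0_T)_1 \;=\; \sqrt{\delta}\,\X\bigl((\a^0_T)_1 - (\xi^T_{-1,-1})_{21}\btheta^*\bigr) \;-\; \sqrt{\delta}\,\cT(\y)\circ(\b^{-1}_T)_1,
\end{align*}
while $(\b^0)_1 = (I-\tfrac{1}{\lambda^*_\delta}\bZ_s)\X\btheta^0$. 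Lemma \ref{lem: AMP stage 1} controls the first summand. For the second, it suffices to establish $\sqrt{\delta}\,(\b^{-1}_T)_1 \to (\b^0)_1$ in normalized $\ell_2$ as $T\to\infty$; combining this with the algebraic identity $\cT(y)(\lambda^*_\delta-\cT_s(y)) = \cT_s(y)$ and the relation $(\b^0)_1 = \X\btheta^0/(1+\cT(\y))$ from (\ref{eq:represent_Xtheta0}) then yields $\sqrt{\delta}\,\cT(\y)\circ(\b^{-1}_T)_1 \to \tfrac{1}{\lambda^*_\delta}\bZ_s\X\btheta^0$. Once $(\b^0_T)_1\to(\b^0)_1$ is in hand, $\a^1_T\to\a^1$ follows from the Lipschitz property of $f_0$, the operator-norm bound $\pnorm{\X}{\op}\leq C$, and $\xi^T_{00}\to\xi_{00}$.

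For the inductive step, assume the claim holds for all indices strictly below $i$, and decompose
\begin{align*}
\b^i - \b^i_T \;&=\; \X\Bigl[g_i(\a^1,\dots,\a^i;\btheta^0,\btheta^*) - g_i\Bigl(\a^1_T,\dots,\a^i_T;\sqrt{\delta}\bigl((\a^0_T)_1 - (\xi^T_{-1,-1})_{21}\btheta^*\bigr),\btheta^*\Bigr)\Bigr] \\
&\quad + \tfrac{1}{\delta}\sum_{j=0}^{i-1}\bigl[f_j(\b^0,\dots,\b^j;\z)\zeta_{i,j} - f_j(\b^0_T,\dots,\b^j_T;\z)\zeta^T_{i,j}\bigr] \\
&\quad - \bigl(\tfrac{1}{\lambda^*_\delta}\bZ_s\X\btheta^0,\mathbf{0}\bigr)\zeta_{i,-1} - \tfrac{1}{\delta}f^T_{-1}(\b^{-1}_T;\z)\,\zeta^T_{i,-1}.
\end{align*}
Each of the three pieces is bounded separately using the induction hypothesis, Lipschitzness of $g_i$ and $f_j$, the operator-norm bound on $\X$, and the Onsager convergences; the final pair of terms reduces to the same algebraic cancellation as in the base case, now with $\zeta^T_{i,-1}\to\sqrt{\delta}\,\zeta_{i,-1}$. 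An analogous decomposition (without the last pair of terms) covers $\a^{i+1}-\a^{i+1}_T$.

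The main obstacle is the base case identification $\sqrt{\delta}\,(\b^{-1}_T)_1 \to (\b^0)_1$: the stage-1 power iteration with normalizations $\{\beta^T_i\}$ of (\ref{eq:nonlinearity_first_stage}) must be traced carefully so that the $1/\sqrt{\delta}$ built into the recursive normalization of the $g^T_j$'s cancels exactly against the $\sqrt{\delta}$ prefactor of $\cT(\y)\circ(\b^{-1}_T)_1$. I expect this to follow from a finite-$T$ analogue of Lemma \ref{lem: AMP stage 1} applied one stage earlier (controlling $\a^{-1}_T$ rather than $\a^0_T$), combined with the convergence of the normalization constants $\beta^T_i$ to the limiting spectral quantities; once this is in place, the remainder of the argument is routine Lipschitz and operator-norm bookkeeping.
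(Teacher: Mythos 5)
Your overall route is the paper's own: the same induction, the same base-case split of $(\b^0_T)_1$ into the term controlled by Lemma \ref{lem: AMP stage 1} and the extra Onsager term $-\sqrt{\delta}\,\cT(\y)\circ(\b^{-1}_T)_1$, the same identification of that term with $-\frac{1}{\lambda_\delta^*}\bZ_s\X\btheta^0$ via $\cT/(1+\cT)=\cT_s/\lambda_\delta^*$ and (\ref{eq:represent_Xtheta0}) (this is exactly the paper's (\ref{eq: conv first onsager})), and the same Lipschitz/operator-norm/Onsager-convergence bookkeeping in the inductive step.

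The one step that your plan does not yet close is precisely the one you flag as the main obstacle, namely $\sqrt{\delta}\,(\b^{-1}_T)_1\to(\b^0)_1$. Applying Lemma \ref{lem: AMP stage 1} one stage earlier is legitimate (the first-stage iterates depend only on the number of steps from initialization, so $\a^{-1}_T=\a^0_{T-1}$ and $(\xi^T_{-2,-2})_{21}=(\xi^{T-1}_{-1,-1})_{21}$), and combining it with $\beta^T_{-1}\to1/\sqrt{\delta}$ gives the approximate relation $\sqrt{\delta}\,(\b^{-1}_T)_1\approx\X\btheta^0-\bZ\,\sqrt{\delta}\,(\b^{-2}_T)_1$ with $\bZ=\diag(\cT(\y))$; but the term involving $(\b^{-2}_T)_1$ does not go away, and you cannot close this recursion by a contraction argument, since $\pnorm{\bZ}{\op}=\max_i\cT(y_i)$ is not guaranteed to be below $1$ (it diverges as $\lambda_\delta^*$ approaches $\tau$). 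The missing ingredient is the convergence of consecutive first-stage iterates, i.e. $\frac{1}{n}\pnorm{(\b^{-1}_T)_1-(\b^{-2}_T)_1}{}^2\to0$ and $\frac{1}{d}\pnorm{(\a^{0}_T)_1-(\a^{-1}_T)_1}{}^2\to0$ in the double limit, which the paper imports from \cite[Lemma 5.3]{mondelli_optimal_2022} (alongside $\beta^T_{-1}\to1/\sqrt{\delta}$ from \cite[Lemma B.2]{mondelli_approximate_2022}); with it one obtains $(\I+\bZ)(\b^{-1}_T)_1\approx\X\big((\a^0_T)_1-(\xi^T_{-1,-1})_{21}\btheta^*\big)$, and then Lemma \ref{lem: AMP stage 1} plus the algebra you describe finishes the base case. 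Once you add this ingredient, your argument coincides with the paper's proof; the inductive step as you outline it is correct as stated.
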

We pause to give some intuition of the above result. Comparing the AMP iterates in (\ref{eq: artifical AMP iterates}) and (\ref{eq:spec_init_AMP}), the result of Lemma \ref{lem: conv hd amp} is rather intuitive except for why the additional Onsager correction term $\frac{1}{\delta}f_{-1}^T(\b^{-T}_T,...,\b^{-1}_T;\s)\zeta^T_{i,-1}$ in (\ref{eq: artifical AMP iterates}) converges to $\left(-\frac{1}{\lambda_\delta^*}\bZ_s \X\btheta^0, \mathbf{0}\right)\zeta_{i,-1}$ in (\ref{eq:spec_init_AMP}) in the high dimensional limit followed by $T\rightarrow\infty$. Since $f_{-1,2}^T = 0$ as in (\ref{eq:nonlinearity_first_stage}) and $\zeta_{i,-1}^T\ \rightarrow \sqrt{\delta}\zeta_{i,-1}$ as in (\ref{eq:converge_onsager}), it suffices to check that 
\begin{align}\label{eq: conv first onsager}
\lim_{T\rightarrow\infty}\lim_{n,d\rightarrow\infty}\frac{1}{n} \biggpnorm{\frac{1}{\delta}f_{-1,1}^T(\b^{-T}_T,...,\b^{-1}_T;\s) + \frac{1}{\sqrt{\delta}\lambda_\delta^*}\bZ_s \X\btheta^0}{}^2 = 0.
\end{align}
By the definition of $f_{-1,1}^T$ in (\ref{eq:nonlinearity_first_stage}), we know that $\frac{1}{\delta}f_{-1,1}^T(\b^{-T}_T,...,\b^{-1}_T;\s) = -\bZ(\b_T^{-1})_1$, where $\bZ = \diag(\cT(\y))$, and using (\ref{eq:onsager_simplify_1}) and $(\zeta^T_{-1,-2})_{11} = \frac{1}{\beta_{-1}^T\sqrt{\delta}}$, 
\begin{align*}
 (\b_T^{-1})_1 &= \X g_{-1,1}^T(\a_T^{-T+1},...,\a_T^{-1};\v) + \frac{1}{\delta}f_{-2,1}^T(\b_T^{-T},...,\b_{T}^{-2};\s)(\zeta_{-1,-2}^T)_{11}\\
 &=\frac{1}{\beta_{-1}^T\sqrt{\delta}}\X\big((\a_T^{-1})_1-\btheta^*(\xi_{-2,-2}^T)_{21}\big) - \frac{1}{\beta_{-1}^T\sqrt{\delta}}\bZ(\b_T^{-2})_1.
\end{align*}
Using $\beta_{-1}^T \rightarrow \frac{1}{\sqrt{\delta}}$ as $T\rightarrow\infty$ (\cite[Lemma B.2]{mondelli_approximate_2022}), the above display implies that
\begin{align*}
\lim_{T\rightarrow\infty}\lim_{n,d\rightarrow\infty}\frac{1}{n}\Bigpnorm{(\b_T^{-1})_1 + \bZ(\b_T^{-2})_1 - \X\big((\a_T^{-1})_1-\btheta^*(\xi^T_{-2,-2})_{21}\big)}{}^2 = 0.
\end{align*}
Using that the AMP iterates of the first stage converge in the high-dimensional limit (\cite[Lemma 5.3]{mondelli_optimal_2022}), we have 
\begin{align*}
\lim_{T\rightarrow\infty}\lim_{n,d\rightarrow\infty}\Big\{\frac{1}{n}\pnorm{(\b_T^{-1})_1-(\b_T^{-2})_1}{}^2, \frac{1}{n}\pnorm{(\a_T^{0})_1-(\a_T^{-1})_1}{}^2\Big\} = 0, \quad \lim_{T\rightarrow\infty} (\xi^T_{-2,-2})_{21} = \lim_{T\rightarrow\infty}(\xi_{-1,-1}^T)_{21},
\end{align*}
which further implies that 
\begin{equation*}
\lim_{T\rightarrow\infty}\lim_{n,d\rightarrow\infty}\frac{1}{n}\Bigpnorm{(\I+\bZ)(\b_T^{-1})_1 - \X\big((\a_T^{0})_1-\btheta^*(\xi^T_{-1,-1})_{21}\big)}{}^2 = 0.
\end{equation*}
Now applying Lemma \ref{lem: AMP stage 1}, we can conclude
\begin{equation*}\lim_{T\rightarrow\infty}\lim_{n,d\rightarrow\infty}\frac{1}{n}\Bigpnorm{(\b_T^{-1})_1 - (\I+\bZ)^{-1}\frac{1}{\sqrt{\delta}}\X\btheta^0}{}^2 = 0.
\end{equation*}
Now using that $\frac{1}{\delta}f_{-1,1}^T(\b^{-T}_T,...,\b^{-1}_T;\z) = -\bZ(\b_T^{-1})_1$ and $(\I+\bZ)^{-1}\bZ = \frac{1}{\lambda_\delta^*}\bZ_s$, we get the desired (\ref{eq: conv first onsager}).

With these results, we are now ready to prove Theorem \ref{thm: amp spec}. 
\begin{proof}[Proof of Theorem \ref{thm: amp spec}]
We show for any 2-Pseudo Lipschitz function $\psi: \R^{2i+2} \rightarrow \R$, we have
    \begin{equation}
        \frac{1}{d}\sum_{j=1}^d \psi\Big((\a^1)_j,...,(\a^i)_j;(\btheta^0)_j,(\btheta^*)_j\Big) \rightarrow \E\left[\psi(u^1,...,u^i;a\theta^*+u^0_1,\theta^*)\right].
    \end{equation}
    The analogous statement for the $\b^j$ iterates is similar. Let $(u_T^0,u_T^1,...,u_T^i),(u^0,u^1,...,u^i)$ be an optimal $W_2$ coupling. For any integer $T > 0$, we have the decomposition
    \begin{align*}
        \begin{split}
            &\limsup_{n,d\rightarrow\infty}\Big|\frac{1}{d}\sum_{j=1}^d \psi\Big((\a^1)_j,\ldots,(\a^i)_j;(\btheta^0)_j,(\btheta^*)_j\Big) - \E\left[\psi(u^1,\ldots,u^i;a\theta^*+u^0_1,\theta^*)\right] \Big|\\
            &\leq \underbrace{\limsup_{n,d\rightarrow\infty}\Big|\frac{1}{d}\sum_{j=1}^d \psi\Big((\a^1)_j,\ldots,(\a^i)_j;(\btheta^0)_j,(\btheta^*)_j\Big) - \frac{1}{d}\sum_{j=1}^d \psi\Big((\a^1_T)_j,\ldots,(\a^i_T)_j;\sqrt{\delta}((\a_T^0)_1 - (\xi_{-1,-1}^T)_{21}\btheta^*)_j,(\btheta^*)_j\Big)\Big|}_{(I)}\\
            &+\underbrace{\limsup_{n,d\rightarrow\infty}\Big|\frac{1}{d}\sum_{j=1}^d \psi\Big((\a^1_T)_j,...,(\a^i_T)_j;\sqrt{\delta}((\a_T^0)_1 - (\xi_{-1,-1}^T)_{21}\btheta^*)_j,(\btheta^*)_j\Big) - \E\left[\psi(u^1_T,...,u^i_T;\sqrt{\delta}((u^0_T)_1) - (\xi^T_{-1,-1})_{21}\theta^*),\theta^*)\right]\Big|}_{(II)}\\
            &+\underbrace{\Big|\E\left[\psi(u^1_T,...,u^i_T;\sqrt{\delta}((u^0_T)_1)-(\xi^T_{-1,-1})_{21}\theta^*,\theta^*)\right] - \E\left[\psi(u^1,...,u^i;a\theta^*+u^0_1,\theta^*)\right]\Big|}_{(III)}.
        \end{split}
    \end{align*}
    Here $\{\a^i\}_{i\geq 0}$ are given by (\ref{eq:spec_init_AMP}) with respect to the randomness of $(\btheta^\ast, \X,\z)$, and $\{\a^i_T\}_{i\geq 0}$
    are given by (\ref{eq:random_init_amp}) with respect to the randomness of $(\btheta^\ast, \n, \X,\z)$, coupled with the same $(\btheta^\ast, \X,\z)$ independent of $\n$.
    
    We will show that almost surely as $T\rightarrow\infty$, terms $(I)$-$(III)$ converge to $0$.
    First note that for any finite $T > 0$, $(II) = 0$ by Theorem \ref{thm:standard_amp_se}. For $(I)$, we have by triangle inequality
    \begin{align}
        \begin{split}
            &\Big|\frac{1}{d}\sum_{j=1}^d \psi\Big((\a^1)_j,...,(\a^i)_j;(\btheta^0)_j,(\btheta^*)_j\Big) - \frac{1}{d}\sum_{j=1}^d \psi\Big((\a^1_T)_j,...,(\a^i_T)_j;\sqrt{\delta}((\a_T^0)_1 - (\xi_{-1,-1}^T)_{21}\btheta^*)_j,(\btheta^*)_j\Big)\Big| \\
            &\leq \frac{1}{d}\sum_{j=1}^d\Big|\psi\Big((\a^1)_j,...,(\a^i)_j;(\btheta^0)_j,(\btheta^*)_j\Big) - \psi\Big((\a^1_T)_j,...,(\a^i_T)_j;\sqrt{\delta}((\a_T^0)_1 - (\xi_{-1,-1}^T)_{21}\btheta^*)_j,(\btheta^*)_j\Big)\Big|.
        \end{split}
    \end{align}
Since $\psi$ is 2-PL, we have for each $j\in[d]$ that
\begin{align}
    \begin{split}
        &\Big|\psi\Big((\a^1)_j,...,(\a^i)_j;(\btheta^0)_j,(\btheta^*)_j\Big) -  \psi\Big((\a^1_T)_j,...,(\a^i_T)_j;\sqrt{\delta}((\a_T^0)_1 - (\xi_{-1,-1}^T)_{21}\btheta^*)_j,(\btheta^*)_j\Big)\Big| \\
        &\leq C\left(1+\pnorm{\alpha_j}{}+\pnorm{\alpha_j^T}{}\right)\pnorm{\alpha_j-\alpha_j^T}{},
    \end{split}
\end{align}
where $\alpha_j,\alpha_j^T$ are length $2(2i+1)$ vectors defined by
\begin{align*}
    \alpha_j &= \Big((\a^1)_{1j},(\a^1)_{2j},...,(\a^i)_{1j},(\a^i)_{2j},(\btheta^0)_j,(\btheta^*)_j\Big)\\
    \alpha_j^T &= \Big((\a^1_T)_{1j},(\a^1_T)_{2j},...,(\a^i_T)_{1j},(\a^i_T)_{2j},\sqrt{\delta}((\a_T^0)_1 - (\xi_{-1,-1}^T)_{21}\btheta^*)_j,(\btheta^*)_j\Big).
\end{align*}
Define $\Xi_T, \Theta_T \in \R_+^d$ as 
\begin{align*}
\Xi_T = \{\pnorm{\alpha_j-\alpha_j^T}{}\}_{j=1}^d, \quad \Theta_T = \{1+\pnorm{\alpha_j}{}+\pnorm{\alpha_j^T}{}\}_{j=1}^d,
\end{align*}
then
\begin{align*}
(I) \leq \limsup_{n,d\rightarrow\infty} \frac{C}{d}\langle\Xi_T,\Theta_T\rangle \leq \limsup_{n,d\rightarrow\infty}\frac{C}{d}\pnorm{\Xi_T}{}\pnorm{\Theta_T}{}.
\end{align*}
It remains to show that, almost surely, $\lim_{T\rightarrow\infty}\lim_{n,d\rightarrow\infty}\frac{1}{\sqrt{d}}\pnorm{\Xi_T}{}=0$ and $\limsup_{T\rightarrow\infty}\limsup_{n,d\rightarrow\infty}\frac{1}{\sqrt{d}}\pnorm{\Theta_T}{} < \infty$. For the first claim, up to a constant $C > 0$ depending on $i$, we have almost surely that
\begin{align*}
    \frac{1}{d}\pnorm{\Xi_T}{}^2 \leq C\Big[\frac{1}{d}\sum_{k=1}^i\pnorm{(\a_T^k)_1-\a^k_1}{}^2 + \frac{1}{d}\sum_{k=1}^i\pnorm{(\a_T^k)_2-\a^k_2}{}^2 + \frac{1}{d}\pnorm{\btheta^0 - \sqrt{\delta}((\a_T^0)_1 - (\xi_{-1,-1}^T)_{21}\btheta^*)}{}^2\Big] \rightarrow 0,
\end{align*}
where we apply Lemmas \ref{lem: AMP stage 1} and \ref{lem: conv hd amp}. For the second claim, note that
\begin{align*}
\frac{1}{\sqrt{d}} \pnorm{\Theta_T}{} &= \frac{1}{\sqrt{d}} \sqrt{\sum_{j=1}^d (1+\pnorm{\alpha_j}{} +\pnorm{\alpha_j^T}{})^2}\leq C \sqrt{1 + \frac{1}{d}\sum_{j=1}^d\pnorm{\alpha_j}{}^2 + \frac{1}{d}\sum_{j=1}^d\pnorm{\alpha_j^T}{}^2}\\
&\leq C \sqrt{1 + \frac{1}{d}\sum_{j=1}^d\pnorm{\alpha_j-\alpha_j^T}{}^2 + \frac{1}{d}\sum_{j=1}^d\pnorm{\alpha_j^T}{}^2}.
\end{align*}
By Lemma \ref{lem: conv hd amp}, we have $\lim_{T\rightarrow\infty}\lim_{n,d\rightarrow\infty}\frac{1}{d}\sum_{j=1}^d\pnorm{\alpha_j-\alpha_j^T}{}^2 \rightarrow 0$ almost surely. By Theorem \ref{thm:standard_amp_se}, we have $\lim_{n,d\rightarrow \infty}\frac{1}{d}\sum_{j=1}^d\pnorm{\alpha_j^T}{}^2 = \E[\pnorm{\alpha^T}{}^2]$ almost surely for any $T > 0$, where 
\begin{align*}
\alpha^T := \Big((u^1_T)_{1},(u^1_T)_{2},\ldots,(u^i_T)_{1},(u^i_T)_{2}, \sqrt{\delta}\big((u^0_T)_1 - (\xi_{-1,-1}^T)_{21}\theta^*\big) ,\theta^*\Big).
\end{align*}
Moreover, we have $\E[\pnorm{\alpha^T - \alpha}{}^2] \rightarrow 0$ as $T\rightarrow\infty$ by Lemma \ref{lem: conv gaussian amp}, hence
\begin{equation}
\lim_{T\rightarrow\infty}\lim_{n,d\rightarrow \infty}\frac{1}{d}\sum_{j=1}^d\pnorm{\alpha_j^T}{2}^2 = \E[\pnorm{\alpha}{}^2],
\end{equation}
where $\alpha := ((u^1)_{1},(u^1)_{2},...,(u^i)_{1},(u^i)_{2}, \theta^0 ,\theta^*)$.
This concludes the second claim, thereby showing that $(I) \rightarrow 0$ almost surely as $T\rightarrow\infty$. 

For $(III)$, we have
\begin{align*}
&\Big|\E\Big[\psi\Big(u^1_T,...,u^i_T;\sqrt{\delta}\big((u^0_T)_1  - (\xi^T_{-1,-1})_{21}\theta^*\big),\theta^*\Big)\Big] - \E\Big[\psi\Big(u^1,...,u^i;a\theta^*+u^0_1,\theta^*\Big)\Big]\Big|\\
    &\leq \E \Big[\Big|\psi\Big(u^1_T,...,u^i_T;\sqrt{\delta}\big((u^0_T)_1 - (\xi^T_{-1,-1})_{21}\theta^*\big),\theta^*\Big) - \psi\Big(u^1,...,u^i;a\theta^*+u^0_1,\theta^*\Big)\Big|\Big]\\
    &\leq C\E\Big[(1+\pnorm{\alpha^T}{}+\pnorm{\alpha}{})\pnorm{\alpha^T-\alpha}{}\Big]\\
    &\leq C\sqrt{\E\Big[(1+\pnorm{\alpha^T}{}^2+\pnorm{\alpha}{}^2)\Big]} \cdot \sqrt{\E[\pnorm{\alpha^T-\alpha}{}^2]},
\end{align*}
It is easy to see that $\limsup_{T\rightarrow\infty} \sqrt{\E\left[(1+\pnorm{\alpha^T}{}^2+\pnorm{\alpha}{}^2)\right]}$ is bounded, and $\lim_{T\rightarrow\infty}\sqrt{\E[\pnorm{\alpha^T-\alpha}{}^2]} = 0$ by Lemma \ref{lem: conv gaussian amp}, thereby showing $(III)\rightarrow 0$ as $T\rightarrow\infty$. The proof is complete.
\end{proof}

\subsection{Proof of auxiliary lemmas}\label{subsec:proof_auxiliary_amp}
\subsubsection{Proof of Lemma \ref{lem: AMP stage 1}}
We check that the first stage of our AMP is equivalent to the first stage of the artificial GAMP given in Section 5 and B.1 in \citet{mondelli_approximate_2022}. More precisely, the GAMP algorithm is: for $t\geq 0$, 
\begin{align}\label{eq:gamp}
\begin{split}
\tilde{\x}^{t+1} &= \frac{1}{\sqrt{\delta}} \X^\top\tilde{h}_t(\tilde{\u}^t,\y) - \tilde{c}_t \tilde{f}_t(\tilde{\x}^t),\\
\tilde{\u}^{t+1} &= \frac{1}{\sqrt{\delta}}\X\tilde{f}_{t+1}(\tilde{\x}^{t+1}) - \tilde{b}_{t+1}\tilde{h}_t(\tilde{\u}^t,\y),
\end{split}
\end{align}
with initialization $\tilde{\x}^0 = \alpha \btheta^\ast + \sqrt{1-\alpha^2}\n$ for some $\alpha\in(0,1)$ and $\tilde{\u}^0 = \frac{1}{\delta} \X \tilde{f}_0(\tilde{\x}^0) = \frac{1}{\delta}\X \tilde{\x}^0$ with $\tilde{f}_0(x) = x$. Here the Onsager coefficients are
\begin{align*}
\tilde{c}_t = \E[\tilde{h}_t'(\tilde{U}_t, Y)], \quad \tilde{b}_{t+1} = \frac{1}{\delta}\E[\tilde{f}_{t+1}'(\tilde{X}_{t+1})],
\end{align*}
where, with $\omega^\ast,W_{\tilde{U},t}, W_{\tilde{X},t}$ denoting independent standard Gaussian variables and $Y = \varphi(\omega^\ast,z)$, the state evolution variables are defined by 
\begin{equation}\label{eq:power_iteration_SE}
\begin{gathered}
\tilde{U}_t = \mu_{\tilde{U},t} \omega^\ast + \sigma_{\tilde{U},t}W_{\tilde{U},t}, \quad \tilde{X}_{t} = \mu_{\tilde{X},t} \theta^\ast + \sigma_{\tilde{X},t}W_{\tilde{X},t},
\end{gathered}
\end{equation}
and the state evolution coefficients $\{\mu_{\tilde U, t}\}, \{\mu_{\tilde X,t}\}, \{\sigma_{\tilde U,t}\}, \{\sigma_{\tilde X,t}\}$ are given by
\begin{equation}\label{eq:power_iteration_coef}
\begin{gathered}
\mu_{\tilde U,t} = \frac{1}{\sqrt{\delta}}\E[\theta^\ast \tilde f_t(\tilde X_t)],\quad \sigma_{\tilde U,t}^2 = \frac{1}{\delta}\E[\tilde f_t(\tilde X_t)^2] - \mu_{\tilde U,t}^2, \\
\mu_{\tilde X,t+1} = \sqrt{\delta}\E[\omega^\ast \tilde h_t(\tilde U_t, Y)] - \E[\tilde h_t'(\tilde U_t,Y)]\E[\theta^\ast \tilde f_t(\tilde X_t)],\quad \sigma_{\tilde X,t+1}^2 = \E[\tilde h_t^2(\tilde U_t,Y)], 
\end{gathered}
\end{equation}
initialized from $\mu_{\tilde X,0} = \alpha$ and $\sigma^2_{\tilde X,0} = 1-\alpha^2$. The choices for the nonlinearities are
\begin{align}\label{eq:power_iter_nonlinearity}
\tilde{h}_t(u,y) = \sqrt{\delta}u\cT(y), \quad \tilde{f}_t(x) = \frac{x}{\beta_t}, 
\end{align}
with $\cT(y) = \frac{\cT_s(y)}{\lambda^\ast_\delta - \cT_s(y)}$ given in (\ref{def:T_func}) and $\beta_t = \sqrt{\mu_{\tilde{X},t}^2 + \sigma_{\tilde{X},t}^2}$.

Specifically, we show the following correspondence for $i=-T,-T+1,\dots,-1,$
\begin{equation}\label{eq:amp_gamp_correspondence}
\begin{gathered}
    (\b_T^i)_1 = \Tilde{\u}^{i+T}, \quad (w_T^i)_1 \overset{d}{=} \Tilde{U}_{i+T}\\
    (\a_T^{i+1})_1 - \btheta^* (\xi^T_{i,i})_{21} = \Tilde{\x}^{i+T+1}, \quad (u_T^{i+1})_1-(\xi^T_{i,i})_{21}\theta^* \overset{d}{=} \Tilde{X}_{i+T+1}\\
    \beta_i^T = \beta_{i+T},
\end{gathered}
\end{equation}
where the right side is the GAMP notation from \citet{mondelli_approximate_2022}. With these equivalences, the desired conclusion follows from Theorem 1 in \citet{mondelli_approximate_2022}.

We show (\ref{eq:amp_gamp_correspondence}) by induction.

\textbf{Base Case ($i = -T$)} 
Let $\v = (\v_1,\v_2) = (\alpha\btheta^*+\sqrt{1-\alpha^2}\n,\btheta^*)$ be as in (\ref{eq:randomness_v}), $\bZ = \diag(\cT(y_1),\ldots, \cT(y_n))$ with $\cT(y) = \frac{\cT_s(y)}{\lambda_\delta^*-\cT_s(y)}$ as given by (\ref{def:T_func}). The first iterates of the artificial GAMP (\ref{eq:gamp}) are
\begin{align}
    \Tilde{\u}^{0} = \frac{1}{\sqrt{\delta}}\X\v_1, \quad \Tilde{\x}^{1} = \frac{1}{\sqrt{\delta}}\X^\top\bZ\X\v_1 - \sqrt{\delta}\E[\cT(Y)]\v_1,
\end{align}
where the expectation is over $Y = \varphi(G,z)$ with $G\sim \N(0,1)$ independent of $z \sim \sP(z)$.
The SE iterates of the GAMP are
\begin{align*}
    \Tilde{U}_0 = \mu_{\Tilde{U},0} \omega^* +\sigma_{\Tilde{U},0} W_{\Tilde{U},0}, \quad \Tilde{X}_1 = \mu_{\Tilde{X},1} \theta^* +\sigma_{\Tilde{X},1} W_{\Tilde{X},1},
\end{align*}
where $\theta^* \sim \sP(\theta^*)$ and $\omega^*,W_{\Tilde{U},0},W_{\Tilde{X},1}$ are independent standard normals and
\begin{equation}\label{eq:gamp_se_init}
\begin{aligned}
    \mu_{\tilde{U},0} = \frac{\alpha}{\sqrt{\delta}}, &\qquad \sigma_{\tilde{U},0}^2 = \frac{1-\alpha^2}{\delta},\\
    \mu_{\tilde{X},1} = \frac{\alpha}{\sqrt{\delta}}, &\qquad \sigma_{\tilde{X},1}^2 = \alpha^2\E[\cT(Y)^2(\omega^*)^2]+(1-\alpha^2)\E[\cT(Y)^2],
\end{aligned}
\end{equation}
using Equation B.2 in \citet{mondelli_approximate_2022}.
In the above notation, $Y=\varphi(\omega^*,z)$ where $z\sim \sP(z)$. 

To see the correspondence involving $\b^{-T}_T$ and $w^{-T}_T$, we have by the AMP iterate (\ref{eq:random_init_amp}) that
\begin{equation}
    \b_T^{-T} = g_{-T}^T(\v) = \Big(\frac{1}{\sqrt{\delta}}\X\v_1,\X\btheta^*\Big),
\end{equation}
which verifies $(\b_T^{-T})_1 = \Tilde{\u}^0.$ We have the centered Gaussian $w_T^{-T}$ with covariance
\begin{equation}
    \E[w_T^{-T}(w_T^{-T})^\top] = \begin{pmatrix}
        \frac{1}{\delta} & \frac{\alpha}{\sqrt{\delta}}\\
        \frac{\alpha}{\sqrt{\delta}} & 1
    \end{pmatrix}.
\end{equation}
This implies $\big((w_T^{-T})_1,(w_T^{-T})_2\big) \overset{d}{=} \big(\frac{\alpha}{\sqrt{\delta}} \omega^* + \sqrt{\frac{1-\alpha^2}{\delta}}W_{w,-T},\omega^\ast\big)$, where $\omega^*$ and $W_{w,-T}$ are independent standard Gaussian variables, verifying that $(w_T^{-T})_1 \overset{d}{=} \Tilde{U}_{0}$. 

To see the correspondence involving $\a^{-T+1}_T$ and $u^{-T+1}_T$, we have by the AMP iterate (\ref{eq:random_init_amp}) that
\begin{align*}
    \a_{T}^{-T+1} = \left(\frac{1}{\sqrt{\delta}}\X^\top\bZ\X\v_1,\mathbf{0}\right) + \left(\frac{1}{\sqrt{\delta}}\v_1,\btheta^*\right)\xi_{-T,-T}^T,
\end{align*}
where the Onsager correction $\xi_{-T,-T}^T$ has the form
\begin{equation*}
    \begin{pmatrix}
        \E\left[\frac{\partial}{\partial(w_T^{-T})_1}f_{-T,1}(w_T^{-T};z)\right] & 0\\
        \E\left[\frac{\partial}{\partial(w_T^{-T})_2}f_{-T,1}(w_T^{-T};z)\right] & 0
    \end{pmatrix}.
\end{equation*}
Using the definition in (\ref{eq:nonlinearity_first_stage}) and $(w^{-T}_T)_2 \overset{d}{=} \omega^\ast$, we can compute the top left term as
\begin{align*}
     \E\left[\frac{\partial}{\partial(w_T^{-T})_1}f_{-T,1}(w_T^{-T};z)\right] = -\delta\E[\cT(Y)],
\end{align*}
which implies that
\begin{align*}
    (\a_T^{-T+1})_1 - \btheta^* (\xi^T_{-T,-T})_{21} = \frac{1}{\sqrt{\delta}}\X^\top\bZ\X\v_1 - \sqrt{\delta}\E[\cT(Y)]\v_1 = \Tilde{\x}^{1}.
\end{align*}
We also have the centered Gaussian variable $(u_T^{-T+1})_1$ with variance
\begin{align*}
\E[(u_T^{-T+1})_1^2]  = \delta\E[(\cT(Y)(w_T^{-T})_1)^2] = \alpha^2\E[\cT(Y)^2(\omega^*)^2]+(1-\alpha^2)\E[\cT(Y)^2].
\end{align*}
This matches $\sigma_{\tilde{X},1}^2$ in (\ref{eq:gamp_se_init}), hence to show $(u_T^{-T+1})_1-(\xi^T_{-T,-T})_{21}\theta^* \overset{d}{=} \tilde{X}_1 \overset{d}{=} \mu_{\tilde{X},1}\theta^\ast + \sigma_{\tilde{X},1}W_{\tilde{X},1}$, it suffcies to verify $(\xi^T_{-T,-T})_{21}=-\frac{\alpha}{\sqrt{\delta}}$. Using Gaussian integration by parts and that $(w_T^{-T})_2|(w_T^{-T})_1 \sim \N(\alpha\sqrt{\delta}(w_T^{-T})_1,1-\alpha^2)$, we can compute
\begin{align*}
    (\xi^T_{-T,-T})_{21} = \E\left[\frac{\partial}{\partial(w_T^{-T})_2}f_{-T,1}(w_T^{-T};z)\right] = \frac{1}{1-\alpha^2}\E\left[\left((w_T^{-T})_2-\alpha\sqrt{\delta}(w_T^{-T})_1\right)f_{-T,1}(w_T^{-T};z)\right].
\end{align*}
Now using $(w_T^{-T})_2 = \omega^*$ and $(w_T^{-T})_1 = \frac{\alpha}{\sqrt{\delta}} \omega^* + \sqrt{\frac{1-\alpha^2}{\delta}}W_{w,-T}$, we can compute that
\begin{align*}
    (\xi_{-T,-T}^T)_{21} = -\sqrt{\delta}\alpha(\E[\cT(Y)(\omega^*)^2] - \E[\cT(Y)]) = -\frac{\alpha}{\sqrt{\delta}},
\end{align*}
where the last line uses that $\E[\cT(Y)(\omega^*)^2] - \E[\cT(Y)] = \frac{1}{\delta}$, which was computed in Equation 5.38 in \citet{mondelli_optimal_2022}. Lastly, $\beta_{-T}^T = \beta_0 = 1$ by definition, establishing the base case.

\noindent\textbf{Inductive Step}
Suppose that the equivalences (\ref{eq:amp_gamp_correspondence}) hold at the $(i-1)$-th iterate for some $i\geq -T+1$. We show that they also hold for the $i$-th iterate. The GAMP iterates (\ref{eq:gamp}) at this stage satisfy 
\begin{align}\label{eq:gamp_artificial}
\begin{split}
    \Tilde{\u}^{i+T} &= \frac{1}{\beta_{i+T}\sqrt{\delta}}\X\Tilde{\x}^{i+T} - \frac{1}{\sqrt{\delta}\beta_{i+T}}\bZ \Tilde{\u}^{i-1+T},\\
    \tx^{i+T+1} &= \X^\top\bZ\tu^{i+T} - \frac{\sqrt{\delta}}{\beta_{i+T}}\E[\cT(Y)]\tx^{i+T}.
\end{split}
\end{align}
Under the nonlinearity choice (\ref{eq:power_iter_nonlinearity}), the corresponding state evolution variables are the same as in (\ref{eq:power_iteration_SE}): 
\begin{align}\label{eq:gamp_artificial_dist}
\begin{split}
    \Tilde{U}_{i+T} &= \mu_{\Tilde{U},i+T} \omega^* +\sigma_{\Tilde{U},i+T} W_{\Tilde{U},i+T}\\
    \Tilde{X}_{i+T+1} &= \mu_{\Tilde{X},i+T+1} \theta^* +\sigma_{\Tilde{X},i+T+1} W_{\Tilde{X},i+T+1},
\end{split}
\end{align}
where $\theta^* \sim \sP(\theta^*)$ and $\omega^*,W_{\Tilde{U},i+T},W_{\Tilde{X},i+T+1}$ are independent standard normals, and the coefficients in (\ref{eq:power_iteration_SE}) are given by
\begin{align}\label{eq:gamp_artificial_se}
\begin{split}
    \mu_{\Tilde{U},i+T} = \frac{\mu_{\Tilde{X},i+T}}{\sqrt{\delta}\beta_{i+T}}, &\qquad \sigma_{\Tilde{U},i+T}^2 = \frac{\sigma_{\Tilde{X},i+T}^2}{\delta\beta_{i+T}^2},\\
    \mu_{\Tilde{X},i+T+1} = \frac{\mu_{\Tilde{X},i+T}}{\sqrt{\delta}\beta_{i+T}}, &\qquad \sigma_{\Tilde{X},i+T+1}^2 = \mu_{\Tilde{X},i+T}^2\E[\cT(Y)^2\omega^*]+\sigma_{\Tilde{X},i+T}^2\E[\cT(Y)^2].
\end{split}
\end{align}
We first establish the correspondence involving $(\b_T^i)_1$ and $(w_T^i)_1$. By the AMP iterate in (\ref{eq:random_init_amp}), we have
\begin{align*}
    \b_T^i = \left(\frac{1}{\beta_i^T\sqrt{\delta}}\X\big((\a_T^i)_1-\btheta^*(\xi_{i-1,i-1}^T)_{21}\big) - \frac{1}{\sqrt{\delta}\beta_{i}^T}\bZ \b_T^{i-1},\X\btheta^*\right).
\end{align*}
Then using the inductive assumption that $(\a^i_T)_1 - \btheta^\ast(\xi^T_{i-1,i-1})_{21} = \tilde\x^{i+T}$, we have verified that $(\b^i_T)_1 = \tu^{i+T}$ with the latter given in (\ref{eq:gamp_artificial}). The corresponding Gaussian variable $w_i^T$ has covariance
\begin{align*}
    \E[w_T^i(w_T^i)^\top] &= \begin{pmatrix}
        \E\big[\frac{1}{\beta_i^T\sqrt{\delta}}\big(u_T^i-(\xi_{i-1,i-1}^T)_{21}\theta^*)\big)\big]^2 & \E\big[\frac{1}{\beta_i^T\sqrt{\delta}}\big(u_T^i-(\xi_{i-1,i-1}^T)_{21}\theta^*\big)\theta^*\big]\\
        \E\big[\frac{1}{\beta_i^T\sqrt{\delta}}\big(u_T^i-(\xi_{i-1,i-1}^T)_{21}\theta^*\big)\theta^*\big] & 1
    \end{pmatrix}\\
    &= \begin{pmatrix}
        \frac{1}{\beta_{i+T}^2\delta} (\mu_{\Tilde{X},i+T}^2 + \sigma_{\Tilde{X},i+T}^2)& \frac{1}{\beta_{i+T}\sqrt{\delta}} \mu_{\Tilde{X},i+T}\\
        \frac{1}{\beta_{i+T}\sqrt{\delta}} \mu_{\Tilde{X},i+T} & 1
    \end{pmatrix},
\end{align*}
where in the second equality we use the inductive hypothesis. This implies 
\begin{equation}\label{eq: wT decomposition}
    \big((w_T^i)_1,(w_T^i)_2\big) \overset{d}{=}\Big( \frac{\mu_{\Tilde{X},i+T}}{\sqrt{\delta}\beta_{i+T}} \omega^* + \frac{\sigma_{\Tilde{X},i+T}}{\sqrt{\delta}\beta_{i+T}}W_{w,i}, \omega^\ast\Big),
\end{equation}
with $\omega^\ast, W_{w,i}$ being independent standard Gaussian variables, which verifies that $(w_T^i)_1 \overset{d}{=} \Tilde{U}_{i+T}.$

Next we establish the correspondence involving $(\a_T^{i+1})_1$ and $(u_T^{i+1})_1$. By the AMP iterate in (\ref{eq:random_init_amp}), we have
\begin{align}
    \notag(\a_T^{i+1})_1 &= -\frac{1}{\delta}\X^\top f_{i,1}^T(\b_{T}^{-T},...,\b_T^i;\z)+g_{i,1}^T(\a^{-T+1}_T,...,\a^i_T;\v)(\xi_{ii}^T)_{11} + g_{i,2}^T(\a^{-T+1}_T,...,\a^i_T;\v)(\xi_{ii}^T)_{21}\\
    \notag &=\X^\top \bZ(\b_T^i)_1 + \frac{1}{\beta_i^T\sqrt{\delta}}((\a_T^i)_1-\btheta^*(\xi_{i-1,i-1}^T)_{21})(\xi_{ii}^T)_{11}+\btheta^*(\xi_{ii}^T)_{21}\\
    &=\X^\top\bZ\Tilde{\u}^{i+T} + \frac{1}{\beta_{i+T}\sqrt{\delta}}\tx^{i+T}(\xi_{ii}^T)_{11}+\btheta^*(\xi_{ii}^T)_{21},
\end{align}
where in the last equality, we use the inductive assumptions.
The Onsager correction has the form
\begin{equation}
 \xi_{ii}^T =    \begin{pmatrix}
        \E\left[\frac{\partial}{\partial(w_T^{i})_1}f_{i,1}(w_T^{-T},...,w_T^i;z)\right] & 0\\
        \E\left[\frac{\partial}{\partial(w_T^{i})_2}f_{i,1}(w_T^{-T},...,w_T^i;z)\right] & 0
    \end{pmatrix},
\end{equation}
whose top left term can be computed as
\begin{equation}
    \E\left[\frac{\partial}{\partial(w_T^{i})_1}f_{i,1}(w_T^{-T},...,w_T^i;Z)\right] = -\delta\E[\cT(Y)].
\end{equation}
Plugging this back in shows that $(\a_T^{i+1})_1 - \btheta^*(\xi_{ii}^T)_{21} = \tx^{i+T+1}$ with the latter given in (\ref{eq:gamp_artificial}). We now have the Gaussian variable $(u_T^{i+1})_1$ which has variance 
\begin{equation}\label{eq:var_u}
\frac{1}{\delta}\E[f_{i,1}^T(w_T^{-T},...,w_T^i;Z)^2] = \delta\E[\cT(Y)^2(w_T^i)^2] = \mu_{\Tilde{X},i+T}^2\E[\cT(Y)^2\omega^*]+\sigma_{\Tilde{X},i+T}^2\E[\cT(Y)^2],
\end{equation}
where to get the last equality, we use $(w_T^i)_1 \overset{d}{=} \frac{\mu_{\Tilde{X},i+T}}{\sqrt{\delta}\beta_{i+T}} \omega^* + \frac{\sigma_{\Tilde{X},i+T}}{\sqrt{\delta}\beta_{i+T}}W_{w,i}$ established in (\ref{eq: wT decomposition}) and $Y = \varphi(\omega^\ast,z)$. Let us verify $(u_T^{i+1})_1-(\xi_{ii}^T)_{21}\theta^* \overset{d}{=} \Tilde{X}_{i+T+1} = \mu_{\tilde{X},i+T+1}\theta^\ast + \sigma_{\tilde{X},i+T+1} W_{\tilde{X},i+T+1}$, with $\mu_{\tilde{X},i+T+1}, \sigma_{\tilde{X},i+T+1}$ given in (\ref{eq:gamp_artificial_se}). Note that the variance in (\ref{eq:var_u}) is identical to $\sigma_{\tilde{X},i+T+1}^2$, so it suffices to check that 
 \begin{align}\label{eq:xi_ii_calc}
 (\xi_{ii}^T)_{21} = -\mu_{\tilde X,i+T+1} = -\frac{\mu_{\Tilde{X},i+T}}{\sqrt{\delta}\beta_{i+T}}.
 \end{align}
 Note by the covariance structure of $w^i_T$ in (\ref{eq: wT decomposition}), we know the conditional distribution $(w^i_T)_2|(w^i_T)_1 \sim \N\left(\frac{\mu_{\Tilde{X},i+T}\beta_{i+T}\sqrt{\delta}}{\mu_{\Tilde{X},i+T}^2+\sigma_{\Tilde{X},i+T}^2}(w^i_T)_1,\frac{\sigma_{\Tilde{X},i+T}^2}{\mu_{\Tilde{X},i+T}^2+\sigma_{\Tilde{X},i+T}^2}\right)$.
Then by using Gaussian integration by parts with respect to $(w^i_T)_2|(w^i_T)_1$, we can compute
\begin{align*}
   \notag(\xi_{ii}^T)_{21} &= \E\left[\frac{\partial}{\partial(w_T^i)_2}f_{i,1}(w_T^{-T},...,w_T^{i};Z)\right]\\
   \notag&= \frac{\mu_{\Tilde{X},i+T}^2+\sigma_{\Tilde{X},i+T}^2}{\sigma_{\Tilde{X},i+T}^2}\E\left[\left((w^i_T)_2-\frac{\mu_{\Tilde{X},i+T}\beta_{i+T}\sqrt{\delta}}{\mu_{\Tilde{X},i+T}^2+\sigma_{\Tilde{X},i+T}^2}(w^i_T)_1\right)f_{i,1}(w_T^{-T},...,w_T^{i};Z)\right]\\
   \notag&=-\frac{\mu_{\Tilde{X},i+T}^2+\sigma_{\Tilde{X},i+T}^2}{\sigma_{\Tilde{X},i+T}^2}\E\left[\left((w^i_T)_2-\frac{\mu_{\Tilde{X},i+T}\beta_{i+T}\sqrt{\delta}}{\mu_{\Tilde{X},i+T}^2+\sigma_{\Tilde{X},i+T}^2}(w^i_T)_1\right)\delta\cT(Y)(w_T^i)_1\right]\\
   \notag&=-\sqrt{\delta}\frac{\mu_{\Tilde{X},i+T}}{\beta_{i+T}}\left(\E[\cT(Y)(\omega^*)^2]-\E[\cT(Y)]\right)\\
   &=-\frac{\mu_{\Tilde{X},i+T}}{\sqrt{\delta}\beta_{i+T}},
\end{align*}
which verifies (\ref{eq:xi_ii_calc}). Here the second to last line follows from (\ref{eq: wT decomposition}), and the last equality again follows from the fact $\E[\cT(Y)(\omega^*)^2] - \E[\cT(Y)] = \frac{1}{\delta}$, established in Equation 5.38 in \citet{mondelli_optimal_2022}. 

Lastly, we have 
\begin{align*}
(\beta_i^T)^2 = (\xi^T_{i-1,i-1})^2 + \E[(u^i_T)_1^2] = \E\big[\big((u^i_T)_1 - (\xi^T_{i-1,i-1})_{21}\theta^\ast\big)^2\big] = \E[\tilde{X}_{i+T}^2] = \mu_{\tilde{X},i+T}^2 + \sigma_{\tilde{X},i+T}^2 = \beta_{i+T}^2,
\end{align*}
completing the induction.
\qed

\subsubsection{Proof of Lemma \ref{lem: conv gaussian amp}}
    We will use induction to show that (i) the base case $\sqrt\delta u^0_T \overset{W_2}{\to} u^0$; (ii) For any $k\geq 0$, 
    \begin{align*}
    W_2\left((\sqrt{\delta}u_T^0,u_T^1,...,u_T^k),(u^0,u^1,...,u^k)\right) \rightarrow 0 \Longrightarrow W_2\left((w_T^0,w_T^1,...,w_T^k),(w^0,w^1,...,w^k)\right) \rightarrow 0,
    \end{align*}
    and (iii) For any $k\geq 0$,
    \begin{align*}
    W_2\left((w_T^0,w_T^1,...,w_T^k),(w^0,w^1,...,w^k)\right) \rightarrow 0 \Longrightarrow W_2\left((\sqrt{\delta}u_T^0,u_T^1,...,u_T^{k+1}),(u^0,u^1,...,u^{k+1})\right) \rightarrow 0.  
    \end{align*}
    Note that as we have finite dimensional centered Gaussian processes, we only need to check the covariances converge.

 In the base case, it suffices to check that 
 \begin{align*}
 \delta\E[u_T^0(u_T^0)^\top] \rightarrow \E[u^0(u^0)^\top] = 
 \begin{pmatrix}
 1-a^2 & 0\\
 0 & 0
 \end{pmatrix}.
 \end{align*}
 To see this, first note that by the nonlinearity choice in (\ref{eq:nonlinearity_first_stage}), $(u^0_T)_2 = 0$. Then the claim follows by using $(u^0_T)_1 \overset{d}{=} (\xi_{-1,-1}^T)_{21}\theta^\ast + \tilde{X}_{T} = ((\xi_{-1,-1}^T)_{21} + \tilde\mu_{X,T})\theta^\ast + \tilde\sigma_{X,t}W_{\tilde X,T}$ via (\ref{eq:amp_gamp_correspondence}), $(\xi^T_{-1,-1})_{21} = -\mu_{\tilde X, T}$ via (\ref{eq:xi_ii_calc}), and $\sigma^2_{\tilde X,T} \rightarrow (1-a^2)/\delta$ as $T\rightarrow\infty$ by 
 the fixed point analysis (Lemma B.2) of GAMP in \citet{mondelli_approximate_2022}.

Let us now show $(ii)$, and $(iii)$ will follow from similar arguments. Let $\Sigma_T$ be the covariance matrix of $(w_T^0,w_T^1,...,w_T^k)$ and $\Sigma$ be that of of $(w^0,w^1,...,w^k)$. 
To check $\lim_{T\rightarrow\infty }\Sigma_T = \Sigma$, pick an optimal $W_2$-coupling of $(\sqrt{\delta}u_T^0,u_T^1,...,u_T^k,\theta^\ast), (u^0,u^1,...,u^k,\theta^\ast)$.  For any $i,j \in \{0,...,k\}$, we have
\begin{align*}
    \E[(w^i)_1(w^j)_1] &= \E\Big[g_{i,1}(u^1,...,u^i;a\theta^*+u_1^0,\theta^*)g_{j,1}(u^1,...,u^j;a\theta^*+u_1^0,\theta^*)\Big]\\
    \E[(w^i_T)_1(w^j_T)_1] &=\E\Big[g_{i,1}(u^1_T,...,u^i_T;\sqrt\delta((u^0_T)_1 - (\xi^T_{-1,-1})_{21}\theta^\ast),\theta^*)g_{j,1}(u^1_T,...,u^j_T;\sqrt\delta((u^0_T)_1 - (\xi^T_{-1,-1})_{21}\theta^\ast),\theta^*)\Big].
\end{align*}
Using that $g_{i,1},g_{j,1}$ both are Lipschitz and thus have linear growth, we know that, assuming $i \leq j$, the function $(u^1_T,...,u^j_T,v_1,v_2) \mapsto g_{i,1}(u^1_T,...,u^i_T;v_1,v_2)g_{j,1}(u^1_T,...,u^j_T;v_1,v_2)$ is at most quadratic growth, so $\E[(w^i_T)_1(w^j_T)_1] \rightarrow_{T\rightarrow\infty} \E[(w^i)_1(w^j)_1]$ follows from the $W_2$ convergence of the induction hypothesis. The same reasoning also yields
\begin{align*}
    \lim_{T\rightarrow\infty} \E[(w^i_T)_2(w^j_T)_2] = \E[(w^i)_2(w^j)_2], \quad
    \lim_{T\rightarrow\infty} \E[(w^i_T)_1(w^j_T)_2] = \E[(w^i)_1(w^j)_2],
\end{align*}
which concludes $\Sigma_T \rightarrow \Sigma$, as desired.
\qed

\subsubsection{Proof of Lemma \ref{lem: conv hd amp}}
    We can check this statement inductively. In the proof we will use $C>0$ to denote a constant independent of $T,n,d$ but may change line to line. We note that by model Assumption \ref{ass: model assumptions}, for large enough $C$, we have $\pnorm{\X}{\rm op} < C$ almost surely for large $n,d$.
    \paragraph{Base Case}
    The first AMP iterate of the second stage is
    \begin{align}
    \label{eq:b_0}
        \b^0_T &= \X\left(\sqrt{\delta}((\a_T^0)_1 - (\xi_{-1,-1}^T)_{21}\btheta^*),\btheta^*\right) + \frac{1}{\delta}f_{-1}^T(\b^{-T}_T,...,\b^{-1}_T;\z)\zeta^T_{0,-1}.
    \end{align}
    By the nonlinearity choice (\ref{eq:nonlinearity_first_stage}), we have $(\b_T^0)_2 = \X\btheta^* = (\b^0)_2$, verifying the claim for the second component. For the first component, by definition of $\b^0_1$ we need to verify 
    \begin{equation*}
        \lim_{T\rightarrow\infty}\lim_{n,d\rightarrow\infty}\frac{1}{\sqrt{n}}\pnorm{(\b^0_T)_1 - \b_1^0}{} = \lim_{n,d\rightarrow\infty} \frac{1}{\sqrt{n}}\biggpnorm{(\b^0_T)_1 - \Big(\X\btheta^0-\frac{1}{\lambda_\delta^*}\bZ_s \X\btheta^0\Big)}{} \rightarrow 0.
    \end{equation*}
    Using (\ref{eq:b_0}) and that $\xi_{0,-1}^T = [\sqrt{\delta},0;0,0]$, we can write the first component as
    \begin{equation*}
        (\b_T^0)_1 = \X\sqrt{\delta}\big((\a_T^0)_1 - (\xi_{-1,-1}^T)_{21}\btheta^*\big) + \frac{1}{\delta}f_{-1,1}^T(\b^{-T}_T,...,\b^{-1}_T;\z)\sqrt{\delta}. 
    \end{equation*}
    
    Then we can bound
    \begin{align*}
        \lim_{T\rightarrow\infty}\lim_{n,d\rightarrow\infty} \frac{1}{\sqrt{n}}\biggpnorm{(\b^0_T)_1 - \Big(\X\btheta^0-\frac{1}{\lambda_\delta^*}\bZ_s \X\btheta^0\Big)}{}&\leq\underbrace{\lim_{T\rightarrow\infty}\lim_{n,d\rightarrow\infty}\frac{1}{\sqrt{n}}\bigpnorm{\X\sqrt{\delta}\big((\a_T^0)_1 - (\xi_{-1,-1}^T)_{21}\btheta^*\big)-\X\btheta^0}{}}_{(I)}\\
        &\quad+\underbrace{\lim_{T\rightarrow\infty}\lim_{n,d\rightarrow\infty}\frac{1}{\sqrt{n}}\pnorm{\frac{1}{\delta}f_{-1,1}^T(\b^{-T}_T,...,\b^{-1}_T;\z)\sqrt{\delta} + \frac{1}{\lambda_\delta^\ast}\bZ_s\X\btheta^0}{}}_{(II)}.
    \end{align*}
    Now $(I) = 0$ by Lemma \ref{lem: AMP stage 1} and $(II) = 0$ by (\ref{eq: conv first onsager}), concluding the base case.
    \paragraph{Induction Step} Suppose that the lemma statement holds for the iterates $\b_T^0,...,\b_T^i$ and $\a_T^0,...,\a_T^i$. We now check that $\lim_{T\rightarrow\infty}\lim_{d\rightarrow\infty}\frac{1}{d}\pnorm{(\a_T^{i+1})_1-\a^{i+1}_1}{}^2 \rightarrow 0$ and $\lim_{T\rightarrow\infty}\lim_{d\rightarrow\infty}\frac{1}{d}\pnorm{(\a_T^{i+1})_2-\a^{i+1}_2}{}^2 \rightarrow 0$. We can bound $\pnorm{(\a_T^{i+1})_1-\a^{i+1}_1}{}$ by
    \begin{align}
    \begin{split}
        &\bigpnorm{(\a_T^{i+1})_1-\a^{i+1}_1}{}^2 \leq C \Bigpnorm{\X^\top \big(f_{i,1}(\b_T^0,...,\b_T^i;\z) - f_{i,1}(\b^0,...,\b^i;\z)\big)}{}^2\\
        &+ 
        C\sum_{j=0}^i \Bigpnorm{g_{j,1}\Big(\a_T^1,...,\a_T^j;\sqrt{\delta}\big((\a_T^0)_1-(\xi_{-1,-1}^T)_{21}\btheta^*\big),\btheta^*\Big)(\xi_{i,j}^T)_{11}-g_{j,1}\Big(\a^1,...,\a^j;\btheta^0,\btheta^*\Big)(\xi_{i,j})_{11}}{}^2\\
        &+
        C\sum_{j=0}^i \Bigpnorm{g_{j,2}\Big(\a_T^1,...,\a_T^j;\sqrt{\delta}\big((\a_T^0)_1-(\xi_{-1,-1}^T)_{21}\btheta^*\big),\btheta^*\Big)(\xi_{i,j}^T)_{21}-g_{j,2}\Big(\a^1,...,\a^j;\btheta^0,\btheta^*\Big)(\xi_{i,j})_{21}}{}^2.
    \end{split}
    \end{align}
    Using the inductive assumption and the fact that $f_{i,1}$ is Lipschitz, we can bound the first term by
    \begin{align*}
    \lim_{n,d\rightarrow\infty}\Bigpnorm{\X^\top (f_{i,1}(\b_T^0,...,\b_T^i;\z) - f_{i,1}(\b^0,...,\b^i;\z))}{}^2 &\leq C \lim_{n,d\rightarrow\infty} \pnorm{\X}{\op}^2 \Bigpnorm{(\b_T^0,...,\b_T^i)-(\b^0,...,\b^i)}{F}^2 \xrightarrow{T} 0,
    \end{align*}
    since $\pnorm{\X}{\op}^2$ is almost surely bounded for large enough $n,d$.

    Next, we can bound
    \begin{align}
        \begin{split}
            &\Bigpnorm{g_{j,1}\Big(\a_T^1,...,\a_T^j;\sqrt{\delta}((\a_T^0)_1-(\xi_{-1,-1}^T)_{21}\btheta^*),\btheta^*\Big)(\xi_{i,j}^T)_{11}-g_{j,1}\Big(\a^1,...,\a^j;\btheta^0,\btheta^*\Big)(\xi_{i,j})_{11}}{}^2\\
            &\leq C \underbrace{\Bigpnorm{g_{j,1}\Big(\a_T^1,...,\a_T^j;\sqrt{\delta}((\a_T^0)_1-(\xi_{-1,-1}^T)_{21}\btheta^*),\btheta^*\Big)}{}^2 \big|(\xi_{i,j}^T)_{11} - (\xi_{i,j})_{11}\big|^2}_{(I)}\\
            &\quad + C\underbrace{\Bigpnorm{g_{j,1}\Big(\a_T^1,...,\a_T^j;\sqrt{\delta}((\a_T^0)_1-(\xi_{-1,-1}^T)_{21}\btheta^*),\btheta^*\Big) - g_{j,1}\Big(\a^1,...,\a^j;\btheta^0,\btheta^*\Big)}{}^2|(\xi_{i,j})_{11}|^2}_{(II)}.
        \end{split}
    \end{align}
    Using $\xi_{i,j}^T \rightarrow \xi_{i,j}$ via (\ref{eq:converge_onsager}), we have $\lim_{T\rightarrow\infty}\limsup_{n,d\rightarrow\infty} (I) = 0$.
    Next using that $g_{j,1}$ is Lipschitz and the inductive assumption, we also have $\lim_{T\rightarrow\infty}\lim_{n,d\rightarrow\infty} (II) = 0$, which concludes that
    \begin{equation*}
    \lim_{T\rightarrow\infty}\lim_{n,d\rightarrow\infty}\sum_{j=0}^i \Bigpnorm{g_{j,1}\Big(\a_T^1,...,\a_T^j;\sqrt{\delta}((\a_T^0)_1-(\xi_{-1,-1}^T)_{21}\btheta^*),\btheta^*\Big)(\xi_{i,j}^T)_{11}-g_{j,1}\Big(\a^1,...,\a^j;\btheta^0,\btheta^*\Big)(\xi_{i,j})_{11}}{}^2 = 0.
    \end{equation*}
    A similar argument shows that for any $j=0,...,i$,
    \begin{equation*}
        \lim_{T\rightarrow\infty} \lim_{n,d\rightarrow\infty}\Bigpnorm{g_{j,1}\Big(\a_T^1,...,\a_T^j;\sqrt{\delta}((\a_T^0)_1-(\xi_{-1,-1}^T)_{21}\btheta^*),\btheta^*\Big) - g_{j,1}\Big(\a^1,...,\a^j;\btheta^0,\btheta^*\Big)}{}^2|(\xi_{i,j})_{11}|^2 = 0,
    \end{equation*}
    so we can conclude that almost surely $\lim_{T\rightarrow\infty}\lim_{n,d\rightarrow\infty}\frac{1}{d}\pnorm{(\a_T^{i+1})_1-\a^{i+1}_1}{}^2 \rightarrow 0$. The same argument shows that $\lim_{T\rightarrow\infty}\lim_{n,d\rightarrow\infty}\frac{1}{d}\pnorm{(\a_T^{i+1})_2-\a^{i+1}_2}{}^2 \rightarrow 0$.
    A similar line of reasoning shows that if the lemma statement holds for the iterates $\b_T^0,...,\b_T^i$ and $\a_T^0,...,\a_T^{i+1}$, then the lemma statement also holds for $\b_T^{i+1}$, completing the induction.
\qed

\section{Proof of artificial DMFT characterization (Proposition \ref{prop:discreteDmftArt})}\label{appendix:proof_power_dmft}

The proof is similar to that of Theorem \ref{thm: discrete dmft asymp}, but based on reduction to the AMP algorithm (\ref{eq:random_init_amp}) stated in Section \ref{sec: spec init amp} with independent initialization. We set the external randomness $\v \in \R^{d\times 2},\s\in \R^n$ therein to be
\begin{equation*}
    \v = (\btheta^\ast,\btheta^\ast) \qquad \s = \z.
\end{equation*}
The following lemma sets the nonlinearites $g_j,f_j$ used in the AMP algorithm.

\begin{lemma}
\label{lem:amp_nonlin_art}
    For $j\geq -T$, there exist continuous non-linearities $g_j^T=(g^T_{j,1},g^T_{j,2}):\R^{2(j+T)}\times \R^2\rightarrow\R^2$ and $f_j^T=(f_{j,1}^T,f_{j,2}^T):\R^{2(j+T+1)}\times \R\rightarrow\R^2$ such that $g_j^T(a^{-T+1}_T,\ldots,a^j_T;\theta^\ast,\theta^\ast)$ is Lipschitz in $\big((a^{-T+1}_T)_1,\ldots, (a^j_T)_1\big)$, $f_j^T(b^{-T}_T, \ldots, b^j_T;z)$ is Lipschitz in $\big((b^{-T}_T)_1,\ldots, (b^{j}_T)_1\big)$, and
    \begin{align}
        \label{eq:postprocessing_g_art}
        g^T_j(\a^{-T+1}_T,...,\a^j_T;\btheta^\ast,\btheta^*) &= (\btheta^j_T,\btheta^*) &j\geq -T\\
        \label{eq:postprocessing_f_art}
        f^T_j(\b^{-T}_T,...,\b^{j}_T;\z) &= \Big(\diag(\cT_s(\y))\X\btheta^j_T,\mathbf{0}_n\Big) &j<0\\
    \label{eq:postprocessing_f_art_2}
         f^T_j(\b^{-T}_T,...,\b^{j}_T;\z) &= \Big(\ell(\X\btheta^j_T,\X\btheta^*,\z),\mathbf{0}_n\Big)  &j\geq 0.
    \end{align}
\end{lemma}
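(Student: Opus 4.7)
The plan is to construct $g_j^T$ and $f_j^T$ by induction on $j \geq -T$, mirroring the construction in Lemma \ref{lm: AMP nonlinearity choice} but split into two phases matching the power-iteration ($j \leq -1$) and gradient-descent ($j \geq 0$) stages of the artificial dynamic (\ref{def:art_dyn_first})-(\ref{def:art_dyn_second}). At each step, the key observation is that the AMP recursion (\ref{eq:random_init_amp}) can be algebraically solved to express $\btheta_T^{j+1}$ (resp.\ $\X\btheta_T^{j+1}$ or its $\cT_s$/$\ell$-image) as an explicit linear/Lipschitz combination of past AMP iterates, the external randomness $(\btheta^\ast,\btheta^\ast)$ or $\z$, and past inductively-constructed nonlinearities.

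For the base case $j=-T$, set $g_{-T}^T(v_1,v_2) = (v_1,v_2)$, so $g_{-T}^T(\btheta^\ast,\btheta^\ast) = (\btheta_T^{-T},\btheta^\ast)$ and $\b_T^{-T} = (\X\btheta^\ast,\X\btheta^\ast)$; then define $f_{-T,1}^T(b;z) = \cT_s(\varphi(b_2,z))\,b_1$ and $f_{-T,2}^T=0$, which evaluates to $(\diag(\cT_s(\y))\X\btheta^\ast,\zero)$ since $\varphi(\X\btheta^\ast,\z)=\y$. For the power-iteration inductive step ($-T \leq j \leq -1$), combine the first-stage recursion $\M_n\btheta_T^j = \hat\beta_T^{j+1}\btheta_T^{j+1}$ with the AMP identity
\begin{align*}
(\a_T^{j+1})_1 = -\frac{1}{\delta}\M_n\btheta_T^{j} + \sum_{k=-T}^{j}\btheta_T^k(\xi_{j,k}^T)_{11} + \btheta^\ast\sum_{k=-T}^{j}(\xi_{j,k}^T)_{21}
\end{align*}
to solve for $\btheta_T^{j+1}$ with coefficient $-\delta/\hat\beta_T^{j+1}$ on $(\a_T^{j+1})_1$, defining $g_{j+1,1}^T$ accordingly (and $g_{j+1,2}^T=\theta^\ast$); then solve the $\b_T^{j+1}$ recursion for $\X\btheta_T^{j+1}$ in terms of $(\b_T^{j+1})_1$ and the inductive $f_{k,1}^T$'s, and multiply by $\cT_s(\varphi(b^{-T}_2,z))$ to obtain $f_{j+1}^T$ satisfying (\ref{eq:postprocessing_f_art}).

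At $j+1 = 0$, the same formula yields $g_0^T=(\btheta_T^0,\btheta^\ast)$, but $f_0^T$ is now redefined as $\ell(\,\cdot\,,b^{-T}_2,z)$ composed with the same linear expression for $\X\btheta_T^0$, matching (\ref{eq:postprocessing_f_art_2}). For the gradient-descent inductive step ($j \geq 0$), the construction is identical in spirit to (\ref{eq:def g})-(\ref{eq: def f}) in Lemma \ref{lm: AMP nonlinearity choice}: rearranging the GD recursion $\btheta_T^{j+1} = (1-\gamma\lambda)\btheta_T^j - \gamma\X^\top\ell(\X\btheta_T^j,\X\btheta^\ast,\z)$ and the AMP formula for $\a_T^{j+1}$ (now with $f_j^T$ producing the loss gradient) gives
\begin{align*}
\btheta_T^{j+1} = (1-\gamma\lambda)\btheta_T^j + \gamma\delta\Big[(\a_T^{j+1})_1 - \sum_{k=-T}^{j}\btheta_T^k(\xi_{j,k}^T)_{11} - \btheta^\ast\sum_{k=-T}^{j}(\xi_{j,k}^T)_{21}\Big],
\end{align*}
defining $g_{j+1,1}^T$, and inverting the $\b_T^{j+1}$ recursion for $\X\btheta_T^{j+1}$ gives $f_{j+1,1}^T$ as an $\ell(\cdot,b^{-T}_2,z)$-composition as before. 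The Lipschitz property in the prescribed arguments then propagates through the induction by Assumptions \ref{ass: model assumptions}, \ref{ass: spectral initialization}, \ref{ass: ell lipschitz} (boundedness of $\cT_s,\cT_s',\varphi',\partial_1\ell,\partial_2\ell$), together with boundedness of the scalar Onsager coefficients $\xi_{j,k}^T,\zeta_{j,k}^T$.

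The main obstacle is ensuring that the reciprocals $1/\hat\beta_T^{k+1}$ appearing in the power-iteration phase are well-defined, uniformly bounded, and agree with the deterministic $\beta_T^{k+1}$ from (\ref{def:beta_seq}). This is the content of the first assertion of Proposition \ref{prop:discreteDmftArt} and must be verified as part of the induction: one shows by induction that the constructed $g_k^T,f_k^T$ together with Theorem \ref{thm:standard_amp_se} (applied to the partial AMP built so far) imply $\pnorm{\M_n\btheta_T^k}{}/\sqrt{d}$ has an almost sure limit, and this limit is strictly positive thanks to the spectral-gap assumption $\psi_\delta'(\lambda_\delta^*) > 0$ (Proposition \ref{prop: spec est overlap}), which prevents the power iterate from aligning with the null space of $\M_n$ at any finite step. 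This simultaneous induction on nonlinearities and normalizations, together with matching the identity $\hat\beta_T^{k+1} = \beta_T^{k+1}$ via the covariance structure (\ref{eq:vector_amp_se}), closes the construction.
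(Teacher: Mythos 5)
Your proposal is correct and follows essentially the same route as the paper: the identical base case, the same two-phase induction that inverts the AMP recursions to recover $\btheta_T^{j+1}$ and $\X\btheta_T^j$ (multiplying by $\cT_s$ in the first stage and composing with $\ell$ in the second), the same identification of $\hat\beta_T^{j+1}$ through the state evolution of the partially constructed AMP, and the same boundedness-based Lipschitz argument. The only small deviation is your appeal to $\psi_\delta'(\lambda_\delta^\ast)>0$ for strict positivity of $\hat\beta_T^{j+1}$: that condition is not assumed in Lemma \ref{lem:amp_nonlin_art} or Proposition \ref{prop:discreteDmftArt}, and positivity in fact follows already from Assumption \ref{ass: spectral initialization} ($\cT_s\geq 0$, $\P(Z_s=0)<1$), so no spectral gap is needed at this stage.
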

\begin{proof} 
    \noindent\textbf{Base Case}
    Define $g_{-T}^T(\theta^\ast,\theta^*) = (\theta^\ast,\theta^*)$. Then $g_{-T}^T(\btheta^\ast,\btheta^*) = (\btheta^\ast,\btheta^*) = (\btheta_T^{-T},\btheta^\ast)$ verifying (\ref{eq:postprocessing_g_art}). Having defined $g_{-T}^T$, the next AMP iterate is $\b_T^{-T} = \big(\X\btheta_T^{-T},\X\btheta^\ast\big)$. Thus we can define
    \begin{equation*}
        f_{-T}^T(b_{T}^{-T},z) = \Big(\cT_s(\varphi(b_{T,2}^{-T},z))b_{T,1}^{-T},0\Big),
    \end{equation*}
    satisfying (\ref{eq:postprocessing_f_art}).
    
    \noindent\textbf{Inductive Step (First Stage $j < 0$)} Assume we have defined up to $f_{j-1}^T$ and $g_{j}^T$ where $j< 0$. In this step we will first define $f_{j}^T$ and then $g_{j+1}^T$. By the inductive assumption, the first component of the next AMP iterate is
    \begin{align*}
        \b_{T,1}^{j} &= \X g_{j,1}^T(\a^{-T+1}_T,...,\a^j_T;\btheta^\ast,\btheta^\ast) + \frac{1}{\delta} \sum_{i=-T}^{j-1} f_{i,1}^T(\b^{-T}_T,...,\b^i_T;\z)(\zeta^T_{j,i})_{11}\\
        &=\X \btheta_T^{j} + \frac{1}{\delta} \sum_{i=-T}^{j-1} f_{i,1}^T(\b^{-T}_T,...,\b^i_T;\z)(\zeta^T_{j,i})_{11}.
    \end{align*}
    Hence to satisfies (\ref{eq:postprocessing_f_art}), we can define
    \begin{equation}
    \label{def:f_postprocess_art_1}
        f_j^T(b^{-T}_T,...,b^{j}_T;z) = \bigg(\cT_s\big(\varphi(b_{T,2}^{-T},z)\big)\bigg[b_{T,1}^{j} - \frac{1}{\delta}\sum_{i=-T}^{j-1}f^T_{i,1}(b^{-T}_T,...,b^{i}_T;z)(\zeta^T_{ji})_{11}\bigg],0\bigg).
    \end{equation}
    
    The first component of the next AMP iterate is
    \begin{align*}
         \a^{j+1}_{T,1} &= -\frac{1}{\delta}\X^\top f^T_{j,1}(\b^{-T}_T,...,\b^j_T;\z) + \sum_{i=-T}^{j} \Big(g^T_i(\a^{-T+1}_T,...,\a^i_T;\btheta^\ast,\btheta^\ast)\xi^T_{j,i}\Big)_{\cdot 1}\\
         &=-\frac{1}{\delta}\X^\top \diag(\cT_s(\y))\X\btheta_T^j + \sum_{i=-T}^{j} g^T_{i,1}(\a^{-T+1}_T,...,\a^i_T;\btheta^\ast,\btheta^\ast)(\xi^T_{j,i})_{11}+ \sum_{i=-T}^{j} \btheta^\ast(\xi^T_{j,i})_{21}.
    \end{align*}
    In particular,
    \begin{equation}
    \label{eq:preactivation_amp_art}
    \X^\top \diag\big(\cT_s(\y)\big)\X\btheta_T^j = \delta\sum_{i=-T}^{j} g^T_{i,1}(\a^{-T+1}_T,...,\a^i_T;\btheta^\ast,\btheta^\ast)(\xi^T_{j,i})_{11}+ \delta\sum_{i=-T}^{j} \btheta^\ast(\xi^T_{j,i})_{21}-\delta\a_{T,1}^{j+1}.
    \end{equation}
    Using (\ref{eq:preactivation_amp_art}) and the state evolution in Theorem \ref{thm:standard_amp_se}, we see that 
    \begin{equation}\label{eq:hat_beta}
        \hat{\beta}_T^{j+1} = \sqrt{\E\bigg[\delta\sum_{i=-T}^{j}g^T_{i,1}(u^{-T+1}_T,...,u^i_T;\theta^\ast,\theta^\ast)(\xi_{j,i}^T)_{11} + \delta\sum_{i=-T}^{j}\theta^\ast(\xi^T_{j,i})_{21}-\delta (u_T^{j+1})_1\bigg]^2}.
    \end{equation}
    This quantity is well-defined as it only uses previously constructed state evolution iterates so we can use it to define $g_{j+1}^T$. Since $\btheta^{j+1}_T = \X^\top \diag(\cT_s(\y))\X\btheta^j_T/\hat\beta^{j+1}_T$ for all $j\leq -1$, we can define
    \begin{equation}
    \label{def:g_postprocess_art_1}
        g_{j+1}^T(a^{-T+1}_T,...,a^{j+1}_T;\theta_T^\ast,\theta^*) = \bigg(\frac{1}{\hat\beta_T^{j+1}}\bigg[\delta\sum_{i=-T}^{j}g_{i,1}(a^{-T+1}_T,...,a^i_T;\theta^\ast,\theta^*)(\xi_{j,i}^T)_{11} + \delta\sum_{i=-T}^{j}\theta^*(\xi^T_{j,i})_{21}-\delta (a_T^{j+1})_1\bigg],\theta^*\bigg)
    \end{equation}
    satisfying (\ref{eq:postprocessing_g_art}). This specifies the nonlinearities up to $g^T_0$ and $f^T_{-1}$.

    \noindent\textbf{Inductive Step (Second Stage $j \geq 0$)}
    Now suppose that we have defined up to $f_{j-1}^T$ and $g_{j}^T$ for some $j \geq 0$. We now define $f_j^T$ and $g_{j+1}^T$. The first component of the next AMP iterate is 
    \begin{align}
    \label{eq:def_preactivation}
        \notag\b_{T,1}^{j} &= \X g_{j,1}^T(\a^{-T+1}_T,...,\a^j_T;\btheta^\ast,\btheta^\ast) + \frac{1}{\delta} \sum_{i=-T}^{j-1} f_{i,1}^T(\b^{-T}_T,...,\b^i_T;\z)(\zeta^T_{j,i})_{11}\\
        &=\X \btheta_T^{j} + \frac{1}{\delta} \sum_{i=-T}^{j-1} f_{i,1}^T(\b^{-T}_T,...,\b^i_T;\z)(\zeta^T_{j,i})_{11}.
    \end{align}
    Then to satisfy (\ref{eq:postprocessing_f_art_2}), using the fact that $(\b^0_T)_2 = \X\btheta^\ast$, we can define
            \begin{align}
    \begin{split}
    \label{def:f_postprocess_art_2}
        f_{j,1}^T(b^{-T}_T,...,b^{j}_T;z) &= \ell\Big((b_T^{j})_1 -\frac{1}{\delta} \sum_{i=-T}^{j-1} f^T_{i,1}(b^{-T}_T,...,b^{i}_T;z)(\zeta^T_{j,i})_{11},(b_T^{0})_2,z\Big),\\
        f^T_{j,2}(b^{-T}_T,...,b^{j}_T;z) &= 0.
    \end{split}
    \end{align}
    The first component of the next AMP iterate is
    \begin{align*}
         \a^{j+1}_{T,1} &= -\frac{1}{\delta}\X^\top f^T_{j,1}(\b^{-T}_T,...,\b^j_T;\z) + \sum_{i=-T}^{j} \Big(g^T_i(\a^{-T+1}_T,...,\a^i_T;\btheta^\ast,\btheta^\ast)\xi^T_{j,i}\Big)_{\cdot 1}\\
         &=-\frac{1}{\delta}\X^\top \ell\Big(\X\btheta_T^j,\X\btheta^\ast,\z\Big) + \sum_{i=-T}^{j} g^T_{i,1}(\a^{-T+1}_T,...,\a^i_T;\btheta^\ast,\btheta^\ast)(\xi^T_{j,i})_{11}+ \sum_{i=-T}^{j} \btheta^\ast(\xi^T_{j,i})_{21},
    \end{align*}
    so
    \begin{equation*}
        \X^\top \ell\Big(\X\btheta_T^j,\X\btheta^\ast,\z\Big) = \delta\sum_{i=-T}^{j} g^T_{i,1}(\a^{-T+1}_T,...,\a^i_T;\btheta^\ast,\btheta^\ast)(\xi^T_{j,i})_{11}+ \delta\sum_{i=-T}^{j} \btheta^\ast(\xi^T_{j,i})_{21}-\delta\a^{j+1}_{T,1}.
    \end{equation*}
    On the other hand, we have
    \begin{align*}
        \btheta^{j+1}_T&= \btheta^{j}_T - \gamma\X^\top \ell(\X\btheta^{j}_T,\X\btheta^*,\z) - \gamma\lambda \btheta^{j}_T\\
        &= (1-\gamma\lambda)g^T_{j,1}(\a_T^{-T+1},...,\a_T^j;\btheta^\ast,\btheta^*) + \gamma\delta\Big[\a_{T,1}^{j+1} - \sum_{i=-T}^j g^T_{i,1}(\a_T^{-T+1},...,\a_T^i;\btheta^\ast,\btheta^*)(\xi^T_{ji})_{11} - \sum_{i=-T}^j \btheta^* (\xi^T_{ji})_{21}\Big],
    \end{align*}
    Thus with
        \begin{align}
        \notag g_{j+1,1}^T(a^{-T+1}_T,...,a^{j+1}_T;\theta^\ast,\theta^*) &= (1-\gamma\lambda)g_{j,1}^T(a^{-T+1}_T,...,a^{j}_T;\theta^\ast,\theta^*)\\
        \notag&\quad + \gamma\delta\Big[(a^{j+1}_T)_1 - \sum_{i=-T}^{j} g_{i,1}^T(a^{-T+1}_T,...,a^{i}_T;\theta^\ast,\theta^*)(\xi^T_{j,i})_{11} - \sum_{i=-T}^{j} \theta^* (\xi^T_{j,i})_{21}\Big],\\
        \label{def:g_postprocess_art_2}
        g_{j+1,2}^T(a^{-T+1}_T,...,a^{j+1}_T;\theta^\ast,\theta^*) &= \theta^*,
    \end{align}
    we can verify that this definition satisfies (\ref{eq:postprocessing_g_art}), completing the induction. For the regularity requirements, $f_j$ and $g_j$ in the first stage satisfy them under boundedness of $\cT_s(\cdot)$, and those in the second stage satisfy them using the same reasoning as in Lemma \ref{lm: AMP nonlinearity choice}.
\end{proof}

With this AMP algorithm, we can now complete the proof.
\begin{proof}[Proof of Proposition \ref{prop:discreteDmftArt}]
    Let $\{f_j^T\},\{g_j^T\}$ be defined as in Lemma \ref{lem:amp_nonlin_art}. We first construct the low-dimensional counterparts $\theta_T^i$ and $\eta_T^i$ of $\btheta_T^i$ and $\bbeta_T^i$. By construction of $g_i^T$, we know that
    \begin{equation}
        g_{i,1}^T(\a_T^{-T+1},...,\a_T^i;\btheta^\ast,\btheta^\ast) = \btheta_T^i,
    \end{equation}
    so we define
    \begin{equation}\label{eq:art_dmft_theta}
        \theta_{T,\gamma}^{i} := g^T_{i,1}(u_T^{-T+1},...,u_T^i;\theta^\ast,\theta^\ast).
    \end{equation}
    Next, recall from (\ref{eq:def_preactivation}) that
    \begin{equation}
        \X\btheta_T^i =\b_{T,1}^i-\frac{1}{\delta}\sum_{j=-T}^{i-1}f_{j,1}^T(\b_T^{-T},...,\b_T^j;\z)(\zeta_{i,j}^T)_{11}.
    \end{equation}
    Then we define
    \begin{equation}\label{eq:art_dmft_eta}
        \eta_{T,\gamma}^i := w_{T,1}^i -\frac{1}{\delta}\sum_{j=-T}^{i-1}f^T_{j,1}(w^{-T}_T,...,w^{j}_T;z)(\zeta^T_{ij})_{11}.
    \end{equation}
    This definition satisfies
    \begin{equation}
        f_{j,1}^T(w_T^{-T},...,w_T^j;z) =\begin{cases}
            \cT_s(\varphi(w_{T,2}^{-T},z))\eta_{T,\gamma}^j &j<0\\
            \ell(\eta_{T,\gamma}^j,w_{T,2}^0,z) &j\geq 0.
        \end{cases}
    \end{equation}
    Applying the AMP state evolution (\ref{thm:standard_amp_se}), for any fixed integer $m\geq -T$, almost surely as $n,d \rightarrow\infty$, we have that
    \begin{align}
        \frac{1}{d}\sum_{j=1}^d \delta_{\left((\theta^{-T+1}_T)_j,...,(\theta^{m}_T)_j,(\theta^\ast)_j,(\theta^{*})_j\right)} &\xrightarrow{W_2} \mathsf{P}\left(\theta_T^{-T+1},...,\theta_T^{m},\theta^\ast,\theta^*\right)\\
        \frac{1}{n}\sum_{i=1}^n\delta_{\left((\eta^{-T}_T)_i,...,(\eta_T^{m})_i,(\eta^{*})_i,z_i\right)} &\xrightarrow{W_2} \sP\left(\eta_T^{-T},...,\eta_T^{m},w_T^*,z\right),
    \end{align}
    where $w_T^\ast := u_{T,2}^0 = u_{T,2}^{-T}.$
    We define the following quantities (on the left) using the state evolution variables (on the right):
\begin{equation}
\begin{gathered}
    \{u_{T,\gamma}^{i}\}_{i\geq -T}:= \delta \{u_{T,1}^{i+1}\}_{i\geq -T},  \qquad(w_{T,\gamma}^\sharp,w^*_{T,\gamma}, \{w_\gamma^{i}\}_{i\geq -T}) := (w_{T,2}^{-T},w^0_{T,2}, \{w^i_{T,1}\}_{i\geq -T}), \\
    \Big\{r^\gamma_{T,\theta}(i,j)\Big\}_{i > j \geq -T} := \Big\{\frac{1}{\delta}\frac{\partial g^T_{i,1}}{\partial u^{j+1}_{T,1}}\Big\}_{i>j\geq -T},\\
    \Big\{r^\gamma_{T,\eta}(i,j)\Big\}_{i>j\geq -T} := \Big\{\frac{\partial f^T_{i,1}}{\partial w_{T,1}^j}\Big\}_{i>j\geq -T}, \quad \Big\{r^\gamma_{T,\eta}(i,*)\Big\}_{i\geq 0} := \Big\{\frac{\partial f^T_{i,1}}{\partial w_{T,2}^{0}}\Big\}_{i \geq 0}, \quad \Big\{r^\gamma_{T,\eta}(i,\sharp)\Big\}_{i\geq -T} := \Big\{\frac{\partial f^T_{i,1}}{\partial w_{T,2}^{-T}}\Big\}_{i \geq -T} \label{eq:discrete_equiva_4}.
\end{gathered}
\end{equation}
We also define 
\begin{align}
\begin{split}
    R^\gamma_{T,\theta}(i,j) &:= \E\Big[r^\gamma_{T,\theta}(i,j)\Big] = \frac{1}{\delta}\E\Big[\frac{\partial g^T_{i,1}}{\partial u^{j+1}_{T,1}}\Big] = \frac{1}{\delta}(\zeta_{ij}^T)_{11},\\
    R^\gamma_{T,\eta}(i,j) &:= \delta\E\Big[r^\gamma_{T,\eta}(i,j)\Big] = \delta\E\Big[\frac{\partial f^T_{i,1}}{\partial w_{T,1}^j}\Big] = \delta(\xi_{ij}^T)_{11},\\
    R^\gamma_{T,\eta}(i,*) &:= \delta\E\Big[r^\gamma_{T,\eta}(i,*)\Big] = \delta\E\Big[\frac{\partial f^T_{i,1}}{\partial w_{T,2}^0}\Big] = \delta(\xi_{i0}^T)_{21},\\
    R^\gamma_{T,\eta}(i,\sharp) &:= \delta\E\Big[r^\gamma_{T,\eta}(i,\sharp)\Big] = \delta\E\Big[\frac{\partial f^T_{i,1}}{\partial w_{T,2}^{-T}}\Big] = \delta(\xi_{i(-T)}^T)_{21}.
\end{split}
\end{align}
Following a similar calculation as in (\ref{eq:check_covariance}), we have
\begin{align*}
\E[u^i_{T,\gamma} u^j_{T,\gamma}] = \E[u^i_Tu^i_T], \quad \E[w^i_{T,\gamma}w^j_{T,\gamma}] = \E[w^i_{T}w^j_{T}], \quad \E[w^i_{T,\gamma}w^\ast_{T,\gamma}] = \E[w^i_{T}w^\ast_{T}], \quad w^\ast_{T,\gamma} = w^\sharp_{T,\gamma}, 
\end{align*}
with the variables on the right given by (\ref{eq:w_art}) and (\ref{eq:u_art}), indicating that $\{u^i_{T,\gamma}\}_{i\geq -T} \overset{d}{=} \{u^i_{T}\}_{i\geq -T}$ and $(w^\ast_{T,\gamma}, w^\sharp_{T,\gamma}, \{w^i_{T,\gamma}\}_{i\geq -T}) \overset{d}{=} (w^\ast_{T}, w^\sharp_{T}, \{w^i_{T}\}_{i\geq -T})$. 

Next we specify the recursions of $r_{T,\theta}^{\gamma}(t,s), r_{T,\eta}^\gamma(t,s), r^\gamma_{T,\eta}(t,\ast), r^\gamma_{T,\eta}(t,\sharp)$.

\noindent\textbf{Recursion of $r_{T,\eta}^\gamma(t,s)$} For the case $s < t < 0$, using $\frac{\partial f^T_{s,1}}{\partial w^s_{T,1}} = \cT_s(y^\ast_{T,\gamma})$, we have by (\ref{def:f_postprocess_art_1}) that
\begin{align*}
r_{T,\eta}^\gamma(t,s) &= \cT_s(y^\ast_{T,\gamma})\Big[-\frac{1}{\delta}\sum_{\ell=s}^{t-1} \frac{\partial f^T_{\ell,1}}{\partial w^s_{T,1}} (\zeta^T_{t\ell})_{11}\Big] = \cT_s(y^\ast_{T,\gamma})\Big[-\sum_{\ell=s+1}^{t-1} r_{T,\eta}^{\gamma}(\ell,s)R_\theta^\gamma(t,\ell) - \cT_s(y^\ast_{T,\gamma})R_\theta^\gamma(t,s)\Big].
\end{align*}
For the case $t\geq 0 > s$, we have by (\ref{def:f_postprocess_art_2}) that
\begin{align*}
r_{T,\eta}^\gamma(t,s) &= \partial_1\ell(\eta^t_{T,\gamma},w^\ast_{T,\gamma},z)\Big[-\frac{1}{\delta}\sum_{\ell=s}^{t-1} \frac{\partial f^T_{\ell,1}}{\partial w^s_{T,1}} (\zeta^T_{t\ell})_{11}\Big]\\
&= \partial_1\ell(\eta^t_{T,\gamma},w^\ast_{T,\gamma},z)\Big[-\sum_{\ell=s+1}^{t-1} r_{T,\eta}^{\gamma}(\ell,s)R_\theta^\gamma(t,\ell) - \cT_s(y^\ast_{T,\gamma})R_\theta^\gamma(t,s)\Big].
\end{align*}
For the case $t > s\geq 0$, we have by (\ref{def:f_postprocess_art_2}) and $\frac{\partial f^T_{s,1}}{\partial w^s_{T,1}} = \partial_1 \ell(\eta^t_\gamma, w^\ast_{T,\gamma}, z)$ that
\begin{align*}
r_{T,\eta}^\gamma(t,s) = \partial_1\ell(\eta^t_{T,\gamma},w^\ast_{T,\gamma},z)\Big[-\sum_{\ell=s+1}^{t-1} r_{T,\eta}^{\gamma}(\ell,s)R_\theta^\gamma(t,\ell) - \partial_1 \ell(\eta^t_\gamma, w^\ast_{T,\gamma}, z) R_\theta^\gamma(t,s)\Big].
\end{align*}
\noindent\textbf{Recursion of $r_{T,\eta}^\gamma(t,\sharp)$} If $t < 0$, we have by (\ref{def:f_postprocess_art_1}) that
\begin{align*}
r_{T,\eta}^\gamma(t,\sharp) &= \cT_s'(y^\ast_{T,\gamma})\varphi'(w^\ast_{T,\gamma},z)\eta^t_{T,\gamma} - \cT_s(y^\ast_{T,\gamma}) \frac{1}{\delta} \sum_{\ell=-T}^{t-1} \frac{\partial f_{\ell,1}^T}{\partial w^{-T}_{T,2}}(\zeta_{t\ell}^T)_{11}\\
&=\cT_s'(y^\ast_{T,\gamma})\varphi'(w^\ast_{T,\gamma},z)\eta^t_{T,\gamma} -  \cT_s(y^\ast_{T,\gamma}) \sum_{\ell=-T}^{t-1} r^\gamma_{T,\eta}(\ell,\sharp)R_{T,\theta}^\gamma(t,\ell).
\end{align*}
If $t \geq 0$, we have by (\ref{def:f_postprocess_art_2}) that
\begin{align*}
r_{T,\eta}^\gamma(t,\sharp) = \partial_1 \ell(\eta^t_{T,\gamma}, w^\ast_{T,\gamma},z) \Big[-\frac{1}{\delta}\sum_{\ell=-T}^{t-1} \frac{\partial f_{\ell,1}^T}{\partial w^{-T}_{T,2}}(\zeta_{t\ell}^T)_{11}\Big] = -\partial_1 \ell(\eta^t_{T,\gamma}, w^\ast_{T,\gamma},z)\sum_{\ell=-T}^{t-1} r_{T,\eta}^\gamma(\ell,\sharp)R_{T,\theta}^\gamma(t,\ell).
\end{align*}
\noindent\textbf{Recursion of $r_{T,\eta}^\gamma(t,\ast)$} By (\ref{def:f_postprocess_art_2}) we have
\begin{align*}
r_{T,\eta}^\gamma(t,\ast) &= \partial_1\ell(\eta^t_{T,\gamma}, w^\ast_{T,\gamma},z)\Big[-\frac{1}{\delta}\sum_{\ell=0}^{t-1} \frac{\partial f_{\ell,1}^T}{\partial w^{0}_{T,2}}(\zeta_{t\ell}^T)_{11}\Big] + \partial_2 \ell(\eta^t_{T,\gamma}, w^\ast_{T,\gamma},z)\\
&= -\partial_1\ell(\eta^t_{T,\gamma}, w^\ast_{T,\gamma},z)\sum_{\ell=0}^{t-1} r_{T,\eta}^\gamma(\ell,\ast)R_{T,\theta}^\gamma(t,\ell) + \partial_2 \ell(\eta^t_{T,\gamma}, w^\ast_{T,\gamma},z).
\end{align*}
\noindent \textbf{Recursion of $r_{T,\theta}^\gamma(t,s)$} We first consider the case $t \geq 0$. If $t = s$ (in which case $s\geq 0$) it holds by (\ref{def:g_postprocess_art_2}) that $r_{T,\theta}^\gamma(s+1,s) = \gamma$, and for $t\geq s+1$ we have
\begin{align*}
\frac{\partial g_{t+1,1}^T}{\partial u^{s+1}_{T,1}} &= (1-\gamma\lambda)\frac{\partial g_{t,1}^T}{\partial u^{s+1}_{T,1}} +  \gamma\delta \Big[-\sum_{r=s+1}^t \frac{\partial g_{r,1}^T}{\partial u^{s+1}_{T,1}} (\xi^T_{tr})_{11} \Big]\\
&= \Big(1-\gamma\lambda - \gamma \delta (\xi^T_{tt})_{11}\Big)\frac{\partial g_{t,1}^T}{\partial u^{s+1}_{T,1}} - \gamma\sum_{r=s+1}^{t-1} \frac{\partial g_{r,1}^T}{\partial u^{s+1}_{T,1}}R_{T,\eta}^\gamma(t,r)\\
&= \Big(1-\gamma\lambda - \gamma \delta \E[\cT_s(y^\ast_{T,\gamma})]\Big)\frac{\partial g_{t,1}^T}{\partial u^{s+1}_{T,1}} - \gamma\sum_{r=s+1}^{t-1} \frac{\partial g_{r,1}^T}{\partial u^{s+1}_{T,1}}R_{T,\eta}^\gamma(t,r).
\end{align*}
This implies
\begin{align*}
r_{T,\theta}^{\gamma}(t+1,s) = \Big(1-\gamma\lambda - \gamma \delta \E[\cT_s(y^\ast_{T,\gamma})]\Big)r_{T,\theta}^\gamma(t,s) - \gamma\sum_{r=s+1}^{t-1} R_{T,\eta}^\gamma(t,r) r_{T,\theta}^\gamma(r,s).
\end{align*}
For the case $t < 0$, it holds by (\ref{def:g_postprocess_art_1}) that $r_{T,\eta}^\gamma(s+1,s) = -1/\beta^{s+1}_T$, and for $t\geq s+1$, 
\begin{align*}
\frac{\partial g_{t+1,1}^T}{\partial u^{s+1}_{T,1}} = \frac{1}{\beta^{t+1}_T} \Big[\delta\sum_{r=s+1}^t \frac{\partial g_{r,1}^T}{\partial u^{s+1}_{T,1}} (\xi_{tr}^T)_{11}\Big] = \frac{1}{\beta^{t+1}_T}\Big[\sum_{r=s+1}^{t-1} \frac{\partial g_{r,1}^T}{\partial u^{s+1}_{T,1}} R_{T,\eta}^\gamma(t,r) + \delta\frac{\partial g_{t,1}^T}{\partial u^{s+1}_{T,1}}\E[\cT_s(y^\ast_{T,\gamma})] \Big].
\end{align*}
This implies
\begin{align*}
r_{T,\theta}^\gamma(t+1,s) = \frac{1}{\beta^{t+1}_T}\Big[\sum_{r=s+1}^{t-1} r_{T,\theta}^\gamma(r,s) R_{T,\eta}^\gamma(t,r) + \delta r_{T,\theta}^\gamma(t,s)\E[\cT_s(y^\ast_{T,\gamma})] \Big].
\end{align*}
Lastly we derive the recursion for $\{\theta^t_{T,\gamma}\}$ in (\ref{eq:art_dmft_theta}) and $\{\eta^t_{T,\gamma}\}$ in (\ref{eq:art_dmft_eta}). For $\{\theta^t_{T,\gamma}\}$, in the case $t < 0$, we have by (\ref{def:g_postprocess_art_1}) that
\begin{align*}
\theta^{t+1}_{T,\gamma} &= \frac{1}{\beta_T^{j+1}}\bigg[\delta\sum_{i=-T}^t g_{i,1}(u^{-T+1}_T,...,u^i_T;\theta^\ast,\theta^*)(\xi_{j,i}^T)_{11} + \delta\sum_{i=-T}^t\theta^*(\xi^T_{j,i})_{21}-\delta (u_T^{t+1})_1\bigg]\\
&= \frac{1}{\beta_T^{j+1}}\bigg[\sum_{i=-T}^t \theta^i_{T,\gamma}R_{T,\eta}^\gamma(t,i) + R_{T,\eta}^\gamma(t,\sharp) \theta^\ast - u^t_{T,\gamma}\bigg]\\
&= \frac{1}{\beta_T^{j+1}}\bigg[\delta\E[\cT_s(y^\ast_{T,\gamma})]\theta^t_{T,\gamma} + \sum_{i=-T}^{t-1} \theta^i_{T,\gamma}R_{T,\eta}^\gamma(t,i) + R_{T,\eta}^\gamma(t,\sharp) \theta^\ast - u^t_{T,\gamma}\bigg]
\end{align*}
where we use the fact that $\sum_{i=-T}^t(\xi^T_{t,i})_{21} = (\xi_{t,-T}^T)_{21} = R_{T,\eta}^\gamma(t,\sharp)/\delta$. For the case $t\geq 0$, we have by (\ref{def:g_postprocess_art_2})
\begin{align*}
\theta^{t+1}_{T,\gamma} &= (1-\gamma\lambda)g_{t,1}^T(u^{-T+1}_T,...,u^t_T;\theta^\circ,\theta^*)\\
        &\quad + \gamma\delta\Big[(u^{t+1}_T)_1 - \sum_{i=-T}^t g_{i,1}^T(u^{-T+1}_T,...,u^{i}_T;\theta^\circ,\theta^*)(\xi^T_{t,i})_{11} - \theta^\ast\sum_{i=-T}^t (\xi^T_{t,i})_{21}\Big]\\
        &= (1-\gamma\lambda)\theta^t_{T,\gamma} + \gamma\Big[u^t_{T,\gamma} - \sum_{i=-T}^t \theta^i_{T,\gamma}R_{T,\eta}^\gamma(t,i) -  \delta((\xi_{t,-T}^T)_{21} + (\xi_{t,0}^T)_{21})\theta^\ast\Big]\\
        &=(1-\gamma\lambda)\theta^t_{T,\gamma} + \gamma\Big[-\delta\E[\partial_1\ell(\eta^t_{T,\gamma},w^\ast_{T,\gamma},z)]\theta^t_{T,\gamma} - \sum_{i=-T}^{t-1}\theta^i_{T,\gamma}R_{T,\eta}^\gamma(t,i) - (R_{T,\eta}^\gamma(t,\sharp) + R_{T,\eta}^\gamma(t,\ast))\theta^\ast + u^t_{T,\gamma}\Big],
\end{align*}
where we apply the fact that $(\xi_{ti}^T)_{21} = 0$ except for $i = -T$ and $i = 0$.

Lastly, for $\{\eta^t_{T,\gamma}\}$, when $t \leq 0$ we have by (\ref{def:f_postprocess_art_1}) that
\begin{align*}
\eta^t_{T,\gamma} = w_{T,1}^t - \frac{1}{\delta}\sum_{i=-T}^{t-1}f^T_{i,1}(w^{-T}_T,...,w^{i}_T;z)(\zeta^T_{ti})_{11} = w^t - \sum_{i=-T}^{t-1} \cT_s(y^\ast_{T,\gamma})\eta^i_{T,\gamma}R_{T,\theta}^\gamma(t,i).
\end{align*}
When $t > 0$, we have by (\ref{def:f_postprocess_art_2}) that
\begin{align*}
\eta^t_{T,\gamma} &= w_{T,1}^t - \frac{1}{\delta}\sum_{i=-T}^{t-1}f^T_{i,1}(w^{-T}_T,...,w^{i}_T;z)(\zeta^T_{ti})_{11}\\
&= w^t_{T,\gamma} - \sum_{i=-T}^{-1} \cT_s(y^\ast_{T,\gamma})\eta^i_{T,\gamma}R_{T,\theta}^\gamma(t,i) - \sum_{i=0}^{t-1} \ell(\eta^i_{T,\gamma},w^\ast_{T,\gamma},z)R_{T,\theta}^\gamma(t,i).
\end{align*}
In view of the above recursions and the expression of $\hat\beta^t_T$ in (\ref{eq:hat_beta}), a simple induction argument yields that $y^\ast_{T,\gamma} \overset{d}{=} y_T$, and
\begin{equation*}
\begin{gathered}
R_{T,\theta}^\gamma(i,j) = R_{T,\theta}(i,j), \quad R_{T,\eta}^\gamma(i,j) = R_{T,\eta}(i,j), \quad R_{T,\eta}^\gamma(i,\ast) = R_{T,\eta}(i,\ast), \quad R_{T,\eta}^\gamma(i,\sharp) = R_{T,\eta}(i,\sharp), \\
\theta^{t}_{T,\gamma} \overset{d}{=} \theta^t_{T}, \quad \eta^{t}_{T,\gamma} \overset{d}{=} \eta^t_{T}, \quad \hat\beta^t_T = \beta^t_T,
\end{gathered}
\end{equation*}
which concludes the proof.
\end{proof}

\section{Local convexity of phase retrieval (Proposition \ref{lm:pr_exp_tti_ass})}
\label{sec:phase_conv}

Note that \cite{li2020toward} uses a slightly different (but equivalent) scaling than ours. To better apply their technical lemmas, define
    \begin{align*}
        \tilde{\x}_i = \sqrt{d}\x_i, \quad \tilde{\btheta}^\ast = \frac{1}{\sqrt{d}} \btheta^\ast,\quad \tilde{\btheta} = \frac{1}{\sqrt{d}} \btheta,
    \end{align*}
    where $\tilde\btheta$ is the new optimizing variable, $\tilde\x_i \sim \N(0,\I_d)$, and $\pnorm{\tilde\btheta^\ast}{} = 1$. Then we have the correspondence $ \tilde{\x}_i \leftrightarrow \a_i, \tilde{\btheta}^\ast \leftrightarrow \x, \tilde{\btheta} \leftrightarrow \z$, where the notation on the right is in \cite{li2020toward}. With these definitions, the loss function in (\ref{eq:smoothed_WF_loss}) is
    \begin{align*}
    \cL_n(\tilde\btheta) = \frac{1}{2}\sum_{i=1}^n \Big((\tilde\x_i^\top \tilde\btheta)^2-(|\tilde\x_i^\top\tilde\btheta^\ast|)^2\Big)^2 h\big((\tilde\x_i^\top\tilde\btheta)^2\big)h\big((|\tilde\x_i^\top\tilde\btheta^\ast|)^2\big).
    \end{align*}
    In contrast, \cite{li2020toward} uses
    \begin{align*}
    \tilde\cL_n(\tilde\btheta) = \frac{1}{2}\sum_{i=1}^n \Big((\tilde\x_i^\top \tilde\btheta)^2-(|\tilde\x_i^\top\tilde\btheta^\ast|)^2\Big)^2 h\Big(\frac{(\tilde\x_i^\top\tilde\btheta)^2}{\pnorm{\tilde\btheta}{}^2}\Big)h\Big(\frac{n(|\tilde\x_i^\top\tilde\btheta^\ast|)^2}{\pnorm{\y}{1}}\Big).
    \end{align*}
    The approximation $\pnorm{\y}{1}/n \approx \E[\pnorm{\y}{1}/n] = 1$ is justified by \cite[Lemma 3.3]{li2020toward}, and the approximation eror brought by the $1/\pnorm{\tilde\btheta}{}^2$ scaling is handled by the following lemma.

    \begin{lemma}
    \label{lm:phase_loss_comparison}
        For any $\tilde{\btheta}$ such that $\pnorm{\tilde{\btheta}-\tilde{\btheta}^
        \ast}{} < \frac{1}{5}$, fixed non-negative integers $s,t$, and given $\eps>0$, we have for $g\in \{h,h',h''\}$ that
        \begin{equation*}
            \biggpnorm{\frac{1}{n}\sum_{i=1}^n(\tilde{\x}_i^\top\tilde{\btheta})^s(\tilde{\x}_i^\top\tilde{\btheta}^\ast)^t\bigg[g\Big((\tilde{\x}_i^\top\tilde{\btheta})^2\Big)-g\Big(\frac{(\tilde{\x}_i^\top\tilde{\btheta})^2}{\pnorm{\tilde{\btheta}}{}^2}\Big)\bigg]h\big((\tilde{\x}_i^\top\tilde{\btheta}^\ast)^2\big)\tilde{\x}_i\tilde{\x}_i^\top}{\rm op} \leq CU^{(t+s)/2}(\sqrt{L}e^{-cL} + \epsilon),
        \end{equation*}
        with probability at least $1-e^{-c(\eps)n}$ when $n\geq C(\eps)d$, where $c(\eps)$ (resp. $C(\eps)$) is some small (resp. large) enough constant depending on $\eps$.
    \end{lemma}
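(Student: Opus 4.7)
My plan is to split the operator norm via the triangle inequality as $\pnorm{\textbf{M}}{\op} \leq \pnorm{\E \textbf{M}}{\op} + \pnorm{\textbf{M} - \E\textbf{M}}{\op}$, where $\textbf{M}$ denotes the random matrix in the display. The first term will deliver the bias $U^{(s+t)/2}\sqrt{L}e^{-cL}$ and the second the fluctuation $U^{(s+t)/2}\epsilon$. The structural input common to both steps is a support observation for $\Delta_i := g((\tilde\x_i^\top\tilde\btheta)^2) - g((\tilde\x_i^\top\tilde\btheta)^2/\pnorm{\tilde\btheta}{}^2)$. Since each of $g\in\{h,h',h''\}$ is constant on $[0,L]$ and vanishes on $[U,\infty)$, $\Delta_i$ is zero whenever both arguments lie simultaneously in $[0,L]$ or simultaneously in $[U,\infty)$. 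The constraint $\pnorm{\tilde\btheta - \tilde\btheta^\ast}{} < 1/5$ together with $\pnorm{\tilde\btheta^\ast}{} = 1$ forces $\pnorm{\tilde\btheta}{}^2 \in [16/25, 36/25]$, so $1/\pnorm{\tilde\btheta}{}^2$ is bounded above and below by universal constants, and $\Delta_i$ is supported on the event $\mathcal{E}_i := \{(\tilde\x_i^\top\tilde\btheta)^2 \in [c_1 L, c_2 U]\}$ for universal $c_1, c_2 > 0$. On $\mathcal{E}_i$, $|\Delta_i| \leq 2$, $|\tilde\x_i^\top\tilde\btheta|^s \leq (c_2 U)^{s/2}$, and the truncation $h((\tilde\x_i^\top\tilde\btheta^\ast)^2)$ enforces $|\tilde\x_i^\top\tilde\btheta^\ast|^t \leq U^{t/2}$, so the scalar prefactor $f_i := (\tilde\x_i^\top\tilde\btheta)^s(\tilde\x_i^\top\tilde\btheta^\ast)^t \Delta_i h((\tilde\x_i^\top\tilde\btheta^\ast)^2)$ satisfies $|f_i| \leq CU^{(s+t)/2}\mathbf{1}_{\mathcal{E}_i}$ uniformly in $\tilde\btheta$.

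For the mean bound I would decompose a unit test vector as $\u = \alpha e + \beta f$ with $e = \tilde\btheta/\pnorm{\tilde\btheta}{}$ and $f \perp e$, so that $e^\top\tilde\x_i,\,f^\top\tilde\x_i$ are independent $\mathcal{N}(0,1)$ variables and $\mathbf{1}_{\mathcal{E}_i}$ depends only on $e^\top\tilde\x_i$. The cross term vanishes by the symmetry of $f^\top \tilde\x_i$, leaving
\[
\E[\mathbf{1}_{\mathcal{E}_i}(\u^\top \tilde\x_i)^2] = \alpha^2\,\E[\mathbf{1}_{\mathcal{E}_i}(e^\top\tilde\x_i)^2] + \beta^2\,\P(\mathcal{E}_i).
\]
Standard one-dimensional Gaussian tail estimates give $\P((e^\top\tilde\x_i)^2 \geq c_1 L/\pnorm{\tilde\btheta}{}^2) \leq e^{-cL}$ and $\int_a^\infty \sqrt{x}\,e^{-x/2}\,dx \leq C\sqrt{a}\,e^{-a/2}$, which combined with $\alpha^2+\beta^2 = 1$ produce $\E[\mathbf{1}_{\mathcal{E}_i}(\u^\top\tilde\x_i)^2] \leq C\sqrt{L}\,e^{-cL}$ uniformly in $\u$. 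Since $\pnorm{\E\textbf{M}}{\op} = \sup_{\pnorm{\u}{}=1}|\u^\top \E\textbf{M}\,\u| \leq CU^{(s+t)/2}\cdot\E[\mathbf{1}_{\mathcal{E}_i}(\u^\top\tilde\x_i)^2]$ from the pointwise bound above, this yields $\pnorm{\E\textbf{M}}{\op} \leq CU^{(s+t)/2}\sqrt{L}\,e^{-cL}$.

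For the fluctuation I would run a standard $1/4$-net argument on the unit sphere (cardinality $\leq 9^d$) together with scalar Bernstein concentration. For each fixed $\u$ in the net, $\u^\top\textbf{M}\,\u = \frac{1}{n}\sum_i X_i$ is an average of i.i.d.\ sub-exponential variables $X_i := f_i (\u^\top \tilde\x_i)^2$ with $\pnorm{X_i}{\psi_1} \leq CU^{(s+t)/2}$, using $|f_i| \leq CU^{(s+t)/2}$ and $\pnorm{(\u^\top\tilde\x_i)^2}{\psi_1} \leq C$. Bernstein's inequality then gives $\P(|\u^\top(\textbf{M} - \E\textbf{M})\u| \geq \epsilon U^{(s+t)/2}) \leq 2\exp(-c n\min(\epsilon,\epsilon^2))$, and a union bound over the net combined with $n \geq C(\epsilon) d$ for $C(\epsilon) \sim 1/\epsilon^2$ absorbs the $9^d$ factor, yielding $\pnorm{\textbf{M} - \E\textbf{M}}{\op} \leq 2\sup_{\u\in\mathcal{N}}|\u^\top(\textbf{M}-\E\textbf{M})\u| \leq CU^{(s+t)/2}\epsilon$ with probability at least $1 - e^{-c(\epsilon)n}$.

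The main technical delicacy is extracting $\sqrt{L}$ rather than $\sqrt{U}$ in the mean bound. A naive Cauchy--Schwarz splitting that ignores the shape of the Gaussian density on the tail event would yield $\sqrt{U}\,e^{-cL}$, which is too weak. The improvement comes from observing that on $\mathcal{E}_i$ the density of $(e^\top \tilde\x_i)^2$ is concentrated near the lower threshold $c_1 L$ (by Gaussian concentration in the tail, the excess above $c_1 L$ is only $O(1)$), forcing the relevant second moment to scale as $\sqrt{L}\,e^{-cL}$; this has to be captured by a direct tail integral rather than by a uniform bound.
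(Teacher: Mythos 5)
Your reduction to a truncated second-moment matrix is exactly the paper's first step: the same observation that each $g\in\{h,h',h''\}$ is constant on $[0,L]$ and vanishes on $[U,\infty)$, combined with $\pnorm{\tilde{\btheta}}{}\in[4/5,6/5]$, bounds the bracket by an indicator that $|\tilde{\x}_i^\top\tilde{\btheta}|\gtrsim\sqrt{L}$ and lets the truncation $h\big((\tilde\x_i^\top\tilde\btheta^\ast)^2\big)$ extract the $U^{(s+t)/2}$ prefactor. Where you genuinely diverge is in how the matrix $\frac{1}{n}\sum_i \mathbf{1}_{\cE_i}\tilde\x_i\tilde\x_i^\top$ is controlled: the paper stops here and cites \cite[Lemma A.2]{li2020toward}, whereas you give a self-contained argument, splitting into the mean (decomposing the test vector along $\tilde\btheta/\pnorm{\tilde\btheta}{}$ and its orthogonal complement, with the tail integral $\int_a^\infty\sqrt{x}\,e^{-x/2}\,dx\lesssim\sqrt{a}\,e^{-a/2}$ correctly producing $\sqrt{L}e^{-cL}$ rather than $\sqrt{U}e^{-cL}$) and the fluctuation (scalar Bernstein plus a $1/4$-net over test directions, absorbed by $n\geq C(\eps)d$). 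These computations are right, and your diagnosis of where the $\sqrt{L}$ factor comes from is the correct one; your route buys independence from the external reference at the cost of a longer argument.

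The one substantive caveat concerns the quantifier over $\tilde\btheta$. Your Bernstein step treats $X_i=f_i(\u^\top\tilde\x_i)^2$ as i.i.d., which requires $\tilde\btheta$ to be fixed in advance, independent of the data; your net runs over $\u$ only, so the high-probability event you construct depends on $\tilde\btheta$. But in the proof of Proposition \ref{lm:pr_exp_tti_ass} the bound must hold on a single event simultaneously for all $\tilde\btheta$ with $\pnorm{\tilde\btheta-\tilde\btheta^\ast}{}\leq 1/5$ — in particular at data-dependent points of $\cR_n$ such as the gradient descent iterates — and this uniformity is exactly what the cited \cite[Lemma A.2]{li2020toward} provides, since it is uniform over the direction defining the truncation event. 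Upgrading your argument is not automatic: a naive net over $\tilde\btheta$ fails because the indicator $\mathbf{1}_{\cE_i}$ is not Lipschitz in $\tilde\btheta$, so one needs a VC-type or chaining argument, or a Lipschitz surrogate for the indicator (essentially the content of Li et al.'s lemma), to keep the sample size requirement at $n\geq C(\eps)d$. As a proof of the pointwise-in-$\tilde\btheta$ statement your argument is sound; to serve the role the lemma plays in the paper you should either add this uniformization step or quote the uniform result as the paper does.
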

    \begin{proof}
        Since $\frac{4}{5} \leq \pnorm{\tilde{\btheta}}{} \leq \frac{6}{5}$ and $\pnorm{h}{\infty}\vee\pnorm{h'}{\infty}\vee\pnorm{h''}{\infty} \leq 1$, we have
        \begin{align*}
            \bigg|g\Big((\tilde{\x}_i^\top\tilde{\btheta})^2\Big)-g\Big(\frac{(\tilde{\x}_i^\top\tilde{\btheta})^2}{\pnorm{\tilde{\btheta}}{}^2}\Big)\bigg| &\leq \mathbf{1}\bigg(\frac{4}{5}\sqrt{L}\leq |\tilde{\x}_i^\top\tilde{\btheta}|\leq \frac{6}{5}\sqrt{U}\bigg),
        \end{align*}
        which further implies
        \begin{align*}
        |\tilde{\x}_i^\top\tilde{\btheta}|^s\bigg|g\Big((\tilde{\x}_i^\top\tilde{\btheta})^2\Big)-g\Big(\frac{(\tilde{\x}_i^\top\tilde{\btheta})^2}{\pnorm{\tilde{\btheta}}{}^2}\Big)\bigg|&\leq |\tilde{\x}_i^\top\tilde{\btheta}|^s\mathbf{1}\bigg(\frac{4}{5}\sqrt{L}\leq (\tilde{\x}_i^\top\tilde{\btheta})^2\leq \frac{6}{5}\sqrt{U}\bigg) \leq C U^{s/2}\mathbf{1}\bigg(\frac{4}{5}\sqrt{L}\leq |\tilde{\x}_i^\top\tilde{\btheta}|\bigg).
        \end{align*}
        Using also $|\tilde{\x}_i^\top\tilde{\btheta}^\ast|^th((\tilde{\x}_i^\top\tilde{\btheta}^\ast)^2) \leq CU^{t/2}$, we have
    \begin{align*}
        &\biggpnorm{\frac{1}{n}\sum_{i=1}^n(\tilde{\x}_i^\top\tilde{\btheta})^s(\tilde{\x}_i^\top\tilde{\btheta}^\ast)^t\bigg[g\Big((\tilde{\x}_i^\top\tilde{\btheta})^2\Big)-g\Big(\frac{(\tilde{\x}_i^\top\tilde{\btheta})^2}{\pnorm{\tilde{\btheta}}{}^2}\Big)\bigg]h((\tilde{\x}_i^\top\tilde{\btheta}^\ast)^2)\tilde{\x}_i\tilde{\x}_i^\top}{\rm op}\\
        &\leq CU^{(t+s)/2} \biggpnorm{\frac{1}{n}\sum_{i=1}^n\mathbf{1}\bigg(\frac{4}{5}\sqrt{L}\leq |\tilde{\x}_i^\top\tilde{\btheta}|\bigg)\tilde{\x}_i\tilde{\x}_i^\top}{\rm op} \leq CU^{(t+s)/2}(\sqrt{L}e^{-cL} + \epsilon),
    \end{align*}
    where the last step applies \cite[Lemma A.2]{li2020toward}.
    \end{proof}

\begin{proof}[Proof of Proposition \ref{lm:pr_exp_tti_ass}]
    Let us first check that $\pnorm{\hat\btheta^s - \tilde\btheta^\ast}{}\leq 1/5$. Recall that $\hat\btheta^s$ is the unit-norm leading eigenvector of the matrix $\M_n = \sum_{i=1}^n \x_i\x_i^\top (\x_i^\top \btheta^\ast)^2\bm{1}\{|\x_i^\top \btheta^\ast| \leq U\}$, which satisfies 
    \begin{align*}
    \E[\M_n] = \delta(\xi_1 \tilde\btheta^\ast\tilde\btheta^{\ast\top} + \xi_2 \I_d),
    \end{align*}
    with $\xi_1 = \E_{Z\sim \N(0,1)}[Z^4\bm{1}\{|Z|\leq U\}] - \E_{Z\sim \N(0,1)}[Z^2\bm{1}\{|Z|\leq U\}] > 0$ and $\xi_2 = \E_{Z\sim \N(0,1)}[Z^2\bm{1}\{|Z|\leq U\}]$. By the Davis-Kahan theorem, we have
    \begin{align}\label{eq:davis_khan}
    \pnorm{\hat\btheta^s - \tilde\btheta^\ast}{} \leq C\frac{\pnorm{\M_n - \E[\M_n]}{\op}}{\Delta(\E[\M_n])}, 
    \end{align}
    where $\Delta(\E[\M_n]) = \delta\xi_1$ is the difference between the first and second eigenvalue of $\E[\M_n]$. To estimate the numerator, first note that $\pnorm{\M_n - \E[\M_n]}{\op} \leq 2\sup_{u\in \N_d} \big|u^\top (\M_n - \E[\M_n])u\big|$ where $\N_d$ is a $1/4$-net of the unit ball in $\R^d$ with cardinality at most $9^d$. Now $u^\top (\M_n - \E[\M_n])u = \sum_{i=1}^n (Z_i - \E[Z_i])$ with $Z_i = (\x_i^\top u)^2 (\x_i^\top\btheta^\ast)\bm{1}\{|\x_i^\top \btheta^\ast|\leq U\}$ being sub-exponential with norm bounded by $C/d$, hence Bernstein inequality yields
    \begin{align*}
    \P(|u^\top (\M_n - \E[\M_n])u|\geq t) \leq 2\exp\Bigg(-c\Big(\frac{d^2t^2}{n} \wedge dt\Big)\Bigg)
    \end{align*}
    for any $t\geq 0$. Taking the union bound over $u\in\N$ and choosing $t = \delta^{2/3}$ yields that $\pnorm{\M_n - \E[\M_n]}{\op} \leq \delta^{2/3}$ with probability $1-e^{-cn}$ for large enough $\delta$. Coming back to (\ref{eq:davis_khan}), we conclude that $\pnorm{\hat\btheta^s - \tilde\btheta^\ast}{} \leq C/\delta^{1/3} \leq 1/5$ for large enough $\delta$.

    Next, we check that $\psi'_\delta(\lambda^\ast_\delta) > 0$. Using Proposition \ref{prop: spec est overlap}, we see that it is sufficient to check that the limiting overlap is nonzero, i.e.
    \begin{equation*}
        \lim_{n,d\rightarrow \infty}\frac{1}{\sqrt{d}}\langle\hat{\btheta}^s,\btheta^\ast\rangle > 0.
    \end{equation*} 
    For sufficiently large $\delta$, we can bound the overlap by
    \begin{align*}
        \frac{1}{\sqrt{d}}\langle\hat{\btheta}^s,\frac{\btheta^\ast}{\sqrt{d}}\rangle &= \frac{1}{2} \Big(\pnorm{\hat{\btheta}^s}{}^2 + \Bigpnorm{\frac{\btheta^\ast}{\sqrt{d}}}{}^2 - \Bigpnorm{\hat{\btheta}^s-\frac{\btheta^\ast}{\sqrt{d}}}{}^2\Big)\geq \frac{1}{2} \Big(\pnorm{\hat{\btheta}^s}{}^2 + \biggpnorm{\frac{\btheta^\ast}{\sqrt{d}}}{}^2 -\frac{1}{25}\Big) = 1-1/50 > 0,
    \end{align*}
    which holds for large enough $n,d$, as desired.
    
    Now we lower bound the smallest eigenvalue of the Hessian. Using
    \begin{align}\label{eq:part_1_ell_phase}
        \partial_1\ell(a,b) = 2(3a^2-b^2)h(a^2)h(b^2) + (9a^2 - b^2)(a^2-b^2) h'(a^2)h(b^2) +2a^2(a^2-b^2)^2h''(a^2)h(b^2),
\end{align}    
    in terms of $(\tilde\x_i, \tilde\btheta^\ast,\tilde\btheta)$ the Hessian is 
    \begin{align*}
        \nabla_{\btheta}^2 L(\btheta) &= \underbrace{\frac{1}{n}\sum_{i=1}^n2\Big(3(\tilde{\x}_i^\top\tilde{\btheta})^2-(\tilde{\x}_i^\top\tilde\btheta^\ast)^2\Big)h((\tilde{\x}_i^\top\tilde{\btheta})^2)h((\tilde{\x}_i^\top\btheta^\ast)^2)\tilde{\x}_i\tilde{\x}_i^\top}_{(I)}\\
        &\quad+ \underbrace{\frac{1}{n}\sum_{i=1}^n\Big(9(\tilde{\x}_i^\top\tilde{\btheta})^2 - (\tilde\x_i^\top \tilde\btheta^\ast)^2\Big)\Big((\tilde{\x}_i^\top\tilde{\btheta})^2-(\tilde{\x}_i^\top\btheta^\ast)^2\Big)h'((\tilde{\x}_i^\top\tilde{\btheta})^2)h((\tilde{\x}_i^\top\btheta^\ast)^2)\tilde\x_i\tilde\x_i^\top}_{(II)}\\
        &\quad+\underbrace{\frac{1}{n}\sum_{i=1}^n2(\tilde{\x}_i^\top\tilde{\btheta})^2((\tilde{\x}_i^\top\tilde{\btheta})^2-(\tilde{\x}_i^\top\btheta^\ast)^2)^2h''((\tilde{\x}_i^\top\tilde{\btheta})^2)h((\tilde{\x}_i^\top\btheta^\ast)^2)\tilde\x_i\tilde\x_i^\top}_{(III)}.
    \end{align*}
    
    We will now lower bound $\lambda_{\min}((I))$ and upper bound $\pnorm{(II)}{\op},\pnorm{(III)}{\op}$. Using two applications of Lemma \ref{lm:phase_loss_comparison}, we have the bound
    \begin{align*}
    \biggpnorm{(I) - \frac{1}{n}\sum_{i=1}^n 2\Big(3(\tilde{\x}_i^\top\tilde{\btheta})^2-(\tilde{\x}_i^\top\tilde{\btheta}^\ast)^2\Big)h\Big(\frac{(\tilde{\x}_i^\top\tilde{\btheta})^2}{\pnorm{\tilde{\btheta}}{}^2}\Big)h\Big((\tilde{\x}_i^\top\tilde{\btheta}^\ast)^2\Big)\tilde{\x}_i\tilde{\x}_i^\top}{\op} \leq CU(\sqrt{L}e^{-\kappa_2 L} + \eps).
    \end{align*} 
    By Eq. (9) in the proof of Theorem 2.1 in \cite{li2020toward}, it holds with probability $1-e^{-cn}$ that
    \begin{align*}
     \lambda_{\min}\bigg(\frac{1}{n}\sum_{i=1}^n 2(3(\tilde{\x}_i^\top\tilde{\btheta})^2-(\tilde{\x}_i^\top\tilde{\btheta}^\ast)^2)h\Big(\frac{(\tilde{\x}_i^\top\tilde{\btheta})^2}{\pnorm{\tilde{\btheta}}{}^2}\Big)h\Big((\tilde{\x}_i^\top\tilde{\btheta}^\ast)^2\Big)\tilde{\x}_i\tilde{\x}_i^\top\Big)\bigg) \geq \frac{2}{25}- CU^{\kappa_3}(\eps+\eps^{-1}e^{-\kappa_1\eps^{-2}}+e^{-\kappa_2L}).
    \end{align*}
    Then for $U = 2L$ and both large enough, it holds with probability $1-e^{-cn}$ that $\lambda_{\min}((I)) \geq 1/25$. On the other hand, terms $(II)$ and $(III)$ are both linear combinations of terms of the form
    \begin{align*}
    B_{s,t}(\tilde{\btheta}) := \frac{1}{n}\sum_{i=1}^n (\tilde{\x}_i^\top \tilde{\btheta})^s(\tilde{\x_i}^\top \tilde\btheta^\ast)^t g\big((\tilde{\x}_i^\top \tilde{\btheta})^2\big)h\big((\tilde\x_i^\top \tilde\btheta^\ast)\big),
    \end{align*}
    where $s,t$ are nonnegative integers, and $g = h',h''$ satisfies $g(x) = 0$ on $[0,L]\cup[U,\infty)$ and $|g(x)|\leq 1$ for $x\in [L,U]$. Further define
    \begin{align*}
    B_{s,t}'(\tilde\btheta) &:= \frac{1}{n}\sum_{i=1}^n (\tilde{\x}_i^\top \tilde{\btheta})^s(\tilde{\x_i}^\top \tilde\btheta^\ast)^t g\Big(\frac{(\tilde{\x}_i^\top \tilde{\btheta})^2}{\pnorm{\tilde\btheta}{}^2}\Big)h\big((\tilde\x_i^\top \tilde\btheta^\ast)^2\big),\\
    B_{s,t}''(\tilde\btheta) &:= \frac{1}{n}\sum_{i=1}^n (\tilde{\x}_i^\top \tilde{\btheta})^s(\tilde{\x_i}^\top \tilde\btheta^\ast)^t g\Big(\frac{(\tilde{\x}_i^\top \tilde{\btheta})^2}{\pnorm{\tilde\btheta}{}^2}\Big)h\Big(\frac{n(\tilde\x_i^\top \tilde\btheta^\ast)^2}{\pnorm{\y}{1}}\Big).
    \end{align*}
    By Lemma \ref{lm:phase_loss_comparison} and \cite[Lemma 3.3]{li2020toward}, for any preset $\eps > 0$, by choosing $U = 2L$ and both large enough, we have
    \begin{align*}
    \pnorm{B_{s,t}(\tilde\btheta) - B'_{s,t}(\tilde\btheta)}{\op} \vee \pnorm{B'_{s,t}(\tilde\btheta) - B''_{s,t}(\tilde\btheta)}{\op} \leq \eps
    \end{align*}
    for any $\tilde\btheta$ such that $\pnorm{\tilde\btheta - \tilde\btheta^\ast}{}\leq 1/5$, so using further $\pnorm{B''_{s,t}(\tilde\btheta)}{\op} \leq \eps$ for the same set of $\tilde\btheta$ via \cite[Lemma 3.4]{li2020toward}, we conclude that with probability $1-e^{-cn}$ it holds that $\pnorm{(II)}{\op} + \pnorm{(III)}{\op} \leq 1/50$. By choosing $\eps$ small enough, this concludes that $\lambda_{\min}(\nabla_{\btheta}^2 \cL_n(\btheta)) \geq 1/50$ for $\pnorm{\tilde\btheta - \tilde\btheta^\ast}{}\leq 1/5$ with probability $1-e^{-cn}$, as desired.

    Finally we derive the system of fixed point equations. Since $\btheta^\infty = \btheta^\ast$ by \cite{li2020toward}, we have $\theta^\infty = \theta^\ast$, which further gives
    \begin{align*}
        \bbeta^\infty = \X\btheta^\infty = \X\btheta^\ast = \bbeta^\ast, \quad
        \eta^\infty = w^\infty = w^\ast.
    \end{align*}
    By (\ref{eq:fix3}), we can conclude that $u^\infty = 0.$ By substituing (\ref{eq:fix6}) into (\ref{eq:fix5}) and some further algebra, we can derive the equation
    \begin{align}
    \label{eq:pr_fix_1}
    \frac{1}{\delta} = \E\Big[\frac{\partial_1\ell(w^\ast,w^\ast,0)R_\theta^\infty}{1+\partial_1\ell(w^\ast,w^\ast,0)R_\theta^\infty}\Big]
    \end{align}
    for $R_\theta^\infty$. We claim it admits an unique solution. By some further algebra, the equation is equivalent to 
    \begin{equation}
        \label{eq:R_theta}F(R_\theta^\infty) = 1-\frac{1}{\delta}, \text{ where } F(x) = \E\Big[\frac{1}{1+\partial_1\ell(w^\ast,w^\ast,0)x}\Big].
    \end{equation}
    As $\partial_1\ell(w^\ast,w^\ast,0) > 0$, $F$ is strictly monotone so the equation (\ref{eq:R_theta}) admits an unique solution. The equation 
    \begin{align}
    \label{eq:pr_fix_2}
    R_\eta^\infty &= \frac{1}{R_\theta^\infty} - \delta\E\Big[\frac{\partial_1\ell(w^\ast,w^\ast,0)}{1+\partial_1\ell(w^\ast,w^\ast,0)}\Big] 
    \end{align} follows directly from (\ref{eq:fix6}). To get the final equation, note that $\partial_2\ell(w^\ast,w^\ast,0) = -\partial_1\ell(w^\ast,w^\ast,0)$, so using further (\ref{eq:fix7}), we get
    \begin{align*}
        R_\eta^\ast &= -\delta\E\Big[\frac{\partial_1 \ell(\eta^\infty,w^*,0)}{1+\partial_1 \ell(\eta^\infty,w^*,0)R_\theta^\infty}\Big] = -\delta\E[\partial_1\ell(w^\ast,w^\ast,0)] - R_\eta^\infty,
    \end{align*}
    where the last step follows from algebra, (\ref{eq:pr_fix_1}), and (\ref{eq:pr_fix_2}).
\end{proof}

\bibliographystyle{abbrvnat}
\bibliography{dmft}
\end{document}